\newtcbox{\mybox}{nobeforeafter, standard jigsaw, left=0.25mm,right=0.25mm,top=0.25mm,bottom=0.25mm,boxsep=0mm,boxrule=0mm,notitle,colback=white,colframe=white,opacityframe=0.85,opacityback=0.85}
\title{Generic pro-$p$ Hecke algebras}
\author{Nicolas A. Schmidt}
\newcommand{\op}[1]{\operatorname{#1}}
\newcommand{\bbf}[1]{\mathds{#1}}
\newcommand{\Z}{\bbf{Z}}
\newcommand{\Q}{\bbf{Q}}
\newcommand{\R}{\bbf{R}}
\newcommand{\C}{\bbf{C}}
\newcommand{\F}{\bbf{F}}
\newcommand{\N}{\bbf{N}}
\newcommand{\axiom}[1]{\hyperref[axiom:#1]{(\textbf{#1})}}
\numberwithin{equation}{subsection}
\theoremstyle{plain}
\newtheorem*{theorem*}{Theorem}
\newtheorem{theorem}{Theorem}[subsection]
\newtheorem{lemma}[theorem]{Lemma}
\newtheorem{cor}[theorem]{Corollary}
\newtheorem{prop}[theorem]{Proposition}
\newtheorem{propdef}[theorem]{Proposition/Definition}
\theoremstyle{definition}
\newtheorem{definition}[theorem]{Definition}
\newtheorem{notation}[theorem]{Notation}
\newtheorem{terminology}[theorem]{Terminology}
\newtheorem{example}[theorem]{Example}
\newtheorem{rmk}[theorem]{Remark}
\newtheorem{convention}[theorem]{Convention}
\crefname{theorem}{theorem}{theorems}
\crefname{prop}{proposition}{propositions}
\crefname{lemma}{lemma}{lemmas}
\crefname{cor}{corollary}{corollaries}
\crefname{definition}{definition}{definitions}
\crefname{propdef}{proposition/definition}{proposition/definitions}
\crefname{rmk}{remark}{remarks}
\crefname{ex}{example}{examples}
\crefname{figure}{figure}{figures}
\crefname{terminology}{}{}
\begin{document}

\ifpdf
\DeclareGraphicsExtensions{.pdf, .jpg, .tif}
\else
\DeclareGraphicsExtensions{.eps, .jpg}
\fi

\maketitle

\begin{abstract}
   This is an extended and corrected version of the author's Diplomarbeit. A class of algebras called \textit{generic pro-$p$ Hecke algebras} is introduced, enlarging the class of generic Hecke algebras by considering certain extensions of (extended) Coxeter groups. Examples of generic pro-$p$ Hecke algebras are given by pro-$p$-Iwahori Hecke algebras and Yokonuma-Hecke algebras. The notion of an \textit{orientation} of a Coxeter group is introduced and used to define `Bernstein maps' intimately related to Bernstein's presentation and to Cherednik's cocycle. It is shown that certain relations in the Hecke algebra hold true, equivalent to Bernstein's relations in the case of Iwahori-Hecke algebras.

For a certain subclass called \textit{affine pro-$p$ Hecke algebras}, containing Iwahori-Hecke and pro-$p$-Iwahori Hecke algebras, an explicit canonical and integral basis of the center is constructed and finiteness results are proved about the center and the module-structure of the algebra over its center, recovering results of Bernstein-Zelevinsky-Lusztig and Vignéras.
\end{abstract}

\tableofcontents

\section{Introduction} 
\label{sec:Introduction}
The present article is a translated and extended version of the author's \textit{Diplomarbeit} \cite{Schmidt}. It contains various small improvements and additions as well as an entirely new proof and a generalization of the Bernstein relations, filling the gap left open by the erroneous proof in \cite{Schmidt}. Moreover, the axiomatics have been reformulated, extending the main structure theorem (see \cref{thm:structure of affine pro-$p$ Hecke algebras}) to the pro-$p$-Iwahori Hecke algebras of \textit{all} connected (possibly non-split) reductive groups.

Our initial goal in \cite{Schmidt} was to provide a self-contained account of the two articles \cite{VigGen}, \cite{VigProP} of Vignéras, both of which generalize the theory of the center of affine Hecke algebras (`Bernstein's presentation') of Bernstein-Zelevinsky and Lusztig \cite{Lusztig}, but in different directions.

The first article \cite{VigGen} develops an integral version of this theory, removing the restrictions on the ring of coefficients. Recall that affine Hecke algebras $H_q(W,S)$ are defined with respect to a base ring $R$ by generators $\{T_w\}_{w \in W}$ and relations
\begin{align}\label{eq:iwahori-matsumoto_relations_i} T_w T_{w'} & = T_{ww'} \quad\text{if}\quad \ell(w)+\ell(w') = \ell(ww') \\
   \label{eq:iwahori-matsumoto_relations_ii} T_s^2 & = q_s + (q_s - 1)T_s \quad (s \in S)
\end{align}
depending on the choice of an extended affine Weyl group $W$ (associated to some root datum $(X,\Phi,X^\vee,\Phi^\vee)$), a set $S \subseteq W$ of simple reflections (defining a length function $\ell: W \rightarrow \N$), and a family $\{q_s\}_{s \in S} \subseteq R$ of parameters subject to the constraint
\[ q_s = q_t \quad \text{if $s,t \in S$ are conjugate in $W$} \]
This general definition imposes no restrictions on the nature of the ring $R$ whatsoever. However in \cite{Lusztig}, it was assumed that the ring of coefficients be $R = \C$ and that the parameters $q_s$ are invertible\footnote{The results of \cite{Lusztig} are actually applicable to any ring $R$ as long as the $q_s$ are invertible and admit square roots.}. Traditionally, this wasn't seen as a restriction because the results of \cite{Lusztig} were usually applied in the context of \textit{complex} representations of reductive groups and the \textit{classical} Langlands program.

Let us briefly recall how affine Hecke algebras are related to reductive groups. Given a split\footnote{The splitness assumption is necessary in order to dispose of the Iwahori-Matsumoto presentation; although it was a folklore result that Iwahori-Hecke algebras of non-split groups admit an Iwahori-Matsumoto presentation with unequal parameters, there was no proof or even a precise result available until the appearance of \cite{VigProI}.} connected reductive group $G$ over a nonarchimedean local field $F$ and an Iwahori subgroup $I \leq G(F)$, a standard construction from representation theory yields the associated \textit{Iwahori-Hecke algebra} $H(G(F),I)$ over $R$. This algebra has an $R$-basis indexed by the set $I\backslash{}G(F)/I$ of double cosets with product structure given by convolution. More conceptually, the algebra $H(G(F),I)$ identifies with the endomorphism ring of the $R$-linear $G(F)$-representation $\op{ind}^{G(F)}_I \bbf{1}$ induced from the trivial representation of $I$. By Frobenius reciprocity, this induced representation also represents the functor of $I$-invariants of $G(F)$-representations, and the latter therefore lifts to a functor
\begin{equation}\label{eq:functor_of_I_invariants} \left\{ \text{$R$-linear $G(F)$-representations} \right\} \longrightarrow \left\{ \text{Right-$H(G(F),I)$-modules} \right\} \end{equation}
   relating representations of reductive groups to modules of Iwahori-Hecke algebras. Finally, affine Hecke algebras and Iwahori-Hecke algebras are related via the \textit{Iwahori-Matsumoto presentation} of $H(G(F),I)$, which defines an isomorphism
\[ H_q(W,S) \stackrel{\sim}{\longrightarrow} H(G(F),I) \]
where $W$ is the extended Weyl group of the root datum of $G$ and the parameters $q_s$ all equal the cardinality $q = p^r$ of the residue field of $F$.

Note that when $R = \C$, the parameters $q_s$ are invertible and the results of \cite{Lusztig} are applicable. On the other hand if $R$ is a field of characteristic $p$, the $q_s$ are all equal to zero. In this case there is no hope of applying the Bernstein-Zelevinsky theory as presented in \cite{Lusztig}, since the relevant constructions depend \textit{explicitly} on the invertibility of the parameters $q_s$.

In particular, Bernstein-Zelevinsky's description of the center of affine Hecke algebras had not been available to the \textit{mod $p$ Langlands program}---which aims to study representations of reductive groups in precisely this equal-characteristic situation---when it emerged in the early 2000s. The purpose of \cite{VigGen} was to remedy this fact by developing an integral version of the theory in \cite{Lusztig}. The surprising result of \cite{VigGen} was that one could completely avoid any invertibility assumptions and make the results carry over to arbitrary coefficient rings by simply replacing the Bernstein-Zelevinsky basis $\{\widetilde{\theta}_x T_w\}$ with an integral variant $\{ E_w \}$ differing from it only by explicit scalar factors.

Still, it was not clear how useful Hecke algebras would be in the study of mod $p$ representations because, in contrast to the case of characteristic zero, the functor of $I$-invariants is not exact in characteristic $p$. However, it was soon observed that a certain variant of the Iwahori subgroup enjoys a remarkable property in characteristic $p$ that almost makes up for the lack of exactness. This property goes back to the following elementary fact: a $p$-group that acts on a nonzero $\F_p$-vector space must have a nonzero fix point. It follows at once that the same holds true more generally for \textit{pro}-$p$ groups acting \textit{smoothly}, i.e. with open stabilizers, and for arbitrary coefficient rings of characteristic $p$. Thus, if one replaces an Iwahori subgroup $I$ by its maximal open normal pro-$p$ subgroup $I(1) \leq I$, the analogue
\begin{equation*} \left\{ \text{$R$-linear $G(F)$-representations} \right\} \longrightarrow \left\{ \text{Right-$H(G(F),I(1))$-modules} \right\} \end{equation*}
   of the functor \eqref{eq:functor_of_I_invariants} above sends \textit{nonzero smooth} representations to \textit{nonzero} modules (while still being not exact of course). This remarkable property has some important consequences. For example, it implies the following practical irreducibility criterion: a $G(F)$-representation $V$ generated by its $I(1)$-invariants $V^{I(1)}$ is irreducible if the $H(G,I(1))$-module $V^{I(1)}$ is simple.
   
   The subgroup $I(1) \leq I$ and the algebra $H(G(F),I(1))$ were introduced by Vignéras in the second article \cite{VigProP}, where they were named `pro-$p$-Iwahori group' and `pro-$p$-Iwahori Hecke algebra' respectively. Since the appearance of \cite{VigProP}, these `higher congruence analogues' of the Iwahori-Hecke algebras have proven to be of ever-growing importance in the mod $p$ Langlands program (see also \cite{Flicker} for an application in the classical context).

   Having removed the restrictions on the ring of coefficients in \cite{VigGen}, in \cite{VigProP} Vignéras re-developed this new integral Bernstein-Zelevinsky theory in the context of pro-$p$-Iwahori Hecke algebras (of split groups). Surprisingly, the results carried over almost verbatim. However, the methods of proof were different as \cite{VigProP} dealt with the concrete convolution Hecke algebras $H(G(F),I(1))$ and not with abstract Hecke algebras $H_q(W,S)$ defined by generators and relations. As a result, the proofs in \cite{VigProP} were less elementary as they assumed some familiarity with reductive groups. A more serious consequence was that one could not take advantage of a reduction argument (the `specialization argument', see \cref{rmk:specialization_argument}) available in the abstract setting that often allows one to reduce statements to the case of invertible parameters.
   
   For these reasons, it seemed desirable if there was a `pro-$p$ analogue' of the affine Hecke algebras. Our first contribution was to verify that such an analogue exists: the \textit{generic pro-$p$ Hecke algebras} that formed the subject of \cite{Schmidt}.   

   More precisely, generic pro-$p$ Hecke algebras are the pro-$p$ analogues of \textit{generic} Hecke algebras. The latter generalize affine Hecke algebras by allowing $(W,S)$ to be abstract `extended Coxeter groups' instead of just extended affine Weyl groups. The generic pro-$p$ Hecke algebras which are analogous to affine Hecke algebras and to which the Bernstein-Zelevinsky method applies are the \textit{affine pro-$p$ Hecke algebras} (see \cref{def:affine_pro-$p$_Hecke_algebra}).
   
   Like generic Hecke algebras, generic pro-$p$ Hecke algebras are associated to a `Coxeter-like' group $W^{(1)}$ equipped with a length function $\ell:W^{(1)} \rightarrow \N$ and a set of parameters, and are equipped with a linear basis $\{T_w\}$ indexed by $W^{(1)}$ such that relations similar to \eqref{eq:iwahori-matsumoto_relations_i}, \eqref{eq:iwahori-matsumoto_relations_ii} above (see \cref{def:generic pro-$p$ Hecke algebras} for details) hold true.
   
   However, there are two essential differences. First of all, the $W^{(1)}$ aren't extended Coxeter groups but \textit{extensions}
   \[ \begin{xy} \xymatrix{ 1 \ar[r] & T \ar[r] & W^{(1)} \ar[r] & W \ar[r] & 1 } \end{xy} \]
      of extended Coxeter groups by abelian groups (where the group $T$ is not to be confused with the basis $\{T_w\}$). In particular even for affine pro-$p$ Hecke algebras, the representation as a group of isometries of a real affine space the groups $W^{(1)}$ come equipped with is in general\footnote{The general case being $T \neq 1$; in the degenerate case $T = 1$, the notion of generic pro-$p$ Hecke algebras reduces to that of generic Hecke algebras, making the latter a special case of the former.} not faithful. In other words, the groups $W^{(1)}$ are only `geometric up to $T$', which adds an extra layer of difficulty to many statements whose analogues for affine Hecke algebras have purely geometric proofs. But, this difficulty also forces one to recognize structures that remain hidden in the classical case. Namely, of great importance is the existence of a family $(n_s)_{s \in S}$ of lifts of the simple reflections $s \in S$ to the group $W^{(1)}$ which satisfy the braid relations
      \begin{equation} \underbrace{n_s n_t n_s \ldots}_{\text{$m$ factors}} = \underbrace{n_t n_s n_t \ldots}_{\text{$m$ factors}} \quad \text{if $st$ is of order $m < \infty$} \end{equation}
         For generic Hecke algebras, the canonical choice $n_s = s$ renders this point trivial. But, even in this case it is ultimately the existence of this family (cf. the proof of \cref{thm:ex_theta_braid}) that allows the construction of the `Bernstein functions' $\theta$ on which the Bernstein-Zelevinsky theory rests. Showing the existence of such lifts $n_s \in W^{(1)}$ in examples associated to reductive groups is nontrivial (cf. \cref{lem:lifting_braid_relations}) and related to describing normalizers of maximal tori in split reductive groups, a problem that has been studied in depth by Tits \cite{TitsTores}. In fact, his `groupes de Coxeter étendu' are almost the same as our `pro-$p$ Coxeter groups' (see \cref{sub:A characterization of pro-$p$ Coxeter groups in terms of X}).
      
         The second difference is that the analogues of the quadratic relations \eqref{eq:iwahori-matsumoto_relations_ii} are more delicate and that generic pro-$p$ Hecke algebras must therefore be viewed as objects $\mathcal{H}^{(1)} = \mathcal{H}^{(1)}(a,b)$ that depend\footnote{The dependence on the group $W^{(1)}$ is surpressed in the notation.} on \textit{two} families $\{a_s\}_s$, $\{b_s\}_s$ of parameters, instead of one family $\{q_s\}_s$, and are thus actually pro-$p$ analogues of the two-parameter generic Hecke algebras $H_{a,b}(W,S)$ which are defined like $H_q(W,S)$ but with the quadratic relation $T_s^2 = q_s + (q_s - 1)T_s$ replaced by $T_s^2 = a_s + b_s T_s$.
      But whereas working with two parameters is a convenience in the classical case, in the pro-$p$ case it becomes a necessity because the parameters $b_s$ appearing in the quadratic relations
   \[ T_{n_s}^2 = a_s T_{n_s^2} + b_s T_{n_s} \]
   are longer elements of the ground ring $R$ but elements of the group ring $R[T]$ of $T$ (viewed as a subalgebra of $\mathcal{H}^{(1)}$ by identifying an element $t \in T$ with the basis element $T_t \in \mathcal{H}^{(1)}$). Thus, there is no sensible one-parameter version of the generic pro-$p$ Hecke algebras as the parameters $a_s$, $b_s$ never satisfy a simple relation like $b_s = a_s - 1$ in interesting examples. Yet, even for generic Hecke algebras it is fruitful to let $a_s$ and $b_s$ vary independently because then (and only then!) it is possible to reduce statements to the case $a_s = 1$ using the `specialization argument' mentioned before, where formulas take on a particularly simple form.
   
   With these abstract versions of the pro-$p$-Iwahori Hecke algebras at our disposal, our next goal was to redevelop the integral Bernstein-Zelevinsky theory of \cite{VigProP} using generic algebra methods as in \cite{Lusztig} and \cite{VigGen}. Recall that the method of Bernstein-Zelevinsky\footnote{The description of the center of affine Hecke algebras for `constant parameters' was obtained by Bernstein and Zelevinsky in an unpublished work. Their results were generalized by Lusztig in \cite{Lusztig}, which is the canonical reference for the theory.} rests on the decomposition
   \[ W = X\rtimes W_0 \]
   of $W$ into a semi-direct product of a finite group $W_0$ (`Weyl group') and a `large' abelian subgroup $X$ (`lattice of translations') provided by the root datum $(X,\Phi,X^\vee,\Phi^\vee)$ giving rise to $W$, with the group $W_0$ acting on $X$; because the group law on the abelian subgroup $X$ is traditionally written additively, one uses the exponential notation $\tau^x$ when viewing an $x \in X$ as an element of $W$, in order to avoid confusion. With this convention, the action of $W_0$ can be written as
   \[ w(x) = w \tau^x w^{-1} \]
   To the commutative subgroup $X$ now corresponds a commutative subalgebra $\mathcal{A} \subseteq H_q(W,S)$ via a group homomorphism\footnote{$\widetilde{\theta}$ corresponds to the map denoted by $\theta$ in \cite{Lusztig}. We use the notation $\widetilde{\theta}$ in order to be consistent with the notation for the \textit{normalized Bernstein maps} $\widetilde{\theta}_{\mathfrak{o}}$ to be introduced later that generalize $\widetilde{\theta}$ and have `unnormalized' counterparts denoted by $\theta_{\mathfrak{o}}$.}
   \[ \widetilde{\theta}: X \rightarrow H_q(W,S)^\times \]
   such that the $\widetilde{\theta}(x)$, $x \in X$ form a basis of $\mathcal{A}$ and $W_0$ acts on it via algebra automorphisms permuting the basis elements. The main result of the Bernstein-Zelevinsky method is the equality
   \[ Z(H_q(W,S)) = \mathcal{A}^{W_0} \]
   between the center of the Hecke algebra and the invariants of this commutative subalgebra under the action of the Weyl group. Proving this equality usually proceeds in two steps (cf. \cite{Lusztig}). First, one establishes that $\mathcal{A}^{W_0}$ lies in the center using the \textit{Bernstein relations}, and then one shows---using this inclusion---that equality must hold. In \cite{LusztigSingular} the last step is justified by referring to a `Nakayama argument' (without providing details). Here and in \cite{Schmidt}, we follow the mentioned outline but replace the `Nakayama argument' with a combination of an induction (\cref{thm:center}) and an explicit computation (\cref{prop:centralizer}) that shows that the subalgebra $\mathcal{A}$ equals its own centralizer (in the `\textit{split} case'; in the `\textit{non-split} case' it is a proper subalgebra of $\mathcal{A}$ in general). This step isn't difficult although somewhat convoluted (especially in the `non-split case').
   
   The essential difficulty of the Bernstein-Zelevinsky method lies in establishing the Bernstein relations. In \cite{Lusztig}, they are stated in the following form (restated here for two parameters). Given a reflection $s = s_\alpha \in W_0 \cap S$ attached to a simple root $\alpha: X \rightarrow \Z$ and an element $x \in X$, we have
   \begin{equation}\label{eq:classical_bernstein} \widetilde{\theta}(x)T_s - T_s \widetilde{\theta}(s(x)) = \begin{dcases}
      b_s \frac{\widetilde{\theta}(x) - \widetilde{\theta}(s(x))}{1-\widetilde{\theta}(-\alpha^\vee)} &\text{ if } \alpha(X) = \Z \\
      a_s^{1/2} \left(a_s^{-1/2} b_s + a_{s'}^{-1/2} b_{s'} \widetilde{\theta}(-\alpha^\vee)\right) \frac{\widetilde{\theta}(x) - \widetilde{\theta}(s(x))}{1-\widetilde{\theta}(-2\alpha^\vee)} & \text{ if } \alpha(X) = 2\Z
   \end{dcases} \end{equation}
where $\alpha^\vee \in X$ denotes the dual coroot of $\alpha$ and $s' \in S$ is any simple reflection conjugate to the affine reflection $s_{\alpha,1} = \tau^{-\alpha^\vee} s_\alpha \tau^{\alpha^\vee}$ in $W$. The homomorphism $\widetilde{\theta}$ is defined as
   \[ \widetilde{\theta}(x) = \widetilde{T}_y \widetilde{T}_z^{-1} \]
   where $y,z \in X$ are any two elements lying in the dominant cone that satisfy $x = y-z$, and the $\widetilde{T}_w$ are normalized versions of the $T_w$ determined by $\widetilde{T}_s = a_s^{-1/2} T_s$ and the analogues of the braid relations \eqref{eq:iwahori-matsumoto_relations_i}.
      
   In \cite{VigProP}, Vignéras established analogues of the Bernstein relations for pro-$p$-Iwahori Hecke algebras; her proof closely followed Lusztig's intricate computational proof \cite{Lusztig}. Shortly after her article appeared, Görtz published a simple geometric proof \cite{Goertz} of the Bernstein relations for affine Hecke algebras. When we learnt of his article, we hoped that his geometric approach might work for pro-$p$-Iwahori Hecke algebras too. His proof was based on Ram's theory of alcove walk algebras \cite{Ram}. The main input of that theory to Görtz' proof is a geometric interpretation of the elements $\widetilde{\theta}(x)$ based on identifying formal expressions in the Hecke algebra like\footnote{In order to avoid some minor subtleties arising from the group $\Omega$ of elements of length zero, we assume that $\Omega = 1$, i.e. that the extended affine Weyl group $W$ coincides with the affine Weyl group.}
   \[ \widetilde{T}_{s_1}^{\varepsilon_1} \widetilde{T}_{s_2}^{\varepsilon_2} \ldots \widetilde{T}_{s_r}^{\varepsilon_r}\quad (\varepsilon_i \in \{ \pm 1 \}) \]
   with `coloured' or `signed' galleries (i.e. `unfolded alcove walks' in the terminology of \cite{Ram}) in the Coxeter complex starting at the base alcove $C$, the above expression corresponding to the gallery
   \[ \Gamma = (C_0 = C,\ C_1 = s_1 C,\ C_2 = s_1 s_2 C,\ \dots ,\ C_r = s_1 \dots s_r C) \]
   from $C$ to $wC$, where $w = s_1 \ldots s_r$ and the colour of the arrow from $C_{i-1}$ to $C_i$ is determined by the sign $\varepsilon_i$, as in \cref{fig:coloured-gallery}.
   \begin{figure}[t]
      \centering
      \psfrag{C0}[Bc][bl][1.5]{$C_0$}
      \psfrag{Cr}[Bc][bl][1.5]{$C_5$}
      \psfrag{g1}[Bc][bl]{\mybox{$T_{s_1}$}}
      \psfrag{g2}[Bc][bl]{\mybox{$T_{s_2}$}}
      \psfrag{g3}[Bc][bl]{\mybox{$T_{s_1}$}}
      \psfrag{g4}[Bc][bl]{\mybox{$T_{s_2}^{-1}$}}
      \psfrag{g5}[Bc][bl]{\mybox{$T_{s_1}^{-1}$}}
      \includegraphics[width=0.5\linewidth]{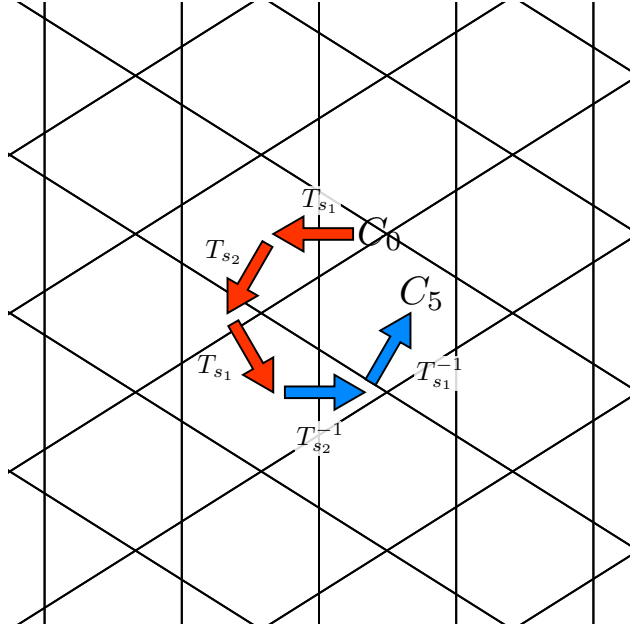}
      \caption{The coloured gallery $\Gamma = (C_0,\dots,C_5)$ corresponding to the expression $T_{s_1}T_{s_2}T_{s_1}T_{s_2}^{-1}T_{s_1}^{-1}$ in the affine Coxeter complex of type $\widetilde{A}_2$.}
      \label{fig:coloured-gallery}
   \end{figure}
   Expanding $\widetilde{T}_y$ and $\widetilde{T}_z$ in the definition of $\widetilde{\theta}(x) = \widetilde{T}_y \widetilde{T}_z^{-1}$ into a product of generators $\widetilde{T}_s$, it is easy to see that $\widetilde{\theta}(x)$ is given by \textit{some} coloured gallery in this way.
   
   The key point however is this: fixing an `orientation' (see \cref{def:or}), there is a canonical way to colour every ordinary (uncoloured) gallery starting at $C$ in such a way that \textit{any} two such coloured galleries having the \textit{same endpoint} define the same element in the Hecke algebra. For the `spherical orientation' attached to the dominant Weyl chamber (see \cref{def:spherical_orientation}), this is the content of the following theorem, quoted verbatim from \cite{Goertz} ($W$ corresponding to `$W_a$', and $C$ to `$\mathbf{a}$' in his notation):
   \begin{theorem}[{\cite[Theorem 1.1.1]{Goertz}}]\label{thm:ram} Let $w \in W_a$. For an expression
   \begin{equation}\label{eq:goertz}\tag{1.1.1}
      w = s_{i_1}\ldots s_{i_n}
   \end{equation}
   of $w$ as a product in the generators (which does not have to be reduced), consider the element
   \[ \Psi(w) := T_{s_{i_1}}^{\varepsilon_1} \ldots T_{s_{i_n}}^{\varepsilon_n} \]
   in the affine Hecke algebra, where the $\varepsilon_\nu \in \{\pm 1\}$ are determined as follows. Let $\mathbf{b}$ be an alcove far out in the anti-dominant chamber (``far out'' depends on $w$, and the result will then be independent of $\mathbf{b}$, see section 2.3 for a precise definition). For each $\nu$, consider the alcove $\mathbf{c}_\nu := s_{i_1}\ldots s_{i_{\nu - 1}}\mathbf{a}$, and denote by $H_\nu$ the affine root hyperplane containing its face of type $i_\nu$. We set
   \[ \varepsilon_\nu := \begin{cases}
      1 &\text{ if $\mathbf{c}_\nu$ is on the same side of $H_\nu$ as $\mathbf{b}$} \\
      -1 & \text{ otherwise}
   \end{cases} \]
   Then the element $\Psi(w)$ is independent of the choice of the expression (1.1.1).
\end{theorem}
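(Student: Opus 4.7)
The plan is to reduce, using the fact that any two expressions of $w$ as a product of simple generators are related by a sequence of \emph{braid moves} $s_i s_j s_i \cdots \leftrightarrow s_j s_i s_j \cdots$ (each side of length $m_{ij}$) and \emph{nil-moves} $ss \leftrightarrow \varnothing$, to verifying that $\Psi(w)$ is invariant under each of these two elementary moves.

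For nil-moves the argument is immediate. If $s_{i_\nu} = s_{i_{\nu+1}} = s$, then $\mathbf{c}_{\nu+2} = \mathbf{c}_\nu$ and $H_{\nu+1} = H_\nu$---both are the wall carrying the $s$-face of $\mathbf{c}_\nu$---while $\mathbf{c}_\nu$ and $\mathbf{c}_{\nu+1}$ lie on opposite sides of this common wall. Hence $\varepsilon_\nu = -\varepsilon_{\nu+1}$ and $T_s^{\varepsilon_\nu} T_s^{\varepsilon_{\nu+1}} = 1$; the subsequent alcoves $\mathbf{c}_\mu$ for $\mu > \nu+1$ are unaffected by the deletion, so all remaining signs are preserved and $\Psi(w)$ is unchanged.

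The braid moves form the main obstacle. The problem localizes to the rank-$2$ subcomplex spanned by $\langle s_i, s_j\rangle$ based at the initial alcove of the braid segment; the two words then describe the two minimal galleries between this initial alcove and its image under $w_0'$, the longest element of $\langle s_i, s_j\rangle$. Both galleries cross exactly the same $m_{ij}$ walls, each traversed from the initial-alcove side to the opposite side; consequently, the sign $\varepsilon_H$ attached to a wall $H$ depends only on whether the initial alcove and $\mathbf{b}$ lie on the same side of $H$, and is \emph{independent} of the gallery. What remains is a purely algebraic identity in the rank-$2$ Hecke algebra equating two products of $m_{ij}$ signed generators whose signs are assigned to positions via the (position-to-wall) permutation of the corresponding gallery. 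To establish this identity I would pass to the normalized generators $\widetilde{T}_s = a_s^{-1/2} T_s$, which satisfy $\widetilde{T}_s^{-1} = \widetilde{T}_s - c_s$ with $c_s = b_s/a_s^{1/2}$, and proceed by induction on $m_{ij}$: the signed identity for a given pattern follows from the classical braid relation \eqref{eq:iwahori-matsumoto_relations_i} together with the quadratic relations, crucially invoking the conjugacy constraint $c_s = c_t$ (whenever $s$ and $t$ are conjugate in $W_a$) to cancel the residual terms produced when $\widetilde{T}_s^{-1}$ is expanded. A more conceptual alternative is to invoke Ram's alcove walk algebra, in which the identity is interpreted geometrically as an explicit bijection between positively-folded refinements of the two galleries.

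Finally, the freedom in choosing $\mathbf{b}$ ``far out in the antidominant chamber'' presents no real ambiguity: since only the finitely many walls $H_1,\ldots,H_n$ enter the construction of $\Psi(w)$, any two antidominant alcoves lying on the same side of all of them produce identical signs.
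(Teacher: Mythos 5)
Your reduction to Tits moves is the right framework, and the nil-move case is handled correctly. The braid-move case, however, has a genuine gap. You correctly observe that the sign attached to a wall $H$ in the rank-$2$ residue depends only on $H$ itself (namely on whether $\mathbf{c}_\nu$ and $\mathbf{b}$ lie on the same side of $H$), independently of the gallery; but you then treat the resulting algebraic identity as if it should hold for an arbitrary assignment of signs to walls, and try to force it through by expanding $\widetilde{T}_s^{-1}$ and cancelling via the quadratic relations. It does \emph{not} hold for an arbitrary sign assignment. Take $m_{ij}=3$ with the sign pattern $(+,-,+)$ along the gallery $s,t,s$: the would-be identity reads $T_s T_t^{-1} T_s = T_t T_s^{-1} T_t$. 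Substituting $T_u^{-1} = a^{-1}(T_u-b)$ (all parameters agree here, since $s$ and $t$ are conjugate) and using $T_sT_tT_s=T_tT_sT_t$, one finds that the two sides differ by $-a^{-1}b^2(T_s-T_t)\neq 0$. The conjugacy constraint $c_s=c_t$, which you invoke to kill residual terms, does nothing here. So the induction you sketch cannot go through as stated.

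The missing ingredient is the geometric fact that the paper isolates as the orientation axiom \axiom{OR1}/\axiom{OR2}: because $\mathbf{b}$ lies to one side of each wall, and the walls of the rank-$2$ residue are crossed in rotational order along a minimal gallery, the walls that \emph{do} separate $\mathbf{c}_\nu$ from $\mathbf{b}$ form a contiguous block; the sign sequence along either gallery is therefore always $(+^k,-^{m-k})$ or $(-^{m-k},+^k)$, never an interleaved pattern such as $(+,-,+)$. With that constraint in hand the algebraic step collapses and no quadratic relations are needed at all: move each inverted generator to the opposite side of the equation, and both sides become products of $m$ positive generators along reduced words, which equal a single $T_w$ and agree by the braid relation. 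That is precisely the mechanism in the proof of the analogous \cref{thm:ex_theta_braid}, where condition (iii) of \cref{lem:cocycle} is verified entirely inside the braid group $\mathfrak{A}(W^{(1)})$, \emph{before} quadratic relations are imposed; and Ram's alcove-walk approach, which you cite as an alternative, depends on the same contiguity. As written, your proposal omits the one fact that makes the braid-move case true.
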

 
The theorem also holds true with the $T_s$ replaced by the $\widetilde{T}_s$, and the galleries corresponding to the expressions
\[ \widetilde{T}_{s_1}\dots\widetilde{T}_{s_n}\widetilde{T}_{t_m}^{-1}\dots\widetilde{T}_{t_1}^{-1} \]
arising from expanding $\widetilde{T}_y$ and $\widetilde{T}_z$ in $\widetilde{\theta}(x) = \widetilde{T}_y \widetilde{T}_z^{-1}$ into \textit{reduced} products (i.e. $\ell(y) = n$, $\ell(z) = m$)
\[ \widetilde{T}_y = \widetilde{T}_{s_1} \dots \widetilde{T}_{s_n} \quad \text{ and }\quad \widetilde{T}_z = \widetilde{T}_{t_1}\ldots \widetilde{T}_{t_m} \]
of the generators $\widetilde{T}_s$ are easily seen to be coloured according to the method given in the theorem. Therefore, $\widetilde{\theta}(x)$ is given by \textit{any} canonically coloured gallery from $C$ to $x+C$, giving the Bernstein homomorphism $\widetilde{\theta}$ a very natural and intuitive interpretation in terms of alcove walks. This geometric interpretation fueled Görtz geometric proof of the Bernstein relations, reducing it essentially to a telescopic sum expansion of the left hand side, with each summand posessing a geometric interpretation as an alcove walk. Before we explain this in more detail, let us note some further consequences of the above theorem. These consequences played no explicit role in Görtz' original proof, but they allow us to recast it in a way that makes it adaptable to the pro-$p$ case. In order to simplify the exposition, we will discuss everything in the affine case first, making only some brief indications on how the pro-$p$ case differs, and then later discuss the pro-$p$ case more fully.

The first thing to note is that the above theorem suggests to extend the Bernstein homomorphism to a map defined on all of $W$. Further, we will see in a moment that it is useful to explicitly denote the dependence on the orientation. Let us therefore write $\widetilde{\theta}_{\mathfrak{o}}(w)$ for the element defined by a gallery from $C$ to $w(C)$ that is coloured according to the orientation $\mathfrak{o}$, and let $\mathfrak{o}$ denote the spherical orientation attached to the dominant Weyl chamber (hence $\widetilde{\theta} = \widetilde{\theta}_{\mathfrak{o}}$) in the following. The group $W$ naturally acts on orientations from the right such that the signs assigned to a gallery $\Gamma$ by $\mathfrak{o}\bullet{}w$ are the ones assigned to $w(\Gamma)$ by $\mathfrak{o}$. Granting the theorem, the definitions then immediately imply the following \textit{cocycle rule} (called \textit{product formula} in \cite{Schmidt}):
\begin{equation}\label{eq:cocycle-rule} \widetilde{\theta}_{\mathfrak{o}}(ww') = \widetilde{\theta}_{\mathfrak{o}}(w) \widetilde{\theta}_{\mathfrak{o}\bullet{}w}(w') \end{equation}
   The cocycle rule recovers the homomorphism property of the `Bernstein homomorphism' because the subgroup $X$ acts trivial on $\mathfrak{o}$ (indeed on all spherical orientations). Moreover, the cocycle rule implies the formula
   \[ \widetilde{\theta}_{\mathfrak{o}}(w)^{-1} = \widetilde{\theta}_{\mathfrak{o}\bullet{}w}(w^{-1}) \]
   Because the spherical orientation $\mathfrak{o}$ of the dominant Weyl chamber has the property that $\widetilde{\theta}_{\mathfrak{o}}(s) = \widetilde{T}_s^{-1}$ for all simple reflections $s \in W_0$, it follows from the cocycle rule that one can rewrite the second summand on the left hand side of \eqref{eq:classical_bernstein} as
\[ T_s \widetilde{\theta}(s(x)) = a_s^{1/2} \widetilde{\theta}_{\mathfrak{o}\bullet{}s}(s) \widetilde{\theta}_{\mathfrak{o}}(s(x)) = a_s^{1/2} \widetilde{\theta}_{\mathfrak{o}\bullet{}s}(s \tau^{s(x)}) = a_s^{1/2} \widetilde{\theta}_{\mathfrak{o}\bullet{}s}(\tau^x s) \]
The first summand can't be rewritten in this way, but since
\[ \widetilde{\theta}_{\mathfrak{o}\bullet{}s}(x) T_s = a_s^{1/2} \widetilde{\theta}_{\mathfrak{o}\bullet{}s}(x) \widetilde{\theta}_{\mathfrak{o}\bullet{}s}(s) = a_s^{1/2} \widetilde{\theta}_{\mathfrak{o}\bullet{}s}(\tau^x s) \]
it follows that
\[ \widetilde{\theta}(x) T_s - T_s \widetilde{\theta}(s(x)) = a_s^{1/2} \left(\widetilde{\theta}_{\mathfrak{o}}(x) - \widetilde{\theta}_{\mathfrak{o}\bullet{}s}(x)\right) \widetilde{\theta}_{\mathfrak{o}\bullet{}s}(s) \]
The proof of the Bernstein relations therefore comes down to computing the difference $\widetilde{\theta}_{\mathfrak{o}}(x) - \widetilde{\theta}_{\mathfrak{o}\bullet{}s}(x)$. To carry out this computation, one \textit{chooses} an explicit expression $\tau^x = s_1 \ldots s_r$ (not necessarily reduced) and writes this difference as a telescopic sum
\[ \widetilde{\theta}_{\mathfrak{o}}(x) - \widetilde{\theta}_{\mathfrak{o}\bullet{}s}(x) = \widetilde{T}_{s_1}^{\varepsilon_1} \ldots \widetilde{T}_{s_r}^{\varepsilon_r} - \widetilde{T}_{s_1}^{\varepsilon'_1} \ldots \widetilde{T}_{s_r}^{\varepsilon'_r} = \sum_i \widetilde{T}_{s_1}^{\varepsilon_1} \ldots \widetilde{T}_{s_{i-1}}^{\varepsilon_i} \left(\widetilde{T}_{s_i}^{\varepsilon_i} - \widetilde{T}_{s_i}^{\varepsilon'_i}\right) \widetilde{T}_{s_{i+1}}^{\varepsilon'_{i+1}} \ldots \widetilde{T}_{s_r}^{\varepsilon'_r} \]
Since the sum needs only to be taken over the indices $i$ where $\varepsilon_i \neq \varepsilon'_i$, one can use the quadratic relations in the Hecke algebra written in the symmetric form $\widetilde{T}_s - \widetilde{T}_s^{-1} = a_s^{-1/2} b_s$ to simplify each summand:
\[ \widetilde{T}_{s_1}^{\varepsilon_1} \ldots \widetilde{T}_{s_{i-1}}^{\varepsilon_i} \left(\widetilde{T}_{s_i}^{\varepsilon_i} - \widetilde{T}_{s_i}^{\varepsilon'_i}\right) \widetilde{T}_{s_{i+1}}^{\varepsilon'_{i+1}} \ldots \widetilde{T}_{s_r}^{\varepsilon'_r} = \varepsilon_i a_{s_i}^{-1/2} b_{s_i} \widetilde{T}_{s_1}^{\varepsilon_1} \ldots \widetilde{T}_{s_{i-1}}^{\varepsilon_i} \widetilde{T}_{s_{i+1}}^{\varepsilon'_{i+1}} \ldots \widetilde{T}_{s_r}^{\varepsilon'_r} \]
The crucial point of the proof now is to recognize each summand as something defined a priori, without reference to the particular chosen expression $\tau^x = s_1 \ldots s_r$. This step is very delicate in the pro-$p$ case, which is why we were unable to follow Görtz' proof and instead had to rely on Lusztig's computational approach in \cite{Schmidt}. Luckily, our proof of the Bernstein relations was not only unsatisfying but also plain \textit{wrong}\footnote{We are indebted to M. F. Vignéras for pointing this out.}. When we tried to find a new proof in the course of writing this article, it became apparent that a purely geometric proof of the Bernstein relations for pro-$p$ Hecke algebras could only exist if a miracle happened: this miracle is \cref{propdef:bernstein}.

For affine Hecke algebras no miracle beyond Görtz' theorem is needed to see that
\begin{align}\label{eq:recognize_element} \widetilde{T}_{s_1}^{\varepsilon_1} \ldots \widetilde{T}_{s_{i-1}}^{\varepsilon_{i-1}} \widetilde{T}_{s_{i+1}}^{\varepsilon'_{i+1}} \ldots \widetilde{T}_{s_r}^{\varepsilon'_r} & = \widetilde{\theta}_{\mathfrak{o}}(s_1 \ldots \widehat{s_i} \ldots s_r) \\
 \notag  & = \widetilde{\theta}_{\mathfrak{o}}(s_{H_i} \tau^x) = \widetilde{\theta}_{\mathfrak{o}}(s_{H_i}) \widetilde{\theta}_{\mathfrak{o}\bullet{}s}(x) \end{align}
 writing $s_{H_i} = (s_1 \ldots s_{i-1}) s_i (s_1 \ldots s_{i-1})^{-1}$ for the reflection at the $i$-th (affine) hyperplane $H_i$ crossed by the gallery $\Gamma = (C, s_1 C, s_1 s_2 C, \ldots{}, x + C)$, and denoting with $\widehat{s_i}$ the omission of the element $s_i$ from the sequence. One therefore arrives at the formula
\begin{equation}\label{eq:alternative_formulation_of_classical_bernstein} \widetilde{\theta}_{\mathfrak{o}}(x) - \widetilde{\theta}_{\mathfrak{o}\bullet{}s}(x) = \left(\sum_i \varepsilon_i a_{s_i}^{-1/2} b_{s_i} \widetilde{\theta}_{\mathfrak{o}}(s_{H_i})\right) \widetilde{\theta}_{\mathfrak{o}\bullet{}s}(x) \end{equation}
   If the expression $\tau^x = s_1 \ldots s_r$ is taken to be reduced, the $H_i$ appearing in the above sum are precisely the hyperplanes separating $C$ and $x+C$ at which the orientations $\mathfrak{o}$ and $\mathfrak{o}\bullet{}s$ disagree (i.e. those parallel to the hyperplane $H_\alpha$ defined by the root $\alpha$). Since $a_{s_i}$, $b_{s_i}$, and the sign $\varepsilon_i$ only depend on the hyperplane $H_i$, the whole sum is therefore purely geometric and independent of the chosen expression. The classical Bernstein relations \eqref{eq:classical_bernstein} are now easily derived from \eqref{eq:alternative_formulation_of_classical_bernstein} using the identity
\[ \widetilde{\theta}_{\mathfrak{o}}(s_H) \widetilde{\theta}_{\mathfrak{o}\bullet{}s}(x) \widetilde{\theta}_{\mathfrak{o}\bullet{}s}(s) = \widetilde{\theta}_{\mathfrak{o}}(s_H s) \widetilde{\theta}_{\mathfrak{o}}(s(x)) \]
and by recognizing $\sum_i \widetilde{\theta}_{\mathfrak{o}}(s_{H_i} s)$ as a geometric sum.

The proof sketched above is a reformulation of the proof of Görtz: his proof was direct and didn't involve formula \eqref{eq:alternative_formulation_of_classical_bernstein}. Although a general notion of orientation was defined there, the discussion in \cite{Goertz} was restricted to the spherical orientation $\mathfrak{o}$ attached to the dominant Weyl chamber and its associated Bernstein map $\widetilde{\theta}_{\mathfrak{o}}$; in particular, neither the cocycle rule nor \eqref{eq:alternative_formulation_of_classical_bernstein} appeared.

As we've already seen, the cocycle rule is important because it simplifies many computations. It also brings the connection to the work of Cherednik (see below), and forms the proper basis for the definition of the Bernstein maps in the case of extended or pro-$p$ Coxeter groups. Its discovery---a lucky byproduct of pedantic notation---was the origin of \cite{Schmidt}. Its impetus led us to consider $\{\widetilde{\theta}_{\mathfrak{o}}(w)\}_{w \in W}$ and its integral version $\{ \widehat{\theta}_{\mathfrak{o}}(w) \}_{w \in W}$ instead of the traditional Bernstein-Zelevinsky basis $\{ \widetilde{\theta}_{\mathfrak{o}}(x) T_w \}_{x \in X,\ w \in W_0}$ and its integral analogue $\{ E_w \}_{w \in W}$ defined in \cite{VigGen}. This resulted in the following integral analogue of the cocycle rule (see \cref{cor:hat mult rule}), generalizing the formula for the product $E_{w_0 x} E_{x'}$ given in \cite{VigGen}:
\begin{equation}\label{eq:integral_cocycle_rule} \widehat{\theta}_{\mathfrak{o}}(w)\widehat{\theta}_{\mathfrak{o}\bullet{}w}(w') = \overline{\bbf{X}}(w,w') \widehat{\theta}_{\mathfrak{o}}(ww') \end{equation}
   The factor $\overline{\bbf{X}}(w,w')$ that appears in this formula played an important role in establishing the integral theory, both in \cite{VigGen} and in \cite{Schmidt}. In \cite{VigGen}, it appeared\footnote{In a slightly disguised form; see \cref{rmk:gen_len_product_rule}(ii) for details.} in the crucial `lemme fondamental' (\cite{VigGen}[1.2]), which was not explicitly mentioned in \cite{Schmidt} but which we recover here in \cref{lem:gen_len_product_rule}. In \cite{Schmidt}, the factor $\overline{\bbf{X}}(w,w')$ entered through its relation to another map $\gamma$ (\cite{Schmidt}[Lemma 3.3.26]) that was used to relate the integral Bernstein map $\widehat{\theta}_{\mathfrak{o}}$ to its non-integral counterpart $\theta_{\mathfrak{o}}$ but was given no further interpretation. Here, we show that the `lemme fondamentale' and \cite{Schmidt}[Lemma 3.3.26] can be seen as exhibiting $\overline{\bbf{X}}$ as a $2$-coboundary in two different ways (see \cref{rmk:gamma_product_rule_as_coboundary_equation} for details), the latter exhibiting $\overline{\bbf{X}}$ as the coboundary of $\mathfrak{\gamma}$.
   
   Another interesting consequence of the cocycle rule---further emphasizing the importance to consider \textit{all} spherical orientations---was the realization that the basis of the center of the Hecke algebra provided by the orbit sums
   \[ z^{\mathfrak{o}}_{\gamma} = \sum_{x \in \gamma} \widetilde{\theta}_{\mathfrak{o}}(x),\quad \gamma \in W_0\backslash X \]
   under the equality
   \[ Z(H_q(W,S)) = \mathcal{A}_{\mathfrak{o}}^{W_0} \]
   is in fact \textit{canonical}, i.e. \textit{independent} of the choice of $\mathfrak{o}$. In fact, the independence of the element $z^{\mathfrak{o}}_{\gamma}$ from $\mathfrak{o}$ turns out to be equivalent to the fact that it lies in the center (see the proof of \cref{prop:invariants_lie_in_center}).

   Unfortunately, in \cite{Schmidt} we couldn't realize this geometric approach to pro-$p$ Hecke algebras to its full potential as we were unable to tranpose Görtz' proof into the context of pro-$p$ Hecke algebras. In addition, the proof of the Bernstein relations we gave was flawed. And so, although we achieved our goal of developing an abstract theory of pro-$p$ Hecke algebras and of re-deriving the integral Bernstein-Zelevinsky theory of \cite{VigProP} in this context, \cite{Schmidt} remained incomplete in a technical and a moral sense.

   Fortunately, these issues are resolved in this article. We give a new and purely geometric proof of the Bernstein relations for pro-$p$ Hecke algebras, based on formula \eqref{eq:alternative_formulation_of_classical_bernstein} derived above. First of all, there is no need to restrict to elements $x \in X$ in this formula: it remains true for any element of $W$. Second, its proof never explicitly used that $W$ is an affine Weyl group. This fact only entered indirectly through the properties of the orientations $\mathfrak{o}$ and $\mathfrak{o}\bullet{}s$ used in deducing \eqref{eq:recognize_element}. By abstracting these properties (and using the `miracle proposition' to extend to the pro-$p$ case), we can thus prove the following generalization of \eqref{eq:alternative_formulation_of_classical_bernstein}, holding for any generic pro-$p$ Hecke algebra whose parameters $a_s$ are invertible and squares (see \cref{thm:bernstein_relation})
\begin{equation}\label{eq:generalized_bernstein_relations} \widetilde{\theta}_\mathfrak{o}(w)-\widetilde{\theta}_{\mathfrak{o}'}(w) = \left(\sum_H \mathfrak{o}(1,H) \Xi_{\mathfrak{o}'}(H)\right) \widetilde{\theta}_\mathfrak{o}(w) \end{equation}
   Here, $\mathfrak{o}$, $\mathfrak{o}'$ denotes a pair of \textit{adjacent} orientations (see \cref{def:adjacency_of_or}). The only non-trivial examples of such pairs that we know of are given by $\mathfrak{o}, \mathfrak{o}\bullet{}s$ (and their $W$-translates) when $W$ is an affine Weyl group. Still, even in this case \eqref{eq:generalized_bernstein_relations} gives new relations in the Hecke algebra. Moreover, phrasing the Bernstein relations in this abstract generality makes the proof cleaner and more transparent, especially in the pro-$p$ case.

   We will now discuss the contents of this article in detail.
   After recalling the notion of Coxeter groups and some common (and maybe not so common) related geometric terminology, we introduce in \cref{sub:Basic definitions and some geometric terminology} two (successive) generalizations of the notion of Coxeter groups, \textit{extended} and \textit{pro-$p$} Coxeter groups, designed to capture the essential properties of the groups appearing in the Bruhat decomposition for Iwahori and pro-$p$-Iwahori groups.

   In \cref{sub:1-Cocycles of pro-$p$ Coxeter groups} we give a classification of $1$-cocycles of pro-$p$ Coxeter groups that is later used to construct the \textit{Bernstein maps}. \Cref{sub:Construction of generic pro-$p$ Hecke algebras} that follows is fundamental: we define generic pro-$p$ Hecke algebras and show that they behave like generic Hecke algebras (existence of a canonical linear basis, Iwahori-Matsumoto relations); we also give a first proper example of generic pro-$p$ Hecke algebras, the \textit{Yokonuma-Hecke} algebras. The Iwahori-Matsumoto presentation of generic pro-$p$ Hecke algebras is proved in the following \cref{sub:Presentations of generic pro-$p$ Hecke algebras via (generalized) braid groups}, phrased in the language of braid groups.
   
   \Cref{sub:Orientations of Coxeter groups} introduces another fundamental concept, the notion of \textit{orientations} of Coxeter groups, which abstracts and generalizes similar notions considered earlier by \cite{Goertz} and \cite{Ram}, and besides the classification of $1$-cocycles is the second ingredient in the construction of the Bernstein maps. The set $\mathcal{O}$ of all orientations of a Coxeter group $W$ is shown to be endowed with the structure of a compact Hausdorff topological space acted upon by $W$, and the group $W$ is embedded as a subset in $\mathcal{O}$ in two different ways, associating to an element $w \in W$ the orientation $\mathfrak{o}_w$ \textit{towards} $w$ and the orientation $\mathfrak{o}_w^{\op{op}}$ \textit{away} from $w$, both embeddings being exchanged under a canonical involution $\mathfrak{o} \mapsto \mathfrak{o}^{\op{op}}$ on $\mathcal{O}$. It is shown that the images of these embeddings give \textit{all} orientations when $W$ is finite, but that there must exist other orientations when $W = (W,S)$ is infinite and $\# S < \infty$, the \textit{boundary orientations}. The notion of orientation is then transferred in a natural way onto extended and pro-$p$ Coxeter groups, such that the set of orientations of an extended or pro-$p$ Coxeter group is in canonical bijection with the set of orientations of its underlying Coxeter group.
  
   The third principal protagonist, the Bernstein maps, is introduced in the following \cref{sub:Bernstein maps}. The existence theorem, \cref{thm:ex_theta_braid}, proven there should be seen as the equivalent of Görtz' theorem in our context. \Cref{sub:A $2$-coboundary appearing in Coxeter geometry} discusses a certain $2$-cocycle that is canonically associated to every Coxeter group and plays a prominent in the theory of Hecke algebras. We show that it can be written as $2$-coboundary in \textit{two different} ways, which is used to define integral and normalized Bernstein maps in \cref{sub:Integral and normalized Bernstein maps}. The intermediate \cref{sub:A characterization of pro-$p$ Coxeter groups in terms of X} contains an optional result that is not used elsewhere: we show that the $2$-cocycle $\bbf{X}$ can be used to classify pro-$p$ Coxeter groups, recovering a result of Tits \cite[3.4]{TitsTores}.

   Finally, in \cref{sub:Bernstein relations}, we prove one of the two main results of this article, the proof of the generalized Bernstein relations \eqref{eq:generalized_bernstein_relations}.

   Whereas the first part dealt with general generic pro-$p$ Hecke algebras, the second part of this article specializes to those generic pro-$p$ Hecke algebras for which a meaningful analogue of the Bernstein-Zelevinsky theory can be developed. These are the \textit{affine pro-$p$ Hecke algebras}, the generic pro-$p$ Hecke algebras whose underlying extended Coxeter group is equipped with the structure of an \textit{affine extened Coxeter group}, a notion that is introduced in \cref{sub:Affine extended Coxeter groups and affine pro-$p$ Hecke algebras} and which generalizes the class of extended affine Weyl groups to allow \textit{all} groups that appear in the Bruhat decomposition of Iwahori groups of (possibly non-split) connected reductive groups over local fields. This makes it necessary to prove some well-known facts from the theory of root data in our more general context.

   In \cref{sub:Main examples of affine pro-$p$ Hecke algebras} we show that our theory is non-empty by introducing three examples of affine pro-$p$ Hecke algebras: the \textit{affine Hecke algebras} considered in the classical Bernstein-Zelevinsky theory \cite{Lusztig}, the \text{pro-$p$ Iwahori Hecke algebras} considered in the $p$-adic and mod-$p$ Langlands programme, and the \textit{affine Yokonuma-Hecke algebras} from the theory of knot invariants.
   These examples have already appeared in the literature before (see \cite{ChlouSe}, \cite{VigProI}), and for the heavy-duty computations needed for the verification of the axioms in the case of pro-$p$-Iwahori Hecke algebra we refer to \cite{VigProI}; however, this section provides some details not found in either source, including an effective version of the existence of the lifts $(n_s)_{s \in S}$, which the reader may find helpful.
   
   \Cref{sub:Some finiteness properties of affine extended Coxeter groups} is devoted to the proof of some finiteness properties of affine extended Coxeter groups, which are the key to prove corresponding finiteness results for affine pro-$p$ Hecke algebras. These results were basically already proven in \cite[4.2.5]{Schmidt}, but the proofs were a bit ad hoc. Here, we give a more unified treatment by relating these finitness properties to the (known) fact that the weak Bruhat order is a \textit{well partial order}.

   In \cref{sub:Spherical orientations} we introduce \textit{spherical orientations} of affine extended Coxeter groups $W$ and prove that they are limits of nets of chamber orientations $\mathfrak{o}_w$, which makes them concrete examples of boundary orientations and gives a precise sense to the notion in Görtz' theorem, of the orientation `attached to an alcove infinitely deep in the anti-dominant chamber'. The most important property of the spherical orientations is that the subgroup $X \leq W$ of `translations' acts trivially on them, as the cocycle rule implies that the Bernstein map $\widetilde{\theta}_{\mathfrak{o}}$ induced an embedding of the group algebra of the stabilizer of $\mathfrak{o}$ (in $W^{(1)}$) embeds into the Hecke algebra. Thus we introduce in the following \cref{sub:Some (almost) commutative subalgebras} subalgebras $\mathcal{A}^{(1)}_{\mathfrak{o}} \subseteq \mathcal{H}^{(1)}$ for every spherical orientation, which are not far from being commutative (and are commutative for affine Yokonuma-Hecke algebras or pro-$p$-Iwahori Hecke algebras of \textit{split} groups). The main result of this section is the computation of the centralizer of these subalgebras in the Hecke algebra; in particular, we prove that the centralizer of $\mathcal{A}^{(1)}_\mathfrak{o}$ is a subalgebra of $\mathcal{A}^{(1)}_{\mathfrak{o}}$, which is an important step towards the computation of the center.
   
   In \cref{sub:The center of affine pro-$p$ Hecke algebras}, we use the Bernstein relations to show that the invariants of $\mathcal{A}^{(1)}_{\mathfrak{o}}$ under the natural action of $W^{(1)}$ are contained in the center of $\mathcal{H}^{(1)}$. Afterwards, we verify using explicit computations that this exhausts the center. The final \cref{sub:The structure of affine pro-$p$ Hecke algebras} uses the results of the previous sections to determine the structure of $\mathcal{H}^{(1)}$ in terms of its center, under very mild assumptions on the coefficient ring and the group $W^{(1)}$, verified in all the examples we consider.

   We conclude the introduction to this article with a discussion of the work of Ivan Cherednik on Hecke algebras and its connection to Bernstein maps. This connection arises through the cocycle rule.
   
   Motivated by problems in quantum physics, Cherednik has constructed various $1$-cocycles of Coxeter groups with values in (localizations of) affine Hecke algebras and their degenerate (i.e. graded) versions, viewing these cocycles as generalized `R-matrices'. By definition, \textit{R-matrices} are solutions of the \textit{Yang-Baxter equation}. This remarkable equation---connecting low-dimensional topology, representation theory, category theory and physics---was discovered independently by C. N. Yang \cite{Yang} and R. J. Baxter \cite{Baxter}, who worked on finding \textit{exact solutions} of certain physical models from quantum and statistical mechanics respectively. Its simplest and most recognizable form is
      \begin{equation}\label{eq:yang-baxter} R_{12} R_{13} R_{23} = R_{23} R_{13} R_{12} \end{equation}
      with the $R_{ij}$ being elements of some monoid (usually an algebra, although the case where the $R_{ij}$ are endomorphisms of a set is of considerable interest too; see `set-theoretical solutions of the Yang-Baxter-equation'). Assuming the existence of an action of the symmetric group $S_3$ on the monoid in which the $R_{ij}$ take values, and assuming `$W$-invariance' of the R-matrix, i.e.
      \[ {}^\sigma R_{ij} = R_{\sigma(i)\sigma(j)} \]
      for all $\sigma$ and $i,j$ for which both sides are defined, the Yang-Baxter equation \eqref{eq:yang-baxter} can be rewritten equivalently as
      \begin{equation}\label{eq:cocycle-consistency} R_s {}^sR_t {}^{st}R_s = R_t {}^tR_s {}^{ts}R_t \end{equation}
      where $s = (12)$, $t = (23)$ and indices $ij$ are identified with transpositions $(ij)$. This equation in turn is nothing but the self-consistency condition necessary for the existence of a $1$-cocycle $\sigma \mapsto R_\sigma$ that results from the braid relation $sts = tst$ in the symmetric group. This relation is almost sufficient for the existence of such a cocycle; necessary and sufficient is the above relation together with the `unitarity condition'
      \[ R_s {}^s R_s = R_t {}^t R_t = 1 \]
      resulting from $s^2 = t^2 = 1$ (cf. \cite[Prop. 4]{CherednikHalfLine}). Thus, \textit{unitary} \textit{invariant} R-matrices are identified with $1$-cocycles of the group $S_3$. Cherednik used this observation to define a general notion of `R-matrices' attached to root systems as cocycles of Weyl groups \cite[Sect. 2]{CherednikQKZ}, and has constructed examples given by the Demazure-Lusztig operators \cite[Prop. 3.5]{CherednikQKZ} and the standard intertwining operators \cite[Prop. 3.8]{CherednikQKZ} (cf. \cite[Prop. 1.2]{CherednikUnification}) familiar from the representation theory of reductive groups. The latter are directly connected to Bernstein maps, realizing them as a limit. In order to make this precise, let us recall the definition of the standard intertwiners. In the following, all algebras will be over $\C$.
      
      Given a root datum $(X,\Phi, X^\vee, \Phi^\vee)$ with basis $\Delta \subseteq \Phi$ and extended affine Weyl group $W = X\rtimes W_0$, the standard intertwiners are elements $F_w$ ($w \in W_0$) of the localization
      \[ H_{\op{gen}} := H_q(W,S) \otimes_Z \op{Frac}(Z) \]
      of the affine Hecke algebra at its center $Z$, determined by (cf. \cite[Prop. 1.2]{CherednikUnification}; also \cite[Lem. 1.13.1]{HainesKottwitzPrasad})
      \begin{align}\label{eq:intertwiners_are_r-matrices} F_{ww'} & = F_w F_{w'} \text{ if } \ell(ww') = \ell(w)+\ell(w') \\
      F_{s_\alpha} & = T_{s_\alpha} + (q_{s_\alpha}-1)(Y_\alpha - 1)^{-1},\quad \alpha \in \Delta
   \end{align}
   where we write
   \[ Y_\alpha := \widetilde{\theta}(\alpha) \]
   for easier comparison with \cite{CherednikUnification}. The $F_w$ now constitute an R-matrix in the following sense. The basis property of the Bernstein-Lusztig basis $\{\widetilde{\theta}(x)T_w\}_{x \in X,\ w \in W_0}$ implies that we have \textit{linear} isomorphisms
      \[ \C[X]\otimes H_0 \stackrel{\sim}{\longrightarrow} H_q(W,S)\quad \text{and}\quad \C(X)\otimes H_0 \stackrel{\sim}{\longrightarrow} H_{\op{gen}} \]
      Here $\C(X) = \op{Frac}(\C[X])$ and $H_0$ denotes the finite Hecke subalgebra spanned by $T_w$, $w \in W_0$. Note that the group $W_0$ acts on $\C(X)\otimes H_0$ via its canonical action on $X$. Now, if we consider $\C(X)\otimes H_0$ with its canonical (tensor) algebra structure, then the map
      \[ \phi: W_0 \longrightarrow \C(X)\otimes H_0 \]
      defined by $w \mapsto F_w$ satisfies the partial cocycle relation
      \[ \phi(ww') = \phi(w) w(\phi(w')) \quad\text{ if }\quad \ell(ww') = \ell(w)+\ell(w') \]
      i.e. defines a (non-unitary) R-matrix in the sense of Cherednik (cf. \cite[Thm. 2.3 a)]{CherednikQKZ}). This follows by easy calculations from relation \eqref{eq:intertwiners_are_r-matrices} and the intertwining property
      \[ F_w a = w(a) F_w, \quad a \in \mathcal{A} \]
      The intertwiners $F_w$ can be normalized so that one gets a proper cocycle (unitary R-matrix) instead: let (cf. \cite[2.2]{HainesKottwitzPrasad}, \cite[Prop. 5.2]{Lusztig}; also \cite[p. 146]{Opdam})
      \[ K_{s_\alpha} := q_{s_\alpha}^{-1} \frac{1 - Y_\alpha}{1 - q_{s_\alpha}^{-1} Y_\alpha} F_{s_\alpha} \]
      for a simple root $\alpha$ and extend to elements $K_w$ for all $w \in W_0$ using \eqref{eq:intertwiners_are_r-matrices} as before. It can be shown that these normalized intertwiners satisfy $K_w K_{w'} = K_{ww'}$ for \textit{all} $w,w'$ and therefore define a cocycle $\psi: W_0 \longrightarrow \C(X)\otimes H_0$ in the usual sense.

      This cocycle $\psi$ partially recovers the Bernstein map $\theta: W \longrightarrow \op{Hom}_{\op{Set}}(\mathcal{O},H_q(W,S)^\times)$ as follows (cf. \cite{Opdam}). Elements of $\C(X)\otimes H_0$ can be viewed as meromorphic function on the complex torus $\bbf{T} = X^\vee \otimes \C^\times$ with values in $H_0$. For every Weyl chamber $D$ in $V^\vee = X^\vee \otimes \R$, given as an intersection
      \[ D = \bigcap_i \{ x \in V : \alpha_i(x) > 0 \} \]
      of half-spaces, one can add a point $\xi_D$ at infinity to $\bbf{T}$, such that
      \[ \lim_{n \rightarrow \infty} t_n = \xi_D \quad \Leftrightarrow \quad \lim_{n \rightarrow \infty} \alpha_i(t) = 0 \ \forall i \]
      for every sequence $(t_n)_n$ in $\bbf{T}$. Then $\theta$ is partially recovered as the pointwise limit
      \begin{equation}\label{eq:theta_as_limit} \theta_{\mathfrak{o}_D}(w) = \lim_{t \rightarrow \xi_D} \psi(w)(t),\quad \forall w \in W_0 \end{equation}
         with respect to the natural topology on $H_0 = \bigoplus_w \C T_w$. Note that $\theta_{\mathfrak{o}_D}(w)$ lies in $H_0 \subseteq H_q(W,S)$ for all $w \in W_0$ a priori; indeed, the restriction of $\mathfrak{o}_D$ to $W_0 \subseteq W$ is nothing but the chamber orientation (\cref{def:chamber_orientations}) towards the element $w_D \in W_0$ corresponding to $D$ via $w_D(C) = D$, where $C$ denotes the fundamental Weyl chamber, and $\theta_{\mathfrak{o}_D}(w)$ identifies with the image under the Bernstein map $\theta_{\mathfrak{o}_{w_D}}: W_0 \rightarrow H_0^\times$ of the finite Hecke algebra. Thus, \cref{eq:theta_as_limit} can also be seen as recovering the cocycle $\theta:W_0 \rightarrow \op{Hom}_{\op{Set}}(\mathcal{O}(W_0),H_0^\times)$ of the finite Hecke algebra.
         
         Because of the cocycle rule, \cref{eq:theta_as_limit} needs only to be checked in the case $w = s_\alpha$, where it follows from easy computations. Indeed
      \[ \lim_{t \rightarrow \xi_D} Y_\alpha = 0 \quad\text{or}\quad \lim_{t \rightarrow \xi_D} Y_\alpha^{-1} = 0 \]
      depending on whether $D$ lies in the positive $\{ x : \alpha(x) > 0 \}$ or negative half-space $\{ x : \alpha(x) < 0 \}$ defined by $\alpha$, respectively. Moreover, from the expression defining $F_{s_\alpha}$ it is immediate that
      \[ F_{s_\alpha}(Y_\alpha = 0) = T_{s_\alpha} \]
      and
      \[ F_{s_\alpha}(Y_\alpha^{-1} = 0) = T_{s_\alpha} - (q_{s_\alpha}-1) = q_{s_\alpha} T_{s_\alpha}^{-1} \]
      where the second equality follows from the quadratic relation $T_{s_\alpha}^2 = q_{s_\alpha} T_{s_\alpha} + (q_{s_\alpha}-1)$. Hence
      \[ K_{s_\alpha}(Y_\alpha = 0) = T_{s_\alpha} \quad\text{and}\quad K_{s_\alpha}(Y_\alpha^{-1} = 0) = T_{s_\alpha}^{-1} \]
      which proves \eqref{eq:theta_as_limit} for $w = s_\alpha$, taking into account the definition of $\theta$ and $\mathfrak{o}_D$ (see \cref{def:unnormalized_bernstein,def:spherical_orientation} resp.)\footnote{Note: the expression for $F_{s_\alpha}$ can be interpreted as defining a `Yang-Baxterization' of the element $T_{s_\alpha} \in H_0$, i.e. a parametric deformation $F_{s_\alpha} = F_{s_\alpha}(Y_\alpha)$ that satisfies the Yang-Baxter equation with spectral parameter $Y_\alpha$. This deformation interpolates between $T_{s_\alpha} = F_{s_\alpha}(Y_\alpha = 0)$ and $q_{s_\alpha} T_{s_\alpha}^{-1} = F_{s_\alpha}(Y_\alpha = \infty)$.}.
      
      Thus one notices a curious fact: to construct $R$-matrices (cocycles) in the \textit{finite} Hecke algebra $H_0$, one should study intertwiners of the \textit{affine} Hecke algebra $H_q(W,S)$, which contains the former as a subalgebra. Does this pattern continue? Cherednik has shown that it does (at least for affine Hecke algebras). His \textit{double affine Hecke algebras} $\ddot{H}_q(W,S)$ contain the affine Hecke algebras $H_q(W,S)$ as subalgebras, and one can define elements $\widehat{F}_w \in \ddot{H}_q(W,S)$ for all elements $w \in W$ of the affine Weyl group (cf. \cite[Theorem 3.3]{CherednikDAHA}), satisfying
      \[ \widehat{F}_w \widehat{F}_{w'} = \widehat{F}_{ww'} \quad \text{ if } \ell(w)+\ell(w') = \ell(ww') \]
      and defining $R$-matrices (with spectral parameters) with values in $H_q(W,S)$ that recover the whole Bernstein map $\theta: W \rightarrow \op{Hom}_{\op{Set}}(\mathcal{O},H_q(W,S)^\times)$ as a limit.

\section{Generic pro-\texorpdfstring{$p$}{p} Hecke algebras and Bernstein maps} 
\label{sec:Generic pro-$p$ Hecke algebras and Bernstein maps}

\subsection{Basic definitions and some geometric terminology} 
\label{sub:Basic definitions and some geometric terminology}
We recall some standard facts and terminology from the theory of Coxeter groups (cf. \cite[Ch. IV]{Bourbaki} or \cite[II]{Brown}).

\begin{definition}\label{def:coxeter-group} A \textbf{Coxeter group} $W = (W,S)$ consists of a group $W$ and a set $S \subseteq W$ of generators of order $2$ satisfying the \textit{action condition}. That is, there exists an action
   \[ \rho: W \longrightarrow \op{Aut}_{\op{Set}}(\mathfrak{H}\times \{\pm \}) \]
   on the set $\mathfrak{H}\times \{\pm 1\}$, where
   \[ \mathfrak{H} := \{ wsw^{-1} : w \in W,\ s \in S \} \subseteq W \]
   such that a generator $s \in S$ acts as
   \[ \rho(s) (H,\varepsilon) =  \begin{cases}
      (sHs^{-1},-\varepsilon) & : H = s \\
      (sHs^{-1},\varepsilon) & : H \neq s
   \end{cases} \]
\end{definition}

\begin{rmk}\label{rmk:equivalent-definitions-of-coxeter-groups}
   There are several other equivalent definitions of the notion of a Coxeter group (see \cite[II.4]{Brown}). In particular, given a group $W$ and a set $S$ of generators of order $2$, the action condition is equivalent to both the \textit{exchange condition} $\text{(\textbf{E})}$ and the \textit{deletion condition} $\text{(\textbf{D})}$. The former states that given a reduced expression $w = s_1 \dots s_r$ and an element $s \in S$, either $\ell(sw) = \ell(w)+1$ or
   \begin{equation*}\tag{\textbf{E}} w = ss_1 \dots \widehat{s_i} \dots s_r \end{equation*}
   for some $1 \leq i \leq r$ (where $\widehat{s_i}$ denotes omission of $s_i$), the latter that if the expression $w = s_1\dots s_r$ is \textit{not} reduced, then
   \begin{equation*}\tag{\textbf{D}} w = s_1 \dots \widehat{s_i} \dots \widehat{s_j} \dots s_r \end{equation*}
   for some $1 \leq i < j \leq r$.
\end{rmk}

\begin{terminology}\label{term:basic}
   If an action as in \cref{def:coxeter-group} exists, it is uniquely determined and called the \textit{canonical action}. The set $\mathfrak{H}$ is called the set of \textit{walls} or \textit{hyperplanes}. When we want to view a hyperplane $H \in \mathfrak{H}$ as the reflection in $W$ it corresponds to, we sometimes write $s_H$ instead of $H$. Elements of $W$ are also called \textit{chambers}. A distinguished chamber is given by the neutral element $1 \in W$ and is called the \textit{fundamental chamber}. Two chambers $w,w'$ are called \textit{adjacent} if $w^{-1}w' \in S$. A \textit{gallery} from $w$ to $w'$ is a finite sequence $\Gamma = (w=w_0,\ldots,w_r = w')$ such that $w_i, w_{i+1}$ are adjacent. Galleries from the fundamental chamber to a chamber $w \in W$ correspond to expressions
   \[ w = s_1 \ldots s_r \]
   of $w$ as a product of generators $s_i \in S$, the associated gallery being
   \[ \Gamma = (1,s_1,s_1 s_2,\ldots , s_1\ldots s_r) \]
   A wall $H$ is said to \textit{separate} $w_1,w_2 \in W$ if
   \[ \rho(w_2^{-1}w_1)(w_1^{-1}Hw_1, 1) = (w_2^{-1}Hw_2, -1) \]
   Otherwise $w_1$, $w_2$ are said to \textit{lie on the same side} with respect to $H$. The number of walls separating $1$ and $w$ is finite and equal to
   \[ \ell(w) := \op{min} \{ r \in \N : \exists s_1,\ldots, s_r \in S\ \ w = s_1\ldots s_r \} \]
   which is called the \textit{length} of $w$. An arbitrary expression
   \[ w = s_1 \ldots s_r \]
   is called \textit{reduced} if $r = \ell(w)$. Given such a reduced expression, the set
   \[ \{ s_1, s_1 s_2 s_1^{-1}, \ldots{}, (s_1\ldots{}s_{r-1}) s_r (s_1\ldots{}s_{r-1})^{-1} \} \]
   is the set of hyperplanes separating $1$ and $w$. More generally, we can define for any two chambers $w,w'$ the \textit{distance} $d(w,w')$ between $w$ and $w'$ as the length of the shortest gallery from $w$ to $w'$. A gallery $\Gamma$ is called a \textit{geodesic} if its length equals the distance between its start- and endpoint. One can show that a gallery is a geodesic if and only if it does not cross a hyperplane twice. In particular the distance $d(w,w')$ equals the number of walls separating $w$ and $w'$. Moreover, the distance is $W$-invariant and so in particular $d(w,w') = d(1,w^{-1}w') = \ell(w^{-1}w')$. A wall $H = w_0 s w_0^{-1}$ divides $W$ into two equivalence classes under the relation of lying on the same side with respect to $H$, namely the \textit{positive half-space}
   \[ U^+_H = \{ w \in W : \ell(sw_0^{-1}w) = \ell(w_0^{-1}w) + (\ell(sw_0^{-1}) - \ell(w_0^{-1}w)) \} \]
   and the \textit{negative half-space}
   \[ U^-_H = \{ w \in W : \ell(sw_0^{-1}w) = \ell(w_0^{-1}w) - (\ell(sw_0^{-1}) - \ell(w_0^{-1})) \} \]
   By definition the positive half-space is the one containing the fundamental chamber. The map $(H,\varepsilon) \mapsto U^\varepsilon_H$ gives a bijection between $\mathfrak{H}\times \{\pm 1\}$ and the set of all \textit{half-spaces}. This bijection is $W$-equivariant with respect to the natural actions and allows to identify these two $W$-sets.
   
   The \textit{Bruhat order} $<$ on $W$ is the strict partial order in which $w < w'$ if and only if for some (every) reduced expression
   \[ w = s_1 \ldots s_r \]
   there exist $1 \leq i_1 < \ldots < i_m \leq r$, $m < r$ such that
   \[ w' = s_{i_1}\ldots s_{i_m} \]
   The order of the product $st \in W$ of two generators $s,t \in S$ will be denoted by $m(s,t)$ and is an element of $\{1,2,\ldots,\infty\}$.
\end{terminology}

\begin{rmk}
   The inclusion $S \subseteq \mathfrak{H}$ induces a bijection
   \[ S/_\sim \stackrel{\sim}{\longrightarrow} W\backslash \mathfrak{H} \]
   where $\sim$ is the equivalence relation given by
   \[ s \sim t \quad \Leftrightarrow \quad \exists w \in W\ \ wsw^{-1} = t \]
\end{rmk}

In the context of root systems and Iwahori-Hecke algebras one is naturally led to consider groups slightly more general than Coxeter groups. We will therefore introduce a nonstandard definition which axiomatizes extended Weyl groups.
\begin{definition}\label{def:extended_coxeter_group} An \textbf{extended Coxeter group} $W$ consists of a group $W$, subgroups $W_{\op{aff}}, \Omega \leq W$, a subset $S \subseteq W_{\op{aff}}$ and a group homomorphism $W \rightarrow \Omega$ such that
   \begin{enumerate}
      \item The sequence
         \[ \begin{xy} \xymatrix{ 1 \ar[r] & W_{\op{aff}} \ar[r] & W \ar[r] & \Omega \ar[r] & 1 } \end{xy} \]
            is exact.
      \item $(W_{\op{aff}},S)$ is a Coxeter group.
      \item The action of $\Omega$ on $W_{\op{aff}}$ by conjugation restricts to an action on $S$.
   \end{enumerate}
\end{definition}
In other words an extended Coxeter group $W$ is a semidirect product $W = W_{\op{aff}} \rtimes \Omega$ of a Coxeter group $(W_{\op{aff}},S)$ and a group $\Omega$ acting on $W_{\op{aff}}$ by automorphisms of Coxeter groups. 

\begin{notation} The action of $u \in \Omega$ on $w \in W_{\op{aff}}$ will be denoted by $u(w)$, $uwu^{-1}$ or even $u\bullet{}w$. \end{notation}
\begin{rmk}
   The conjugation action of $\Omega$ on $W_{\op{aff}}$ induces a right action on $\op{Hom}_{\op{Set}}(W_{\op{aff}},\N)$ by acting on arguments. The invariance of $S \subseteq W_{\op{aff}}$ is then equivalent to the length function $\ell: W_{\op{aff}} \rightarrow \N$ being fixed under the action of $\Omega$. We may therefore uniquely extend $\ell$ to a function $W \rightarrow \N$ denoted by the same letter and satisfying 
   \[ \ell(wu) = \ell(uw) = \ell(w),\quad w \in W_{\op{aff}}, u \in \Omega \]
\end{rmk}

\begin{rmk}\label{rmk:ex_weyl_walls}
   The group $W$ acts on the set $\mathfrak{H}$ of walls of $(W_{\op{aff}},S)$ by conjugation and we again have a bijection
   \[ S/_\sim \stackrel{\sim}{\longrightarrow} W\backslash \mathfrak{H} \]
   where $\sim$ now refers to the equivalence relation given by
   \[ s \sim t \quad \Leftrightarrow \quad \exists w \in W\ \ wsw^{-1} = t \]
   Two elements $s,t \in S$ can be conjugate in $W$ without being conjugate in $W_{\op{aff}}$. In the context of extended Coxeter groups, $\sim$ will by convention always refer the relation induced by conjugation in $W$.
\end{rmk}

\begin{rmk}\label{rmk:action_of_w_on_chambers}
   By assumption we have an action $\rho: \Omega \rightarrow \op{Aut}_{\op{Grp}}(W_{\op{aff}})$ of $\Omega$ on $W_{\op{aff}}$ by group automorphisms. On the other hand $W_{\op{aff}}$ acts on itself via left translation $\lambda: W_{\op{aff}} \rightarrow \op{Aut}_{\op{Set}}(W_{\op{aff}})$. One has
   \[ \rho_u(\lambda_w(w')) = \rho_u(ww') = \rho_u(w)\rho_u(w') = \lambda_{\rho_u(w)}(\rho_u(w')) = \lambda_{u\bullet{}w}(\rho_u(w')) \]
   for every $w' \in W_{\op{aff}}$. By the universal property of the semidirect product $\rho$ and $\lambda$ therefore combine in a unique way to an action of $W$ on the \textit{set} $W_{\op{aff}}$, which we would like to view as the set of chambers. It follows immediately that the stabilizer of the fundamental chamber $1 \in W_{\op{aff}}$ is $\Omega$. We will occasionally view elements $w \in W$ as chambers via the orbit map $W \rightarrow W_{\op{aff}}, w \mapsto w\bullet{}1$, that is $w = w'u$, $w' \in W_{\op{aff}}$, $u \in \Omega$ will be replaced by $w'$. Accordingly we will talk about walls separating two elements $w,w' \in W$ or the distance between $w$ and $w'$. This is consistent with the definitions given so far in the sense that the distance between $w,w'$ viewed as chambers is equal to $\ell(w^{-1}w')$.
\end{rmk}

\begin{rmk}\label{rmk:ex_bruhat_order}
   One can extend the Bruhat order on $W_{\op{aff}}$ uniquely to a strict partial order $<$ on $W$ which satisfies
   \[ wu < w'u' \quad \Leftrightarrow \quad w < w' \]
   for all $w,w' \in W$ and $u,u' \in \Omega$. This relation is invariant under conjugation by $\Omega$, but beware that in general
   \[ uw < u'w' \quad \not\Leftrightarrow \quad w < w' \]
\end{rmk}

\begin{rmk}
   Some caution has to be applied when dealing with length function on extended Coxeter groups. It is not true that for any $w,w' \in W$ and $u \in \Omega$
   \[ \ell(wuw') = \ell(ww') \]
   If for example $u$ permutes two distinguished generators $s \neq t$ then
   \[ \ell(sut) = \ell(s(utu^{-1})u) = \ell(ssu) = 1 \]
   whereas $\ell(st) = 2$. However, it remains true that for $s \in S$ and $w \in W$ either $\ell(sw) = \ell(w) + 1$ or $\ell(sw) = \ell(w) - 1$ and similarly either $\ell(ws) = \ell(w) + 1$ or $\ell(ws) = \ell(w) - 1$.
\end{rmk}

\begin{rmk}
   The example motivating \cref{def:extended_coxeter_group} are the \textit{extended affine Weyl groups} associated to \textit{root data}. This will be discussed later (see \cref{ex:affine_coxeter_group}), when we will introduce the stronger notion of \textit{affine extended Coxeter groups}.
\end{rmk}

We will define generic pro-$p$ Hecke algebras via a presentation à la Iwahori-Matsumoto. In this presentation the ``Weyl group'' will not be an extended Coxeter group, but a group of a more general type which naturally occurs when considering algebras of the form $\op{End}_G(\op{ind}^G_{I^{(1)}} \bbf{1})$. The following axioms are modelled on this particular case (cf. \cite[1.2]{VigProP}).
\begin{definition}\label{def:pro-$p$ Coxeter group}
   A \textbf{pro-$p$ Coxeter group} $W^{(1)}$ consists of an abelian group $T$, an extended Coxeter group $W$ and a group extension
   \[ \begin{xy} \xymatrix{ 1 \ar[r] & T \ar[r] & W^{(1)} \ar[r]^{\pi} & W \ar[r] & 1 } \end{xy} \]
      together with a family $(n_s)_{s \in S}$ of lifts $n_s \in \pi^{-1}(s)$ of the generators $s \in S$ subject to the following ``braid'' condition. If $s,t \in S$ with $m(s,t) < \infty$ then
      \begin{equation}\label{eq:braidcond} n_s n_t n_s \ldots = n_t n_s n_t \ldots \end{equation}
      where the number of factors on both sides is $m(s,t)$.
\end{definition}

\begin{convention}
   To ease the notation we will in the following always assume that the map $T \hookrightarrow W^{(1)}$ is an inclusion.
\end{convention}

\begin{notation}
   In the above situation we have a canonical action of $W^{(1)}$ on $T$ by conjugation. This action $W^{(1)}\times T \rightarrow T$ is denoted by $(w,t) \mapsto w(t)$. Since $T$ is commutative this action factors over the projection $\pi: W^{(1)} \rightarrow W$. The induced action of $W$ on $T$ will also be denoted by $(w,t) \mapsto w(t)$.   
\end{notation}

\begin{notation}
   Given a pro-$p$ Coxeter group $W^{(1)}$ as above with associated extended Coxeter group $W$ and length function $\ell: W \rightarrow \N$, we will by abuse of notation denote the composite function $\ell \circ{} \pi: W^{(1)} \rightarrow \N$ again by $\ell$ and refer to it also as ``the length function''.
\end{notation}

\begin{notation}
   We may pull back the relation $<$ on $W$ defined in \cref{rmk:ex_bruhat_order} to a strict partial order on $W^{(1)}$ again denoted by $<$ via
   \[ w < w' \quad :\Leftrightarrow\quad \pi(w) < \pi(w') \] 
\end{notation}

\begin{notation}
   Given any subset $X \subseteq W$ we will denote by $X^{(1)}$ the preimage of $X$ under $\pi$. In particular we have
   \[ \Omega^{(1)} = \{ w \in W^{(1)} : \ell(w) = 0 \} \]
\end{notation}

\subsection{1-Cocycles of pro-\texorpdfstring{$p$}{p} Coxeter groups} 
\label{sub:1-Cocycles of pro-$p$ Coxeter groups}
We recall that a $1$-cocycle of a group $G$ with values in a (possibly non-commutative) $G$-module $M$ (i.e. a \textit{group} endowed with a $G$-action by group automorphisms) is a map $\phi: G \rightarrow M$ satisfying the cocycle rule
\[ \forall g,g' \in G\ \ \phi(gg') = \phi(g) g(\phi(g')) \]
Generalizing a result of Cherednik, we will now obtain an explicit description of the set $Z^1(G,M)$ of $1$-cocycles when $G = W^{(1)}$ is a pro-$p$ Coxeter group\footnote{In a previous version of this lemma, we had assumed that $T$ acts trivial on $M$. We thank M.F. Vignéras for suggesting to remove this hypothesis.}.

\begin{lemma}\label{lem:cocycle}
   Let $M$ be a $W^{(1)}$-module. Restriction defines an injective map
   \[ Z^1(W^{(1)},M) \longrightarrow \op{Hom}_{\op{Set}}(S,M)\times Z^1(\Omega^{(1)},M) \]
   \[ \phi \mapsto ((s \mapsto \phi(n_s)),(u \mapsto \phi(u))) \]
   whose image consists of all pairs $(\sigma,\rho)$ satisfying the following properties.
   \begin{enumerate}
      \item $\sigma(s) n_s(\sigma(s)) = \rho(n_s^2)$ for all $s \in S$
      \item For all $u \in \Omega^{(1)}$, $s \in S$
         \[ \rho(u)\cdot{} u(\sigma(s)) = \sigma(u(s))\cdot{} n_{u(s)}(\rho(ut_{s,u})) \]
      where $t_{s,u} \in T$ denotes the element defined by the equation $u n_s = n_{u(s)} u t_{s,u}$
      \item For all $s,t \in S$ with $m(s,t) < \infty$, the following two products with $m(s,t)$ factors are equal
         \[ \sigma(s)\cdot{}n_s(\sigma(t))\cdot{}(n_sn_t)(\sigma(s))\cdot{}(n_sn_tn_s)(\sigma(t)) \ldots = \sigma(t)\cdot{}n_t(\sigma(s))\cdot{}(n_tn_s)(\sigma(t))\cdot{}(n_tn_sn_t)(\sigma(s)) \ldots \]

   \end{enumerate}
\end{lemma}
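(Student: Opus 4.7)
The overall approach is to identify $Z^1(W^{(1)}, M)$ with the set of group-theoretic sections of the split extension
\[ 1 \longrightarrow M \longrightarrow M \rtimes W^{(1)} \longrightarrow W^{(1)} \longrightarrow 1 \]
via the bijection $\phi \leftrightarrow (w \mapsto (\phi(w), w))$, under which the cocycle identity becomes the homomorphism property. This reduces the problem to producing, for each pair $(\sigma, \rho)$ satisfying (i)--(iii), a group homomorphism $W^{(1)} \to M \rtimes W^{(1)}$ lifting the identity on $W^{(1)}$ with the prescribed values on the $n_s$ and on $\Omega^{(1)}$.

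For injectivity and necessity, I first observe that $W^{(1)}$ is generated by $\Omega^{(1)} \cup \{n_s\}_{s \in S}$: the projections $\pi(n_s) = s$ together with $\pi(\Omega^{(1)}) = \Omega$ generate $W = W_{\op{aff}} \rtimes \Omega$, and the kernel $T$ of $\pi$ is contained in $\Omega^{(1)}$ since elements of $T$ have length zero. Injectivity of the restriction map follows, since any cocycle is determined by its values on a generating set via the cocycle identity. For necessity of (i), apply the cocycle rule to the equality $n_s \cdot n_s = n_s^2$, with the right-hand side viewed as an element of $\Omega^{(1)}$. For (ii), apply the cocycle rule to both sides of the relation $u n_s = n_{u(s)} \cdot (u t_{s,u})$, noting that $\pi(u t_{s,u}) = \pi(u) \in \Omega$ so $u t_{s,u} \in \Omega^{(1)}$. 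For (iii), apply the cocycle rule to each side of the braid relation \eqref{eq:braidcond} and expand, using the fact that $W^{(1)}$ acts on $M$ by applying the cocycle iteratively.

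For sufficiency, given $(\sigma, \rho)$ I would set $\widetilde{n}_s := (\sigma(s), n_s)$ and $\widetilde{u} := (\rho(u), u)$ in $M \rtimes W^{(1)}$. A direct expansion of the semidirect-product multiplication $(m, w)(m', w') = (m \cdot w(m'), ww')$ then shows that $\widetilde{u}\widetilde{u'} = \widetilde{uu'}$ for $u, u' \in \Omega^{(1)}$ is equivalent to the cocycle identity for $\rho$; that $\widetilde{n}_s^2 = (\rho(n_s^2), n_s^2)$ is equivalent to (i); that $\widetilde{u}\widetilde{n}_s = \widetilde{n}_{u(s)} \widetilde{u t_{s,u}}$ is equivalent to (ii); and that the two $m(s,t)$-fold braid words in $\widetilde{n}_s, \widetilde{n}_t$ agree if and only if (iii) holds.

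The main obstacle is to recognise these four families of relations as \emph{defining} for $W^{(1)}$: one needs a presentation of $W^{(1)}$ with generating set $\Omega^{(1)} \cup \{n_s\}_{s \in S}$ whose relations are exactly (a) the group relations of $\Omega^{(1)}$, (b) the quadratic relations $n_s^2 \in \Omega^{(1)}$, (c) the conjugation relations $u n_s = n_{u(s)} \cdot (u t_{s,u})$ for $u \in \Omega^{(1)}$, $s \in S$, and (d) the braid relations. This presentation can be obtained by lifting the standard semidirect-product presentation of $W = W_{\op{aff}} \rtimes \Omega$ along the central extension $1 \to T \to W^{(1)} \to W \to 1$: the Coxeter braid relations and the conjugation relations of $W$ lift to (d) and (c) respectively, up to elements of $T$, and this indeterminacy is absorbed harmlessly into $\Omega^{(1)}$ because $T \subseteq \Omega^{(1)}$. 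Once this presentation is in place, the verifications in the previous paragraph extend $\widetilde{n}_s$ and $\widetilde{u}$ uniquely to a group homomorphism $W^{(1)} \to M \rtimes W^{(1)}$ splitting the projection, and its first component is the sought-after cocycle $\phi$ restricting to $(\sigma, \rho)$.
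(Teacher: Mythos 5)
Your reformulation of $Z^1(W^{(1)}, M)$ as the set of group-theoretic sections of the split extension $M \rtimes W^{(1)} \twoheadrightarrow W^{(1)}$, followed by a reduction to a presentation of $W^{(1)}$, is a genuinely different packaging from the paper's. The paper defines $\phi$ directly by the expected formula on a word $n_{s_1}\cdots n_{s_r}u$, shows the formula is independent of the chosen word by invoking Tits' solution of the word problem (introducing a set of ``companion transformations'' $(\mathrm{I}^{(1)})$--$(\mathrm{III}^{(1)})$ on expressions and checking invariance under each), and then verifies the cocycle rule by a separate induction on $\ell(w)$. Your route, by contrast, makes the semidirect product do all the bookkeeping: once the presentation is in hand, the cocycle rule comes for free from the homomorphism property. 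Your computations for necessity and injectivity, and the translation of conditions (i)--(iii) into the preservation of the relations by $\widetilde{n}_s = (\sigma(s), n_s)$ and $\widetilde{u} = (\rho(u), u)$, are correct.

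The gap is that the presentation claim -- which you rightly flag as ``the main obstacle'' -- is not actually proved, and the sentence offered in its place is not an argument. ``Lifting the standard semidirect-product presentation of $W$ along the extension $1 \to T \to W^{(1)} \to W \to 1$'' is not a canonical operation, and ``the indeterminacy is absorbed harmlessly into $\Omega^{(1)}$ because $T \subseteq \Omega^{(1)}$'' is precisely the assertion that needs proof. To establish that (a)--(d) are defining relations one has to show that in the group $G$ presented by these relations every element reduces to a normal form $n_{s_1}\cdots n_{s_r}u$ with $s_1\cdots s_r$ reduced in $W_{\op{aff}}$ and $u \in \Omega^{(1)}$, that the image of $T$ is normal in $G$, and that $G/T$ is identified with $W$ via its Coxeter presentation. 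These verifications go through exactly the same Tits-moves combinatorics the paper deploys (the Coxeter presentation of $W_{\op{aff}}$ and the solution of its word problem are equivalent in content), so your approach relocates the hard step into an unstated lemma rather than circumventing it. As written the proof is an outline, not a proof.

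A smaller but genuine error: the extension $1 \to T \to W^{(1)} \to W \to 1$ is \emph{not} central in general. The paper defines a nontrivial conjugation action of $W^{(1)}$ on $T$ factoring through $W$, and in the pro-$p$-Iwahori example $W_0$ acts nontrivially on $T = Z_k$. Your argument happens not to use centrality, so this is a misstatement rather than a fatal flaw, but it should be corrected.
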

\begin{proof}
   The map is obviously well-defined and injective. In fact, let $\phi \in Z^1(W^{(1)},M)$ be mapped to $(\sigma,\rho)$. For any $w \in W^{(1)}$, we can find an expression
   \begin{align}\label{eq:cocycle_lemma_1} w & = n_{s_1}\ldots n_{s_r} u \end{align}
   with $s_i \in S$ and $u \in \Omega^{(1)}$. The cocycle rule for $\phi$ now implies
   \begin{align}\label{eq:cocycle_lemma_2} \phi(w) & = \sigma(s_1)\cdot{} n_{s_1}(\sigma(s_2)) \cdot{} (n_{s_1}n_{s_2})(\sigma(s_3)) \cdot{} \ldots \cdot{} (n_{s_1}\ldots n_{s_{r-1}})(\sigma(s_r)) \cdot{} (n_{s_1}\ldots n_{s_r})(\rho(u)) \end{align}
      Moreover, the cocycle rule for $\phi$ immediately implies the conditions $(i)$-$(iii)$ for the pair $(\sigma,\rho)$. We will now show that starting with any pair $(\sigma,\rho)$ satisfying $(i)$-$(iii)$, equation \eqref{eq:cocycle_lemma_2} gives rise to a well-defined cocycle $\phi: W^{(1)} \rightarrow M$. In fact, to show that \eqref{eq:cocycle_lemma_2} gives a well-defined map $\phi: W^{(1)} \rightarrow M$ of sets independent of the choice of the expression \eqref{eq:cocycle_lemma_1} it suffices to assume $(i)$, $(iii)$ and the following condition $(iv)$. It is implied by $(ii)$ by taking $u = t \in T$, observing that in this case $ut_{u,s} = s^{-1}(t)$
      \begin{equation} \tag{\text{iv}} \rho(t)\cdot{}t(\sigma(s)) = \sigma(s) n_s(\rho(s^{-1}(t))) \quad \forall s \in S,\ t \in T \end{equation}
      Now let
      \[ w = n_{\overline{s}_1}\ldots n_{\overline{s}_m} \overline{u} \]
      be another expression for $w$. We verify that
      \begin{align}\label{eq:cocycle_lemma_3} \sigma(s_1)\cdot{} n_{s_1}(\sigma(s_2)) \cdot{} \ldots \cdot{} (n_{s_1}\ldots n_{s_r})(\rho(u)) & = \sigma(\overline{s}_1)\cdot{}n_{\overline{s}_1}(\sigma(\overline{s}_2))\cdot{}\ldots\cdot{}(n_{\overline{s}_1}\ldots{}n_{\overline{s}_m})(\rho(\overline{u})) \end{align}
      It suffices to show this when $u,\overline{u} \in T$. Indeed, assume the statement is true in this case. Then, since $W = W_{\op{aff}} \rtimes \Omega$, reducing the equation
      \begin{align}\label{eq:cocycle_lemma_4} n_{s_1}\ldots n_{s_r} u & = n_{\overline{s}_1} \ldots n_{\overline{s}_m} \overline{u} \end{align}
         via $\pi:W^{(1)} \rightarrow W$ shows that $s_1\ldots s_r = \overline{s}_1\ldots \overline{s}_m$ and $\pi(u) = \pi(\overline{u})$, and therefore $u\overline{u}^{-1} \in T$. Multiplying \eqref{eq:cocycle_lemma_4} by $\overline{u}^{-1}$ and using \eqref{eq:cocycle_lemma_3} for the case $u,\overline{u} \in T$ gives
         \begin{align}\label{eq:cocycle_lemma_5} \sigma(s_1)\cdot{}\ldots\cdot{}(n_{s_1}\ldots{}n_{s_{r-1}})(\sigma(s_r))\cdot{}(n_{s_1}\ldots n_{s_r})(\rho(u\overline{u}^{-1})) & = \sigma(\overline{s}_1)\cdot{}\ldots{}\cdot{}(n_{\overline{s}_1}\ldots{}n_{\overline{s}_{m-1}})(\sigma(\overline{s}_m)) \end{align}
      The cocycle property for $\rho$ implies
      \[ \rho(u\overline{u}^{-1}) = \rho(u)\cdot{}u(\rho(\overline{u}^{-1})) = \rho(u)\cdot{}u(\overline{u}^{-1}(\rho(\overline{u})^{-1})) \]
      Therefore
      \[ (n_{s_1}\ldots n_{s_r})(\rho(u\overline{u}^{-1})) = (n_{s_1}\ldots n_{s_r})(\rho(u)) \cdot{} (n_{\overline{s}_1}\ldots{}n_{\overline{s}_m})(\rho(\overline{u})^{-1}) \]
      Multiplying \eqref{eq:cocycle_lemma_5} from the right by $(n_{\overline{s}_1}\ldots{}n_{\overline{s}_m})(\rho(\overline{u}))$ therefore gives the desired equation \eqref{eq:cocycle_lemma_3}.

      We proceed now with the proof of \eqref{eq:cocycle_lemma_3} in the case $u,\overline{u} \in T$. Since the two words $s_1\ldots s_r$ and $\overline{s}_1 \ldots \overline{s}_m$ in the generators define the same element in $W_{\op{aff}}$, by Tits' solution \cite{Tits} of the word problem for Coxeter groups we can transform $s_1 \ldots s_r$ into $\overline{s}_1 \ldots \overline{s}_r$ by applying a finite number of transformations of words in the generators $s \in S$ of the following form.
         \begin{align*} \tag{I} t_1 \ldots t_i t_{i+1} \ldots t_m & \longmapsto t_1 \ldots t_i s s t_{i+1} \ldots t_m \\
            \tag{II} t_1 \ldots t_i s s t_{i+1} \ldots t_m & \longmapsto t_1 \ldots t_i t_{i+1} \ldots t_m \\
            \tag{III} t_1 \ldots t_i \underbrace{s t s \ldots }_{m(s,t) < \infty} t_{i+1} \ldots t_m & \longmapsto t_1 \ldots t_i \underbrace{tst \ldots }_{m(s,t) < \infty} t_{i+1} \ldots t_m
         \end{align*}
      Consider the following `companion' transformations for expression of the form $n_{t_1}\ldots n_{t_m} u$ (with $t_i \in S$, $u \in T$)
         \begin{align*}
            \tag{$\text{I}^{(1)}$} n_{t_1} \ldots n_{t_i} n_{t_{i+1}} \ldots n_{t_m} u & \longmapsto n_{t_1} \ldots n_{t_i} n_s n_s n_{t_{i+1}} \ldots n_{t_m} (t_{i+1}\ldots t_m)^{-1}(n_s^{-2}) u \\
            \tag{$\text{II}^{(1)}$} n_{t_1} \ldots n_{t_i} n_s n_s n_{t_{i+1}} \ldots n_{t_m} u & \longmapsto n_{t_1} \ldots n_{t_i} n_{t_{i+1}} \ldots n_{t_m} (t_{i+1}\ldots t_m)^{-1}(n_s^2) u \\
            \tag{$\text{III}^{(1)}$} n_{t_1} \ldots n_{t_i} \underbrace{ n_s n_t n_s \ldots }_{m(s,t)} n_{t_{i+1}} \ldots n_{t_m} u & \longmapsto n_{t_1} \ldots n_{t_i} \underbrace{ n_t n_s n_t \ldots }_{m(s,t)} n_{t_{i+1}} \ldots n_{t_m} u
         \end{align*}
         Taking the sequence of transformations of type (I)-(III) which transforms $s_1\ldots s_r$ into $\overline{s}_1\ldots \overline{s}_m$ and applying the corresponding sequence of transformations of type ($\text{I}^{(1)}$)-($\text{III}^{(1)}$) to $n_{s_1}\ldots n_{s_r} u$ will give an expression of the form $n_{\overline{s}_1}\ldots n_{\overline{s}_m} t$ with $t \in T$. A simple computation shows that the transformations ($\text{I}^{(1)}$)-($\text{III}^{(1)}$) do not change the element in $W^{(1)}$ which the expression defines. Therefore
         \[ n_{\overline{s}_1}\ldots n_{\overline{s}_m} t = n_{s_1}\ldots n_{s_r} u = n_{\overline{s}_1}\ldots n_{\overline{s}_m} \overline{u} \]
         and therefore $t = \overline{u}$. To prove \eqref{eq:cocycle_lemma_3}, it is therefore enough to show that the element in $M$ defined by the right hand side of \eqref{eq:cocycle_lemma_3} corresponding to an expression $n_{s_1}\ldots n_{s_r} u$ does not change if we apply any transformation of type ($\text{I}^{(1)}$)-($\text{III}^{(1)}$). For ($\text{III}^{(1)}$) this follows immediately from property (iii). We now prove the invariance for transformations of type ($\text{I}^{(1)}$), leaving the dual case ($\text{II}^{(1)}$) to the reader. It obviously suffices to consider the case $i = 0$, i.e. the transformation
         \[ n_{s_1}\ldots n_{s_r} u \mapsto n_s n_s n_{s_1}\ldots n_{s_r} (s_1\ldots s_r)^{-1}(n_s^{-2})u \]
      and to prove that
      \[ \sigma(s_1)\cdot{}\ldots\cdot{} (n_{s_1}\ldots{}n_{s_{r-1}})(\sigma(s_r))\cdot{} (n_{s_1}\ldots{}n_{s_r})(\rho(u)) \]
      is equal to
      \[ \sigma(s)\cdot{}n_s(\sigma(s))\cdot{}(n_s n_s)(\sigma(s_1))\cdot{}\ldots\cdot{}(n_s n_s n_{s_1}\ldots{}n_{s_{r-1}})(\sigma(s_r))\cdot{}(n_s n_s n_{s_1}\ldots{}n_{s_r})(\rho((s_1\ldots s_r)^{-1}(n_s^{-2})u) \]
      But, using property (i) and the following identity (implied by the cocycle property of $\rho$)
      \[ (n_s n_s n_{s_1}\ldots{}n_{s_r})(\rho((s_1\ldots{}s_r)^{-1}(n_s^{-2})u)) = (n_s n_s n_{s_1}\ldots{}n_{s_r})(\rho((s_1\ldots s_r)^{-1}(n_s^{-2})))\cdot{} (n_{s_1}\ldots{}n_{s_r})(\rho(u)) \]
      it follows that the last expression is equivalent to
      \[ \rho(n_s^2)\cdot{}n_s^2 \left(\sigma(s_1)\cdot{}\ldots\cdot{}(n_{s_1}\ldots{}n_{s_{r-1}})(\sigma(s_r))\cdot{}(n_{s_1}\ldots n_{s_r})(\rho((s_1\ldots s_r)^{-1}(n_s^{-2})))\right)\cdot{} (n_{s_1}\ldots{}n_{s_r})(\rho(u)) \]
      Thus it suffices to show that
      \[ \sigma(s_1)\cdot{}\ldots\cdot{} (n_{s_1}\ldots{}n_{s_{r-1}})(\sigma(s_r)) \]
      is equal to
      \[ \rho(n_s^2)\cdot{}n_s^2 \left(\sigma(s_1)\cdot{}\ldots\cdot{}(n_{s_1}\ldots{}n_{s_{r-1}})(\sigma(s_r))\cdot{}(n_{s_1}\ldots n_{s_r})(\rho((s_1\ldots s_r)^{-1}(n_s^{-2})))\right) \]
      But, this follows immediately by repeated application of (iv), using that $\rho(n_s^2)\cdot{}n_s^2(\rho(n_s^{-2})) = 1$.

      Thus, we have shown the existence of a map $\phi: W^{(1)} \rightarrow M$ satisfying \eqref{eq:cocycle_lemma_2}. It remains to show that $\phi$ is a $1$-cocycle if condition (ii) is satisfied, i.e. that
      \begin{align}\label{eq:cocycle_lemma_7} \phi(ww') & = \phi(w)\cdot{}w(\phi(w')) \end{align}
      holds for all $w,w' \in W^{(1)}$. First, we consider the case when $w$ is as a product $w = n_{s_1}\ldots n_{s_r}$ in the distinguished generators. In this case, \eqref{eq:cocycle_lemma_7} follows immediately from \eqref{eq:cocycle_lemma_2}.
      Next, we treat the case $w = u \in \Omega^{(1)}$. From the identity
      \[ u(n_s) = n_{u(s)} u(t_{s,u}) \]
      it follows by induction that
      \begin{equation}\label{eq:cocycle_lemma_8} u(n_{s_1}) \ldots u(n_{s_i}) = n_{u(s_1)} \ldots n_{u(s_i)} u(t_{s_i,u} s_i^{-1}(t_{s_{i-1},u}) \ldots (s_2 \ldots s_i)^{-1}(t_{s_1,u})) \end{equation}
         Using \eqref{eq:cocycle_lemma_8}, we can now repeatedly apply property (ii) to compute $\phi(w)\cdot{}w(\phi(w'))$ for $w = u \in \Omega^{(1)}$ and $w' = n_{s_1}\ldots{}n_{s_r}u'$:
      \begin{align*} \phi(u)\cdot{}u(\phi(w')) & = \rho(u)\cdot{}u(\sigma(s_1)\cdot{}n_{s_1}(\sigma(s_2))\cdot{}\ldots \cdot{} (n_{s_1}\ldots n_{s_{r-1}})(\sigma(s_r))\cdot{} (n_{s_1}\ldots n_{s_r})(\rho(u'))) \\
               & = \rho(u)\cdot{}u(\sigma(s_1))\cdot{}u(n_{s_1})(u(\sigma(s_2)))\cdot{}\ldots \cdot{} u(n_{s_1}\ldots n_{s_r})(u(\rho(u'))) \\
               & = \sigma(u(s_1))\cdot{} n_{u(s_1)}(\rho(u t_{s_1,u}))\cdot{}u(n_{s_1})(u(\sigma(s_2)))\cdot{}\ldots \cdot{} u(n_{s_1}\ldots n_{s_r})(u(\rho(u'))) \\
               & = \sigma(u(s_1))\cdot{} n_{u(s_1)}(\sigma(u(s_2)))\cdot{}(n_{u(s_1)}n_{u(s_2)})(\rho(ut_{s_2,u}s_2^{-1}(t_{s_1,u})))\cdot{} \\
               & \cdot{}u(n_{s_1}n_{s_2})(u(\sigma(s_3)))\cdot{}\ldots\cdot{}u(n_{s_1}\ldots n_{s_r})(u(\rho(u'))) \\
               & \vdots \\
               & = \sigma(u(s_1))\cdot{}n_{u(s_1)}(\sigma(u(s_2)))\cdot{}\ldots \cdot{} (n_{u(s_1)}\ldots{}n_{u(s_{r-1})})(\sigma(u(s_r))\cdot{} \\
               & \cdot{} (n_{u(s_1)}\ldots{}n_{u(s_r)})(\rho(u t_{s_r,u} s_r^{-1}(t_{s_{r-1}})\ldots (s_2\ldots s_r)^{-1}(t_{s_1,u}))) \\
               & \cdot{} (u(n_{s_1}\ldots{}n_{s_r}))(u(\rho(u')))
      \end{align*}
      Using \eqref{eq:cocycle_lemma_8} again, we see that
      \[ u(n_{s_1}\ldots{}n_{s_r})(u(\rho(u'))) = (n_{u(s_1)}\ldots{}n_{u(s_r)} u t_{s_r,u} s_r^{-1}(t_{s_{r-1},u}) \ldots (s_2\ldots{}s_r)^{-1}(t_{s_1,u}))(\rho(u')) \]
      We can therefore apply the cocycle property of $\rho$ to finally obtain that
      \begin{equation}\label{eq:cocycle_lemma_9}\begin{aligned} \phi(u)\cdot{}u(\phi(w')) & = \sigma(u(s_1))\cdot{}n_{u(s_1)}(\sigma(u(s_2)))\cdot{}\ldots \cdot{} (n_{u(s_1)}\ldots{}n_{u(s_{r-1})})(\sigma(u(s_r))\cdot{} \\
               & \cdot{} (n_{u(s_1)}\ldots{}n_{u(s_r)})(\rho(u t_{s_r,u} s_r^{-1}(t_{s_{r-1}})\ldots (s_2\ldots s_r)^{-1}(t_{s_1,u}) u'))
            \end{aligned}
      \end{equation}
      Now
      \begin{align*} u w' & = u n_{s_1}\ldots n_{s_r}u' \\
      & = n_{u(s_1)}u t_{s_1,u} n_{s_2} \ldots n_{s_r} u' = n_{u(s_1)} u n_{s_2}\ldots n_{s_r} (s_2\ldots s_r)^{-1}(t_{s_1,u}) u' \\
      & \vdots \\
      & = n_{u(s_1)} n_{u(s_2)} \ldots n_{u(s_r)} u t_{s_r,u} s_r^{-1}(t_{s_{r-1},u}) \ldots (s_2\ldots s_r)^{-1}(t_{s_1,u}) u'\end{align*}
      and hence
      \begin{equation}\label{eq:cocycle_lemma_10} \begin{aligned} \phi(uw') & = \sigma(u(s_1))\cdot{}n_{u(s_1)}(\sigma(u(s_2))\cdot{}\ldots \cdot{} (n_{u(s_1)}\ldots n_{u(s_{r-1})})(\sigma(u(s_r))) \cdot{} \\
         & \cdot{} (n_{u(s_1)}\ldots n_{u(s_r)})(\rho(u t_{s_r,u} s_r^{-1}(t_{s_{r-1},u}) \ldots (s_2\ldots s_r)^{-1}(t_{s_1,u}) u'))  \end{aligned} \end{equation}
         Comparing \eqref{eq:cocycle_lemma_9} with \eqref{eq:cocycle_lemma_10} gives \eqref{eq:cocycle_lemma_7} for $w = u \in \Omega^{(1)}$ and $w' \in W^{(1)}$ arbitrary. The general case now follows by induction on $\ell(w)$. We have just proved the start of the induction $\ell(w) = 0$. Now let $\ell(w) = r > 0$ and write $w = n_{s_1}\ldots n_{s_r} u$. Then
         \begin{align*} \phi(w w') & = \phi(n_{s_1}n_{s_2}\ldots n_{s_r} u w') = \phi(n_{s_1}) n_{s_1}(\phi(n_{s_2}\ldots n_{s_r} u w')) \\
            & = \phi(n_{s_1}) n_{s_1}(\phi(n_{s_2}\ldots n_{s_r} u) \cdot{} (n_{s_2}\ldots n_{s_r} u)(\phi(w'))) \\
            & = \phi(n_{s_1}n_{s_2}\ldots n_{s_r} u) (n_{s_1} n_{s_2} \ldots n_{s_r} u)(\phi(w')) \\
            & = \phi(w) w(\phi(w'))\end{align*}
      where we used that $\ell(n_{s_2}\ldots n_{s_r}u) = r-1 < r$ in line 2 in order to apply the induction hypothesis.
\end{proof}

\subsection{Construction of generic pro-\texorpdfstring{$p$}{p} Hecke algebras} 
\label{sub:Construction of generic pro-$p$ Hecke algebras}
In this section we will construct the main object of this article. Throughout, $W^{(1)}$ will denote a fixed pro-$p$ Coxeter group. The notation $W,W_{\op{aff}},S,\Omega,\ell$ etc. will be conserved. We will also fix a commutative associative unital ring $R$. The monoid algebra of $T$ over $R$ will be denoted by $R[T]$. The action of $W$ on $T$ extends naturally to an action on $R[T]$ by $R$-algebra automorphisms.

\begin{theorem}\label{thm:exhecke} Let $(a_s)_{s \in S}$ and $(b_s)_{s \in S}$ be families of elements $a_s \in R$ and $b_s \in R[T]$ subject to the following condition. Given $s,t \in S$ and $w \in W^{(1)}$ such that $s\pi(w) = \pi(w)t$, the following two equalities in $R$ resp. $R[T]$ hold\footnote{Note that $n_s w n_t^{-1} w^{-1} \in T$}
   \begin{align}\label{eq:condstar}
      a_s & = a_t & (n_s w n_t^{-1} w^{-1}) w(b_t) & = b_s
   \end{align}
   Under this assumption, there exists a unique structure of an $R$-algebra on the free $R$-Module $M$ with basis $\{T_w\}_{w \in W^{(1)}}$ which is compatible with the given $R$-module structure and such that the following two conditions hold
   \begin{equation}\label{eq:heckecond1} \forall w,w' \in W^{(1)}\quad \ell(ww') = \ell(w)+\ell(w')\quad \Rightarrow\quad T_{ww'} = T_w T_w' \end{equation}
   \begin{equation}\label{eq:heckecond2} \forall s \in S\quad T_{n_s}^2 = a_s T_{n_s^2} + T_{n_s} b_s \end{equation}
\end{theorem}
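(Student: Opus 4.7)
My plan is to prove uniqueness directly from the two axioms, then construct the algebra as a subring of $\op{End}_R(M)$ via left-multiplication operators, in the spirit of the standard Matsumoto-style construction adapted to the pro-$p$ setting.

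For uniqueness, I would first observe that every $w \in W^{(1)}$ admits a decomposition $w = n_{s_1}\cdots n_{s_r}\, u$ with $s_1 \cdots s_r$ reduced in $W_{\op{aff}}$ and $u \in \Omega^{(1)}$, by lifting a reduced expression for the $W_{\op{aff}}$-part of $\pi(w)$ through the distinguished lifts $n_s$ and absorbing the $T$-discrepancy into $u$. Iterated use of \eqref{eq:heckecond1} then forces $T_w = T_{n_{s_1}} \cdots T_{n_{s_r}} T_u$, so that $M$ is generated as an $R$-algebra by $\{T_{n_s}\}_{s \in S}$ and $\{T_u\}_{u \in \Omega^{(1)}}$; it therefore suffices to determine the products $T_{n_s} T_{w'}$ and $T_u T_{w'}$. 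The latter equals $T_{uw'}$ immediately from \eqref{eq:heckecond1}, while for the former one splits into two cases: if $\ell(sw') > \ell(w')$ then $T_{n_s} T_{w'} = T_{n_s w'}$ by \eqref{eq:heckecond1}; if $\ell(sw') < \ell(w')$ then $T_{w'} = T_{n_s} T_{n_s^{-1} w'}$ with lengths adding, so
\[ T_{n_s} T_{w'} \;=\; T_{n_s}^2\, T_{n_s^{-1} w'} \;=\; a_s T_{n_s w'} + \sum_{t \in T} c_t\, T_{n_s(t)\, w'} \]
by \eqref{eq:heckecond2}, writing $b_s = \sum_t c_t T_t$ and further applying \eqref{eq:heckecond1} to each summand. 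This forces every product of basis elements, proving uniqueness.

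For existence, I would define $R$-linear operators $\rho_u$ (for $u \in \Omega^{(1)}$) and $\lambda_s$ (for $s \in S$) on $M$ via the formulas dictated by the uniqueness step,
\[ \rho_u(T_w) = T_{uw}, \qquad \lambda_s(T_w) = \begin{cases} T_{n_s w} & \text{if } \ell(sw) > \ell(w), \\ a_s T_{n_s w} + \sum_{t} c_t\, T_{n_s(t) w} & \text{if } \ell(sw) < \ell(w), \end{cases} \]
and verify the following compatibilities that any such operators must satisfy: (i) $\rho_u \rho_{u'} = \rho_{uu'}$ (immediate); (ii) a commutation rule $\lambda_s \rho_t = \rho_{n_s(t)} \lambda_s$ for $t \in T$; (iii) the quadratic relation $\lambda_s^2 = a_s\, \rho_{n_s^2} + \lambda_s \circ \beta_s$ with $\beta_s := \sum_t c_t \rho_t$; (iv) the braid relations $\underbrace{\lambda_s \lambda_t \lambda_s \cdots}_{m(s,t)} = \underbrace{\lambda_t \lambda_s \lambda_t \cdots}_{m(s,t)}$ for all $s, t \in S$ with $m(s,t) < \infty$. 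Granted (i)--(iv), the braid condition \eqref{eq:braidcond} on the lifts $n_s$ together with Matsumoto's theorem implies that any decomposition $w = n_{s_1}\cdots n_{s_r}\, u$ produces the same endomorphism $\lambda_w := \lambda_{s_1}\cdots\lambda_{s_r}\rho_u$, yielding a well-defined map $\lambda : W^{(1)} \to \op{End}_R(M)$. Setting $T_w \cdot x := \lambda_w(x)$ then equips $M$ with the required $R$-algebra structure, and \eqref{eq:heckecond1}, \eqref{eq:heckecond2} hold by construction.

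The main obstacle will be the simultaneous verification of (iii) and (iv), where the hypothesis \eqref{eq:condstar} enters essentially. Specialising \eqref{eq:condstar} to $s = t$ and $w = t_0 \in T$ yields $b_s = s(t_0) t_0^{-1} \cdot b_s$ for every $t_0 \in T$; this $T$-invariance of $b_s$ is precisely what is needed to make the second branch of $\lambda_s$ consistent with (ii), and hence for (iii) to hold as an identity in $\op{End}_R(M)$. For (iv), I expect both sides to be compared by applying them to a basis element $T_{w'}$ and carrying out an inductive case analysis on $\ell(w')$ and on which side of the walls dual to $s, t$ the element $w'$ lies on, adapting the classical Matsumoto-style argument; the full strength of \eqref{eq:condstar} is then needed to ensure that whenever a generator $s$ and a conjugate generator $t = w^{-1} s w$ are exchanged during this reduction, the $T$-twist picked up by $b_s$ matches that picked up by the conjugated $w(b_t)$ through the prescribed factor $n_s w n_t^{-1} w^{-1}$, so that both sides of the braid relation continue to agree after every reduction step.
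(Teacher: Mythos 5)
Your existence strategy diverges from the paper's in a way that leaves a real gap. The paper does \emph{not} verify the braid relations $\lambda_{s}\lambda_{t}\lambda_{s}\cdots = \lambda_{t}\lambda_{s}\lambda_{t}\cdots$ for the left-multiplication operators. Instead, it introduces \emph{both} left-multiplication operators $\lambda_{n_s}$ and right-multiplication operators $\rho_{n_t}$ and proves the single commutation identity $\lambda_{n_s}\rho_{n_t} = \rho_{n_t}\lambda_{n_s}$ via a six-case analysis on the length constellations of $w$, $n_s w$, $w n_t$, $n_s w n_t$ (with \cref{lem:swt} and \eqref{eq:condstar} entering exactly in the two exchange-type cases). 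This commutation is what makes the whole construction go through: it simultaneously gives the braid relations as a byproduct \emph{and}, crucially, furnishes the injectivity of $\op{ev}_{T_1}: \Lambda \to M$, because any $\phi \in \Lambda$ with $\phi(T_1)=0$ can be killed on all of $M$ by commuting it past the $\rho$'s.

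The gap in your sketch is precisely at the sentence ``Setting $T_w \cdot x := \lambda_w(x)$ then equips $M$ with the required $R$-algebra structure.'' Even if you succeed in verifying (i)--(iv) and conclude that $\lambda_w$ is well-defined, this only gives you a map $\lambda: W^{(1)} \to \op{End}_R(M)$; it does not by itself produce an associative product on $M$. The product $T_w T_{w'} := \lambda_w(T_{w'})$ is a linear combination of basis elements, so to check $(T_w T_{w'})T_{w''} = T_w(T_{w'} T_{w''})$ you would need the $R$-linear extension of $\lambda$ to be multiplicative on its image --- which is not automatic and is exactly what the evaluation-isomorphism argument is designed to settle. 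To close the gap you would still need to show that the subalgebra $\Lambda \subseteq \op{End}_R(M)$ generated by your operators maps bijectively onto $M$ under $\op{ev}_{T_1}$ and then transport structure; surjectivity is easy, but injectivity needs the operators to commute with \emph{something} acting on the other side, which is where right multiplication re-enters. In practice this means your direct-braid-relation route costs at least as much as the paper's commutation argument \emph{and still requires it}, so it does not actually save work.

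One small notational point: you call your left-multiplication-by-$u$ operator $\rho_u$, which clashes with the standard use of $\rho$ for right multiplication (as in the paper); this is harmless but confusing given that right multiplication operators are exactly what your argument is missing.
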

Before we begin with the proof of \cref{thm:exhecke}, we make a couple of remarks. 
\begin{rmk}
   \begin{enumerate}
      \item As a consequence of the first condition, the natural embedding $R[T] \hookrightarrow M$ of $R$-modules will be a morphism of $R$-algebras because the length function vanishes on $T$. The $R$-algebra $M$ will therefore carry a canonical structure of an $(R[T],R[T])$-bimodule so that the second condition makes sense.
      \item The first condition implies the following basic commutation rule $t \in T$ and $w \in W^{(1)}$
         \begin{equation}\label{eq:commrule} T_w T_t = T_{wt} = T_{wtw^{-1} w} = T_{w(t)w}= T_{w(t)} T_w \end{equation}
         This implies more generally that for any $b \in R[T]$ we have
         \begin{equation}\label{eq:gencommrule} T_w b = w(b) T_w \end{equation}
         \item Applying relation \eqref{eq:condstar} for $w = n_s^{-1}$ and $s = t$ shows that
            \begin{equation}\label{eq:paraminv} n_s^{-1}(b_s) = b_s \end{equation}
            \item In view of \eqref{eq:gencommrule} and \eqref{eq:paraminv}, the second relation could also have been written as
         \[ T_{n_s}^2 = a_s T_{n_s^2} + b_s T_{n_s} \]
   \end{enumerate}
\end{rmk}
\begin{proof}[Proof of \cref{thm:exhecke}]
   We will closely follow the proof in the classical case (cf. \cite[Ch. IV, Exercices \textsection 2, Ex. 23]{Bourbaki}). First, we show uniqueness. It suffices to prove that for all $w,w' \in W^{(1)}$ the expansion of the product $T_w T_{w'}$ in terms of the given basis can be effectively computed in terms of the coefficient families $(a_s)_s$ and $(b_s)_s$. If $\ell(w) > 0$, we can write $w = n_s \widetilde{w}$ with $\ell(w) = 1 + \ell(\widetilde{w})$. By \eqref{eq:heckecond1}
   \[ T_w T_{w'} = T_{n_s} T_{\widetilde{w}} T_{w'} \]
   By induction it therefore suffices to compute products of the form $T_u T_w$ for $u \in \Omega^{(1)}$ and $T_{n_s} T_w$. From \eqref{eq:heckecond1} it follows immediately that $T_u T_w = T_{u w}$. We now show how to compute products of the form $T_{n_s} T_w$ by induction on $\ell(w)$. If $\ell(n_s w) = \ell(w) + 1$, again by \eqref{eq:heckecond1} we find that $T_{n_s} T_w = T_{n_s w}$. If $\ell(n_s w) = \ell(w) - 1$, we can write $w = n_s \widetilde{w}$ with $\ell(w) = \ell(\widetilde{w}) + 1$, and so
   \begin{align*} T_{n_s} T_w & = T_{n_s} T_{n_s} T_{\widetilde{w}} = (a_s T_{n_s^2} + T_{n_s} b_s) T_{\widetilde{w}} = a_s T_{n_s^2 \widetilde{w}} + T_{n_s} T_{\widetilde{w}} \widetilde{w}^{-1}(b_s)
   \end{align*}
   We now show the existence of the algebra structure in question. The construction proceeds by defining an $R$-subalgebra $\Lambda \subseteq \op{End}_R(M)$ and then showing that $\op{ev}_{T_1}: \op{End}_R(M) \rightarrow M$ induces an isomorphism $\Lambda \stackrel{\sim}{\longrightarrow} M$ of $R$-modules. By transport of structure, we obtain an $R$-algebra structure on $M$ which is then easily verified to have the required properties.
   
   First, we will construct the structure of an $(R[\Omega^{(1)}],R[\Omega^{(1)}])$-bimodule structure on $M$. Such a structure is equivalent to giving morphisms $\lambda: R[\Omega^{(1)}] \rightarrow \op{End}_R(M)$ and $\rho: R[\Omega^{(1)}]^{\op{op}} \rightarrow \op{End}_R(M)$ whose images commute. For $u \in \Omega^{(1)}$ we define $\lambda(u)$ and $\rho(u)$ on basis elements by
   \begin{align*} \lambda(u)(T_w) & := T_{uw} & \quad \rho(u)(T_w) & := T_{wu} \end{align*}
   One verifies immediately that $\lambda(u u') = \lambda(u)\lambda(u')$ and $\rho(u u') = \rho(u') \rho(u)$ and hence we get well defined morphisms $\lambda$ and $\rho$. From the definition it is immediate that the images of $\lambda$ and $\rho$ commute. With respect to this bimodule structure the following identity
   \[ T_w b = w(b) T_w \]
   holds for all $b \in R[T] \subseteq R[\Omega^{(1)}]$ and $w \in W^{(1)}$.
   
   We will now introduce for every $s \in S$ elements $\lambda_{n_s}, \rho_{n_s} \in \op{End}_R(M)$, which will a posteriori turn out the be left respectively right multiplication by $T_{n_s}$. Put
   \begin{align*} \lambda_{n_s}(T_w) & := \begin{cases}
      T_{n_s w} & : \ell(n_s w) = \ell(w) + 1 \\
      a_s T_{n_s w} + b_s T_w & : \ell(n_s w) = \ell(w) - 1
   \end{cases} \\
   \rho_{n_s}(T_w) & := \begin{cases}
      T_{w n_s} & : \ell(w n_s) = \ell(w) + 1 \\
      T_{w n_s} a_s + T_w b_s & : \ell(w n_s) = \ell(w) - 1
   \end{cases}
\end{align*}
The products $b_s T_w$, $a_s T_{n_s w}$ etc. therefore refer to the $(R[\Omega^{(1)}],R[\Omega^{(1)}])$-bimodule structure already constructed. Also note that $\lambda_{n_s}$ and $\rho_{n_s}$ are linear with respect to the right respectively left $R[\Omega^{(1)}]$-module structure.

The main part of the proof consists of showing that the elements $\lambda_{n_s}, \rho_{n_t}$ commute for all $s,t \in S$. Fix $w \in W^{(1)}$ and $s,t \in S$. We make a case distinction according to the $6$ possible constellations of $\ell(w), \ell(n_s w), \ell(w n_t)$ and $\ell(n_s w n_t)$
\begin{enumerate}
   \item $\ell(n_s w n_t) > \ell(n_s w) = \ell(w n_t) > \ell(w)$:
      \begin{align*} (\lambda_{n_s}\rho_{n_t})(T_w) = \lambda_{n_s}(T_{w n_t}) = T_{n_s w n_t} = \rho_{n_t} (T_{n_s w}) = (\rho_{n_t} \lambda_{n_s})(T_w) \end{align*}
   \item $\ell(n_s w n_t) < \ell(n_s w) = \ell(w n_t) < \ell(w)$:
      \begin{align*} (\lambda_{n_s} \rho_{n_t})(T_w) & = \lambda_{n_s}(T_{w n_t} a_t + T_w b_t) = \lambda_{n_s}(T_{w n_t}) a_t + \lambda_{n_s}(T_w) b_t \\
         & = a_s T_{n_s w n_t} a_t + b_s T_{w n_t} a_t + a_s T_{n_s w} b_t + b_s T_w b_t \\
         & = a_s \rho_{n_t}(T_{n_s w}) + b_s \rho_{n_t}(T_w) = \rho_{n_t}(a_s T_{n_s w} + b_s T_w) \\
         & = (\rho_{n_t} \lambda_{n_s})(T_w)
      \end{align*}
   \item $\ell(n_s w n_t) = \ell(w) < \ell(n_s w) = \ell(w n_t)$:
      By \cref{lem:swt}, we have $s \pi(w) = \pi(w) t$ and hence that $n_s w n_t^{-1} w^{-1} \in T$. We can therefore invoke relation \eqref{eq:condstar} to conclude that
      \begin{align*}
         (\lambda_{n_s}\rho_{n_t})(T_w) & = \lambda_{n_s}(T_{w n_t}) = a_s T_{n_s w n_t} + b_s T_{w n_t} \\
         & = a_t T_{n_s w n_t} + (n_s w n_t^{-1} w^{-1}) w(b_t) \rho_{n_t}(T_w) \\
         & = a_t T_{n_s w n_t} + \rho_{n_t}((n_s w n_t^{-1} w^{-1}) w(b_t) T_w) \\
         & = a_t T_{n_s w n_t} + \rho_{n_t}((n_s w n_t^{-1} w^{-1}) T_w b_t) \\
         & = a_t T_{n_s w n_t} + \rho_{n_t}(T_{n_s w n_t^{-1}} b_t) \\
         & = a_t T_{n_s w n_t} + \rho_{n_t}((n_s w n_t^{-1})(b_t) T_{n_s w n_t^{-1}}) \\
         & = a_t T_{n_s w n_t} + (n_s w n_t^{-1})(b_t) \rho_{n_t} (T_{n_s w n_t^{-1}}) \\
         & = a_t T_{n_s w n_t} + (n_s w n_t^{-1})(b_t) T_{n_s w} \\
         & = a_t T_{n_s w n_t} + T_{n_s w} n_t^{-1}(b_t) \\
         & \stackrel{\text{\ref{eq:paraminv}}}{=} T_{n_s w n_t} a_t + T_{n_s w} b_t \\
         & = \rho_{n_t}(T_{n_s w}) = (\rho_{n_t} \lambda_{n_s})(T_w)
      \end{align*}
   \item $\ell(n_s w n_t) = \ell(w) > \ell(n_s w) = \ell(w n_t)$: Similar to (iii).
   \item $\ell(n_s w) < \ell(w) = \ell(n_s w n_t) < \ell(w n_t)$:
      \begin{align*} (\lambda_{n_s}\rho_{n_t})(T_w) & = \lambda_{n_s}(T_{w n_t}) = a_s T_{n_s w n_t} + b_s T_{w n_t} = \rho_{n_t}(a_s T_{n_s w} + b_s T_w) \\
         & = (\rho_{n_t}\lambda_{n_s})(T_w) \end{align*}
   \item $\ell(n_s w) > \ell(w) = \ell(n_s w n_t) > \ell(w n_t)$: Similar to (v).

 Let now $\Lambda \subseteq \op{End}_R(M)$ be the $R$-subalgebra generated by $\{\lambda_{n_s}\}_{s \in S}$ and $\{\lambda_u\}_{u \in \Omega^{(1)}}$ and consider the evaluation homomorphism $\op{ev}_{T_1}: \op{End}_R(M) \rightarrow M$, $\op{ev}_{T_1}(\phi) = \phi(T_1)$. We claim that restriction to $\Lambda$ induces an isomorphism
   \[ \left.\op{ev}_{T_1}\right|: \Lambda \stackrel{\sim}{\rightarrow} M \]
   of $R$-modules. If $s \in S$ and $w \in W^{(1)}$ are such that $\ell(n_s w) = 1 + \ell(w)$, then $\lambda_{n_s}(T_w) = T_{n_s w}$ by definition. From this it follows immediately that
   \[ \op{ev}_{T_1}(\lambda_{n_{s_1}}\circ{}\ldots\circ{}\lambda_{n_{s_r}}\circ{}\lambda_u) = T_{n_{s_1}\ldots n_{s_r}u} \]
   if $w = n_{s_1}\ldots n_{s_r} u$, $u \in \Omega^{(1)}$ is a reduced expression. This proves surjectivity. To show injectivity let $\phi \in \Lambda$ be such that $\phi(T_1) = 0$. It suffices to show by induction on $\ell(w)$ that $\phi(T_w) = 0$ for all $w \in W^{(1)}$. For $\ell(w) = 0$ we have $w = u \in \Omega^{(1)}$ and hence
   \[ \phi(T_u) = \phi(\rho_u(T_1)) = \rho_u(\phi(T_1)) = 0 \]
   Here we have used the fact that $\rho_u$ commutes with all elements of $\Lambda$. If $\ell(w) > 0$, write $w = \widetilde{w} n_s$ with $\ell(w) = 1+\ell(\widetilde{w})$. Then
   \[ \phi(T_w) = \phi(\rho_{n_s} T_{\widetilde{w}}) = \rho_{n_s}(\phi(T_{\widetilde{w}})) = 0 \]
   where we have made use of the fact that $\rho_{n_s}$ commutes with the elements of $\Lambda$.

   By transport of structure, we now get on $M$ the structure of an $R$-algebra compatible with the given $R$-module structure. It remains to verify the conditions \eqref{eq:heckecond1} and \eqref{eq:heckecond2}. Assume $\ell(ww') = \ell(w)+\ell(w')$ and let $w = u n_{s_1}\ldots n_{s_r}$, $w' = n_{s_{r+1}} \ldots n_{s_{r+t}} u'$ be two reduced expressions. Then $\op{ev}_{T_1}(\lambda_u \lambda_{n_{s_1}} \ldots \lambda_{n_{s_r}}) = T_w$ and $\op{ev}_{T_1}(\lambda_{n_{s_{r+1}}}\ldots \lambda_{n_{s_{r+t}}} \lambda_u) = T_{w'}$ and hence
   \[ T_w T_{w'} = \op{ev}_{T_1}(\lambda_u \lambda_{n_{s_1}}\ldots \lambda_{n_{s_r}} \lambda_{n_{s_{r+1}}} \ldots \lambda_{n_{s_{r+t}}} \lambda_{u'}) = T_{u n_{s_1} \ldots n_{s_r} n_{s_{r+1}} \ldots n_{s_{r+t}} u' } = T_{ww'} \]
   The validity of \eqref{eq:heckecond2} is equivalent to
   \[ (\lambda_{n_s}\circ{} \lambda_{n_s})(T_1) = a_s \lambda_{n_s^2}(T_1) + (\lambda_{n_s}\circ{}\lambda_{b_s})(T_1) \]
   But
   \[ \lambda_{n_s}^2(T_1) = \lambda_{n_s}(T_{n_s}) = a_s T_{n_s^2} + b_s T_{n_s} \]
   by definition and
   \begin{align*} a_s \lambda_{n_s^2}(T_1) + (\lambda_{n_s}\circ{}\lambda_{b_s})(T_1) & = a_s T_{n_s^2} + \lambda_{n_s}(b_s) = a_s T_{n_s^2} + T_{n_s} b_s \\
      & = a_s T_{n_s^2} + n_s(b_s) T_{n_s} = a_s T_{n_s^2} + b_s T_{n_s}
   \end{align*}
\end{enumerate}
\end{proof}

\begin{lemma}\label{lem:swt}
   Let $W$ be an extended Coxeter group and $w \in W$, $s,t \in S$. If either
   \[ \ell(sw) = \ell(wt) < \ell(w) = \ell(swt) \]
   or
   \[ \ell(sw) = \ell(wt) > \ell(w) = \ell(swt) \]
   then
   \[ swt = w \]
\end{lemma}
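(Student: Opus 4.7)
The plan is to translate the length hypotheses into statements about which walls separate which chambers, and then to show that the wall $H := wtw^{-1}$ between the adjacent chambers $w$ and $wt$ must coincide with $H_s = s$; this immediately gives $swt = w$.

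First I would reduce to the case $w \in W_{\op{aff}}$. Writing $w = w'u$ with $w' \in W_{\op{aff}}$ and $u \in \Omega$, and setting $t' := utu^{-1} \in S$ (which lies in $S$ because $\Omega$ preserves $S$), one computes $wt = w't'u$ and $swt = sw't'u$. Since $\ell$ is invariant under right multiplication by $\Omega$, the length conditions on $(s,w,t)$ are equivalent to the corresponding conditions on $(s,w',t')$, and the conclusion $swt = w$ is equivalent to $sw't' = w'$. Thus we may replace $(w,t)$ by $(w',t')$ and assume $w \in W_{\op{aff}}$.

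Set $H := wtw^{-1} \in \mathfrak{H}$. Consider Case~1. The hypothesis $\ell(sw) < \ell(w)$ says that $H_s$ separates $1$ from $w$; the hypothesis $\ell(wt) < \ell(w)$ says that $H$, the wall between the adjacent chambers $w$ and $wt$, separates $1$ from $w$; and $\ell(swt) = \ell(sw) + 1$ says that the wall between $sw$ and $swt$, namely $(sw)t(sw)^{-1} = sHs$, does \emph{not} separate $1$ from $sw$. Assume toward a contradiction that $H \neq s$. Then the $s$-action on $\mathfrak{H} \times \{\pm 1\}$ does not flip the sign of $(H,\epsilon)$, so the left-multiplication action of $s$ on chambers sends the pair $(1,w)$ — which lies on opposite sides of $H$ — to the pair $(s,sw)$, which lies on opposite sides of $sHs$. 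Combined with the fact that $sHs$ does not separate $1$ from $sw$, we conclude that $sHs$ separates $1$ from $s$. But the only wall separating the adjacent chambers $1$ and $s$ is $H_s = s$ itself, so $sHs = s$ and hence $H = s$, contradicting $H \neq s$. Therefore $H = s$, i.e.\ $wt = sw$, i.e.\ $swt = w$.

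Case~2 is handled by the dual version of the same argument: all separation conditions get flipped, but the conclusion that $sHs$ separates $1$ from $s$ — and hence $H = s$ — goes through identically. The main point requiring care is the bookkeeping of the $s$-action on $(H,\pm)$ in the degenerate case $H = s$; but since this case is precisely the desired conclusion, no further treatment is needed.
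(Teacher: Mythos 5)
Your proof is correct, and it differs from the paper's in one respect worth noting. Both proofs perform the same reduction from extended Coxeter groups to $W_{\op{aff}}$ by factoring $w = w'u$, conjugating $t$ by $u$, and using $\Omega$-invariance of $\ell$; at that point the paper simply cites the ordinary Coxeter-group version of the lemma (\cite[Lemma~7.2]{HumReflect}), while you prove that base case directly by a wall/separation argument. Your argument is sound: by the $W$-equivariance of the separation relation, $H = wtw^{-1}$ separates $1$ from $w$ iff $sHs$ separates $s$ from $sw$, and combining this with the third length condition forces $sHs$ to separate the adjacent chambers $1$ and $s$, hence $sHs = s$ and $swt = w$. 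Two small remarks on presentation: the proof-by-contradiction framing is unnecessary — the equivariance $H$ separates $w_1,w_2 \Leftrightarrow sHs$ separates $sw_1,sw_2$ holds whether or not $H = s$, so the whole aside about $s$ ``not flipping the sign of $(H,\epsilon)$'' is a red herring and the argument goes through directly; and when you assert ``$\ell(wt) < \ell(w)$ says that $H$ separates $1$ from $w$'' it is worth spelling out that $H$ is by definition the unique wall between the adjacent chambers $w$ and $wt$, and the inequality says $wt$ lies on the same side of $H$ as $1$. Your approach has the advantage of being self-contained, at the cost of length; the paper's citation keeps the lemma short.
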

\begin{proof} For the case of ordinary Coxeter groups we refer to \cite[Lemma 7.2]{HumReflect}. We show why the statement carries over to the case of extended Coxeter groups. Assume for concreteness that we are in the first case. Write $w = w'u$ with $w' \in W_{\op{aff}}$ and $u \in \Omega$. Then
   \[ \ell(sw') = \ell(sw) = \ell(wt) = \ell(w'u(t)u) = \ell(w'u(t)) \]
   and
   \[ \ell(w') = \ell(w) = \ell(swt) = \ell(sw'u(t)u) = \ell(sw'u(t)) \]
   According to the version of this lemma for Coxeter groups we conclude that
   \[ w' = sw'u(t) \]
   and hence $w = swt$.
\end{proof}

\begin{definition}\label{def:generic pro-$p$ Hecke algebras}The $R$-algebra constructed in \cref{thm:exhecke} is called \textbf{the generic pro-$p$ Hecke algebra for the parameters $a = (a_s)_s, b = (b_s)_s$} and is denoted by $\mathcal{H}_R^{(1)}(a,b)$.
\end{definition}

\begin{rmk}
      Because of \cref{rmk:ex_weyl_walls} and the condition \eqref{eq:condstar}, we can extend the family $(a_s)_{s \in S}$ to a family $(a_H)_{H \in \mathfrak{H}}$ by putting
   \[ a_H := a_s \]
   if $s \in S$ is an element conjugate to $H \in \mathfrak{H}$ under $W$.
\end{rmk}

\begin{rmk}\label{rmk:condstar}
   The condition \eqref{eq:condstar} of \cref{thm:exhecke} is easily seen to be equivalent to the following two conditions.
   \begin{enumerate}
      \item For any $s,t \in S$ which are conjugate under $W$ we have
   \[ a_s = a_t \]
   and for \textit{some} $w \in W^{(1)}$ with $s \pi(w) = \pi(w) t$ we have
   \[ (n_s w n_t^{-1} w^{-1}) w(b_t) = b_s \]
\item For every $s \in S$ and every $t \in T$ we have
   \[ s(t)t^{-1} b_s = b_s \]
   and for every $w \in W$ with $s w = w s$ we have
   \[ (n_s \widetilde{w} n_s^{-1} \widetilde{w}^{-1}) w(b_s) = b_s \]
   for \textit{some} lift $\widetilde{w} \in W^{(1)}$ of $w$ under $\pi: W^{(1)} \rightarrow W$.
   \end{enumerate}
\end{rmk}

\begin{example}\label{ex:yokonuma}
   The main examples of generic pro-$p$ Hecke algebras that motivated their introduction and the terminology are the double coset convolution algebras $H(G,I^{(1)})$ associated to pro-$p$-Iwahori subgroups $I^{(1)} \leq G$ of reductive groups. These will be considered in detail in the next section (\cref{ex:pro-$p$-Iwahori Hecke algebras}).
   Let us therefore consider here other important examples.
   \begin{enumerate}
      \item[(i)] Every Coxeter group $W$ can be viewed as a pro-$p$ Coxeter group with $T = \Omega = 1$ and $n_s = s$. The generic pro-$p$ Hecke algebra $\mathcal{H}^{(1)}_R((a_s)_s,(b_s)_s)$ then coincides with the classical generic Hecke algebra associated to the Coxeter group $W$ and the families $(a_s)_s, (b_s)_s \in R$ of parameters. In the notation of \cite[Ch. IV, Exercices \textsection 2, Ex. 23]{Bourbaki} we have
         \[ \mathcal{H}^{(1)}_R((a_s)_s,(b_s)_s) = E_R((b_s),(a_s)) \] 
      \item[(ii)] Given a Coxeter group $W$ and an action
         \[ W \longrightarrow \op{GL}_\Z(T) \]
         on an abelian group $T$ by group automorphisms, we get a pro-$p$ Coxeter group $W^{(1)}$ with $\Omega = 1$ by forming the semi-direct product $W^{(1)} = T\rtimes W$ and letting $n_s = s$.
         
         Generic pro-$p$ Hecke algebras of this type include Yokonuma-Hecke algebras $Y_{d,n}$ ($d,n \in \N$). These are algebras over $R = \C[u^{\pm 1},v]$ generated by elements (cf. \cite{Jacon-dAndecy})
         \[ g_1, \ldots , g_{n-1}, t_1,\ldots ,t_n \]
         subject to the relations
         \begin{alignat*}{2} g_i g_j & = g_j g_i & \quad & \text{for all $i,j = 1,\ldots ,n-1$ such that $|i-j| > 1$} \\
         g_i g_{i+1} g_i & = g_{i+1}g_i g_{i+1} & \quad & \text{for all $i = 1,\ldots ,n-2$} \\
         t_i t_j & = t_j t_i & \quad & \text{for all $i,j = 1,\ldots ,n$} \\
         g_i t_j & = t_{s_i(j)} g_i & \quad & \text{for all $i = 1,\ldots ,n-1$ and $j = 1,\ldots ,n$} \\
         t_j^d & = 1 & \quad & \text{for all $j = 1,\ldots ,n$} \\
         g_i^2 & = u^2 + ve_i g_i & \quad & \text{for all $i = 1,\ldots ,n-1$}
      \end{alignat*}
      where $s_i \in S_n$ denotes the transposition $(i\ i+1)$ and $e_i$ is given by
      \[ e_i = \frac{1}{d}\sum_{0 \leq s < d} (t_i/t_{i+1})^s \]
      In order to relate these to generic pro-$p$ Hecke algebras, let $W$ be the Coxeter group $S_n$ with the standard generators $S = \{s_1,\ldots ,s_{n-1}\}$ and let $T$ be the finite abelian group $T = (\Z/d\Z)^n$. Then we get an isomorphism
      \[ Y_{d,n} \stackrel{\sim}{\longrightarrow} \mathcal{H}^{(1)}_R((a_s)_s,(b_s)_s) \]
      of $R$-algebras by sending $g_i$ to $T_{n_{s_i}} = T_{s_i}$ and $t_j$ to the element of $T$ denoted by the same letter and given component-wise by $(t_j)_i = \overline{\delta_{ij}} \in \Z/d\Z$, if we let
      \[ a_{s_i} = u^2 \in R \quad\quad i = 1,\ldots ,n-1 \]
      and
      \[ b_{s_i} = \frac{v}{d}\sum_{s \in \Z/d\Z} (t_i/t_{i+1})^s \in R[T] \quad\quad i = 1,\ldots ,n-1 \]
   \end{enumerate}
\end{example}

\begin{rmk}\label{rmk:base_change_and_universal_coefficients}
   \begin{enumerate}
      \item Given a ring $R$ and families $a = (a_s)_{s \in S} \in R$, $b = (b_s)_{s \in S} \in R[T]$ satisfying condition \eqref{eq:condstar}, it is clear that for any ring homomorphism $\varphi: R \rightarrow R'$ the image families $\varphi(a) = (\varphi(a_s))_{s \in S} \in R'$ and $\varphi(b) = (\varphi(b_s))_{s \in S} \in R'[T]$ again satisfy condition \eqref{eq:condstar}. Moreover, the natural homomorphism of $R'$-algebras
   \begin{align*} \mathcal{H}^{(1)}_R(a,b)\otimes_R R' & \longrightarrow \mathcal{H}^{(1)}_{R'}(\varphi(a),\varphi(b)) \\
      T_w \otimes x & \longmapsto \varphi(x) T_w
   \end{align*}
   is an isomorphism, as it is a bijection on the canonical $R'$-bases on both sides.
\item Given a pro-$p$ Coxeter group $W^{(1)}$, let $\mathcal{R}(W^{(1)})$ denote the following category. Objects of $\mathcal{R}(W^{(1)})$ consists of triples $(R,a,b)$ where $R$ is a ring and $a = (a_s)_{s \in S} \in R$ and $b = (b_s)_{s \in S} \in R[T]$ are parameters satisfying condition \eqref{eq:condstar}. A morphism $f: (R,a,b) \rightarrow (R',a',b')$ is a ring homomorphism $f: R \rightarrow R'$ preserving the parameters
   \[ f(a_s) = a'_s,\quad f[T](b_s) = b'_s \quad \forall s \in S \]
   Here $f[T]: R[T] \rightarrow R'[T]$ denotes the induced ring homomorphism.
   
   If the group $T$ is finite, the category $\mathcal{R}(W^{(1)})$ has an initial object $R^{\op{univ}}$ given as follows. Consider the polynomial ring
   \[ R = \Z[\{\mathbf{a}_s, \mathbf{b}_{s,t} : s \in S,\ t \in T \}] \]
   in the formal variables $\mathbf{a}_s$ and $\mathbf{b}_{s,t}$. Let
   \[ \mathbf{b}_s := \sum_{t \in T} \mathbf{b}_{s,t}\cdot t \in R[T] \]
   The families $(\mathbf{a}_s)_{s \in S} \in R_0$ and $(\mathbf{b}_s)_{s \in S} \in R_0[T]$ do not satisfy condition \eqref{eq:condstar} in general. However, condition \eqref{eq:condstar} is equivalent to a set of relations of the form
   \[ \mathbf{a}_s = \mathbf{a}_{s'} \quad \text{and}\quad \mathbf{b}_{s,t} = \mathbf{b}_{s',t'} \]
   Letting $p: R \twoheadrightarrow R^{\op{univ}}$ denote the quotient of $R$ by the ideal generated by these relations, we obtain a well-defined object
   \[ R^{\op{univ}} = (R^{\op{univ}},a^{\op{univ}},b^{\op{univ}}) := (R^{\op{univ}},(p(\mathbf{a}_s))_{s \in S},(p[T](\mathbf{b}_s))_{s \in S}) \]
   of the category $\mathcal{R}(W^{(1)})$. It is clear from the construction that this object is initial. Moreover, by construction $R^{\op{univ}}$ is the polynomial ring over $\Z$ on a set of formal variables, that is a quotient of the set $S\amalg (S\times T)$. In particular $R^{\op{univ}}$ is noetherian if $\# S < \infty$.
\item By the above remarks, when $T$ is finite, every generic pro-$p$ Hecke algebra $\mathcal{H}^{(1)}_R(a,b)$ over a ring $R$ is naturally obtained by base change
   \[ \mathcal{H}^{(1)}_{R^{\op{univ}}}(a^{\op{univ}},b^{\op{univ}}) \otimes_{R^{\op{univ}}} R \stackrel{\sim}{\longrightarrow} \mathcal{H}^{(1)}_R(a,b) \]
   from the \textit{universal} generic pro-$p$ Hecke algebra $\mathcal{H}^{(1)}_{R^{\op{univ}}}(a^{\op{univ}},b^{\op{univ}})$ over $R^{\op{univ}}$. This allows to reduce many statements about generic pro-$p$ Hecke algebras to the `universal case'. In particular when we will study the structure of \textit{affine} pro-$p$ Hecke algebras (in which case $S$ and $T$ are finite) in \cref{sec:Affine pro-$p$ Hecke algebras}, this will allow us to reduce to the case of a noetherian coefficient ring $R$.
   \end{enumerate}
\end{rmk}

\subsection{Presentations of generic pro-\texorpdfstring{$p$}{p} Hecke algebras via braid groups} 
\label{sub:Presentations of generic pro-$p$ Hecke algebras via (generalized) braid groups}
Generic Iwahori-Hecke algebras can be described as quotients of monoid algebras of braid monoids (see \cite[4.4.1]{GeckPfeiffer}). The same holds true for generic pro-$p$ Hecke algebras if one introduces the appropriate analogue of braid monoids in the context of pro-$p$ Coxeter groups.

\begin{definition}\label{def:generalized_braid_group} Let $W^{(1)}$ be a pro-$p$ Coxeter group.
   \begin{enumerate}
      \item The \textbf{(generalized) braid monoid $\mathfrak{B}(W^{(1)})$ associated to $W^{(1)}$} is the monoid with presentation
         \[ \mathfrak{B}(W^{(1)}) = \left< \{T_w\}_{w \in W^{(1)}} : T_{ww'} = T_w T_w' \text{ if } \ell(ww') = \ell(w)+\ell(w') \right> \]
      \item The \textbf{(generalized) braid group $\mathfrak{A}(W^{(1)})$ associated to $W^{(1)}$} is the group with presentation
         \[ \mathfrak{A}(W^{(1)}) = \left< \{T_w\}_{w \in W^{(1)}} : T_{ww'} = T_w T_w' \text{ if } \ell(ww') = \ell(w)+\ell(w') \right> \]
   \end{enumerate}
\end{definition}
By \eqref{eq:heckecond1}, the canonical map $\{T_w\}_{w \in W^{(1)}} \rightarrow \mathcal{H}_R^{(1)}(a,b)$ of sets extends to a morphism
\[ \mathfrak{B}(W^{(1)}) \longrightarrow \mathcal{H}_R^{(1)}(a,b) \]
of monoids which in turn induces a morphism
\[ R[\mathfrak{B}(W^{(1)})] \longrightarrow \mathcal{H}_R^{(1)}(a,b) \]
of $R$-algebras. Let $\mathfrak{b}$ denote the two-sided ideal in $R[\mathfrak{B}(W^{(1)})]$ generated by all elements of the form $T_{n_s}^2 - a_s T_{n_s^2} - T_{n_s} b_s$ , where $s$ runs over all elements of $S$. By \eqref{eq:heckecond2}, we have an induced morphism
\[ \phi: R[\mathfrak{B}(W^{(1)})]/\mathfrak{b} \longrightarrow \mathcal{H}_R^{(1)}(a,b) \]
\begin{prop}\label{prop:first_presentation_of_hecke}
   The above map $\phi$ is an isomorphism of $R$-algebras.
\end{prop}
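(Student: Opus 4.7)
The plan is to establish surjectivity and injectivity separately, the former being immediate and the latter via a dimension/spanning argument. Surjectivity holds because the canonical generators $T_w \in \mathfrak{B}(W^{(1)})$ map under $\phi$ to the $R$-basis $\{T_w\}_{w \in W^{(1)}}$ of $\mathcal{H}_R^{(1)}(a,b)$, which certainly spans the target.

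For injectivity, I will build an $R$-linear section. Since $\mathcal{H}_R^{(1)}(a,b)$ is \emph{free} as an $R$-module on $\{T_w\}_{w \in W^{(1)}}$, there is a unique $R$-linear map
\[ \psi:\ \mathcal{H}_R^{(1)}(a,b) \longrightarrow R[\mathfrak{B}(W^{(1)})]/\mathfrak{b} \]
sending the basis element $T_w$ to the class $[T_w]$. By construction $\phi\circ\psi$ is the identity on the basis of $\mathcal{H}_R^{(1)}(a,b)$, hence $\phi\circ\psi=\op{id}$. To conclude that $\phi$ is an isomorphism, it therefore suffices to prove that $\psi$ is surjective, i.e.\ that the set $\{[T_w]\}_{w \in W^{(1)}}$ already generates $R[\mathfrak{B}(W^{(1)})]/\mathfrak{b}$ as an $R$-module.

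The generation will be proved by a rewriting argument, adapting verbatim the uniqueness part of the proof of \cref{thm:exhecke}. An arbitrary element of $R[\mathfrak{B}(W^{(1)})]$ is an $R$-linear combination of monoid-words $T_{w_1}\cdots T_{w_k}$, so by induction on $k$ and associativity it is enough to show that any product $T_v\cdot T_{w}$ (with $v,w \in W^{(1)}$) lies in the $R$-span of $\{[T_u]\}_{u \in W^{(1)}}$ modulo $\mathfrak{b}$. Writing $v = n_{s_1}\cdots n_{s_r} u$ as a reduced expression and using relation \eqref{eq:heckecond1} (which holds in $R[\mathfrak{B}(W^{(1)})]/\mathfrak{b}$ by the very definition of $\mathfrak{B}(W^{(1)})$), the question is reduced to computing products of the form $[T_{n_s}]\cdot [T_w]$ with $s \in S$. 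This is handled by induction on $\ell(w)$: if $\ell(n_s w) = \ell(w)+1$, then $[T_{n_s}][T_w] = [T_{n_s w}]$ by \eqref{eq:heckecond1}; otherwise, write $w = n_s \widetilde{w}$ with $\ell(\widetilde{w}) = \ell(w)-1$, and apply the quadratic relation \eqref{eq:heckecond2} (which holds modulo $\mathfrak{b}$ by definition) together with the braid-monoid relations and the commutation rule coming from \eqref{eq:heckecond1} restricted to $T \subseteq W^{(1)}$, to reduce the product to combinations of shorter terms. The procedure terminates because $\ell(\widetilde{w}) < \ell(w)$, and at each step one stays within the $R[T]$-linear combinations of $[T_u]$'s.

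The expected main obstacle is purely bookkeeping: ensuring that the coefficients produced by the rewriting procedure (which live a priori in $R[T]$ rather than in $R$) can indeed be reorganised into $R$-linear combinations of basis classes $[T_u]$. This is taken care of by observing that $R[T]$ is generated as an $R$-module by elements $t \in T \subseteq \Omega^{(1)} \subseteq W^{(1)}$, and that by \eqref{eq:heckecond1} we have $[T_t]\cdot [T_u] = [T_{tu}]$ and $[T_u]\cdot[T_t] = [T_{ut}]$ for $t \in T$, so intermediate $R[T]$-coefficients reabsorb into further $[T_u]$'s. Once the spanning statement is established, $\psi$ is surjective, whence $\psi\circ\phi = \op{id}$ (as both sides agree on the spanning set $\{[T_w]\}$), completing the proof that $\phi$ is an isomorphism of $R$-algebras.
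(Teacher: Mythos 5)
Your proof is correct and follows essentially the same strategy as the paper's: define the $R$-linear map $\psi$ on the free basis $\{T_w\}$ of $\mathcal{H}_R^{(1)}(a,b)$ by $T_w \mapsto [T_w]$, and reduce everything to the claim that the classes $\{[T_w]\}_{w \in W^{(1)}}$ already span $R[\mathfrak{B}(W^{(1)})]/\mathfrak{b}$ as an $R$-module (which the paper dispatches in a single sentence by appeal to the quadratic relations, and which you make explicit via the rewriting induction borrowed from the uniqueness part of \cref{thm:exhecke}). The only cosmetic difference is that you verify $\phi\circ\psi=\op{id}$ first and then deduce $\psi\circ\phi=\op{id}$ from the spanning claim, whereas the paper goes directly for $\psi\circ\phi=\op{id}$ on the generators; both are equivalent once the spanning fact is in hand.
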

\begin{proof}
   The proof is standard (cf. \cite{GeckPfeiffer}). Obviously $\phi$ is surjective. It therefore suffices to show that $\phi$ has a left inverse $\psi$. Because $\mathcal{H}_R^{(1)}(a,b)$ is a free $R$-module over $\{T_w\}_{w \in W^{(1)}}$, we have a map $\psi$ which associates to any element $T_w$ of the basis the image of the generator $T_w$ of the braid monoid in the quotient $R[\mathfrak{B}(W^{(1)})]/\mathfrak{b}$. Obviously the equation $(\psi \circ{} \phi)(x) = x$ is satisfied for said images of the generators of $\mathfrak{B}(W^{(1)})$. But because of the quadratic relations, these images already generate the quotient $R[\mathfrak{B}(W^{(1)})]/\mathfrak{b}$ as an $R$-module. Hence, $\psi \circ{} \phi = \op{id}$.
\end{proof}

When the parameters $a_s$ are units in $R$, the generic pro-$p$ Hecke algebra has a second presentation in terms of $\mathfrak{A}(W^{(1)})$. In fact, in this case $T_{n_s} \in \mathcal{H}_R^{(1)}(a,b)^\times$ with inverse
\[ T_{n_s}^{-1} = a_s^{-1}(T_{n_s^{-1}} - b_s T_{n_s^{-2}}) \]
This implies that for every $w \in W^{(1)}$ we have $T_w \in \mathcal{H}_R^{(1)}(a,b)^\times$, since
\[ T_w = T_{n_{s_1}}\ldots T_{n_{s_r}} T_u \]
if $w = n_{s_1}\ldots n_{s_r}u$, $u \in \Omega^{(1)}$ is any expression with $r = \ell(w)$. Just as before, we get an induced morphism
\[ \varphi: R[\mathfrak{A}(W^{(1)})]/\mathfrak{a} \longrightarrow \mathcal{H}_R^{(1)}(a,b) \]
of $R$-algebras, where $\mathfrak{a}$ denotes the two-sided ideal generated by $T_{n_s}^2 - a_s T_{n_s^2} - T_{n_s}b_s$, $s \in S$. The same arguments as in the previous proposition show that
\begin{prop}\label{prop:second_presentation_of_hecke}
   The above morphism $\varphi$ is an isomorphism of $R$-algebras.
\end{prop}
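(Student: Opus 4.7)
The plan is to mirror the proof of \cref{prop:first_presentation_of_hecke}. Surjectivity of $\varphi$ is immediate, since the images of $T_w$, $w \in W^{(1)}$, already generate $\mathcal{H}_R^{(1)}(a,b)$ as an $R$-module. For injectivity, I construct a left inverse $\psi: \mathcal{H}_R^{(1)}(a,b) \to R[\mathfrak{A}(W^{(1)})]/\mathfrak{a}$ by $\psi(T_w) := \overline{T_w}$, using the fact that $\{T_w\}_{w \in W^{(1)}}$ is a free $R$-basis of the Hecke algebra. It is clear that $\psi \circ \varphi$ agrees with the identity on each $\overline{T_w}$, so it remains to show that these images generate $R[\mathfrak{A}(W^{(1)})]/\mathfrak{a}$ as an $R$-module.

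The main (and only nontrivial) step is therefore to check that this $R$-module generation holds in the quotient. The algebra $R[\mathfrak{A}(W^{(1)})]/\mathfrak{a}$ is generated as an $R$-algebra by the images $\overline{T_w}$ and their inverses $\overline{T_w}^{-1}$, so it suffices to express each $\overline{T_w}^{-1}$ as an $R$-linear combination of elements $\overline{T_{w'}}$. For $u \in \Omega^{(1)}$ this is automatic because $\ell(u) = \ell(u^{-1}) = 0$ gives $T_u T_{u^{-1}} = T_1$ already in $\mathfrak{B}(W^{(1)})$, hence $\overline{T_u}^{-1} = \overline{T_{u^{-1}}}$. For a simple generator $n_s$ the invertibility of $a_s$ enters: in the quotient, the quadratic relation yields
\[ \overline{T_{n_s}}\bigl(\overline{T_{n_s}} - b_s\bigr) = a_s \overline{T_{n_s^2}}, \]
and since $n_s^2 \in T$ has length zero, $\overline{T_{n_s^2}}^{-1} = \overline{T_{n_s^{-2}}}$, while $\ell(n_s) + \ell(n_s^{-2}) = 1 = \ell(n_s^{-1})$ gives $\overline{T_{n_s}}\,\overline{T_{n_s^{-2}}} = \overline{T_{n_s^{-1}}}$. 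Combining these identities one obtains the explicit formula
\[ \overline{T_{n_s}}^{-1} = a_s^{-1}\bigl(\overline{T_{n_s^{-1}}} - b_s\, \overline{T_{n_s^{-2}}}\bigr), \]
which lies in the $R$-span of the $\overline{T_w}$.

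Finally, for a general $w \in W^{(1)}$, I pick a reduced expression $w = n_{s_1} \cdots n_{s_r} u$ with $u \in \Omega^{(1)}$ and use the braid relations of $\mathfrak{A}(W^{(1)})$ to write $\overline{T_w}^{-1} = \overline{T_u}^{-1}\,\overline{T_{n_{s_r}}}^{-1} \cdots \overline{T_{n_{s_1}}}^{-1}$ as a product of elements each of which is an $R$-linear combination of the $\overline{T_{w'}}$, hence itself such a combination. This proves the $R$-span of $\{\overline{T_w}\}_{w \in W^{(1)}}$ is stable under multiplication by all inverses, so it equals the whole quotient. Then $\psi \circ \varphi = \op{id}$ on an $R$-generating set, and $\varphi$ is injective. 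The expected main obstacle is precisely the manipulation above, which requires $a_s \in R^\times$ and a careful bookkeeping of lengths to ensure the relevant products in $\mathfrak{B}(W^{(1)})$ satisfy the concatenation rule \eqref{eq:heckecond1} needed to rewrite $\overline{T_{n_s}}\,\overline{T_{n_s^{-2}}}$ as a single basis element.
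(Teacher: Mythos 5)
Your proof is correct and follows the route the paper intends when it says ``the same arguments as in the previous proposition.'' The new ingredient over \cref{prop:first_presentation_of_hecke} is precisely the verification that $\overline{T_{n_s}}^{-1}$ already lies in the $R$-span of the $\overline{T_w}$, and your derivation of $\overline{T_{n_s}}^{-1} = a_s^{-1}\bigl(\overline{T_{n_s^{-1}}} - b_s\,\overline{T_{n_s^{-2}}}\bigr)$ \emph{inside the quotient} $R[\mathfrak{A}(W^{(1)})]/\mathfrak{a}$ (rather than in $\mathcal{H}^{(1)}$, where the paper states this formula right before the proposition) is the right thing to check, and it is checked correctly: the length bookkeeping $\ell(n_s^2)=\ell(n_s^{-2})=0$ and $\ell(n_s)+\ell(n_s^{-2})=\ell(n_s^{-1})$ legitimizes the concatenation rule, and invertibility of $a_s$ enters exactly where it must.

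One point to state explicitly rather than leave implicit: knowing that every $\overline{T_w}^{-1}$ lies in the $R$-span $M$ of $\{\overline{T_{w'}}\}$ does not by itself show $M$ is all of $R[\mathfrak{A}(W^{(1)})]/\mathfrak{a}$ -- you also need that $M$ is closed under multiplication by the $\overline{T_{w'}}$ themselves, so that products $\overline{T_{w_1}}^{\varepsilon_1}\cdots\overline{T_{w_k}}^{\varepsilon_k}$ (hence the whole quotient) land back in $M$. This closure is the content of the ``because of the quadratic relations'' step in the proof of \cref{prop:first_presentation_of_hecke} (an induction on length using the braid relations and the quadratic relation). Since you frame your argument as mirroring that proof and only elaborating the new inverse ingredient, this is a presentational nit rather than a gap, but the phrase ``so it suffices to express each $\overline{T_w}^{-1}$ as an $R$-linear combination'' reads as a complete reduction when it isn't quite one; it would be cleaner to say ``together with the closure under products already established in \cref{prop:first_presentation_of_hecke}, it suffices\dots''.
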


\begin{example}\label{ex:yokonuma-pres}
   \begin{enumerate}
      \item Continuing the examples given in \cref{ex:yokonuma}, if we take $W^{(1)} = W = S_n$ to be the symmetric group on $n$ letters in the first example, the associated generalized braid group $\mathfrak{A}(W^{(1)})$ identifies canonically with the classical \textbf{Artin braid group} $B_n$ on $n$ strands. The above presentation then relates the representation theory of finite Hecke algebras associated to $S_n$ to invariants of braids and hence of links, via the construction which associates to a braid its link closure.

      \item In the second example of \cref{ex:yokonuma} the generalized braid group $\mathfrak{A}(W^{(1)})$ of $W^{(1)} = (\Z/d\Z)^n \rtimes S_n$ identifies canonically with the \textbf{$d$-modular framed braid group} $(\Z/d\Z)^n \rtimes B_n$ on $n$ strands, where $B_n$ acts on $(\Z/d\Z)^n$ by permutation. The above presentation then relates the representation theory of the Yokonuma Hecke algebra $Y_{d,n}$ to invariants of framed braids and links (see \cite{Jacon-dAndecy}). The special interest in \textit{framed} braids and links arises from the fact \cite{Kirby} that $3$-manifolds are classified up to homeomorphism (or equivalently, up to diffeomorphism) by framed links up to a certain equivalence.
   \end{enumerate}
\end{example}

\subsection{Orientations of Coxeter groups} 
\label{sub:Orientations of Coxeter groups}
The following definition is motivated by \cref{thm:ram}.

\begin{definition}\label{def:or}
   An \textbf{orientation} $\mathfrak{o}$ of a Coxeter group $(W,S)$ is a map
   \[ \mathfrak{o}: W \times S \longrightarrow \{\pm 1\} \]
   satisfying the following two properties:
\item[\textbf{(OR1)}] \label{axiom:OR1} $\mathfrak{o}(ws,s) = -\mathfrak{o}(w,s)$ for all $w \in W$, $s \in S$.
\item[\textbf{(OR2)}] \label{axiom:OR2} If $s,t \in S$ with $m(s,t) < \infty$ and $w \in W$ is arbitrary, then the sequences
         \[ (\mathfrak{o}(w,s),\mathfrak{o}(ws,t),\mathfrak{o}(wst,s),\ldots),\quad (\mathfrak{o}(w,t),\mathfrak{o}(wt,s),\mathfrak{o}(wts,t),\ldots) \]
         are either of the form
         \[ (\underbrace{+,\ldots,+}_k,\underbrace{-,\ldots,-}_{m(s,t)-k}),\quad (\underbrace{-,\ldots,-}_{m(s,t)-k},\underbrace{+,\ldots,+}_k) \]
         or
         \[ (\underbrace{-,\ldots,-}_k,\underbrace{+,\ldots,+}_{m(s,t)-k}),\quad (\underbrace{+,\ldots,+}_{m(s,t)-k},\underbrace{-,\ldots,-}_k) \]
         for some $0 \leq k \leq m(s,t)$.

         The set of all orientations of a Coxeter group $(W,S)$ \textbf{is denoted by} $\mathcal{O}(W,S)$, or simply by $\mathcal{O}$ if the underlying Coxeter group is understood.
\end{definition}
\begin{terminology}\label{term:orient}
   Viewing elements $w \in W$ of Coxeter groups as chambers according to the terminology introduced in \cref{term:basic}, the sign $\mathfrak{o}(w,s)$ should be interpreted geometrically as follows. The sequence $w, ws$ of adjacent chambers forms a gallery that crosses the hyperplane $H = wsw^{-1}$. We will say that $\mathfrak{o}(w,s)$ is the \textit{sign given to this crossing by the orientation} $\mathfrak{o}$, or that it is the \textit{sign attached to crossing} $H$ at $w$ by the orientation $\mathfrak{o}$. The axiom \axiom{OR1} therefore ensures that the sign attached to the opposite crossing $ws,w$ is opposite. 
\end{terminology}
\begin{rmk}\label{rmk:involution-on-orientations}
   \Cref{def:or} is inherently symmetric: to any orientation $\mathfrak{o}: W\times S \rightarrow \{\pm\}$ one can associate its \textit{opposite orientation} $\mathfrak{o}^{\op{op}}: W\times S \rightarrow \{\pm\}$ given by $\mathfrak{o}^{\op{op}}(w,s) = -\mathfrak{o}(w,s)$.
\end{rmk}

\begin{rmk}\label{rmk:or_geom_inter}
   \Cref{def:or} can be interpreted in terms of the (undirected) Cayley graph $\Gamma$ of $(W,S)$. Recall (cf. \cite[Def. 1.73]{AbramenkoBrown}) that $\Gamma$ is the undirected graph with $\op{Vert}(\Gamma) = W$ and $\{w_1,w_2\} \in \op{Edge}(\Gamma)$ iff $w_1^{-1}w_2 \in S$. By \axiom{OR1}, an orientation $\mathfrak{o}$ of $W$ now determines an \textit{orientation} of $\Gamma$ \textit{in the sense of graph theory}, i.e. it determines a directed graph $\Gamma_\mathfrak{o}$ whose underlying undirected graph equals $\Gamma$, if one lets
   \[ (w_1,w_2) \in \op{Edge}(\Gamma_\mathfrak{o}) \quad \Leftrightarrow \quad w_1^{-1}w_2 \in S \wedge \mathfrak{o}(w_1,w_1^{-1}w_2) = +1 \]
   In terms of $\Gamma_\mathfrak{o}$, \axiom{OR2} means that every cycle $\gamma \subseteq \Gamma$ of the form
   \[ \gamma = \{w, ws, wst, wsts, \dots, w(st)^{m(s,t)-1}\},\quad s,t \in S,\ m(s,t) < \infty \]
   is `oriented towards' some vertex $w_0 \in \gamma$, as indicated in \cref{fig:orientation-of-cayley-graph}.

   \begin{figure}[t]
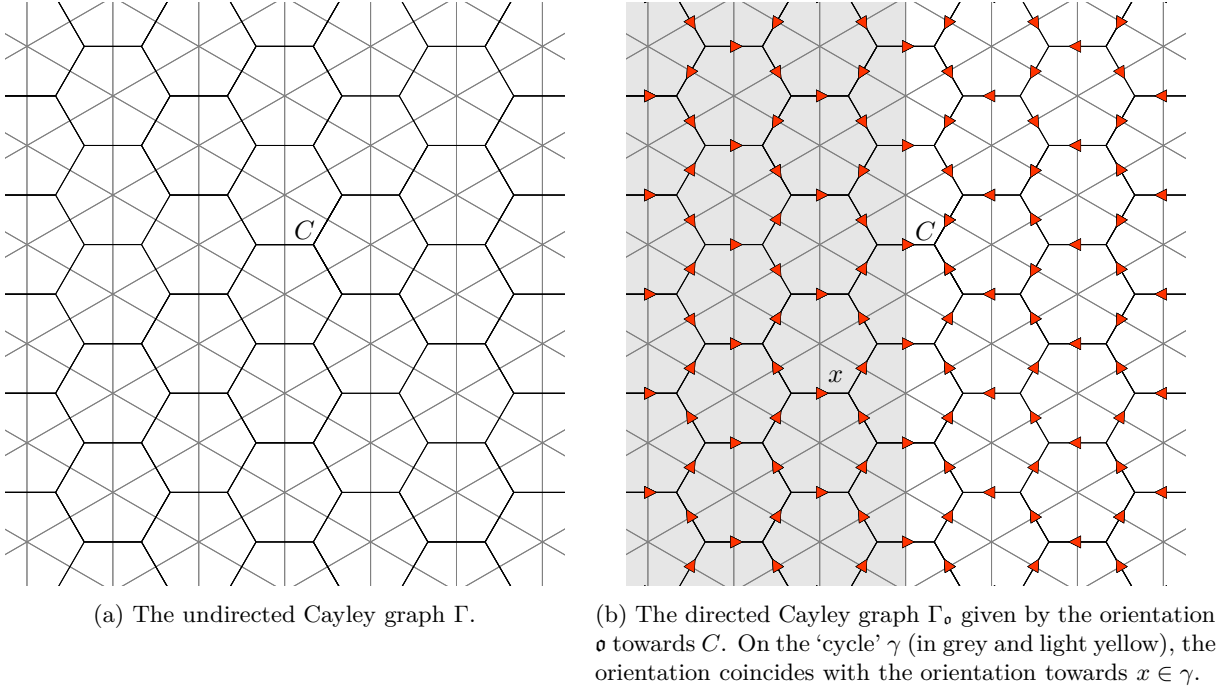

      \centering
      \begin{subfigure}[t]{0.5\textwidth}
         \centering
         \psfrag{C}[cc][bl]{$C$}
         \includegraphics[width=0.9\textwidth]{cayley-graph.eps}
         \caption{The undirected Cayley graph $\Gamma$.}
      \end{subfigure}%
      \begin{subfigure}[t]{0.5\textwidth}
         \centering
         \psfrag{C}[cc][bl]{$C$}
         \psfrag{x}[cc][bl]{$x$}
         \includegraphics[width=0.9\textwidth]{orientations-and-cayley-graph.eps}
         \caption{The directed Cayley graph $\Gamma_\mathfrak{o}$ given by the orientation $\mathfrak{o}$ towards $C$. On the `cycle' $\gamma$ (in grey and light yellow), the orientation coincides with the orientation towards $x \in \gamma$.}
      \end{subfigure}
      \caption{The Coxeter complex of the affine Coxeter group $(W,S)$ of type $\widetilde{A}_2$ and its Cayley graph $\Gamma$. The orientation $\mathfrak{o}$ of $(W,S)$ \textit{towards} the chamber $C$ (\cref{def:chamber_orientations}) determines an orientation of the Cayley graph, giving rise to the directed Cayley graph $\Gamma_\mathfrak{o}$. The condition \axiom{OR2} ensures that restriced to any `cycle' $\gamma \subseteq \mathfrak{o}$, the orientation coincides with the orientation towards a chamber $x \in \gamma$.} 
      \label{fig:orientation-of-cayley-graph}
   \end{figure}
\end{rmk}

\begin{rmk}\label{rmk:or_action}
   There exists a natural right action on the set of all orientations of $(W,S)$. Given an orientation $\mathfrak{o}$ and $w \in W$, it follows easily that the function $\mathfrak{o}\bullet{}w$ defined by
   \[ (\mathfrak{o}\bullet{}w)(w',s) := \mathfrak{o}(ww',s) \]
   is again an orientation of $W$. Moreover, it is clear that this action commutes with the involution $\mathfrak{o} \mapsto \mathfrak{o}^{\op{op}}$.
\end{rmk}

\begin{rmk}\label{rmk:set_of_orientations_is_closed}
   The set $\mathcal{O}(W,S)$ of orientations of a Coxeter group $(W,S)$ naturally carries the structure of a topological space, in fact that of a compact Hausdorff space. Namely, we can view it as a subspace of the mapping space $\{\pm\}^{W\times S}$ endowed with the compact-open topology\footnote{Which can also be viewed as the as the product space $\prod_{(w,s) \in W\times S} \{\pm\}$.}, where $\{\pm\}$ and $W\times S$ are considered discrete. By definition, a basis of the topology on $\{\pm\}^{W\times S}$ is given by
   \[ U_{\{x_i\},\{y_i\}} = \{ f \in \{\pm\}^{W\times S} : f(x_i) = y_i\quad \forall i = 1,\ldots{},n \} \]
   where $\{x_1,\ldots{},x_n\} \subseteq W\times S$ and $\{y_1,\ldots{},y_n\} \subseteq \{\pm\}$ are finite subsets.
   
   It is then easy to see that the set of orientations of $(W,S)$ forms a \textit{closed} (and hence compact) subspace of $\{\pm\}^{W\times S}$, as the conditions \axiom{OR1} and \axiom{OR2} involve only finitely many elements of $W\times S$ at a time. This is of course trivial when $W$ is finite, but this fact will be useful later when we will construct \textit{spherical orientations} of affine Coxeter groups, which are obtained as limits of orientations associated to chambers, in the sense of the definition below.
\end{rmk}

\begin{definition}\label{def:chamber_orientations}
   Given a chamber $w_0 \in W_{\op{aff}}$ let $\mathfrak{o}_{w_0}: W\times S \rightarrow \{\pm\}$ be the map defined by
   \[ \mathfrak{o}_{w_0}(w,s) := \begin{cases}
      +1 & : \ell(w_0^{-1}ws) < \ell(w_0^{-1}w) \\
      -1 & : \ell(w_0^{-1}ws) > \ell(w_0^{-1}w) \end{cases}
   \]
   Then $\mathfrak{o}_{w_0}$ is called the \textbf{orientation associated to the chamber $w_0$} or the \textbf{orientation towards the chamber $w_0$} (cf. \cref{fig:orientation-of-cayley-graph}).
\end{definition}

\begin{rmk}\label{rmk:chamber_orientations}
   The $\mathfrak{o}_{w_0}$ are indeed orientations in the sense of \cref{def:or}. In particular, the set of orientations of a Coxeter group is always non-empty. Indeed, by construction we have $\mathfrak{o}_{ww'} = \mathfrak{o}_{w'} \bullet{} w^{-1}$, so it suffices to verify that $\mathfrak{o}_1$ is an orientation. Obviously condition \axiom{OR1} holds true. An exercise in Coxeter groups \cite[Ch. IV, Exercices \textsection 1, Ex. 2]{Bourbaki} now shows that for any given $w \in W$ and $s,t \in S$ we can always find an element $w_0 \in w\left<s,t\right>$ such that
\[ \ell(w_0) = \ell(w') + \ell(w_0^{-1}w') \]
for every $w' \in w\left<s,t\right>$. So approaching $1$ is the same as moving further away from $w_0$. By \cref{rmk:or_geom_inter}, it follows that $\mathfrak{o}_1$ is an orientation.
\end{rmk}

\begin{rmk}\label{rmk:orientations_away_from}
   The orientation $\mathfrak{o}_w$ defined in \cref{def:chamber_orientations} is not the only orientation naturally attached to an element $w \in W$. One can just as well define an orientation `away from the chamber $w$', which in fact is noneother than the opposite orientation $\mathfrak{o}_w^{\op{op}}$, and so there is no need for a separate definition.
   
   Moreover when $W$ is a finite group, every orientation away from a chamber is in fact also an orientation towards another chamber, namely
   \[ \mathfrak{o}_w^{\op{op}} = \mathfrak{o}_{w_0w} \]
   if $w_0$ is the longest element of $W$. In contrast, for infinite groups orientations towards and away from chambers are disjoint in general (cf. proof of \cref{rmk:w_embeds_discretely_into_orientations}).
\end{rmk}

\begin{rmk}\label{rmk:w_embeds_discretely_into_orientations}
   For a Coxeter group $(W,S)$, the map
   \begin{align*} W & \longrightarrow \mathcal{O}(W,S) \\
      w & \longmapsto \mathfrak{o}_w
   \end{align*}
   is \textit{injective}. Moreover if $S$ is finite, then $W$ is \textit{discrete} as a subset of $\mathcal{O}(W,S)$. In fact, in this case
   \[ W \cup W^{\op{op}} = \{ \mathfrak{o}_w, \mathfrak{o}_w^{\op{op}} : w \in W \} \subseteq \mathcal{O} \]
   is discrete. In particular for an infinite Coxeter group $(W,S)$ with $\# S < \infty$, the set
   \[ \mathcal{O}_{\op{boundary}} := \overline{W\cup W^{\op{op}}} - (W\cup W^{\op{op}}) \subseteq \mathcal{O} \]
   of \textbf{boundary orientations} of $W$ is non-empty.
   \begin{proof}
      The element $w \in W$ can be recovered from the orientation $\mathfrak{o}_w$ as the unique element $w'$ satisfying
      \[ \mathfrak{o}_w(w',s) = -1 \]
      for all $s \in S$, which shows the injectiviy of the map. Moreover if $S = \{s_1,\dots,s_n\}$ is finite, this also shows that
      \[ U_{\{x_i\},\{y_i\}}\cap W = \{\mathfrak{o}_w\},\quad x_i = (w,s_i), y_i = -1 \]
      and therefore that $W$ is discrete as a subset of $\mathcal{O}(W,S)$. Furthermore, if the above neighbourhood $U_{\{x_i\},\{y_i\}}$ contains $\mathfrak{o}_{w_0}^{\op{op}}$ for some $w_0 \in W$, then we would have
      \[ \mathfrak{o}_{w_0}(w,s) = \ell(w_0^{-1}w)-\ell(w_0^{-1}ws) = 1 \]
      for all $s \in S$. This implies (cf. \cite[Ch. IV, \textsection 1, exerc. 22b]{Bourbaki}) that $w_0^{-1}w$ is a longest element of $W$; in particular, the length on $W$ is bounded. Since $S$ is finite, it follows that $W$ must be finite and so the space $\mathcal{O}$ is finite and discrete itself, and there is nothing to show. 

      Finally if $W$ is infinite and $\# S < \infty$, the set $W \cup W^{\op{op}}$ is discrete and infinite, and therefore its (compact) closure must be a proper superset.
   \end{proof}
\end{rmk}

\begin{rmk}\label{rmk:boundary_orientations}
   \begin{enumerate}
      \item The above remark gives an abstract proof (relying on Tychonoff's theorem) that for an infinite Coxeter groups $W = (W,S)$ with $\# S < \infty$ the (compact, $W$-invariant, and $\-^{\op{op}}$-invariant) set
   \[ \mathcal{O}_{\op{boundary}} = \overline{W \cup W^{\op{op}}} - (W\cup W^{\op{op}}) \subseteq \mathcal{O} \]
   is non-empty. In \cref{sub:Spherical orientations}, we will construct some concrete examples of elements $\mathfrak{o} \in \mathcal{O}_{\op{boundary}}$ for affine Coxeter groups, the \textit{spherical orientations} that lie at the heart of Bernstein-Zelevinsky method. What makes these orientations useful for the study of Hecke algebras is the fact that they have a very large stabilizer (the commutative subgroup $X \leq W$ of translations) under the action of $W$, and via the unnormalized and normalized Bernstein maps $\theta$, $\widetilde{\theta}$ (\cref{def:unnormalized_bernstein}, \cref{def:normalized_bernstein_map}) therefore give rise to embeddings (cf. \cref{prop:change_of_basis})
   \[ k[\op{Stab}_W(\mathfrak{o})] \hookrightarrow \mathcal{H}^{(1)} \]
   of the corresponding group algebra, given by $w \mapsto \theta_\mathfrak{o}(w)$ and $w \mapsto \widetilde{\theta}_{\mathfrak{o}}(w)$ respectively. Altough these are the only boundary orientations that we will be concerned with, there exist many more (infinitely many) such orientations for affine Coxeter groups\footnote{The set of boundary orientations can easily be worked out for the group of type $\widetilde{A_2}$; apart from the six spherical orientations, it contains countably many orientations all of whose stabilizers are subgroups of $X$ of rank $1$.}.

\item The set $\mathcal{O}_{\op{boundary}}$ seems to be particularly interesting in the case of hyperbolic\footnote{There are several inequivalent definitions of the term \textit{hyperbolic Coxeter groups}, see \cite[10.4]{AbramenkoBrown}.} Coxeter groups. In particular, there seems to be a rich supply of orientations having non-trivial stabilizer, although it is not clear at the moment whether the corresponding subalgebras $k[\op{Stab}_W(\mathfrak{o})] \subseteq \mathcal{H}^{(1)}$ yield any useful information about the structure of the Hecke algebras $\mathcal{H}^{(1)}$ attached to $W$. The set $\mathcal{O}_{\op{boundary}}$ also appears to be somewhat related to the \textit{Gromov boundary} $\partial (W,S)$ of $W$ (see \cite[12.4]{Davis}).
 
   To illustrate the richness of $\mathcal{O}_{\op{boundary}}$ in the hyperbolic case, let us consider the example of the group $W = \op{PGL}_2(\Z) = \op{GL}_2(\Z)/\{\pm\}$ of invertible 2x2 integer matrices modulo center. It becomes a Coxeter group via the distinguished generating set $S = \{s_1, s_2, s_3\}$ defined by
   \[ s_1 = \begin{bmatrix} & 1 \\ 1 & \end{bmatrix},\ s_2 = \begin{bmatrix} -1 & 1 \\ & 1\end{bmatrix},\ s_3 = \begin{bmatrix} -1 & \\ & 1\end{bmatrix} \]
      with orders $m(s_i,s_j) = \op{ord}(s_is_j)$ given by
   \[ m(s_1, s_2) = 3,\ m(s_1, s_3) = 2,\ m(s_2,s_3) = \infty \]
   Here
   \[ \begin{bmatrix} a & b \\ c & d\end{bmatrix} := \left(\op{GL}_2(\Z) \twoheadrightarrow \op{PGL}_2(\Z)\right)\left(\begin{pmatrix}a & b \\ c & d\end{pmatrix}\right) \]
      denotes the image of a matrix in $\op{GL}_2(\Z)$ under the projection $\op{GL}_2(\Z) \twoheadrightarrow \op{PGL}_2(\Z)$. The group $\op{PGL}_2(\Z)$ is also a hyperbolic reflection group in the sense of Vinberg (cf. \cite[Introduction]{Vinberg}), i.e. it's a discrete subgroup of the group of isometries of the hyperbolic plane $\bbf{H}^2$ generated by reflections at hyperplanes (totally geodesic codimension one submanifolds) in $\bbf{H}^2$. In fact, such a representation of $\op{PGL}_2(\Z)$ is afforded by its canonical action on the upper half-plane
   \[ \mathfrak{H} := \{ z \in \C : \Im z > 0 \} \]
   considered as a model of $\bbf{H}^2$ with metric $g(x+iy) = \frac{1}{y} dx\otimes dy$, via fractional linear transformations
   \[ \begin{bmatrix} a & b \\ c & d \end{bmatrix}\bullet{} z = \begin{dcases}
      \frac{az + b}{cz + d} &\text{ if }ad - bc = 1 \\[1em]
      \frac{a\overline{z} + b}{c\overline{z} + d} &\text{ if } ad - bc = -1
   \end{dcases} \]
   The generators $s_1,s_2,s_3$ act as the reflections at the hyperplanes
   \[ H_1 = \{ z \in \mathfrak{H} : |z| = 1 \},\quad H_2 = \{ z \in \mathfrak{H} : \Re z = \frac{1}{2} \},\quad H_3 = \{ z \in \mathfrak{H} : \Re z = 0 \} \]
   bounding the fundamental polytope
   \[ C := \{ z \in \mathfrak{H} : |z| > 1,\ 0 < \Re z < \frac{1}{2} \} \]
   To describe the boundary representations of $\op{PGL}_2(\Z)$, it is useful to extend the hyperbolic plane by its natural boundary, replacing the upper half-plane by the extended upper half-plane
   \[ \overline{\mathfrak{H}} := \mathfrak{H} \cup \bbf{P}^1(\R) = \{ z \in \C : \Im z \geq 0 \} \cup \{\infty\} \]
   considered as a closed subset of the Riemann sphere $\bbf{P}^1(\C)$. The boundary orientations $\mathfrak{o} \in \mathcal{O}_{\op{boundary}}$ we want to describe are attached to actual boundary points $x \in \bbf{P}^1(\R)$, but to certain points corresponds more than one orientation. A precise statement is that there is a $W$-equivariant correspondence
   \[ \begin{xy} \xymatrix{ & \ar[ld] \mathfrak{F} \ar[rd] & \\
      \bbf{P}^1(\R) & & \mathcal{O}_{\op{boundary}}
      } \end{xy} \]
      defined as follows. Since this construction is in part completely general, let $(W,S)$ be an arbitrary Coxeter group for the moment. The set $\mathfrak{F}$ is the quotient $\mathfrak{F}/_\sim$ of the set
      \[ \Gamma := \{ (w_n)_{n \in \N} : \forall n\ w_n \in W,\ w_n^{-1} w_{n+1} \in S,\ \ell(w_0^{-1}w_n) = n \} \]
      of (semi-)infinite reduced galleries (carrying a natural $W$-action via $w\bullet{}(w_n)_{n \in \N} = (ww_n)_{n \in \N}$) by the ($W$-invariant) equivalence relation $\sim$ on $\Gamma$ characterized uniquely by requiring
      \begin{equation}\label{eq:equiv-rel-on-gamma0} w_0 = w'_0 \quad \Rightarrow \quad (w_n)_{n \in \N} \sim (w'_n)_{n \in \N}\ \Leftrightarrow\ \forall m \exists n\ n\geq m \wedge w_n = w'_n \end{equation}
      and
      \begin{equation} \forall m, m' \in \N\quad (w_n)_{n \in \N} \sim (w'_n)_{n \in \N} \Leftrightarrow (w_{n+m})_{n \in \N} \sim (w'_{n+m'})_{n \in \N} \end{equation}
         for all $(w_n)_{n \in \N}, (w'_n)_{n \in \N} \in \Gamma$. Moreover, there is a natural $W$-equivariant map
         \[ \mathfrak{F} \longrightarrow \mathcal{O}_{\op{boundary}},\quad [(w_n)_{n \in \N}] \mapsto \lim_{n \in \N} \mathfrak{o}_{w_n} \]
         where the left action on $\mathcal{O}_{\op{boundary}}$ is given in terms of the natural right action as $w\bullet{}\mathfrak{o} := \mathfrak{o}\bullet{}w^{-1}$. The equivariance then follows from the formula $\mathfrak{o}_{w'}\bullet{}w^{-1} = \mathfrak{o}_{ww'}$ (cf. \cref{rmk:chamber_orientations}). The limit $\lim_{n \in \N} \mathfrak{o}_{w_n}$ exists (a priori only as an element of $\{\pm\}^{W\times S}$, but by \cref{rmk:set_of_orientations_is_closed} also as an element of $\mathcal{O}$), as it does exist for any infinite gallery $(w_n)_{n \in \N}$ that crosses every hyperplane only finitely many times. The limit orientation $\mathfrak{o} = \lim_{n \in \N} \mathfrak{o}_{w_n}$ must lie in $\mathcal{O}_{\op{boundary}}$ because
         \[ \mathfrak{o}(w_n,w_n^{-1}w_{n+1}) = +1 \]
         for all $n$ by construction, which would be impossible if $\mathfrak{o}$ were of the form $\mathfrak{o} = \mathfrak{o}_w$ or $\mathfrak{o} = \mathfrak{o}_w^{\op{op}}$.
         
         The set $\mathfrak{F}$ can be described a little more explicitly (at the price of making the $W$-action more complicated) as follows. The embedding $\Gamma_0 \subseteq \Gamma$ of the subset of infinite reduced galleries starting in $w_0 = 1$ induces a bijection $\Gamma_0/_\sim \simeq \Gamma/_\sim$ of the quotient of $\Gamma_0$ by the equivalence relation defined by \cref{eq:equiv-rel-on-gamma0} with $\mathfrak{F}$, because given any $(w_n)_{n \in \N} \in \Gamma$, if $m \in \N$ is such that the subgallery $w_m,w_{m+1},\dots$ does not cross any of the (finitely many) hyperplanes separating $w_0$ and $1$, then $(w'_n)_{n \in \N}$ defined by
         \[ w'_n := \begin{cases}
            w''_n &\text{ if } n <= r \\
            w_{n-r+m} &\text{ if } n > r
         \end{cases} \]
         is an element of $\Gamma_0$ equivalent to $(w_n)_{n \in \N}$, for any reduced gallery $w_0 = w''_0,\dots,w''_r = 1$ from $w_0$ to $1$.
         
         Let now be $(W,S) = (\op{PGL}_2(\Z),\{s_1,s_2,s_3\})$ again, then one can make $\mathfrak{F}$ even more explicit. Indeed in this case, a complete system of representatives for $\Gamma_0/_\sim$ is given by galleries corresponding to the infinite formal words in the generators of the form
         \[ (s_2s_3)^{a_0}s_1 (s_2s_3)^{a_1} s_1 (s_2s_3)^{a_2} s_1 \dots\quad \text{ and }\quad (s_2s_3)^{a_0}s_1 \dots (s_1s_2)^{a_r} s_1 (s_2 s_3)^{\pm \infty} \]
         where $a_i \in \Z$, subject to the condition that $\forall i\ a_i \geq 0$ or $\forall i\ a_i \leq 0$, and $a_i \neq 0$ for all $i > 0$. Here, the expressions $(s_2 s_3)^{-\infty}$ and $(s_2 s_3)^{+\infty}$ are to be understood as $s_3 s_2 s_3 s_2 \dots$ and $s_2 s_3 s_2 s_3 \dots$ respectively. We can identify these expressions with formal continued fraction as
         \[ [a_0,a_1,a_2,\dots ] \quad \text{ and }\quad [a_0,\dots,a_r,\pm\infty] \]
         The map
         \[ \mathfrak{F} \longrightarrow \bbf{P}^1(\R) \]
         is then given by evaluation of formal fractions, sending $[a_0,a_1,a_2,\dots]$ to
         \[ [a_0,a_1,\dots ] := \lim_{n \rightarrow \infty} [a_0,\dots,a_n] \]
         where $[a_0,\dots,a_n] \in \Q$ is defined recursively by
         \[ [a_0,\dots,a_n,a_{n+1}] := [a_0,\dots,a_n+\frac{1}{a_{n+1}}],\quad [a_0] := \frac{1}{a_0} \]
         as usual. The $W$-equivariance of the map $\mathfrak{F} \longrightarrow \bbf{P}^1(\R)$ can most easily be verified by establishing that the value of a class $[(w_n)_{n \in \N}]$ is given by the limit $\lim_{n \rightarrow \infty} w_n \bullet{} z$, independent of the choice of a $z \in \overline{\mathfrak{H}}$. From this it also follows that the point $x \in \bbf{P}^1(\R)$ and the orientation $\mathfrak{o} \in \mathcal{O}_{\op{boundary}}$ defined by an element of $\mathfrak{F}$ satisfy
         \begin{equation}\label{eq:stabilizer-inclusion} \op{Stab}_{\op{PGL}_2(\Z)}(\mathfrak{o}) \subseteq \op{Stab}_{\op{PGL}_2(\Z)}(x) \end{equation}
            From the theory of continued fractions it follows that the map $\mathfrak{F} \rightarrow \bbf{P}^1(\R)$ is surjective and that the infinite formal continued fractions $[a_0,a_1,a_2,\dots]$ map bijectively onto $\R-\Q$, while the finite ones map many to one to $\bbf{P}^1(\Q)$, with $[a_0,\dots,a_r,-\infty], [a_0,\dots,a_r,+\infty]$ both mapping to $[a_0,\dots,a_r] \in \bbf{P}^1(\Q)$. More precisely, $\infty \in \bbf{P}^1(\Q)$ has the two preimages $[-\infty], [+\infty]$ ($r = -1$), whereas every $x \in \Q$ has four preimages because it can be expressed as two distinct continued fractions, due to the identity
         \[ [a_0,\dots,a_r,1] = [a_0,\dots,a_r+1] \]

         The orientation $\mathfrak{o} = \lim_n \mathfrak{o}_{w_n}$ defined by a $(w_n)_{n \in \N} \in \Gamma$ can be described concretely in terms of the corresponding point $x \in \bbf{P}^1(\R)$ as follows. The set $\mathfrak{H} = \{ wsw^{-1} : w \in W,\ s \in S\}$ of formal hyperplanes can be indentified as a $W$-set with the set $\{ w \bullet{} H_1, w\bullet{} H_2, w\bullet{} H_3 : w \in W \}$ of $W$-conjugates of the hyperplanes in $\bbf{H}^2 = \mathfrak{H}$ bounding the fundamental polytope. If $H = wsw^{-1}$ corresponds to a hyperplane $H \subseteq \mathfrak{H}$ such that $x \not\in \overline{H}$, then
         \begin{equation}\label{eq:orientation-attached-to-boundary-point} \mathfrak{o}(w,s) = +1 \quad \Leftrightarrow \quad \text{ $w\bullet{}C$ and $x$ lie in different connected components of $\overline{\mathfrak{H}} - \overline{H}$} \end{equation}
            The condition $x \not\in \overline{H}$ is always satisfied if $x \in \R-\Q$, since the endpoints of the hyperplanes $H \in \mathfrak{H}$ on $\bbf{P}^1(\R)$ lie in $\bbf{P}^1(\Q)$, and the orientation attached to $x$ is then uniquely and explicitly determined by \cref{eq:orientation-attached-to-boundary-point}. It follows easily that in this case the inclusion in \cref{eq:stabilizer-inclusion} is an equality. Since the stabilizer $\op{Stab}_{\op{PGL}_2(\Z)}(x)$ is non-trivial precisely when $x \in \bbf{P}^1(\Q)$ or $x$ is a quadratic irrational number (by a classical exercise), the orientation $\mathfrak{o}_x$ attached to an irrational number $x \in \R-\Q$ has a non-trivial stabilizer if and only if it is quadratic irrational.

            The two orientations attached to rational boundary points $x \in \bbf{P}^1(\Q)$ also have a non-trivial stabilizer, but the inclusion in \cref{eq:stabilizer-inclusion} is proper in this case. In fact, since
         \[ x \in \overline{H} \quad \Leftrightarrow \quad s_H = wsw^{-1} \in \op{Stab}_{\op{PGL}_2(\Z)}(x) \]
         we have that $s_2 = s_{H_2}, s_3 = s_{H_3} \in \op{Stab}_{\op{PGL}_2(\Z)}(\infty)$, but $s_2$ and $s_3$ both interchange the two orientations attached to $x = \infty$ (cf. \cref{fig:orientation-of-pgl2(z)}). The stabilizers of these orientations are instead equal to the subgroup
         \[ \left< s_2 s_3\right> = \left\{ \begin{bmatrix} 1 & z \\ & 1\end{bmatrix} : z \in \Z \right\} \]
     
   \begin{figure}
      \centering
      \psfrag{C-20}[cc][bl][0.04952192]{$C_{\text{-20}}$}
      \psfrag{C-19}[cc][bl][0.05472326]{$C_{\text{-19}}$}
      \psfrag{C-18}[cc][bl][0.060770065]{$C_{\text{-18}}$}
      \psfrag{C-17}[cc][bl][0.06785093]{$C_{\text{-17}}$}
      \psfrag{C-16}[cc][bl][0.07620831]{$C_{\text{-16}}$}
      \psfrag{C-15}[cc][bl][0.08615727]{$C_{\text{-15}}$}
      \psfrag{C-14}[cc][bl][0.09811181]{$C_{\text{-14}}$}
      \psfrag{C-13}[cc][bl][0.11262226]{$C_{\text{-13}}$}
      \psfrag{C-12}[cc][bl][0.13042839]{$C_{\text{-12}}$}
      \psfrag{C-11}[cc][bl][0.15253524]{$C_{\text{-11}}$}
      \psfrag{C-10}[cc][bl][0.18031995]{$C_{\text{-10}}$}
      \psfrag{C-9}[cc][bl][0.2156776]{$C_{\text{-9}}$}
      \psfrag{C-8}[cc][bl][0.26120445]{$C_{\text{-8}}$}
      \psfrag{C-7}[cc][bl][0.3203789]{$C_{\text{-7}}$}
      \psfrag{C-6}[cc][bl][0.39758182]{$C_{\text{-6}}$}
      \psfrag{C-5}[cc][bl][0.49748185]{$C_{\text{-5}}$}
      \psfrag{C-4}[cc][bl][0.62264264]{$C_{\text{-4}}$}
      \psfrag{C-3}[cc][bl][0.7674546]{$C_{\text{-3}}$}
      \psfrag{C-2}[cc][bl][0.9082839]{$C_{\text{-2}}$}
      \psfrag{C-1}[cc][bl][1.0]{$C_{\text{-1}}$}
      \psfrag{C0}[cc][bl][1.0]{$C_{0}$}
      \psfrag{C1}[cc][bl][0.9082839]{$C_{1}$}
      \psfrag{C2}[cc][bl][0.7674546]{$C_{2}$}
      \psfrag{C3}[cc][bl][0.62264264]{$C_{3}$}
      \psfrag{C4}[cc][bl][0.49748185]{$C_{4}$}
      \psfrag{C5}[cc][bl][0.39758182]{$C_{5}$}
      \psfrag{C6}[cc][bl][0.3203789]{$C_{6}$}
      \psfrag{C7}[cc][bl][0.26120445]{$C_{7}$}
      \psfrag{C8}[cc][bl][0.2156776]{$C_{8}$}
      \psfrag{C9}[cc][bl][0.18031995]{$C_{9}$}
      \psfrag{C10}[cc][bl][0.15253524]{$C_{10}$}
      \psfrag{C11}[cc][bl][0.13042839]{$C_{11}$}
      \psfrag{C12}[cc][bl][0.11262226]{$C_{12}$}
      \psfrag{C13}[cc][bl][0.09811181]{$C_{13}$}
      \psfrag{C14}[cc][bl][0.08615727]{$C_{14}$}
      \psfrag{C15}[cc][bl][0.07620831]{$C_{15}$}
      \psfrag{C16}[cc][bl][0.06785093]{$C_{16}$}
      \psfrag{C17}[cc][bl][0.060770065]{$C_{17}$}
      \psfrag{C18}[cc][bl][0.05472326]{$C_{18}$}
      \psfrag{C19}[cc][bl][0.04952192]{$C_{19}$}
      \psfrag{C20}[cc][bl][0.045017857]{$C_{20}$}
      \psfrag{1}[cl][bl]{\mybox{$1$}}
      \psfrag{i}[bc][bl]{\mybox{$i$}}
      \psfrag{-3/5-i4/5}[tr][bl]{$-\frac{3}{5}-i\frac{4}{5}$}
      \includegraphics[width=0.5\linewidth]{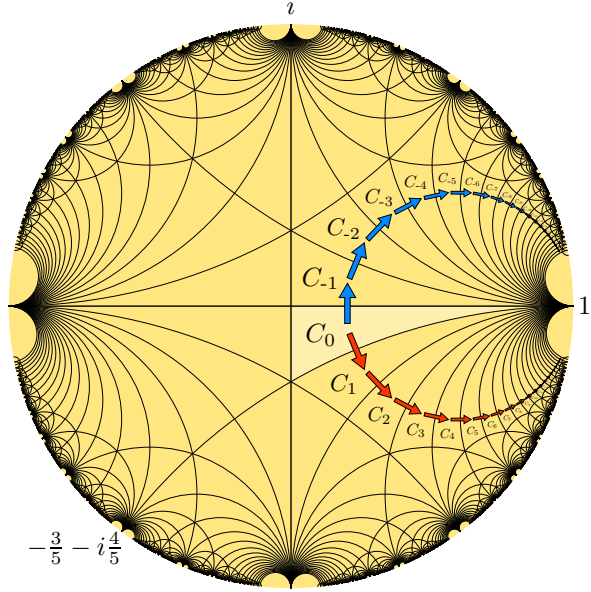}
      \caption{The canonical hyperplane arrangement realizing the Coxeter group $W = \op{PGL}_2(\Z)$ as a hyperbolic reflection group, viewed in the disk model (isometric to the upper half-plane $\mathfrak{H}$ via the Cayley transform $q = \frac{z-i}{z+i}$) of the hyperbolic plane. The two orientations of $W$ attached to the `cusp at infinity' $q = 1 \in \overline{\Delta}$ are the limits $\lim_{n \rightarrow -\infty} \mathfrak{o}_{C_n}$, $\lim_{n \rightarrow +\infty} \mathfrak{o}_{C_n}$ attached to the two semi-infinite galleries contained in the `horocycle' $(C_n)_{n \in \Z}$ and starting in the fundamental polytope $C_0$.}
      \label{fig:orientation-of-pgl2(z)}
   \end{figure}
\end{enumerate}
\end{rmk}

\begin{rmk}\label{rmk:goertz_orientations} 
   We will show later in \cref{lem:or_depend_on_half-space} that orientations $\mathfrak{o} \in \mathcal{O}$ can also be viewed as choosing for every hyperplane $H \in \mathfrak{H}$ a `positive' half-space $U_{\mathfrak{o},H}^+ \in \{U_H^+, U_H^-\}$ such that $\mathfrak{o}(w,s) = 1$ iff $ws \in U_{\mathfrak{o},H}^+$ where $H = wsw^{-1}$. In view of this, it follows easily from unwinding definitions that the union
   \[ \mathcal{O}_G := \overline{W} \cup \overline{W^{\op{op}}} = \overline{W \cup W^{\op{op}}} \subseteq \mathcal{O} \]
   identifies exactly with the \textit{root hyperplane orientations} defined in \cite[Def. 2.3.1]{Goertz}, i.e. those orientations $\mathfrak{o}$ having the property that all finite intersections
   \[ U_{\mathfrak{o},H_1}^+ \cap \dots \cap U_{\mathfrak{o},H_n}^+ \neq \emptyset \]
   of positive half-spaces with respect to $\mathfrak{o}$ are non-empty, or that all finite intersections
   \[ U_{\mathfrak{o},H_1}^- \cap \dots \cap U_{\mathfrak{o},H_n}^- \neq \emptyset \]
   of the corresponding negative half-space are non-empty. 
   The example of the infinite dihedral group (the free group on generators $s,t$; $m(s,t) = \infty$) shows that the inclusion
   \[ \mathcal{O}_G \subseteq \mathcal{O} \]
   is proper in general.
\end{rmk}

As a complement to \cref{rmk:w_embeds_discretely_into_orientations}, the following lemma shows that for \textit{finite} Coxeter groups $(W,S)$ chamber orientations \textit{do} exhaust all possibilities. This lemma is not used in our results concerning affine pro-$p$ Hecke algebras and may therefore be safely skipped.

\begin{lemma}
   Let $\mathfrak{o}$ be an orientation of $(W,S)$, and suppose that $\# W < \infty$. Then
   \[ \mathfrak{o} = \mathfrak{o}_w \]
   for some $w \in W$.
\end{lemma}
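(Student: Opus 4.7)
The plan is to extract from $\mathfrak{o}$ a ``sink'' $w_0 \in W$, by which I mean an element satisfying $\mathfrak{o}(w_0, s) = -1$ for all $s \in S$, and then to show that this forces $\mathfrak{o} = \mathfrak{o}_{w_0}$. For the existence of a sink, I view $\mathfrak{o}$ as a $\Z$-valued $1$-cochain on the Cayley graph of $(W, S)$ and observe that \axiom{OR1} and \axiom{OR2} are exactly the cocycle conditions needed for it to be a coboundary: \axiom{OR1} makes the sum of signs around every backtracking loop $w \to ws \to w$ vanish, and a short calculation with the four sign patterns of \axiom{OR2} shows that the sum around any rank-two loop $w(st)^{m(s,t)} = w$ vanishes as well. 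Since the Cayley $2$-complex of the standard presentation of $(W, S)$ is simply connected, there exists $h : W \to \Z$ with
\[ h(w) - h(ws) = \mathfrak{o}(w, s) \quad \forall (w, s) \in W \times S. \]
As $W$ is finite, $h$ attains its minimum at some $w_0$, at which $h(w_0 s) \geq h(w_0)$ forces $\mathfrak{o}(w_0, s) = -1$ for every $s \in S$. Translating by $w_0$ -- noting $\mathfrak{o}_{w_0} \bullet w_0 = \mathfrak{o}_1$ by \cref{rmk:chamber_orientations} -- reduces the lemma to proving $\mathfrak{o} = \mathfrak{o}_1$ under the assumption $\mathfrak{o}(1, s) = -1$ for all $s \in S$.

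I prove this by minimum-length counterexample, letting $D(w) := \{s \in S : \ell(ws) < \ell(w)\}$ denote the right descent set. The easy case eliminates counterexamples $(w, s)$ with $s \in D(w)$: take a reduced expression $w = s_1 \dots s_r$ ending in $s$, set $w' = s_1 \dots s_{r-1}$; since $\ell(w') < \ell(w)$, minimality gives $\mathfrak{o}(w', s) = \mathfrak{o}_1(w', s) = -1$ (because $\ell(w's) > \ell(w')$), and \axiom{OR1} then yields $\mathfrak{o}(w, s) = +1 = \mathfrak{o}_1(w, s)$, contradicting the counterexample hypothesis. A minimum-length counterexample $(w, s)$ therefore has $\ell(ws) > \ell(w)$ and $\mathfrak{o}(w, s) = +1$, and the same argument applied to any $s' \in D(w)$ shows $\mathfrak{o}(w, s') = +1$ as well.

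The main obstacle is ruling out this remaining case. Pick $s' \in D(w)$, which is nonempty since $\ell(w) \geq 1$ and is necessarily distinct from $s$. Because $W$ is finite, $m := m(s, s') < \infty$, and \axiom{OR2} applies at $(w; s, s')$; the fact that both sequences start with $+$ pins them to Case A with $k = m$ or Case B with $k = 0$, both yielding sequences which are entirely $+$. Decompose $w = u x$ with $u$ the minimal-length representative of the left coset $wW_{\{s, s'\}}$ and $x \in W_{\{s, s'\}}$, so that $\ell(w) = \ell(u) + \ell(x)$ and $\ell(us), \ell(us') > \ell(u)$. The hypotheses $s' \in D(w)$ and $s \notin D(w)$ translate to $D_{W_{\{s, s'\}}}(x) = \{s'\}$, so $x$ is neither trivial nor the longest element of $W_{\{s, s'\}}$ and therefore $\ell(x) \leq m - 1$. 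The second \axiom{OR2} sequence is evaluated along the chain $w, ws', ws's, ws'ss', \dots$, which decreases in length by one at each of the first $\ell(x)$ steps, arriving at $u$, and then at step $\ell(x) + 1 \leq m$ climbs to $u g$ for some $g \in \{s, s'\}$. The $+1$ appearing at this step gives $\mathfrak{o}(u, g) = +1$, whereas the minimality of $u$ as a coset representative gives $\mathfrak{o}_1(u, g) = -1$; since $\ell(u) < \ell(w)$, minimality of the counterexample forces $\mathfrak{o}(u, g) = \mathfrak{o}_1(u, g)$, the desired contradiction.
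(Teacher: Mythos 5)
Your proof is correct. The overall architecture — integrate $\mathfrak{o}$ to a height function $h$ on $W$ (the paper's $\phi_{\mathfrak{o}}$, defined via Tits' word problem rather than the simple connectivity of the Cayley $2$-complex, but these amount to the same thing), pick an extremal chamber, translate it to $1$, and then induct on length — coincides with the paper's. Where you genuinely diverge is in the inductive step. The paper shows that $\phi_{\mathfrak{o}}$ strictly decreases along every geodesic issuing from $1$ by an argument that repeatedly invokes the deletion condition, peeling letters off a hypothetical reduced expression $s_1\cdots s_{r-2}t$ and cycling around the rank-two loop until a reduced expression containing a full braid word appears, yielding a contradiction. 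You instead take a minimum-length counterexample $(w,s)$, observe (correctly) that \axiom{OR1} eliminates the case $s \in D(w)$ and that both \axiom{OR2} sequences at $(w;s,s')$ must then be identically $+1$ once a descent $s'$ is chosen, and then use the parabolic decomposition $w = ux$ with $u$ the minimal coset representative for $wW_{\{s,s'\}}$ to walk the second \axiom{OR2} sequence down to $u$ in $\ell(x) \leq m(s,s')-1$ steps and read off $\mathfrak{o}(u,g)=+1$ at the $(\ell(x)+1)$-th term, contradicting minimality since $\ell(u)<\ell(w)$ and $\mathfrak{o}_1(u,g)=-1$. The coset-representative argument is cleaner and more structural than the paper's iteration of the deletion condition, and it localizes the work to a single invocation of \axiom{OR2}; the paper's version is more hands-on and stays inside word-combinatorics without appealing to parabolic coset theory. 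Both are valid; yours is the more conceptual route for the induction.
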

\begin{proof}
   Let us begin with a general observation. Given any (not necessarily finite) Coxeter group $(W,S)$ and an orientation $\mathfrak{o}$, we can construct a function
   \[ \phi_{\mathfrak{o}}: W \longrightarrow \Z \]
   as follows. Given a $w \in W$, let $w = s_1\ldots{}s_r$ be any expression as a product of generators, and put
   \[ \phi_{\mathfrak{o}}(w) = \sum_{i=1}^r \mathfrak{o}(s_1\ldots{}s_{i-1},s_i) \]
   In other words, $\phi_{\mathfrak{o}}(w)$ is the sum of the signs that $\mathfrak{o}$ associates to the gallery $\Gamma = (1,s_1,s_1 s_2, \ldots{}, w)$ from $1$ to $w$. We need to see that this sum is well-defined indepent of the choice of $\Gamma$.
   
   By Tits' solution of the word problem for Coxeter groups \cite[II.3C]{Brown}, any two expressions of $w$ as a product of generators are related by a sequence of transformations of the following type.
   \begin{enumerate}
      \item[(I)] $s_1\ldots{}s_i s s s_{i+1} \ldots s_r \mapsto s_1\ldots{}s_i s_{i+1} \ldots s_r$
      \item[(II)] $s_1\ldots{}s_i s_{i+1} \ldots s_r \mapsto s_1\ldots{}s_i s s s_{i+1} \ldots s_r$
      \item[(III)] $s_1 \ldots{}s_i \underbrace{sts\ldots }_{m(s,t)} s_{i+1} \ldots s_r \mapsto s_1 \ldots s_i \underbrace{tst \ldots }_{m(s,t)} s_{i+1} \ldots s_r$ if $m(s,t) < \infty$.   \end{enumerate}
   Because \axiom{OR1} guarantees the invariance under the first two transformations and \axiom{OR2} guarantees the invariance under the third, it therefore follows that $\phi_{\mathfrak{o}}(w)$ is well-defined. Moreover, it is immediate from the definitions that
   \[ \phi_{\mathfrak{o}}(ww') = \phi_{\mathfrak{o}}(w) + \phi_{\mathfrak{o}\bullet{}w}(w') \]
   which we can also write as
   \begin{equation}\label{eq:action-on-phi} \phi_{\mathfrak{o}\bullet{}w} = \phi_{\mathfrak{o}}\bullet{}w - \phi_{\mathfrak{o}}(w) \end{equation}

   For orientations $\mathfrak{o}$ of the form $\mathfrak{o} = \mathfrak{o}_w$, the function $\phi_{\mathfrak{o}}$ is easily seen to be given by
   \[ \phi_{\mathfrak{o}}(w') = \ell(w)-\ell(w^{-1}w') \]
   Conversely, if $\phi_{\mathfrak{o}}$ is of the above form, it follows that $\mathfrak{o} = \mathfrak{o}_w$, and in this case $w$ is determined as the unique element $w' \in W$ at which $\phi_{\mathfrak{o}}$ attains its global maximum.

   Let us now assume that $W$ is finite, and let $w$ be such that $\phi_{\mathfrak{o}}(w)$ is maximal. Using \eqref{eq:action-on-phi} and replacing $\mathfrak{o}$ by $\mathfrak{o}\bullet{}w$, we may assume that $w = 1$. In order to show that $\mathfrak{o} = \mathfrak{o}_1$, it suffices by the above remark to prove that
   \[ \phi_{\mathfrak{o}}(w) = -\ell(w) \]
   or equivalently, to prove that $\phi_{\mathfrak{o}}$ is monotously decreasing along geodesics, i.e. to prove that for every \textit{reduced} expression $s_1\ldots{}s_r$ the sequence
   \begin{equation}\label{eq:phi-seq} \phi_{\mathfrak{o}}(w_0),\phi_{\mathfrak{o}}(w_1),\ldots{},\phi_{\mathfrak{o}}(w_r) \quad \text{with}\quad w_i = s_1\ldots{}s_i \end{equation}
      is (strictly) decreasing (note that two consecutive elements of the above sequence differ by $\pm 1$).
      
      We prove this using induction over $r$. For $r = 1$, this follows from the fact that $\phi$ has its (a priori not unique) global maximum at $w = 1$. Let now $r \geq 2$, and assume that the claim holds for sequences of length $< r$. In particular
      \[ \phi_{\mathfrak{o}}(w_i) = -i \quad\text{for $i < r$} \]
      Suppose that we had $\phi_{\mathfrak{o}}(w_r) > \phi_{\mathfrak{o}}(w_{r-1})$, i.e. $\phi_{\mathfrak{o}}(w_r) = -(r-2)$, and put $s = s_{r-1}$, $t = s_r$. We then have the following situtation
      \[ \begin{xy} \xymatrix@R-2pc{ & s & & t &  \\
         \phi_{\mathfrak{o}}(w_{r-2}) = -(r-2) & \mid & \phi_{\mathfrak{o}}(w_{r-1}) = -(r-1) & \mid & \phi_{\mathfrak{o}}(w_r) = -(r-2) } \end{xy} \]
         By \cref{rmk:or_geom_inter}, the restriction of $\mathfrak{o}$ to the `loop' $w_{r-2} \cdot{} \left< s,t\right>$ is given by the distance to a chamber, and therefore $\phi_{\mathfrak{o}}$ attains precisely one local minimum there. Thus, this minimum is attained at $w_{r-1} = w_{r-2}s$ and for all $k \leq m(s,t)-1$ we have that
      \begin{equation}\label{eq:phi-values} \phi_{\mathfrak{o}}(w_{r-2} \underbrace{tst\ldots }_k ) = -(r-2)+k \end{equation}
         Note that $m(s,t) < \infty$ because $W$ is finite, and $m(s,t) \geq 2$ because $s = t$ would contradict the reducedness of $s_1\ldots{}s_r$. In particular, $\phi_{\mathfrak{o}}(w_{r-2}t) = -(r-3)$; because of our induction hypothesis, it follows that the expression $s_1\ldots{}s_{r-2}t$ must be reducible, yielding an immediate contradiction if $r = 2$. If $r \geq 3$, we can apply the \textit{deletion condition} (see \cref{rmk:equivalent-definitions-of-coxeter-groups}) and the reducedness of the expression $s_1\ldots{}s_{r-2}$ to conclude that
         \[ w_{r-2} t = s_1\ldots{}s_{r-2}t = s_1\ldots{}\widehat{s_j}\ldots{}s_{r-2} \]
         for some $1 \leq j \leq r-2$. This subsequence $s_1 \ldots{} \widehat{s_j} \ldots s_{r-2}$ of length $r-3$ is again reduced, and its associated sequence of values of $\phi_{\mathfrak{o}}$ is again strictly decreasing (we don't need to use the induction hypothesis for this; this already follows from the fact that $\phi_{\mathfrak{o}}(1) = 0$ and $\phi_{\mathfrak{o}}(s_1\ldots{}\widehat{s_j}\ldots{}s_{r-2}) = -(r-3)$).

         We can therefore repeat the above argument with the expression $s_1\ldots{}s_{r-2}$ replaced with $s_1\ldots{}\widehat{s_j}\ldots{}s_{r-2}$, using equation \eqref{eq:phi-values} for $k = 2$ and the induction hypothesis to conclude that $s_1\ldots{}\widehat{s_j}\ldots{}s_{r-2}s$ is reducible. We can keep iterating this argument as long as we are able to apply \eqref{eq:phi-values}, that is, applying this argument $k$ times we end up with an equation
         \[ w_{r-2} \underbrace{tst\ldots}_k = s_{j_1}\ldots{}s_{j_{r-2-k}} \]
         for some sequence $1 \leq j_1 < \ldots < j_{r-2-k} \leq r-2$, such that either $k = r-2 < m(s,t)-1$ and the product on the right hand side is empty, or $k = m(s,t)-1$. In the first case, we would have
         \[ w_{r-2} = \underbrace{\ldots tst}_{r-2} \]
         Again, using that the restriction of $\mathfrak{o}$ to the loop $\left<s,t\right>$ of length $m(s,t)$ is given by the distance to a chamber, and that the restriction of $\phi_{\mathfrak{o}}$ to this loop therefore has a unique local minimum and a unique local maximum, both of which are lying opposite to each other, it follows that the maximum must be attained at $w = 1$ (!) and that the minimum must be attained at $w = w_{r-1}$. In particular, $r-1 = m(s,t)$ which is a contradiction.

         In the second case, we would have a reduced (!) expression
         \[ w_{r-2} = s_{j_1}\ldots{}s_{j_{r-2-k}} \underbrace{\ldots tst}_{m(s,t)-1} \]
         Since $w_r = w_{r-2}st$ and $\ell(w_r) = w_{r-2}+2$ by assumption, the expression
         \[ s_{j_1}\ldots{}s_{j_{r-2-k}} \underbrace{\ldots tst}_{m(s,t)-1} st \]
         would be reduced. But already the subexpression
         \[ \underbrace{\ldots tst}_{m(s,t)-1} st \]
         is reducible, yielding a contradiction.
\end{proof}

We will now extend the notion of an orientation to extended and pro-$p$ Coxeter groups. The extension from extended to pro-$p$ Coxeter groups is trivial, but the extension from Coxeter to extended Coxeter groups is a bit subtle because of the action of $\Omega$.

\begin{definition}\label{def:orientation_of_extended_Coxeter_group}
   Let $W$ be an extended Coxeter group and $\mathfrak{o}$ be an orientation of $W_{\op{aff}}$. Then the map
   \[ \widetilde{\mathfrak{o}}: W \times S \longrightarrow \{\pm 1\} \]
   given by $\widetilde{\mathfrak{o}}(wu,s) := \mathfrak{o}(w,u(s))$, $w \in W_{\op{aff}}$, $u \in \Omega$ is called the \textbf{orientation of $W$ associated to} $\mathfrak{o}$.

   A map $\mathfrak{o}: W \times S \longrightarrow \{\pm 1\}$ is called an \textbf{orientation} if it is associated to an orientation of $W_{\op{aff}}$ in the above sense, and the set of all such orientations is denoted by $\mathcal{O}(W)$, or simply by $\mathcal{O}$ if $W$ is understood.
\end{definition}

\begin{rmk}
   There exists a natural right action of $\Omega$ on the set $\mathcal{O}(W_{\op{aff}},S)$ of all orientations of $W_{\op{aff}}$. Given an orientation $\mathfrak{o} \in \mathcal{O}(W_{\op{aff}},S)$ and $u \in \Omega$
   \[ (\mathfrak{o}\bullet{}u)(w,s) := \mathfrak{o}(uwu^{-1},u(s)) \]
   again defines an orientation of $W_{\op{aff}}$. On the other hand by \cref{rmk:or_action}, we also have a natural right action of $W_{\op{aff}}$ on $\mathcal{O}(W_{\op{aff}},S)$. From the definitions it follows immediately that
   \[ (\mathfrak{o}\bullet{}u)\bullet{}w = (\mathfrak{o}\bullet{}(uwu^{-1}))\bullet{} u \]
   and hence by the universal property of the semidirect product $\Omega \ltimes W_{\op{aff}}$ the two actions give rise to an action of $W$ on $\mathcal{O}(W_{\op{aff}},S)$.
\end{rmk}

\begin{rmk}\label{rmk:or_cox_vs_ext_cox}
   There exists a natural intrinsic right action of an extended Coxeter $W$ on the set $\mathcal{O}(W)$ of its orientations. If $\widetilde{\mathfrak{o}}$ is an orientation of $W$ associated to an orientation $\mathfrak{o}$ of $W_{\op{aff}}$, then for any $w \in W$ the map $\widetilde{\mathfrak{o}}\bullet{}w$ defined by
   \[ (\widetilde{\mathfrak{o}}\bullet{}w)(w',s) := \widetilde{\mathfrak{o}}(ww',s) \]
   is again an orientation. In fact, if we write $w = w_0 u$ and $w' = w_0' u'$ with $w_0, w_0' \in W_{\op{aff}}$ and $u,u' \in \Omega$ then
   \begin{align*} \widetilde{\mathfrak{o}}(ww',s) & = \widetilde{\mathfrak{o}}(w_0 uw_0'u^{-1} uu', s) = \mathfrak{o}(w_0 u w_0' u^{-1}, (uu')(s)) \\
      & = (\mathfrak{o}\bullet{}w_0)(uw_0'u^{-1}, u(u'(s))) = ((\mathfrak{o}\bullet{}w_0)\bullet{}u)(w_0',u'(s)) \\
      & = (\mathfrak{o}\bullet{}w)(w_0',u'(s)) \end{align*}
   Hence, $\widetilde{\mathfrak{o}}\bullet{}w$ is associated to $\mathfrak{o}\bullet{}w$. This computation also shows that the natural bijective map
   \[ \mathcal{O}(W_{\op{aff}},S) \stackrel{\sim}{\longrightarrow} \mathcal{O}(W) \]
   is $W$-equivariant with respect to the two actions described.
\end{rmk}

\begin{rmk}\label{rmk:homeomorphism_of_orientations}
   The set $\mathcal{O}(W)$ of orientations of an extended Coxeter group $W$ also carries a natural topology, namely the subspace topology induced by the space $\{\pm\}^{W\times S}$ and its compact-open topology. The above bijection then is actually a homeomorphism.
   This follows immediately from the fact that the extension map
   \begin{align*} \{\pm\}^{W_{\op{aff}}\times S} & \hookrightarrow \{\pm\}^{W\times S} \\
      f & \longmapsto ((wu,s) \mapsto f(w,u(s)))
   \end{align*}
   is a homeomorphism onto the subspace
   \[ \{ f \in \{\pm\}^{W\times S} : f(wu,s) = f(w,u(s)) \quad \forall w \in W,\ u \in \Omega,\ s \in S\} \]
   Since this subspace is closed, it follows that also the set of orientations of $(W,S)$ is a closed subspace of $\{\pm\}^{W\times S}$.
\end{rmk}

\begin{definition}
   Let $W^{(1)}$ be a pro-$p$ Coxeter group and $\mathfrak{o}$ be an orientation of the underlying extended Coxeter group $W$. The map $\widetilde{\mathfrak{o}}: W^{(1)}\times S \longrightarrow \{\pm 1\}$ defined by
   \[ \widetilde{\mathfrak{o}}(w,s) := \mathfrak{o}(\pi(w),s) \]
   is called the \textbf{orientation of $W^{(1)}$ associated to $\mathfrak{o}$}.

   An \textbf{orientation} of $W^{(1)}$ is a map $W^{(1)} \times S \longrightarrow \{\pm 1\}$ associated to an orientation of $W$ in the above sense, and the set of all such orientations is denoted by $\mathcal{O}(W^{(1)})$, or simply by $\mathcal{O}$ if $W^{(1)}$ is understood.
\end{definition}

\begin{rmk}\label{rmk:or_pro_p}
   There exists a natural right action of $W^{(1)}$ on the set of all orientations of $W^{(1)}$ again by the formula $(\mathfrak{o}\bullet{}w)(w',s) := \mathfrak{o}(ww',s)$. There also exists an action of $W^{(1)}$ on the set of all orientations of $W$ and $W_{\op{aff}}$ respectively via pulling back the $W$-actions along $\pi: W^{(1)} \rightarrow W$. The natural bijection
   \[ \mathcal{O}(W) \stackrel{\sim}{\longrightarrow} \mathcal{O}(W^{(1)}) \]
   is then equivariant with respect to these $W^{(1)}$-actions.
   
   By \cref{rmk:or_cox_vs_ext_cox}, we may therefore identify $\mathcal{O}(W^{(1)})$ and $\mathcal{O}(W_{\op{aff}},S)$ as $W^{(1)}$-sets, and may consider the former as a topological space through identification with the latter.
\end{rmk}

\subsection{Bernstein maps} 
\label{sub:Bernstein maps}
In this section, we will introduce the first of three related families of functions $\theta_\mathfrak{o},\widehat{\theta}_{\mathfrak{o}},\widetilde{\theta}_{\mathfrak{o}}$ which we loosely refer to as ``Bernstein maps'', as they are related to Bernstein's presentation of Iwahori-Hecke algebras. We fix a pro-$p$ Coxeter group $W^{(1)}$ throughout and denote by $\mathcal{O} = \mathcal{O}(W^{(1)})$ the set of orientations of $W^{(1)}$.

The following theorem is essentially the transposition of (\cite[Thm 1.1.1]{Goertz}) into our context. We first phrase it in terms of the braid group $\mathfrak{A}(W^{(1)})$ (see \cref{def:generalized_braid_group}).

\begin{theorem}\label{thm:ex_theta_braid}
   There exists a unique map
   \[ \theta: W^{(1)} \longrightarrow \op{Hom}_{\op{Set}}(\mathcal{O},\mathfrak{A}(W^{(1)})), \quad w \mapsto (\mathfrak{o} \mapsto \theta_\mathfrak{o}(w)) \]
   satisfying the cocycle rule
   \[ \theta_\mathfrak{o}(ww') = \theta_\mathfrak{o}(w)\theta_{\mathfrak{o}\bullet{}w}(w') \quad \forall w,w' \in W^{(1)} \]
   such that for $s \in S, \mathfrak{o} \in \mathcal{O}$
   \[ \theta_\mathfrak{o}(n_s) = T_{n_s^{\varepsilon}}^{\varepsilon} \quad \text{where}\quad \varepsilon = \mathfrak{o}(1,s) \in \{\pm 1\} \]
   and for $u \in \Omega^{(1)}, \mathfrak{o} \in \mathcal{O}$
   \[ \theta_\mathfrak{o}(u) = T_u \]
\end{theorem}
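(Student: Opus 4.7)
The plan is to apply \cref{lem:cocycle} with the (non-commutative) $W^{(1)}$-module $M:=\op{Hom}_{\op{Set}}(\mathcal{O},\mathfrak{A}(W^{(1)}))$, whose group law is pointwise multiplication (inherited from $\mathfrak{A}(W^{(1)})$) and whose $W^{(1)}$-action is $(w\cdot f)(\mathfrak{o}):=f(\mathfrak{o}\bullet w)$. Under the bijection $\phi\leftrightarrow\theta$ defined by $\phi(w)(\mathfrak{o}):=\theta_{\mathfrak{o}}(w)$, the 1-cocycle identity $\phi(ww')=\phi(w)\cdot w(\phi(w'))$ in $M$ translates verbatim into the cocycle rule for $\theta$. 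Hence the theorem reduces to producing a unique 1-cocycle $\phi:W^{(1)}\to M$ with the prescribed restriction $\sigma(s)(\mathfrak{o}):=T_{n_s^{\varepsilon}}^{\varepsilon}$ (where $\varepsilon:=\mathfrak{o}(1,s)$) to the distinguished generators $n_s$ and $\rho(u)(\mathfrak{o}):=T_u$ to $\Omega^{(1)}$. The map $\rho$ is automatically a 1-cocycle on $\Omega^{(1)}$, since its values are constant as functions of $\mathfrak{o}$ and $T_{uu'}=T_uT_{u'}$ in $\mathfrak{A}(W^{(1)})$ (lengths vanish on $\Omega^{(1)}$); uniqueness of the extension $\phi$ (hence of $\theta$) is then immediate from \cref{lem:cocycle}.

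It remains to verify conditions (i), (ii) and (iii) of \cref{lem:cocycle}. For (i), axiom \axiom{OR1} gives $\mathfrak{o}(n_s,s)=-\mathfrak{o}(1,s)$, so after evaluating at $\mathfrak{o}$ the identity becomes $T_{n_s^{\varepsilon}}^{\varepsilon}\cdot T_{n_s^{-\varepsilon}}^{-\varepsilon}=T_{n_s^2}$; this follows from the length-additive braid identities $T_{n_s}=T_{n_s^2}T_{n_s^{-1}}=T_{n_s^{-1}}T_{n_s^2}$ (using $\ell(n_s^2)=0$) together with the commutation of $T_{n_s^2}$ with $T_{n_s}$ in $\mathfrak{A}(W^{(1)})$, which holds because $n_s\cdot n_s^2=n_s^2\cdot n_s$ in $W^{(1)}$. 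For (ii), evaluation at $\mathfrak{o}$ and the identity $\mathfrak{o}(u,s)=\mathfrak{o}(1,u(s))$ (from the definition of orientations on extended Coxeter groups) reduce the condition to $T_u\cdot T_{n_s^{\varepsilon}}^{\varepsilon}=T_{n_{u(s)}^{\varepsilon}}^{\varepsilon}\cdot T_{u t_{s,u}}$ with $\varepsilon:=\mathfrak{o}(1,u(s))$. For $\varepsilon=+1$ this is direct by length-additivity combined with the defining relation $un_s=n_{u(s)}u t_{s,u}$; for $\varepsilon=-1$, multiplying by $T_{n_{u(s)}^{-1}}$ on the left and $T_{n_s^{-1}}$ on the right cancels the inverses and reduces the claim to the length-additive identity $T_{n_{u(s)}^{-1}u}=T_{u t_{s,u}n_s^{-1}}$, which is just a rearrangement of the same defining relation.

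The main obstacle is condition (iii). Evaluating both sides at a fixed $\mathfrak{o}$ yields length-$m(s,t)$ alternating products
\[ \mathrm{LHS}(\mathfrak{o}) = T_{n_{a_1}^{\varepsilon_1}}^{\varepsilon_1}\cdots T_{n_{a_m}^{\varepsilon_m}}^{\varepsilon_m}, \qquad \mathrm{RHS}(\mathfrak{o}) = T_{n_{b_1}^{\eta_1}}^{\eta_1}\cdots T_{n_{b_m}^{\eta_m}}^{\eta_m}, \]
with $(a_i)=(s,t,s,\ldots)$ and $(b_i)=(t,s,t,\ldots)$; by axiom \axiom{OR2}, the sign sequences take the complementary form $(\varepsilon_i)=(\underbrace{+,\ldots,+}_{k},\underbrace{-,\ldots,-}_{m-k})$ and $(\eta_i)=(\underbrace{-,\ldots,-}_{m-k},\underbrace{+,\ldots,+}_{k})$ (the opposite global sign choice being handled by the symmetric argument swapping the roles of $T_{n_c}$ and $T_{n_c^{-1}}^{-1}$). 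Set $\alpha:=n_{a_1}\cdots n_{a_k}$, $\beta:=n_{a_{k+1}}\cdots n_{a_m}$, $\gamma:=n_{b_1}\cdots n_{b_{m-k}}$ and $\delta:=n_{b_{m-k+1}}\cdots n_{b_m}$; each of these is a reduced prefix or suffix of the common length-$m$ expression $\alpha\beta=\gamma\delta$ afforded by the braid relation \eqref{eq:braidcond} in $W^{(1)}$. Length-additivity then collapses each constant-sign run to a single $T$-element, giving $\mathrm{LHS}(\mathfrak{o})=T_\alpha\cdot T_{\beta^{-1}}^{-1}$ and $\mathrm{RHS}(\mathfrak{o})=T_{\gamma^{-1}}^{-1}\cdot T_\delta$. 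Rearranging, the sought equality becomes $T_{\gamma^{-1}}T_\alpha=T_\delta T_{\beta^{-1}}$; since the concatenated words $b_{m-k}\cdots b_1 a_1\cdots a_k$ and $a_{k+1}\cdots a_m b_m\cdots b_{m-k+1}$ are alternating in $\{s,t\}$ of length exactly $m\leq m(s,t)$, hence reduced in the dihedral parabolic $\langle s,t\rangle$, length-additivity again applies on both sides and the claim reduces to $T_{\gamma^{-1}\alpha}=T_{\delta\beta^{-1}}$, which is immediate from $\gamma^{-1}\alpha=\delta\beta^{-1}$ in $W^{(1)}$—itself a direct rearrangement of $\alpha\beta=\gamma\delta$.
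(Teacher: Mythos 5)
Your proposal is correct and follows essentially the same route as the paper: apply \cref{lem:cocycle} to $M=\op{Hom}_{\op{Set}}(\mathcal{O},\mathfrak{A}(W^{(1)}))$ with the same $\sigma$ and $\rho$, and verify conditions (i)--(iii) using \axiom{OR1}, \axiom{OR2}, and length-additivity in $\mathfrak{A}(W^{(1)})$. The only cosmetic difference is in step (iii), where you collapse the constant-sign runs to $T_\alpha T_{\beta^{-1}}^{-1}=T_{\gamma^{-1}}^{-1}T_\delta$ before rearranging, while the paper first rearranges the length-$m(s,t)$ word so that all generator factors (no inverses) land on each side and then collapses; the underlying reduction to the braid relation $\alpha\beta=\gamma\delta$ in $W^{(1)}$ is identical.
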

\begin{proof}
   We apply \cref{lem:cocycle} to the $W^{(1)}$-module $M = \op{Hom}_{\op{Set}}(\mathcal{O},\mathfrak{A}(W^{(1)}))$ and the pair $(\sigma,\rho)$, where
   \[ \sigma(s) = \left(\mathfrak{o} \mapsto T_{n_s^{\varepsilon}}^{\varepsilon}\right),\quad \varepsilon = \mathfrak{o}(1,s) \]
   and $\rho$ is the `trivial' cocycle
   \[ \rho(u) = \left(\mathfrak{o} \mapsto T_u\right) \]
   Here, the monoid structure on $M$ is given by pointwise multiplication and the left $W^{(1)}$-action is induced by the right action on $\mathcal{O}$ of \cref{rmk:or_pro_p}. It then only remains to verify conditions (i)-(iii) of \cref{lem:cocycle}. Bearing in mind the defining property \axiom{OR1} of an orientation, condition (i) amounts to showing that for all $s \in S$ and $\mathfrak{o} \in \mathcal{O}$
   \[ T_{n_s^{\varepsilon}}^{\varepsilon} T_{n_s^{-\varepsilon}}^{-\varepsilon} = T_{n_s^2} \]
   where $\varepsilon = \mathfrak{o}(1,s)$. First of all, note that $T_{n_s^2}$ commutes with $T_{n_s}$ since
   \[ T_{n_s} = T_{n_s^2 n_s n_s^{-2}} \stackrel{\text{(!)}}{=} T_{n_s^2} T_{n_s} T_{n_s^2}^{-1} \]
   where we used that $n_s^2 \in T \subseteq \Omega^{(1)}$. Therefore $T_{n_s}$ commutes also with
   \[ T_{n_s^{-1}} = T_{n_s^{-2} n_s} = T_{n_s^2}^{-1} T_{n_s} \]
   Given $\varepsilon \in \{\pm 1\}$, we have
   \[ T_{n_s^{\varepsilon}} = T_{n_s^{2\varepsilon} n_s^{-\varepsilon}} = T_{n_s^2}^{\varepsilon} T_{n_s^{-\varepsilon}} \]
   and hence
   \[ T_{n_s^{\varepsilon}} T_{n_s^{-\varepsilon}}^{-1} = T_{n_s^2}^{\varepsilon} \]
   Since $T_{n_s}$ and $T_{n_s^{-1}}$ commute, we can raise the last equation to the power $\varepsilon$ to get
   \[ T_{n_s^{\varepsilon}}^{\varepsilon} T_{n_s^{-\varepsilon}}^{-\varepsilon} = T_{n_s^2} \]
   We now turn to the verification of condition (ii). Unwinding the definitions and observing that the values of $\rho$ lie in the invariants $M^{W^{(1)}}$, we see that condition (ii) amounts to showing that for $\mathfrak{o} \in \mathcal{O}$, $s \in S$ and $u \in \Omega^{(1)}$ we have
   \[ T_u T_{n_s^{\varepsilon}}^{\varepsilon} = T_{n_{u(s)}^{\varepsilon}}^{\varepsilon} T_{u t_{s,u}} \]
   where we abbreviated $\varepsilon = \mathfrak{o}(1,u(s))$. When $\varepsilon = 1$, this reduces immediately to the defining equation $u n_s = n_{u(s)} u t_{s,u}$ of $t_{s,u}$. When $\varepsilon = -1$, we first compute
   \[ T_{n_{u(s)}^{-1}} T_u = T_{n_{u(s)}^{-1} u} = T_{u t_{s,u} n_s^{-1}} = T_{u t_{s,u}} T_{n_s^{-1}} \]
   Rearranging then gives the desired equation. Finally, let us verify condition (iii). Given $s,t \in S$ with $m(s,t) < \infty$, we have to show that for every orientation $\mathfrak{o}$ we have
   \begin{equation}\label{eq:thm_bernstein1} T_{n_s^{\varepsilon(1)}}^{\varepsilon(1)} T_{n_t^{\varepsilon(2)}}^{\varepsilon(2)} T_{n_s^{\varepsilon(3)}}^{\varepsilon(3)} \ldots = T_{n_t^{\varepsilon'(1)}}^{\varepsilon'(1)} T_{n_s^{\varepsilon'(2)}}^{\varepsilon'(2)} T_{n_t^{\varepsilon'(3)}}^{\varepsilon'(3)} \ldots \end{equation}
   where
   \[ \varepsilon(1) = \mathfrak{o}(1,s),\quad \varepsilon(2) = \mathfrak{o}(s,t),\quad \varepsilon(3) = \mathfrak{o}(st,s), \quad \ldots \]
   and
   \[ \varepsilon'(1) = \mathfrak{o}(1,t),\quad \varepsilon'(2) = \mathfrak{o}(t,s),\quad \varepsilon'(3) = \mathfrak{o}(ts,t),\quad \ldots \]
   are precisely the sign sequences appearing in condition \axiom{OR2} for $w = 1$ in \cref{def:or}. By condition \axiom{OR2}, these sequences are in one of two forms. Without loss of generality we may assume that they are in the first form, i.e.
   \[ (\varepsilon(1),\varepsilon(2),\ldots) = (\underbrace{+,\ldots,+}_k,\underbrace{-,\ldots,-}_{m(s,t)-k}) \]
   and
   \[ (\varepsilon'(1),\varepsilon'(2),\ldots) = (\underbrace{-,\ldots,-}_{m(s,t)-k},\underbrace{+,\ldots,+}_k) \]
   Writing
   \[ s_1 = n_s,\ s_2 = n_t,\ s_3 = n_s,\ \ldots \quad s'_1 = n_t,\ s'_2 = n_s,\ s'_3 = n_t,\ \ldots \]
   Equation \eqref{eq:thm_bernstein1} is thus of the form
   \[ T_{s_1} \ldots T_{s_k} T_{s_{k+1}^{-1}}^{-1} \ldots T_{s_{m(s,t)}^{-1}}^{-1} = T_{{s'}_1^{-1}}^{-1} \ldots T_{{s'}_{m(s,t)-k}^{-1}}^{-1} T_{{s'}_{m(s,t)-k+1}}\ldots T_{{s'}_{m(s,t)}} \]
   Rearranging the last equation slightly, we see that it is equivalent to
   \[ T_{{s'}_{m(s,t)-k}^{-1}} \ldots T_{{s'}_1^{-1}} T_{s_1} \ldots T_{s_k} = T_{{s'}_{m(s,t)-k+1}} \ldots T_{{s'}_{m(s,t)}} T_{s_{m(s,t)}^{-1}} \ldots T_{s_{k+1}^{-1}} \]
   Both sides of this equation are words $T_{w_1} \ldots T_{w_{m(s,t)}}$ of length $m(s,t)$ in the distinguished generators $T_w$ of $\mathfrak{A}(W^{(1)})$. Moreover, the words $w_1 \ldots w_{m(s,t)}$ in the elements of $W^{(1)}$ corresponding to them define reduced expressions, since under $W^{(1)} \twoheadrightarrow W$ they project to alternating words of length $m(s,t)$ in $s$ and $t$. Therefore, we can simplify both sides of the above equation to get
   \[ T_{{s'}_{m(s,t)-k}^{-1} \ldots {s'}_1^{-1} s_1 \ldots s_k} = T_{{s'}_{m(s,t)-k+1}\ldots {s'}_{m(s,t)} s_{m(s,t)}^{-1} \ldots s_{k+1}^{-1}} \]
   The validity of this equation now follows from the equation
   \[ {s'}_{m(s,t)-k}^{-1} \ldots {s'}_1^{-1} s_1 \ldots s_k = {s'}_{m(s,t)-k+1}\ldots {s'}_{m(s,t)} s_{m(s,t)}^{-1} \ldots s_{k+1}^{-1} \]
   in $W^{(1)}$, which by backtransforming is seen to be equivalent to the braid relation \eqref{eq:braidcond}
   \[ s_1 \ldots s_{m(s,t)} = {s'}_1 \ldots {s'}_{m(s,t)} \]
   which holds by assumption.
\end{proof}

\begin{definition}\label{def:unnormalized_bernstein}
   The map $\theta$ defined in the previous theorem is called the (unnormalized) \textbf{Bernstein map}. Given a generic pro-$p$ Hecke algebra $\mathcal{H}^{(1)} = \mathcal{H}^{(1)}(a,b)$ associated to parameters $a = (a_s)_{s \in S}$ and $b = (b_s)_{s \in S}$ with $a_s$ invertible, we have (by \cref{prop:second_presentation_of_hecke}) a morphism of monoids
   \[ \mathfrak{A}(W^{(1)}) \longrightarrow R[\mathfrak{A}(W^{(1)})] \longrightarrow \mathcal{H}^{(1)} \]
   Using this map we can push $\theta$ forward to obtain a map
   \[ W^{(1)} \longrightarrow \op{Hom}_{\op{Set}}(\mathcal{O},\mathcal{H}^{(1)}) \]
   still satisfying the $1$-cocycle rule. This map will also be denoted by $\theta$ and referred to as the (unnormalized) Bernstein map.
\end{definition}

Let now $\mathcal{H}^{(1)}$ be a generic pro-$p$ Hecke-algebra with invertible parameters $a_s$ as above. Fixing an orientation $\mathfrak{o} \in \mathcal{O}$, we thus have a family $\{ \theta_\mathfrak{o}(w) \}_{w \in W^{(1)}}$ of elements in $\mathcal{H}^{(1)}$.

The crucial point is that this family forms an $R$-basis of $\mathcal{H}^{(1)}$. This is the content of the next proposition, which shows that in fact the change of basis matrix between $\{ \theta_\mathfrak{o} \}_{w \in W^{(1)}}$ and $\{ T_w \}_{w \in W^{(1)}}$ is `upper triangular'. We will see later (equation \eqref{eq:theta_coincides_with_T_on_dominant}) that for a certain orientation $\mathfrak{o}$ the restriction of $\theta_\mathfrak{o}$ to the subgroup $X \leq W$ of translations recovers the map $\theta$ of Lusztig \cite{Lusztig}. This motivates the terminology `Bernstein map'.

\begin{prop}\label{prop:change_of_basis}
   In $\mathcal{H}^{(1)}$ one has an expansion of the form
   \[ \theta_\mathfrak{o}(w) = c_{w,w}T_w + \sum_{w' < w} c_{w,w'} T_{w'} \]
   with $c_{w,w} \in R^\times$ and $c_{w,w'} \in R$ which are zero for almost all $w'$. In particular $\{\theta_\mathfrak{o}(w)\}_{w \in W^{(1)}}$ is an $R$-basis of $\mathcal{H}^{(1)}$.
\end{prop}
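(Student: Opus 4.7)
The plan is induction on $\ell(w)$, with the basis statement following formally from the upper-triangular shape of the expansion. The base case $\ell(w)=0$ (i.e.\ $w \in \Omega^{(1)}$) is immediate since $\theta_\mathfrak{o}(w) = T_w$ by \cref{thm:ex_theta_braid}, giving $c_{w,w} = 1$.

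For the inductive step, I pick $s \in S$ so that $w = w' n_s$ with $\ell(w') = \ell(w) - 1$ (taking $w' := w n_s^{-1}$ for any generator $s$ with $\ell(\pi(w) s) < \ell(\pi(w))$), and apply the cocycle rule:
\[ \theta_\mathfrak{o}(w) = \theta_\mathfrak{o}(w') \cdot \theta_{\mathfrak{o}\bullet w'}(n_s). \]
A short calculation using the quadratic relation $T_{n_s}^2 = a_s T_{n_s^2} + b_s T_{n_s}$ and the identity $T_{n_s^{-1}} = T_{n_s^{-2}} T_{n_s}$ yields $T_{n_s^{-1}}^{-1} = a_s^{-1}(T_{n_s} - b_s)$, so in either case ($\varepsilon = \mathfrak{o}(w',s) = \pm 1$),
\[ \theta_{\mathfrak{o}\bullet w'}(n_s) \;=\; T_{n_s^\varepsilon}^\varepsilon \;=\; \alpha_s T_{n_s} + \beta_s \]
with $\alpha_s \in R^\times$ and $\beta_s \in R[T] \subseteq \mathcal{H}^{(1)}$.

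A crucial observation for the Bruhat bookkeeping is that every $T_v$ appearing in the inductive expansion of $\theta_\mathfrak{o}(w')$ has $\Omega$-projection equal to that of $w'$, because multiplication by $T_{n_s}$ and by elements of $R[T]$ preserves the $\Omega$-projection of the index. Writing $\pi(w) = \pi(w)_{\mathrm{aff}} u_w$ with $\pi(w)_{\mathrm{aff}} \in W_{\mathrm{aff}}$, $u_w \in \Omega$, and setting $s' := u_{w'} s u_{w'}^{-1} \in S$, one has $u_w = u_{w'}$ and $\pi(w)_{\mathrm{aff}} = \pi(w')_{\mathrm{aff}} s'$ with $\ell(\pi(w')_{\mathrm{aff}} s') = \ell(\pi(w')_{\mathrm{aff}}) + 1$. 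Expanding $\theta_\mathfrak{o}(w') \cdot (\alpha_s T_{n_s} + \beta_s)$ via the Iwahori--Matsumoto rules, the unique contribution to $T_w$ comes from $c_{w',w'} T_{w'} \cdot \alpha_s T_{n_s} = \alpha_s c_{w',w'} T_w$, with unit coefficient; every other contribution takes the form $T_v \beta_s = v(\beta_s) T_v \in \bigoplus_t R \cdot T_{vt}$ or $T_v T_{n_s}$, the latter equalling $T_{v n_s}$ if $\ell(v n_s) = \ell(v)+1$ and $a_s T_{v n_s} + T_v b_s$ otherwise. Since $u_v = u_{w'}$ forces $\pi(v n_s)_{\mathrm{aff}} = \pi(v)_{\mathrm{aff}} s'$, the lifting property of the Bruhat order on $W_{\mathrm{aff}}$ applied to $\pi(v)_{\mathrm{aff}} < \pi(w')_{\mathrm{aff}}$ and $s'$ (with $\ell(\pi(w')_{\mathrm{aff}} s') > \ell(\pi(w')_{\mathrm{aff}})$) yields $\pi(v n_s)_{\mathrm{aff}} \leq \pi(w)_{\mathrm{aff}}$, strictly because of the strict inequality $\pi(v)_{\mathrm{aff}} < \pi(w')_{\mathrm{aff}}$. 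The remaining indices $vt$ and (in the second case) $v n_s$ with $\ell(v n_s) < \ell(v)$ lie in $W_{\mathrm{aff}}$-positions $\leq \pi(v)_{\mathrm{aff}} < \pi(w')_{\mathrm{aff}} < \pi(w)_{\mathrm{aff}}$, hence also give $< w$ strictly, closing the induction.

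The basis claim follows by the standard triangularity argument: the identity $T_w = c_{w,w}^{-1} \theta_\mathfrak{o}(w) - c_{w,w}^{-1} \sum_{w' < w} c_{w,w'} T_{w'}$ expresses each $T_w$ inductively as an $R$-linear combination of the $\theta_\mathfrak{o}(v)$ with $v \leq w$, giving spanning; linear independence follows by inspecting the $T_{w_0}$-coefficient of a hypothetical nontrivial relation $\sum_w r_w \theta_\mathfrak{o}(w) = 0$, where $w_0$ is maximal in the finite support under any total order refining Bruhat order---this coefficient equals $r_{w_0} c_{w_0,w_0}$, forcing $r_{w_0} = 0$ since $c_{w_0,w_0}$ is a unit. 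The principal technical point is the Bruhat bookkeeping in paragraph three; the $\Omega$-projection invariance is essential, as without it the generator appearing in the lifting property could differ on the two sides and the required inequality would fail in general.
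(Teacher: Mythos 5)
Your proof is correct and follows essentially the same route as the paper: expand $\theta_\mathfrak{o}(w)$ in the Iwahori--Matsumoto basis using $T_{n_s^{-1}}^{-1} = a_s^{-1}(T_{n_s} - b_s)$, the commutation rule for $R[T]$, and the left/right multiplication rules by $T_{n_s}$, observing that the result is upper-triangular with unit diagonal with respect to the Bruhat order. Your version peels one generator from the right via the cocycle rule rather than expanding the full word at once, and it makes explicit the Bruhat-order bookkeeping (the $\Omega$-projection invariance and the lifting-property step) that the paper leaves to the reader.
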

\begin{proof}
   The first claim follows by taking an expression $w = n_{s_1}\ldots n_{s_r} u$ with $\ell(w) = r$ and expanding
   \[ \theta_\mathfrak{o}(w) = T^{\varepsilon_1}_{n_{s_1}^{\varepsilon_1}} \ldots T^{\varepsilon_r}_{n_{s_r}^{\varepsilon_r}} T_u \]
   using
   \begin{equation}\label{eq:t_inverse} T_{n_s^{-1}}^{-1} = a_s^{-1}(T_{n_s} - b_s) \end{equation}
      and the commutation rule \eqref{eq:gencommrule}. Here one also uses that $T_{n_{s_1}}\ldots T_{n_{s_r}} = T_{n_{s_1}\ldots n_{s_r}}$ and that for every $w \in W^{(1)}$ one either has
      \[ T_{n_s} T_w = T_{n_s w} \]
      or
      \[ T_{n_s} T_w = a_s T_{n_s w} + b_s T_w \]
      according to whether $\ell(n_s w) = 1+\ell(w)$ or $\ell(n_s w) = \ell(w)-1$.
      The second claim is a formal consequence of the first and the irreflexivity and transitivity of the relation $<$.
\end{proof}

\subsection{A \texorpdfstring{$2$}{2}-coboundary \texorpdfstring{$\bbf{X}$}{X} appearing in Coxeter geometry} 
\label{sub:A $2$-coboundary appearing in Coxeter geometry}
The purpose of this section is to pave the way for introducing an integral $\widehat{\theta}$ and a normalized version $\widetilde{\theta}$ of the Bernstein map $\theta$ defined in the previous section.

The map $\theta$ has the `defect' that it is only defined when the parameters $a_s$ are invertible. In view of the study mod $p$ representations of pro-$p$-Iwahori Hecke algebras (where $a_s = 0$), it is important to have an integral version which is defined for \textit{all} parameters. Such variants of the classical Bernstein-Lusztig basis have been first introduced by Vignéras \cite{VigProP}, \cite{VigGen}. The construction of $\widehat{\theta}$ is based on the following relation
\[ a_s T_{n_s^{-1}}^{-1} = T_{n_s} - b_s \]
which is an immediate consequence of the quadratic relations. It suggests to formally multiply
\[ \theta_\mathfrak{o}(w) = T_{n_{s_1}^{\varepsilon_1}}^{\varepsilon_1} \ldots T_{n_{s_r}^{\varepsilon_r}}^{\varepsilon_r} T_u \]
by the product
\[ \overline{\gamma}_{\mathfrak{o}}(w) = \prod_{i : \varepsilon_i = -1} a_{s_i} \]
to get an integral expression in the generators $T_w$. However, a priori the factor $\overline{\gamma}_{\mathfrak{o}}(w)$ and therefore $\widehat{\theta}_\mathfrak{o}(w)$ depends on the chosen expression $w = n_{s_1}\ldots n_{s_r} u$ for $w$ as a product in the distinguished generators. The first goal of this section is therefore to establish the independence of $\overline{\gamma}_\mathfrak{o}(w)$ from the chosen expression for $w$.
As this is a purely combinatorial question, it will be useful to work with formal products of hyperplanes instead of products of the parameters $a_s$, and to replace $\overline{\gamma}$ by a purely combinatorially defined map $\gamma$.

The second goal of this section is to determine the multiplicative properties of $\gamma$, as these determine the multiplicative properties of $\widehat{\theta}$ and the usefulness of $\theta$ wholly depends on the fact that it satisfies the cocycle rule. We will achieve this by identifying the coboundary of $\gamma$ (viewed as a map $w \mapsto (\mathfrak{o} \mapsto \gamma_\mathfrak{o}(w))$ in one parameter) with another explicitly and combinatorially defined map $\bbf{X}$.

We will then give a second characterization of $\bbf{X}$ as a coboundary of a `generalized length function' $\sqrt{\bbf{L}}$, which is needed in order to introduce and prove the multiplicative properties of a normalized variant $\widetilde{\theta}$ of $\theta$. This normalized version is closely related to the classical Bernstein-Lusztig basis of the Iwahori-Hecke algebra (see \cref{ex:affine Hecke algebras}).

Since everything in this section only involves the combinatorics of extended Coxeter groups, we need only to fix an extended Coxeter group $W = (W,W_{\op{aff}},\Omega, S)$.

Let us start by defining the `coboundary' mentioned in the title of this section.

\begin{definition}\label{def:coboundary}
   Given $w,w' \in W$ let
   \[ \bbf{X}(w,w') := \prod_{H} \mathbf{a}_H \in \N[\mathfrak{H}] \]
   where $\N[\mathfrak{H}]$ denotes the free abelian monoid on the set $\mathfrak{H}$ of hyperplanes and the product is taken over all hyperplanes $H \in \mathfrak{H}$ which both separate $1$ from $w$ and $w$ from $ww'$.
   
   In other words, $\bbf{X}(w,w')$ is the product over all hyperplanes which are crossed twice by any gallery that is the concatenation of a minimal gallery from $1$ to $w$ and a minimal gallery from $w$ to $ww'$. In particular we have the following observation, which we record separately.
\end{definition}

\begin{rmk}\label{rmk:coboundary}
   For all $w,w' \in W$
   \[ \ell(ww') = \ell(w)+\ell(w') \quad\Rightarrow\quad \bbf{X}(w,w') = 1 \]
\end{rmk}

\begin{rmk}\label{rmk:coboundary_vanishes}
   From the definition of $\bbf{X}$, it also follows directly that
   \[ \bbf{X}(w,w') = 1 \]
   whenever $w \in \Omega$ or $w' \in \Omega$.
\end{rmk}

Next, we will show that the sign attached by an orientation to crossing a hyperplane $H$ at a chamber $w \in W_{\op{aff}}$ does not depend upon the chamber itself but only upon which half-space with respect to $H$ this chamber lies in.

\begin{lemma}\label{lem:or_depend_on_half-space}
   If $w,\widetilde{w} \in W_{\op{aff}}$ and $s,\widetilde{s} \in S$ are such that
   \[ wsw^{-1} = \widetilde{w}\widetilde{s}\widetilde{w}^{-1} \quad \text{and} \quad \ell(sw^{-1}\widetilde{w}) = 1+\ell(w^{-1}\widetilde{w}) \]
   that is, if $w,ws$ and $\widetilde{w},\widetilde{w}\widetilde{s}$ are separated by the same wall $H = wsw^{-1} = \widetilde{w}\widetilde{s}\widetilde{w}^{-1}$ and $w,\widetilde{w}$ lie on the same side with respect to $H$, then
   \[ \mathfrak{o}(w,s) = \mathfrak{o}(\widetilde{w},\widetilde{s}) \]
\end{lemma}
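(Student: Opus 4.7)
The plan is to use Matsumoto's theorem — the invariance of reduced expressions under braid moves — to transform a geodesic gallery starting at $w$ that crosses $H$ in its first step into one that crosses $H$ in its last step, and to verify that each such braid move preserves the sign $\mathfrak{o}$ attaches to the crossing of any wall.

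First, setting $v := w^{-1}\widetilde{w}$, the hypothesis $wsw^{-1} = \widetilde{w}\widetilde{s}\widetilde{w}^{-1}$ rearranges to $sv = v\widetilde{s}$, and combined with $\ell(sv) = \ell(v)+1$ this further forces $\ell(v\widetilde{s}) = \ell(v)+1$. Hence for any reduced expression $v = t_1\cdots t_k$, both $s\cdot t_1\cdots t_k$ and $t_1\cdots t_k\cdot\widetilde{s}$ are reduced expressions of length $k+1$ for the same element of $W_{\op{aff}}$, giving rise at the base chamber $w$ to two geodesic galleries
\[ \Gamma_1 = (w,\ ws,\ wst_1,\ \ldots,\ \widetilde{w}\widetilde{s}),\qquad \Gamma_2 = (w,\ wt_1,\ \ldots,\ \widetilde{w},\ \widetilde{w}\widetilde{s}) \]
sharing endpoints, with $H$ crossed in $\Gamma_1$ at its first edge (sign $\mathfrak{o}(w,s)$) and in $\Gamma_2$ at its last edge (sign $\mathfrak{o}(\widetilde{w},\widetilde{s})$). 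By Matsumoto's theorem, the two reduced expressions are linked by a finite chain of braid moves $\underbrace{\sigma\tau\sigma\cdots}_{m(\sigma,\tau)}\leftrightarrow\underbrace{\tau\sigma\tau\cdots}_{m(\sigma,\tau)}$.

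The main step, and the principal obstacle, is to show that every such braid move preserves the $(\textrm{wall},\textrm{sign})$ pair at every crossing. Such a move at positions $[i+1,i+m]$ replaces a segment of the gallery going from some chamber $C$ to the diametrically opposite chamber of the $2m$-gon $C\langle\sigma,\tau\rangle$ around one side by the segment around the other side. Both segments traverse exactly the $m$ walls of this $2m$-gon once each, and by the rotational symmetry of the $2m$-gon, the $j$-th edge of one segment and the $(m+1-j)$-th edge of the other cross the same wall for $j = 1,\ldots,m$. Axiom \axiom{OR2}, in either of its two allowed forms, states precisely that the corresponding sign sequences $(a_j),(b_j)$ satisfy $a_j = b_{m+1-j}$, so the sign at each wall in the $2m$-gon is preserved; walls outside positions $[i+1,i+m]$ are untouched.

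Accumulating over the finite sequence of moves, the $(\textrm{wall},\textrm{sign})$ data along $\Gamma_1$ and $\Gamma_2$ coincide. Since both galleries are geodesic with common endpoints, they cross the same set of walls; in particular $H$, which separates $w$ from $\widetilde{w}\widetilde{s}$ (because $w,\widetilde{w}$ lie on the same side of $H$ by hypothesis while $\widetilde{w}\widetilde{s}$ lies on the opposite side), is crossed in each and receives the same sign, yielding $\mathfrak{o}(w,s) = \mathfrak{o}(\widetilde{w},\widetilde{s})$.
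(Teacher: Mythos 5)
Your proof is correct and takes essentially the same route as the paper's: reduce via Matsumoto/Tits to a single braid move, then observe that inside the $2m$-gon the two sides cross the same walls with matching signs, which is exactly what \axiom{OR2} guarantees. The only cosmetic differences are that you avoid the paper's preliminary translation to $w = 1$ (replacing $\mathfrak{o}$ by $\mathfrak{o}\bullet w$), and you unpack \axiom{OR2} directly via the index correspondence $H_j \leftrightarrow H'_{m+1-j}$ rather than invoking the paper's \cref{rmk:or_geom_inter} reformulation in terms of distance to a reference chamber in $\left<s,t\right>$ — the underlying content is the same.
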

\begin{proof}
   After replacing $\mathfrak{o}$ by $\mathfrak{o}\bullet{}w$, we may assume that $w = 1$. Then $s\widetilde{w} = \widetilde{w}\widetilde{s}$ and $\ell(s\widetilde{w}) = \ell(\widetilde{w}\widetilde{s}) = \ell(\widetilde{w}) + 1$. Therefore, if we take any reduced expression $\widetilde{w} = s_1\ldots s_r$, then
   \[ s s_1\ldots s_r = s_1\ldots s_r \widetilde{s} \]
   will be two \textit{reduced} expressions of the same element in $W_{\op{aff}}$ and $\mathfrak{o}(1,s), \mathfrak{o}(\widetilde{w},\widetilde{s})$ are the signs which appear in these galleries when crossing the wall $H$. It therefore suffices to show that for any two reduced expressions of the same element in $W_{\op{aff}}$ and any hyperplane $H$ the signs which appear when crossing $H$ are the same for both expressions. By Tits' solution of the word problem \cite[II.3C]{Brown}, two such reduced expressions can be transformed into each other by a finite sequence of transformations of type $(\text{III})$ (cf. proof of \cref{thm:ex_theta_braid})
   \[ t_1 \ldots t_i \underbrace{s t s \ldots }_{m(s,t) < \infty} t_{i+1} \ldots t_m \longmapsto t_1 \ldots t_i \underbrace{tst \ldots }_{m(s,t) < \infty} t_{i+1} \ldots t_m \]
   If $H$ is crossed before or after the part where these two galleries differ, the signs are equal for trivial reasons. It therefore suffices to show that for $s,t \in S$ with $m(s,t) < \infty$ the signs of all the walls crossed by the two galleries corresponding to the reduced expressions
   \[ \underbrace{sts\ldots}_{m(s,t)} = \underbrace{tst\ldots }_{m(s,t)} \]
   are equal. But by \cref{rmk:or_geom_inter}, the signs are determined by the distance to some reference chamber in $\left<s,t\right>$. In particular the sign $\mathfrak{o}(w,s)$ only depends on which half-space with respect to $H = wsw^{-1}$ the fundamental chamber lies in.
\end{proof}

\begin{notation}\label{not:orientations}
   Thanks to the previous lemma, we may extend any orientation $\mathfrak{o}$ canonically to a map
   \[ \mathfrak{o}: W\times \mathfrak{H} \rightarrow \{\pm \} \]
   by letting
   \[ \mathfrak{o}(w,H) := \mathfrak{o}(w w_0,s),\quad w \in W,\ H \in \mathfrak{H} \]
   where $w_0 \in W$ and $s \in S$ are such that
   \[ w_0 s w_0^{-1} = H \]
   and $1, w_0$ lie in the same half-space with respect to $H$. It follows quite easily that this does indeed give rise to a well-defined map $W\times \mathfrak{H} \rightarrow \{\pm\}$ that extends $\mathfrak{o}$. In the terminology of \cref{term:basic} and \cref{term:orient}, the sign $\mathfrak{o}(w,H)$ has the geometric interpretation as being the sign that is attached to crossing the hyperplane $wHw^{-1}$ at any chamber that lies in the same half-space with respect to $wHw^{-1}$ as $w$. In particular $\mathfrak{o}(1,H)$ is the sign attached by $\mathfrak{o}$ to crossing $H$ at any chamber that lies in the same half-space with respect to $H$ as the fundamental chamber.
\end{notation}

\begin{cor}
   Given an orientation $\mathfrak{o}$ of $W$, there exists a unique map from $W$ into the free commutative monoid $\N[\mathfrak{H}]$ with generators $\mathbf{a}_H$ corresponding to the hyperplanes $H \in \mathfrak{H}$ such that if $w = s_1 \ldots s_r u$, $s_i \in S$, $u \in \Omega$ is a reduced expression for $w$, then $\gamma_\mathfrak{o}(w)$ equals the product of the hyperplanes crossed in the negative direction by the gallery corresponding to this reduced expression. In other words
   \begin{equation}\label{eq:gamma_def} \gamma_\mathfrak{o}(w) = \prod_{i\ :\ \varepsilon_i = -1} \mathbf{a}_{H_i} \end{equation}
   where $\varepsilon_i = \mathfrak{o}(s_1\ldots s_{i-1},s_i)$ and $H_i = (s_1\ldots s_{i-1}) s_i (s_1 \ldots s_{i-1})^{-1}$.
\end{cor}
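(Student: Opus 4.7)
The plan is to show that the right-hand side of \eqref{eq:gamma_def} depends only on $w$ and $\mathfrak{o}$, and not on the particular reduced expression chosen. Uniqueness follows immediately, so the only substantive task is well-definedness. I would tackle this by giving a closed-form expression for $\gamma_{\mathfrak{o}}(w)$ in terms of invariants of $w$ alone, then verifying that the formula of \eqref{eq:gamma_def} computes this closed form regardless of the chosen expression.

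First I would reduce to the affine case. Writing $w = w_0 u$ uniquely with $w_0 \in W_{\op{aff}}$ and $u \in \Omega$, a reduced expression $w = s_1 \ldots s_r u$ corresponds exactly to a reduced expression $w_0 = s_1 \ldots s_r$ in $W_{\op{aff}}$ with the same $\Omega$-factor $u$ (since $\ell(w) = \ell(w_0)$ and since the semi-direct decomposition is unique). The factor $u$ contributes nothing to the product, so it suffices to show invariance under changes of the reduced expression for $w_0$.

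Next, I would invoke the standard fact (recorded in \cref{term:basic}) that the set of hyperplanes $\{H_1, \ldots, H_r\}$ attached to any reduced expression $w_0 = s_1 \ldots s_r$ is precisely the inversion set
\[ \op{Inv}(w_0) := \{ H \in \mathfrak{H} : H \text{ separates } 1 \text{ from } w_0 \}, \]
which is a genuine invariant of $w_0$. Moreover, for each $i$ the chamber $c_i := s_1 \ldots s_{i-1}$ lies on the same side of $H_i$ as the fundamental chamber $1$, because the initial segment $(1, s_1, \ldots, c_i)$ is a reduced gallery and hence crosses no hyperplane twice; in particular it does not cross $H_i$. Applying \cref{lem:or_depend_on_half-space} (with $\widetilde{w} = 1$ and $\widetilde{s}$ a suitable conjugate realising $H_i$ through the fundamental chamber) therefore gives
\[ \varepsilon_i = \mathfrak{o}(c_i, s_i) = \mathfrak{o}(1, H_i), \]
where the right-hand side uses the extended notation of \cref{not:orientations}.

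Combining these two observations, the product in \eqref{eq:gamma_def} rewrites as the closed form
\[ \gamma_\mathfrak{o}(w) = \prod_{\substack{H \in \op{Inv}(w_0) \\ \mathfrak{o}(1, H) = -1}} \mathbf{a}_H, \]
which depends only on $w_0$ (and hence on $w$) together with $\mathfrak{o}$. Since this closed form makes no reference to any reduced expression, the value is independent of the choices made. The only delicate point is the verification that the hypothesis of \cref{lem:or_depend_on_half-space} is satisfied, which is precisely the reducedness of the initial subgallery; everything else is routine bookkeeping.
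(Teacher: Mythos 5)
Your proof is correct and follows essentially the same route as the paper: reduce to $W_{\op{aff}}$, observe that the $H_i$ form the inversion set, and apply \cref{lem:or_depend_on_half-space} to see that $\varepsilon_i$ depends only on the half-space of $H_i$ containing the fundamental chamber. One small imprecision: the reason $c_i$ lies on the same side of $H_i$ as $1$ is that the \emph{full} reduced gallery has pairwise-distinct crossed walls (so $H_i \neq H_j$ for $j<i$), not merely that the initial subgallery crosses no wall twice, but the substance is right.
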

\begin{proof}
   We need to verify the independence of the right-hand side of equation \eqref{eq:gamma_def} from the choice of the reduced expression. Since $s_1 \ldots s_r$ is a reduced expression of $wu^{-1} \in W_{\op{aff}}$, the walls $H_i$ appearing are pairwise distinct and are equal to the walls separating $1$ and $w$. On the other hand, by the previous lemma the sign $\varepsilon_i$ only depends on which half-space with respect to $H_i$ the fundamental chamber lies in. Therefore, the $H_i$ with $\varepsilon_i = -1$ only depend on $w$ and $\mathfrak{o}$.
\end{proof}

\begin{rmk}\label{rmk:gamma_product_rule_as_coboundary_equation}
   As promised, we will now explicitly determine the `coboundary' of the map $\gamma$ defined above. More precisely, let us view $\gamma$ as a map $\gamma: W \longrightarrow M$ taking values in the $W$-module $M = \op{Hom}_{\op{Set}}(\mathcal{O},\Z[\mathfrak{H}])$. The structure of an abelian group on $M$ is `pointwise', and $\Z[\mathfrak{H}] \supseteq \N[\mathfrak{H}]$ denotes the free commutative \textit{group} on $\mathfrak{H}$. The $W$-action on $M$ is induced by the canonical right action on $\mathcal{O}$ and the canonical left action on $\mathfrak{H}$, i.e.
   \[ (w\bullet{}\phi)(\mathfrak{o}) = w\bullet{}\phi(\mathfrak{o}\bullet{}w) \quad \forall w \in W,\ \phi \in M,\ \mathfrak{o} \in \mathcal{O} \]
Finally, let us view $\bbf{X}$ as a map
\[ \bbf{X}: W\times W \longrightarrow M,\quad (w,w') \mapsto \left(\mathfrak{o} \mapsto \bbf{X}(w,w')\right) \]
The statement of the next lemma is then equivalent to the coboundary equation
\[ d \gamma = \bbf{X} \]
of the inhomogeneous standard cochain complex on $M$.
\end{rmk}

\begin{lemma}\label{lem:gamma_product_rule}
   For all $w,w' \in W$, one has
   \[ \gamma_\mathfrak{o}(w) w\left(\gamma_{\mathfrak{o}\bullet{}w}(w')\right) = \bbf{X}(w,w') \gamma_\mathfrak{o}(ww') \]
\end{lemma}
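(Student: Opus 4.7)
The plan is to interpret both sides as products over crossings of a single (non-reduced) gallery from $1$ to $ww'$, obtained by concatenating reduced galleries for $w$ and $w'$, and then to classify the crossings by the number of times each wall is traversed.

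First I would fix reduced expressions $w = s_1 \ldots s_r u$ and $w' = s'_1 \ldots s'_{r'} u'$ with $s_i, s'_j \in S$ and $u, u' \in \Omega$, and concatenate them (commuting $u$ past $s'_j$ to get $u(s'_j)$) to obtain a gallery $\Gamma$ from $1$ to $ww'$ whose first $r$ crossings are at the walls $H_i$ of \eqref{eq:gamma_def} and whose last $r'$ crossings are at the walls $wH'_jw^{-1}$, where $H'_j = (s'_1 \ldots s'_{j-1})s'_j(s'_1 \ldots s'_{j-1})^{-1}$. By construction and by the interpretation of the $W$-action as conjugation of hyperplanes, the LHS equals the formal product of $\mathbf{a}_H$ taken over all crossings of $\Gamma$ which are negative with respect to $\mathfrak{o}$. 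Concretely: the first $r$ crossings yield $\gamma_\mathfrak{o}(w)$, and for the $j$-th crossing of the second segment, \cref{lem:or_depend_on_half-space} (together with \cref{not:orientations}) shows that the sign assigned by $\mathfrak{o}$ to this crossing equals $(\mathfrak{o}\bullet w)(s'_1\ldots s'_{j-1}, s'_j)$, so the second segment contributes $w(\gamma_{\mathfrak{o}\bullet w}(w'))$.

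Next I would partition the walls appearing in $\Gamma$ by multiplicity. A reduced gallery crosses no wall twice, so each wall of $\Gamma$ is crossed once or twice; let $A$ and $B$ denote the corresponding sets. By definition $B$ is the support of $\bbf{X}(w,w')$, and $A$ is exactly the set of walls separating $1$ from $ww'$. For $H \in B$, the chamber preceding the second crossing of $H$ lies on the opposite side of $H$ from the chamber preceding the first crossing (no other crossing of $H$ occurs between them), hence by \cref{lem:or_depend_on_half-space} the two signs assigned by $\mathfrak{o}$ are opposite and $H$ contributes exactly one factor $\mathbf{a}_H$. For $H \in A$, the unique chamber of $\Gamma$ immediately preceding the crossing of $H$ lies on the same side of $H$ as the fundamental chamber $1$, so the assigned sign equals $\mathfrak{o}(1,H)$, which is also the sign attached to $H$ by any reduced gallery from $1$ to $ww'$.

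Combining these observations, the LHS expands as
\[
\gamma_\mathfrak{o}(w)\,w(\gamma_{\mathfrak{o}\bullet w}(w')) \;=\; \Bigl(\prod_{H \in B} \mathbf{a}_H\Bigr)\cdot \Bigl(\prod_{\substack{H \in A \\ \mathfrak{o}(1,H)=-1}} \mathbf{a}_H\Bigr) \;=\; \bbf{X}(w,w')\,\gamma_\mathfrak{o}(ww'),
\]
which is the RHS. The main obstacle is the sign analysis in the second paragraph: one has to confirm that the translated sign $(\mathfrak{o}\bullet w)(s'_1\ldots s'_{j-1}, s'_j)$ agrees with the sign $\mathfrak{o}$ actually attaches to the $j$-th crossing of the translated second segment, and that walls crossed twice contribute exactly once; both facts rest squarely on the half-space invariance provided by \cref{lem:or_depend_on_half-space}.
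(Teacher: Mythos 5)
Your proof is correct and follows essentially the same approach as the paper's: interpret both sides as products over the negatively‐crossed walls of the concatenated gallery, then classify walls by crossing multiplicity, with \cref{lem:or_depend_on_half-space} supplying the key fact that a twice-crossed wall contributes one positive and one negative sign. The only organizational difference is that the paper first reduces to $w,w' \in W_{\op{aff}}$ via the identities $\gamma_{\mathfrak{o}}(w_0 u)=\gamma_{\mathfrak{o}}(w_0)$, $\bbf{X}(w,w')=\bbf{X}(w_0,w_0 u(w'_0))$, and $\gamma_{(\mathfrak{o}\bullet w_0)\bullet u}(w'_0)=u^{-1}\bigl(\gamma_{\mathfrak{o}\bullet w_0}(u(w'_0))\bigr)$, whereas you fold the $\Omega$-parts into the concatenated gallery from the start by commuting $u$ past the $s'_j$; both bookkeeping strategies lead to the same combinatorial core.
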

\begin{proof}
   Write $w = w_0 u$ and $w' = w_0' u'$ with $w_0,w_0' \in W_{\op{aff}}$ and $u,u' \in \Omega$. Then by definition
   \[ \gamma_\mathfrak{o}(w) = \gamma_\mathfrak{o}(w_0) \quad \gamma_{\mathfrak{o}\bullet{}w}(w') = \gamma_{\mathfrak{o}\bullet{}w}(w_0') \quad \gamma_\mathfrak{o}(ww') = \gamma_\mathfrak{o}(w_0 u(w_0)') \]
   and
   \[ \bbf{X}(w,w') = \bbf{X}(w_0, w_0 u(w_0')) \]
   Moreover, it follows from the definitions that
   \[ \gamma_{(\mathfrak{o}\bullet{}w_0)\bullet{}u}(w_0') = u^{-1}\left(\gamma_{\mathfrak{o}\bullet{}w_0}(u(w_0'))\right) \]
   It therefore suffices to prove the formula for $w,w' \in W_{\op{aff}}$. Taking reduced expressions $w = s_1\ldots s_r$ and $w' = s_{r+1} \ldots s_{r+m}$, one has
   \[ \gamma_\mathfrak{o}(w)w\left(\gamma_{\mathfrak{o}\bullet{}w}(w')\right) = \prod_H \mathbf{a}_H \]
   where the product extends over all walls $H$ which are crossed with a negative sign by the gallery corresponding to the possibly nonreduced expression $s_1\ldots s_{r+m}$. A wall $H$ will be crossed by this gallery if and only if it separates $1$ from $w$ or $w$ from $ww'$. A wall $H$ is crossed twice iff it separates both $1$ from $w$ and $w$ from $ww'$, otherwise it is crossed only once. The walls that are crossed once are exactly the walls that separate $1$ from $ww'$ and they are crossed with the same sign as in a minimal gallery from $1$ to $ww'$. The walls that are crossed twice are crossed once with a positive and once with a negative sign. It therefore follows immediately that
   \[ \gamma_\mathfrak{o}(w)w\left(\gamma_{\mathfrak{o}\bullet{}w}(w')\right) = \bbf{X}(w,w') \gamma_\mathfrak{o}(ww') \]
\end{proof}

The length $\ell(w)$ of an element $w \in W$ is given by the \textit{number} of walls separating $1$ and $w$. Replacing numbers by formal products of walls we get the notion of the \textit{generalized length} $\bbf{L}(w)$ of an element, which leads to another characterization of $\bbf{X}$ as a coboundary.

\begin{definition}\label{def:generalized_length}
   The \textbf{generalized length} $\bbf{L}(w)$ of $w \in W$ is the element of $\N[\mathfrak{H}]$ given by
   \[ \bbf{L}(w) := \prod_H \mathbf{a}_{H} \]
   where the product is taken over all $H \in \mathfrak{H}$ separating $1$ and $w$.
\end{definition}

\begin{lemma}\label{lem:gen_len_product_rule}
   For all $w,w' \in W$ we have
   \[ \bbf{L}(w) w\left(\bbf{L}(w')\right) = \bbf{X}(w,w')^2 \bbf{L}(ww') \]
\end{lemma}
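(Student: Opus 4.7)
The approach is exactly parallel to the proof of \cref{lem:gamma_product_rule}: interpret each side as a formal product over a multiset of hyperplanes, and reduce the identity to an elementary inclusion-exclusion statement about three sets of walls associated to $w, w', ww'$.

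More precisely, I would introduce
\[ A := \{ H \in \mathfrak{H} : H \text{ separates } 1 \text{ and } w \}, \quad B' := \{ H \in \mathfrak{H} : H \text{ separates } w \text{ and } ww' \}, \]
and $C := \{ H \in \mathfrak{H} : H \text{ separates } 1 \text{ and } ww' \}$. By the geometric description of the $W$-action on $\mathfrak{H}$, the set $B'$ is the image under conjugation by $w$ of the set of hyperplanes separating $1$ and $w'$, so that
\[ \bbf{L}(w) = \prod_{H \in A} \mathbf{a}_H, \qquad w\!\left(\bbf{L}(w')\right) = \prod_{H \in B'} \mathbf{a}_H, \qquad \bbf{L}(ww') = \prod_{H \in C} \mathbf{a}_H. \]
The key geometric fact, essentially already established inside the proof of \cref{lem:gamma_product_rule}, is that concatenating a minimal gallery from $1$ to $w$ with a minimal gallery from $w$ to $ww'$ produces a (possibly non-reduced) gallery from $1$ to $ww'$ which crosses every $H \in A \cap B'$ exactly twice and every $H \in C$ exactly once, with no other crossings; equivalently, $C = A \mathbin{\triangle} B'$. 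Combined with the definition of $\bbf{X}$, this yields $\bbf{X}(w,w') = \prod_{H \in A \cap B'} \mathbf{a}_H$.

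The identity then reduces to the multiset equation
\[ 1_A(H) + 1_{B'}(H) = 2\cdot 1_{A \cap B'}(H) + 1_{A \triangle B'}(H) \qquad \forall H \in \mathfrak{H}, \]
which is elementary (check the four cases for membership in $A$ and $B'$). Multiplying out over $H$ in $\N[\mathfrak{H}]$ gives exactly
\[ \bbf{L}(w)\, w\!\left(\bbf{L}(w')\right) = \bbf{X}(w,w')^2\, \bbf{L}(ww'). \]

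As a preliminary reduction, exactly as in \cref{lem:gamma_product_rule}, one first disposes of the $\Omega$-components: writing $w = w_0 u$ and $w' = w_0' u'$ with $w_0, w_0' \in W_{\op{aff}}$ and $u, u' \in \Omega$, all four quantities appearing in the statement are insensitive to the $\Omega$-factors (since walls $\mathfrak{H}$ are defined via $W_{\op{aff}}$ and $\Omega$ permutes them by conjugation; cf. \cref{rmk:coboundary_vanishes}), so one can reduce to $w, w' \in W_{\op{aff}}$ and work with reduced expressions. The only step requiring a little care is verifying that $w(\bbf{L}(w'))$ genuinely equals $\prod_{H \in B'} \mathbf{a}_H$ in the chosen $W$-module structure on $\N[\mathfrak{H}]$, but this is immediate from the equivariance of the relation ``$H$ separates $1$ and $w'$'' under left translation by $w$. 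No delicate combinatorics beyond the set identity above are needed, so there is no real obstacle; the proof is essentially a more symmetric variant of \cref{lem:gamma_product_rule}.
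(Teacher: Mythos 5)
Your proof is correct and rests on the same geometric idea as the paper, whose own proof is just a one-line reference to the concatenated-gallery argument of \cref{lem:gamma_product_rule} with the remark that each wall crossed twice now also appears twice. Your rephrasing via the three wall-sets $A$, $B'$, $C$ and the symmetric-difference identity $C = A \triangle B'$ is a tidier way to organize exactly that counting, and it has the bonus of making the reduction to $W_{\op{aff}}$ unnecessary: the identity $C = A \triangle B'$ is just the fact that ``lying on the same side of $H$'' is a two-class equivalence relation, and $w(\bbf{L}(w')) = \prod_{H \in B'} \mathbf{a}_H$ is the $W$-equivariance of the separation relation, both of which hold for arbitrary $w,w' \in W$ viewed as chambers, with no need for reduced expressions. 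One small imprecision: the preliminary reduction as you state it (``all four quantities appearing in the statement are insensitive to the $\Omega$-factors'') is not quite right — $w(\bbf{L}(w'))$ and $\bbf{L}(ww')$ do depend on the $\Omega$-component $u$ of $w$, which is why the paper's reduction in \cref{lem:gamma_product_rule} replaces $w_0'$ by $u(w_0')$ rather than simply dropping the $\Omega$-factors. Since your core argument never uses the reduction, this wrinkle is harmless, but if you keep the reduction step it should be phrased the way the paper does.
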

\begin{proof}
   This follows from the same arguments given in the proof of \cref{lem:gamma_product_rule}. The only difference is that here every wall that is crossed twice also appears twice.
\end{proof}

\begin{rmk}\label{rmk:gen_len_product_rule}
   \begin{enumerate}
      \item The length $\ell(w)$ of an element $w \in W$ and its generalized length $\bbf{L}(w)$ are related by the `cardinality morphism'
         \[ \#: (\N[\mathfrak{H}],\cdot ) \longrightarrow (\N,+),\quad \mathbf{a}_H \mapsto 1 \]
      by
      \[ \ell(w) = \# \bbf{L}(w) \]
      The lemma above therefore gives the formula
      \[ \# \bbf{X}(w,w') = \ell(w)+\ell(w')-\ell(ww') \]
      which reproves and generalizes \cref{rmk:coboundary}. The lemma also shows that
      \[ \ell(ww') = \ell(w)+\ell(w') \quad\Rightarrow\quad \bbf{L}(ww') = \bbf{L}(w) w(\bbf{L}(w')) \]

      \item The above lemma says that $\bbf{X}$ is the coboundary of the formal square root $\sqrt{\bbf{L}}$ of $\bbf{L}$. More precisely, letting $\Z[\sqrt{\mathfrak{H}}]$ denote the free abelian group on the symbols $\sqrt{\mathbf{a}_H}$, $H \in \mathfrak{H}$ we can view $\Z[\mathfrak{H}]$ as a subset of $\Z[\sqrt{\mathfrak{H}}]$ via the embedding given by $\mathbf{a}_H \mapsto \left(\sqrt{\mathbf{a}_H}\right)^2$.
         Pushing $\bbf{L}: W \rightarrow \Z[\sqrt{\mathfrak{H}}]$ forward via this embedding, it has a unique square root $\sqrt{\bbf{L}}: W \rightarrow \Z[\sqrt{\mathfrak{H}}]$. Viewing $\bbf{X}$ as a map $W\times W \rightarrow \Z[\sqrt{\mathfrak{H}}]$, the formula of the above lemma is equivalent to the coboundary equation
   \[ d\sqrt{\bbf{L}} = \bbf{X} \]
   of the inhomogeneous cochain complex on $\Z[\sqrt{\mathfrak{H}}]$.

\item The construction of the integral Bernstein-Lusztig basis in \cite{VigGen} heavily depends on the `lemme fondamental' \cite[1.2]{VigGen}. There it is proven that a certain expression $q_{wv}q^{-1}_w q_v$ ($w,v \in W$) which is a product of formal parameters is a square of an element $c_{w,v}$. This relates to the previous lemma as follows. Consider the orbit map $\mathfrak{H} \rightarrow W\backslash \mathfrak{H}$ of the canonical action of $W$ on $\mathfrak{H}$. Pushing $\bbf{L}$ and $\bbf{X}$ forward along the induced map $\Z[\mathfrak{H}] \rightarrow \Z[W\backslash \mathfrak{H}]$, we get maps $\overline{\bbf{L}}$ and $\overline{\bbf{X}}$ with values in $\Z[W\backslash \mathfrak{H}]$. The formula proven in the above lemma then simplifies to
   \begin{equation}\label{eq:simple_formula_for_coboundary} \overline{\bbf{L}}(w)\overline{\bbf{L}}(w') \overline{\bbf{L}}(ww')^{-1} = \overline{\bbf{X}}(w,w')^2 \end{equation}
      Identifying the formal parameter (`poid générique') $q_s$ ($s \in S$) of \cite{VigGen} with the generator $\mathbf{a}_{[s]} \in \Z[W\backslash \mathfrak{H}]$ corresponding to the class $[s] \in W\backslash \mathfrak{H}$, the element $q_w$ ($w \in W$) defined in loc. cit. identifies with $\overline{\bbf{L}}(w)$. In this notation the above formula reads
   \[ q_w q_{w'} q_{ww'}^{-1} = \overline{\bbf{X}}(w,w')^2 \]
   In particular we find that
   \[ q_{wv}q_w^{-1}q_v = q_{wv} q_w^{-1} q_v^{-1} q_v^2 = \overline{\bbf{X}}(w,v)^{-2} q_v^2 \]
   and therefore that the element $c_{w,v}$ defined in \cite{VigGen} is given by
   \[ c_{w,v} = \overline{\bbf{X}}(w,v)^{-1} q_v \]
   This element is more explicity given as the product
   \[ c_{w,v} = \prod_{H} \mathbf{a}_{[H]} \]
   where the product runs over all hyperplanes $H$ which separate $1$ from $v$ \textit{but don't} separate $1$ from $w$.
   \end{enumerate}
\end{rmk}

\subsection{A characterization of pro-\texorpdfstring{$p$}{p} Coxeter groups in terms of \texorpdfstring{$\bbf{X}$}{X}} 
\label{sub:A characterization of pro-$p$ Coxeter groups in terms of X}
Throughout this section, we will fix an extended Coxeter group $W = (W,W_{\op{aff}},\Omega,S)$. Our goal here is to characterize some (all, if $W = W_{\op{aff}}$) pro-$p$ Coxeter groups $W^{(1)}$ whose underlying extended Coxeter group equals $W$, using the `$2$-coboundary' $\bbf{X}$ of the previous section. Even though this result will not be used in the rest of the text, we choose to present it because we think it is of independent interest.

By definition, a pro-$p$ Coxeter group $W^{(1)}$ is given by a group extension
\[ \begin{xy} \xymatrix{ 1 \ar[r] & T \ar[r] & W^{(1)} \ar[r] & W \ar[r] & 1 } \end{xy} \]
   of $W$ by an \textit{abelian} group $T$, together with a choice of lifts $(n_s)_{s \in S}$ of the distinguished generators which satisfy the braid relations. In the case $W = W_{\op{aff}}$, such groups have been studied by Tits \cite{TitsTores} under the name\footnote{We apologize for not following the terminology of \cite{TitsTores}, because in our contexts we have to consider extensions of groups which are themselves (split) extensions of Coxeter groups.} of `extended Coxeter groups'. Among the many interesting results obtained in \cite{TitsTores} is a characterization (\cite[3.4 Proposition]{TitsTores}) of such extensions in terms of data related to $W$ and $T$, and the construction and explicit description of a `universal' extension $V$. Implicit in this (see especially \cite[3.4 Proposition]{TitsTores}) is that the $2$-cocycle
   \[ \phi: W\times W \rightarrow T,\quad \phi(w,w') = n(w)n(w')n(ww')^{-1} \]
associated to the extension and the \textit{canonical} set-theoretic section
\[ n: W \rightarrow W^{(1)} \]
determined by $n(s) = n_s$ and
\[ \ell(ww') = \ell(w)+\ell(w') \quad \Rightarrow \quad n(ww') = n(w)n(w') \]
can be explicitly computed. However in \cite{TitsTores} an explicit expression for this $2$-cocycle was not given. We shall therefore explicitly compute these cocycles in terms of $\bbf{X}$, and deduce the existence of a universal extension (without reference to \cite{TitsTores}) whose corresponding $2$-cocycle identifies with $\bbf{X}$.

Let us begin with a definition.
\begin{definition}
   The category $\mathcal{W}^{(1)}_{/W}$ is the category whose objects consist of extensions
   \[ \begin{xy} \xymatrix{ 1 \ar[r] & T \ar[r] & G \ar[r] & W \ar[r] & 1 } \end{xy} \]
   of $W$ by an abelian group, together with a set-theoretic section $n: W \rightarrow G$ of the map $G \rightarrow W$ satisfying
   \[ \ell(ww') = \ell(w)+\ell(w') \quad \Rightarrow \quad n(ww') = n(w)n(w') \quad \forall w,w' \in W \]
   A morphism $f: (G,T,n) \rightarrow (G',T',n')$ is given by a morphism $f: G \rightarrow G'$ which makes the diagram
   \[ \begin{xy} \xymatrix{ T \ar[d]^f \ar[r] & G \ar[d]^f \ar[r] & W \ar[d]^{\op{id}} \\ T' \ar[r] & G' \ar[r] & W } \end{xy} \]
   commute and which satisfies $f\circ{} n = n'$.
\end{definition}
Thus essentially the objects are pro-$p$ Coxeter groups whose underlying extended Coxeter group equals $W$. However, not all pro-$p$ Coxeter groups give rise to objects of this category. More precisely, an object $(W^{(1)},T,n)$ of $\mathcal{W}^{(1)}$ corresponds to a pro-$p$ Coxeter group $W^{(1)}$ together with a section \textit{of groups}
\[ \widetilde{n}: \Omega \rightarrow \pi^{-1}(\Omega) \]
of the restriction of $\pi: W^{(1)} \rightarrow W$ to $\Omega$, such that we have the relation
\[ \widetilde{n}(u)n_s\widetilde{n}(u)^{-1} = n_{u(s)} \quad \forall u \in \Omega,\ s \in S \]
The map $n: W \rightarrow W^{(1)}$ is then uniquely determined by $\widetilde{n}$ and $(n_s)_{s \in S}$ by requiring
\[ n(u) = \widetilde{n}(u),\ n(s) = n_s \quad \forall u \in \Omega,\ s \in S \]
Note that given a pro-$p$ Coxeter group $W^{(1)}$ such a section $\widetilde{n}$ might not exist, and even if it does the relation $\widetilde{n}(u)n_s \widetilde{n}(u)^{-1} = n_{u(s)}$ might not be fulfilled. However, when $W = W_{\op{aff}}$, the set of pro-$p$ Coxeter groups with underlying extended Coxeter group $W$ and the set of objects $\mathcal{W}^{(1)}_{/W}$ are canonically identified.

\begin{lemma}\label{lem:cocycle_of_group_extension}
   Given an object $(G,T,n)$ of $\mathcal{W}^{(1)}_{/W}$, the $2$-cocycle $\phi: W\times W \rightarrow T$ determined by the section $n$ via
   \[ \phi(w,w') = n(w)n(w')n(ww')^{-1} \]
   satisfies
   \[ \phi(w,w') = h(\bbf{X}(w,w')) \]
   Here
   \[ h: \Z[\mathfrak{H}] \longrightarrow T \]
   denotes the unique $W$-equivariant homomorphism of abelian groups satisfying
   \[ h(s) = n(s)^2 \quad \forall s \in S \]
\end{lemma}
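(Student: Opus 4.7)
The plan is to prove the identity by induction on $\ell(w')$. The base case $\ell(w') = 0$ is immediate: then $w' \in \Omega$, so $\ell(ww') = \ell(w) + \ell(w')$ forces $n(ww') = n(w) n(w')$ and hence $\phi(w, w') = 1$, while $\bbf{X}(w, w') = 1$ by \cref{rmk:coboundary_vanishes}.

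For the inductive step, I would write $w' = s w''$ with $s \in S$ and $\ell(w') = 1 + \ell(w'')$. Since $\ell(s w'') = \ell(s) + \ell(w'')$, the defining property of the section $n$ gives $\phi(s, w'') = 1$, so the standard $2$-cocycle identity for $\phi$ (arising from associativity in $G$, with $W$ acting on the abelian $T$ by conjugation through any chosen lift) collapses to
\[ \phi(w, w') = \phi(w, s) \cdot \phi(ws, w''). \]
Granting for the moment the purely geometric factorization
\[ \bbf{X}(w, s w'') = \bbf{X}(w, s) \cdot \bbf{X}(ws, w'') \qquad (\ast) \]
in $\N[\mathfrak{H}]$, applying $h$ and invoking the inductive hypothesis on $\phi(ws, w'')$ reduces everything to the ``base step'' $\phi(w, s) = h(\bbf{X}(w, s))$ for all $w \in W$ and $s \in S$.

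The base step splits into two subcases. When $\ell(ws) = \ell(w) + 1$, both sides are trivial: $n(ws) = n(w) n_s$ yields $\phi(w, s) = 1$, and the geodesicity of $1 \to \cdots \to w \to ws$ shows that the unique hyperplane $H_0 := wsw^{-1}$ crossed in the last step does not separate $1$ from $w$, so $\bbf{X}(w, s) = 1$. When $\ell(ws) = \ell(w) - 1$, writing $w = w_1 s$ with $\ell(w_1) = \ell(w) - 1$ gives $n(w) = n(w_1) n_s$ and a short computation yields $\phi(w, s) = n(w_1) n_s^2 n(w_1)^{-1}$; meanwhile $\bbf{X}(w, s) = \mathbf{a}_{H_0}$ with $H_0 = w_1 \cdot s$, so the $W$-equivariance of $h$ together with the normalization $h(\mathbf{a}_s) = n_s^2$ yield the matching expression $h(\bbf{X}(w,s)) = n(w_1) n_s^2 n(w_1)^{-1}$.

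The main obstacle is the factorization $(\ast)$, which I plan to verify by direct bookkeeping of inversion sets. Since $\ell(s w'') = 1 + \ell(w'')$, a minimal gallery $w \to ws \to \cdots \to wsw''$ displays the set $B$ of hyperplanes separating $w$ from $wsw''$ as the disjoint union $\{H_0\} \sqcup B''$, where $B''$ is the set separating $ws$ from $wsw''$ (disjointness follows from minimality, since $H_0$ cannot be crossed twice). Since the inversion sets $A$ of $w$ and $A'$ of $ws$ differ exactly by $H_0$ (with $H_0 \in A$ iff $\ell(ws) < \ell(w)$), a two-case analysis matches $A \cap B$ with the disjoint union of $A \cap \{H_0\}$ (giving $\bbf{X}(w, s)$) and $A' \cap B''$ (giving $\bbf{X}(ws, w'')$), which upon taking the product in $\N[\mathfrak{H}]$ establishes $(\ast)$ and completes the induction.
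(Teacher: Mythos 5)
Your proof is correct and follows essentially the same strategy as the paper's: use the $2$-cocycle identity to reduce to the case where one argument lies in $S$, then verify that base case by a direct computation. The paper reduces to $\phi(s,w) = h(\bbf{X}(s,w))$ (peeling the \emph{left} factor), which makes the base case particularly clean since then $\bbf{X}(s,w) \in \{1, \mathbf{a}_s\}$ and one directly gets $n(s)^2$; you peel the \emph{right} factor and so land on $\phi(w,s) = h(\bbf{X}(w,s))$, requiring the extra (and correctly handled) conjugation by $n(w_1)$ to match $h(\mathbf{a}_{w_1 s w_1^{-1}})$. Both routes are fine. The paper also cites abstractly that $h \circ \bbf{X}$ satisfies the $2$-cocycle relation (a consequence of $\bbf{X} = d\sqrt{\bbf{L}}$, \cref{lem:gen_len_product_rule}), whereas you re-derive the needed special case $(\ast)$ by direct bookkeeping of inversion sets along a minimal gallery; your verification is sound (the key point being $A \cap B'' = A' \cap B''$ since $A$ and $A'$ differ only by $H_0 \notin B''$), but it duplicates work the paper has already done.

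One small gap: you use the map $h$ without verifying that it exists, i.e.\ that the rule $h(wsw^{-1}) := w \cdot n(s)^2$ is well-defined. This is not automatic and requires showing that $wsw^{-1} = t$ with $s,t \in S$ forces $w\,n(s)^2\,w^{-1} = n(t)^2$. The paper proves this at the start (after replacing $w$ by $ws$ to arrange $\ell(ws) = \ell(w)+1$, one has $\ell(tw) = \ell(w)+1$, so $n(w)n(s) = n(ws) = n(tw) = n(t)n(w)$, whence the claim). You should include this step, or at least note that the consistency of $h$ needs to be checked.
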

\begin{proof}
   First, note that $h$ is obviously unique if it exists since we have
   \[ h(wsw^{-1}) = w n(s)^2 w^{-1} \quad \forall w \in W,\ s \in S \]
   by assumption. Therefore, such a map exists if and only if for $w \in W$ and $s,t \in S$ we have
   \[ wsw^{-1} = t \quad \Rightarrow \quad w n(s)^2 w^{-1} = n(t)^2 \]
   But replacing $w$ by $ws$ if necessary, we may assume $\ell(ws) = \ell(w) + 1$. Hence, we also have $\ell(tw) = \ell(w) + 1$ and therefore
   \[ n(w)n(s) = n(ws) = n(tw) = n(t) n(w) \]
   implying
   \[ wn(s)^2 w^{-1} = (n(w)n(s)n(w)^{-1})^2 = n(t)^2 \]
   Since both $\phi$ and $h\circ{}\bbf{X}$ fulfill the $2$-cocycle relation
   \[ \phi(w_1,w_2) \phi(w_1w_2, w_3) = w_1(\phi(w_2,w_3)) \phi(w_1,w_2w_3) \]
   and moreover both of these maps vanish whenever one of their arguments lies in $\Omega$, which for $\phi$ follows from the relation $n(ww') = n(w)n(w')$ for $w,w'$ satisfying $\ell(ww') = \ell(w)+\ell(w')$ and for $h\circ{}\bbf{X}$ follows from \cref{rmk:coboundary_vanishes}, in order to show that $\phi = h\circ{}\bbf{X}$ it hence suffices to prove that
   \[ \phi(s,w) = h(\bbf{X}(s,w)) \quad \forall s \in S,\ w \in W \]
   Since both maps vanish on pairs $(w,w')$ satisfying $\ell(ww') = \ell(w)+\ell(w')$ (cf. \cref{rmk:coboundary}), it suffices to treat the case $\ell(sw) = \ell(w) - 1$. Take a reduced expression
   \[ sw = s_1 \ldots s_r u,\quad s_i \in S,\ u \in \Omega,\ r = \ell(sw) \]
   of $sw$, then
   \[ w = s s_1 \ldots s_r u \]
   is a reduced expression of $w$. Hence
   \[ n(w) = n(s) n(s_1) \ldots n(s_r) n(u) \]
   and
   \[ n(sw) = n(s_1) \ldots n(s_r) n(u) \]
   and therefore
   \[ \phi(s,w) = n(s)^2 = h(\bbf{X}(s,w)) \]
   because $\bbf{X}(s,w) = s$ is the unique hyperplane crossed twice by the gallery $(s,s,s_1,\ldots, s_r)$.
\end{proof}

\begin{definition}
   $\mathcal{T}_W$ is the category whose objects are given by pairs $(T,h)$ consisting of an abelian group $W$ endowed with a $\Z$-linear $W$-action and a $W$-equivariant map
   \[ h: \mathfrak{H} \longrightarrow T \]
   (identified with its linear extension $h: \Z[\mathfrak{H}] \rightarrow T$) and whose morphisms $f: (T,h) \rightarrow (T',h')$ are given by $W$-equivariant group homomorphisms $f: T \rightarrow T'$ satisfying $f\circ{} h = h'$.
\end{definition}

With the above definition, we have the following immediate corollary of the above lemma.
\begin{cor}
   The functor
   \[ \mathcal{W}^{(1)}_{/W} \longrightarrow \mathcal{T}_W \]
   given on morphisms in the obvious way and on objects by
   \[ (G,T,n) \longmapsto (T,h),\quad h(s) = n(s)^2 \]
   is an equivalence of categories, with quasi-inverse associating to an object $(T,h)$ the object $(T\times W, T, \iota)$, where $T\times W$ is the group with group law
   \[ (t,w)\cdot{} (t',w') = (tw(t') h(\bbf{X}(w,w')), ww') \]
   and $\iota: W \rightarrow T\times W$ is given by $\iota(w) = (1,w)$.
\end{cor}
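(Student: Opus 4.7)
The plan is to construct the claimed quasi-inverse, verify that it lands in $\mathcal{W}^{(1)}_{/W}$, and then exhibit natural isomorphisms in both directions, one of which will be essentially the identity and the other of which is supplied by the previous lemma.

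First, I would check that the quasi-inverse is well-defined. The key point is that the formula $(t,w)\cdot (t',w') = (tw(t')h(\bbf{X}(w,w')),ww')$ defines a group. Associativity is the assertion that the map $h\circ \bbf{X}: W\times W \to T$ is a normalized $2$-cocycle. That $\bbf{X}$ itself is a $2$-cocycle with values in $\Z[\mathfrak{H}]$ follows from either of the two incarnations of $\bbf{X}$ as a coboundary given in \cref{sub:A $2$-coboundary appearing in Coxeter geometry} (e.g.\ $\bbf{X} = d\gamma$ in \cref{lem:gamma_product_rule}, or $\bbf{X} = d\sqrt{\bbf{L}}$ in \cref{lem:gen_len_product_rule}); since $h$ is a $W$-equivariant homomorphism, $h\circ \bbf{X}$ inherits this property. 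Normalization and the explicit inverse $(t,w)^{-1} = (w^{-1}(t^{-1}h(\bbf{X}(w,w^{-1}))^{-1}),w^{-1})$ are straightforward. Then $\iota(w) = (1,w)$ is a set-theoretic section of the projection, and $\iota(ww') = \iota(w)\iota(w')$ whenever $\ell(ww') = \ell(w)+\ell(w')$ by \cref{rmk:coboundary}, which forces $\bbf{X}(w,w') = 1$. Hence $(T\times W,T,\iota)$ is indeed an object of $\mathcal{W}^{(1)}_{/W}$, and the construction is manifestly functorial.

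Next I would verify that the composite $\mathcal{T}_W \to \mathcal{W}^{(1)}_{/W} \to \mathcal{T}_W$ is the identity on the nose. Starting from $(T,h)$, the new object has $h'(s) := \iota(s)^2 = (1,s)\cdot (1,s) = (h(\bbf{X}(s,s)),1)$. But by definition of $\bbf{X}$, the only hyperplane crossed twice by the length-two gallery $(1,s,1)$ is the hyperplane $s$ itself, so $\bbf{X}(s,s) = \mathbf{a}_s$ and hence $h'(s) = h(s)$. By $W$-equivariance and the uniqueness clause of the lemma, $h' = h$.

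For the other composite, given $(G,T,n)$ in $\mathcal{W}^{(1)}_{/W}$ with associated $(T,h)$, I would define the candidate natural isomorphism
\[ \Phi_{(G,T,n)}: T\times W \longrightarrow G,\quad (t,w) \longmapsto t\cdot n(w). \]
This is a bijection because $n$ is a set-theoretic section of $G \twoheadrightarrow W$. The crucial check is multiplicativity: expanding
\[ \Phi(t,w)\Phi(t',w') = t\cdot n(w)\cdot t'\cdot n(w') = t\cdot w(t')\cdot n(w)n(w') = t\cdot w(t')\cdot \phi(w,w')\cdot n(ww'), \]
and applying \cref{lem:cocycle_of_group_extension} to identify $\phi(w,w') = h(\bbf{X}(w,w'))$, one recovers exactly $\Phi\bigl((t,w)(t',w')\bigr)$. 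Compatibility $\Phi\circ \iota = n$ is obvious, and naturality in $(G,T,n)$ reduces to the compatibility of morphisms in $\mathcal{W}^{(1)}_{/W}$ with the chosen sections.

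The only non-trivial ingredient is really \cref{lem:cocycle_of_group_extension}, which has already been established; everything else is bookkeeping. The main subtlety to watch for is ensuring that the abelian group structure on $T$ is used correctly in defining $h$ on all of $\mathfrak{H}$ from its values on $S$ (this is where $W$-equivariance is essential, and it was verified in the proof of \cref{lem:cocycle_of_group_extension}). No further obstacles arise.
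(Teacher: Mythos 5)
Your proof is correct and takes essentially the same approach as the paper, which simply declares the corollary an ``immediate'' consequence of \cref{lem:cocycle_of_group_extension} without spelling out the details; you fill them in accurately. The three checks you make (the quasi-inverse lands in $\mathcal{W}^{(1)}_{/W}$, the composite $\mathcal{T}_W \to \mathcal{W}^{(1)}_{/W} \to \mathcal{T}_W$ is the identity, and $(t,w)\mapsto t\cdot n(w)$ furnishes the other natural isomorphism) are exactly the bookkeeping the paper elides, and your use of $\bbf{X}(s,s) = \mathbf{a}_s$ and of $\phi = h\circ\bbf{X}$ from the lemma are the right ingredients at the right places.
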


The following corollary essentially recovers Tits description of the group $V$ (cf. \cite[2.5 Théorème]{TitsTores}).

\begin{cor}
   The category $\mathcal{W}^{(1)}_{/W}$ has an initial object $V$ given by
   \[ V = (V,T,n) = (\Z[\mathfrak{H}]\times W,\Z[\mathfrak{H}],\iota) \]
   where $V = \Z[\mathfrak{H}]\times W$ is endowed with a group law via
   \[ (t,w)\cdot{}(t',w') = (tw(t') \bbf{X}(w,w'), ww') \]
   and $\iota: W \rightarrow V$ is given by $\iota(w) = (1,w)$.
\end{cor}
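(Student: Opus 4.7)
The plan is to deduce this corollary directly from the equivalence of categories $\mathcal{W}^{(1)}_{/W} \simeq \mathcal{T}_W$ established in the previous corollary, so it suffices to exhibit an initial object in $\mathcal{T}_W$ and transport it via the quasi-inverse.

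First I would observe that the obvious candidate for an initial object in $\mathcal{T}_W$ is the pair $(\Z[\mathfrak{H}], \iota)$, where $\iota: \mathfrak{H} \hookrightarrow \Z[\mathfrak{H}]$ is the canonical inclusion of the generators into the free abelian group on $\mathfrak{H}$, endowed with the natural $W$-action by permutation. Given any object $(T,h)$ of $\mathcal{T}_W$, the map $h: \mathfrak{H} \to T$ extends uniquely to a group homomorphism $\widetilde{h}: \Z[\mathfrak{H}] \to T$ by the universal property of the free abelian group, and $W$-equivariance of $h$ immediately forces $W$-equivariance of $\widetilde{h}$ (one just checks the defining relation on generators). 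Conversely, any $W$-equivariant morphism $f: (\Z[\mathfrak{H}], \iota) \to (T,h)$ in $\mathcal{T}_W$ must satisfy $f\circ \iota = h$, which determines $f$ on generators and hence everywhere. This shows $(\Z[\mathfrak{H}],\iota)$ is initial in $\mathcal{T}_W$.

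Next I would apply the quasi-inverse functor $\mathcal{T}_W \to \mathcal{W}^{(1)}_{/W}$ described in the previous corollary to this initial object. By construction, this yields the triple $(V, \Z[\mathfrak{H}], \iota)$ where $V = \Z[\mathfrak{H}] \times W$ as a set, equipped with the group law
\[ (t,w)\cdot (t',w') = (t\, w(t')\, \bbf{X}(w,w'),\ ww') \]
and the section $\iota: W \to V$, $w \mapsto (1,w)$. Since equivalences of categories preserve initial objects, $V$ is initial in $\mathcal{W}^{(1)}_{/W}$.

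There is essentially no obstacle here beyond bookkeeping: the only nontrivial content sits in the previous corollary (where the group law involving $\bbf{X}$ is justified via the cocycle computation of Lemma on $\phi(w,w')=h(\bbf{X}(w,w'))$), and the universal property of $\Z[\mathfrak{H}]$ as the free $\Z$-module on a $W$-set. If desired, one can alternatively give a direct verification by defining, for any object $(G,T,n)$ in $\mathcal{W}^{(1)}_{/W}$, the unique morphism $V \to G$ by $(t,w) \mapsto h_G(t)\cdot n(w)$ with $h_G:\Z[\mathfrak{H}]\to T$ induced by $s\mapsto n(s)^2$, and then check compatibility with the group laws using exactly the identity $\phi(w,w')=h_G(\bbf{X}(w,w'))$; this reproduces the same argument without invoking the equivalence explicitly.
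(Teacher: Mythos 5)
Your proof is correct and follows the same route as the paper: both identify $(\Z[\mathfrak{H}],\iota)$ (equivalently, $h = \op{id}$ under linear extension) as the initial object of $\mathcal{T}_W$ and then transport it through the equivalence of the preceding corollary. The paper simply asserts this in one line; your spelled-out verification of the universal property and the optional direct construction of the morphism $V \to G$ add detail but no new ideas.
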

\begin{proof} Immediate from the above corollary, since the pair $(T,h)$ with $T = \Z[\mathfrak{H}]$ and $h = \op{id}$ obviously forms an initial object of the category $\mathcal{T}_W$.
\end{proof}

\subsection{Integral and normalized Bernstein maps} 
\label{sub:Integral and normalized Bernstein maps}
We now apply the results of \cref{sub:A $2$-coboundary appearing in Coxeter geometry} to the construction of an integral and a normalized version of the Bernstein map.
Throughout this section we fix a pro-$p$ Coxeter group $W^{(1)}$, a coefficient ring $R$, and a generic pro-$p$ Hecke algebra $\mathcal{H}^{(1)} = \mathcal{H}^{(1)}(a,b)$ with arbitrary parameters.

Let us begin by constructing the integral Bernstein map.

\begin{theorem}\label{thm:existence_widehat_theta}
   For every orientation $\mathfrak{o}$ of $W^{(1)}$, there exists a unique map
   \[ \widehat{\theta}_\mathfrak{o}: W^{(1)} \longrightarrow \mathcal{H}^{(1)} \]
   such that if $w = n_{s_1}\ldots n_{s_r} u$ with $u \in \Omega^{(1)}$ and $\ell(w) = r$, then
   \[ \widehat{\theta}_\mathfrak{o}(w) = T_1 \ldots T_r T_u \]
   where
   \[ T_i := \begin{cases}
      T_{n_{s_i}} & : \varepsilon_i = +1 \\
      T_{n_{s_i}} - b_{s_i} & : \varepsilon_i = -1
   \end{cases} \]
   and $\varepsilon_i = \mathfrak{o}(s_1\ldots s_{i-1},s_i)$. Moreover, whenever the $a_s$ are units in $R$ we have the equality
   \begin{equation}\label{eq:relation_theta_thetahat} \widehat{\theta}_\mathfrak{o}(w) = \overline{\gamma}_\mathfrak{o}(\pi(w)) \theta_\mathfrak{o}(w) \end{equation}
   where $\overline{\gamma}_\mathfrak{o}: W \rightarrow R$ is the composition of $\gamma_\mathfrak{o}$ with the specialization map $\N[\mathfrak{H}] \rightarrow R$ sending $\mathbf{a}_H$ to $a_H$.
\end{theorem}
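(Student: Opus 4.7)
The uniqueness assertion is immediate, since the formula determines $\widehat{\theta}_\mathfrak{o}(w)$ from any reduced expression. For existence, my plan is to treat the case of invertible $a_s$ directly by reduction to the cocycle $\theta$ of \cref{thm:ex_theta_braid}, and then to deduce the general case via a specialization argument using formal parameters.

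\emph{Invertible case.} If all $a_s \in R^\times$, the quadratic relation \eqref{eq:t_inverse} yields $T_{n_s} - b_s = a_s T_{n_s^{-1}}^{-1}$, so each factor $T_i$ equals $a_{s_i}^{(1-\varepsilon_i)/2} T_{n_{s_i}^{\varepsilon_i}}^{\varepsilon_i}$. Since the $a_{s_i}$ are central scalars they commute past everything and collect into the product $\prod_{i:\varepsilon_i=-1} a_{s_i} = \overline{\gamma}_\mathfrak{o}(\pi(w))$. What remains is $T_{n_{s_1}^{\varepsilon_1}}^{\varepsilon_1} \cdots T_{n_{s_r}^{\varepsilon_r}}^{\varepsilon_r} T_u$, which by iterated application of the cocycle rule of \cref{thm:ex_theta_braid} to the reduced expression $w = n_{s_1} \cdots n_{s_r} u$ equals $\theta_\mathfrak{o}(w)$. (The key point is that the shifted sign $(\mathfrak{o} \bullet n_{s_1} \cdots n_{s_{i-1}})(1, s_i)$ equals $\mathfrak{o}(s_1 \cdots s_{i-1}, s_i) = \varepsilon_i$, because orientations of $W^{(1)}$ are pulled back from $W$ and are therefore $T$-invariant.) Hence $T_1 \cdots T_r T_u = \overline{\gamma}_\mathfrak{o}(\pi(w)) \theta_\mathfrak{o}(w)$, whose right-hand side is manifestly independent of the chosen reduced expression. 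This simultaneously establishes well-definedness and the identity \eqref{eq:relation_theta_thetahat} in the invertible case.

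\emph{General case.} For arbitrary $(a, b)$, introduce formal indeterminates $\{\mathbf{Y}_c\}_{c \in S/\sim}$ indexed by $W$-conjugacy classes of $S$, and form $R' := R[\mathbf{Y}_c]$ with localization $R'_* := R'[\mathbf{Y}_c^{-1}]$. Define new parameters $a'_s := \mathbf{Y}_{[s]}$ and $b'_s := b_s \in R[T] \subseteq R'[T]$. This pair satisfies condition \eqref{eq:condstar} because the $a'_s$ depend only on the conjugacy class $[s]$ while the hypotheses on $b$ carry over unchanged. In $\mathcal{H}_{R'_*}^{(1)}(a', b')$ the $a'_s$ are units, so by the invertible case $T_1 \cdots T_r T_u$ is well-defined independently of the reduced expression. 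But each factor $T_i$ already lies in the subalgebra $\mathcal{H}_{R'}^{(1)}(a', b')$, which injects into $\mathcal{H}_{R'_*}^{(1)}(a', b')$ (since $R' \hookrightarrow R'_*$ is injective and the algebra is $R'$-free on $\{T_w\}$). Hence well-definedness descends to $\mathcal{H}_{R'}^{(1)}(a', b')$. Applying the specialization homomorphism $\mathcal{H}_{R'}^{(1)}(a', b') \to \mathcal{H}_R^{(1)}(a, b)$ induced by $\mathbf{Y}_c \mapsto a_s$ (any $s \in c$; well-defined by the conjugacy part of \eqref{eq:condstar}) via \cref{rmk:base_change_and_universal_coefficients}(i) then transports the well-defined element to $\mathcal{H}_R^{(1)}(a, b)$, realized as $T_1 \cdots T_r T_u$ for any reduced expression.

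The main delicate step is the descent of well-definedness from the localization $\mathcal{H}_{R'_*}^{(1)}$ to the integral algebra $\mathcal{H}_{R'}^{(1)}$, which rests on the injectivity $R' \hookrightarrow R'_*$ together with the $R'$-freeness of the algebra; modulo this, the rest of the argument is bookkeeping that verifies $(a', b')$ satisfies \eqref{eq:condstar} and that the specialization is conjugacy-equivariant, both of which follow immediately because the $\mathbf{Y}_c$ are indexed precisely by conjugacy classes.
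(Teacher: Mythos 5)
Your proof is correct and takes essentially the same approach as the paper: treat the invertible case by unwinding the cocycle definition of $\theta_\mathfrak{o}$ to show $T_1 \cdots T_r T_u = \overline{\gamma}_\mathfrak{o}(\pi(w))\theta_\mathfrak{o}(w)$, then reduce the general case to the invertible one by passing to formal parameters, using $R'$-freeness of the Hecke algebra to embed it into its localization, and finally specializing back via \cref{rmk:base_change_and_universal_coefficients}(i). The only cosmetic difference is that you index the indeterminates by conjugacy classes $\mathbf{Y}_c$ at the outset, whereas the paper writes $\mathbf{a}_s$ subject to the equality constraints $\mathbf{a}_s = \mathbf{a}_t$ for conjugate $s,t$; these are the same ring.
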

\begin{proof}
   Because of the relation $\eqref{eq:t_inverse}$, we have
   \[ a_s T_{n_s^{-1}}^{-1} = T_{n_s} - b_s \]
   whenever $a_s \in R^\times$. The second claim therefore follows immediately from the definitions provided the existence of $\widehat{\theta}_\mathfrak{o}$. We are therefore left to show that the expression $T_1 \ldots T_r T_u$ does not depend on the choice of the expression $w = n_{s_1}\ldots n_{s_r} u$. If this indepence result is true for the generic pro-$p$ Hecke algebra $\mathcal{H}^{(1)}$ over $R$, then it is also true for the generic pro-$p$ Hecke algebra $\mathcal{H}^{(1)}((\phi(a_s))_s, (\phi(b_s))_s) \simeq \mathcal{H}^{(1)} \otimes_R R'$ over $R'$ for every $\phi: R \rightarrow R'$. We may therefore replace the parameters $a_s \in R$ by indeterminates $\mathbf{a}_s$ which satisfy $\mathbf{a}_s = \mathbf{a}_t$ whenever $s,t$ are conjugate via $W$ and replace $R$ by the polynomial ring $R[\mathbf{a}_s]$ and prove the claim for $\mathcal{H}^{(1)}((\mathbf{a}_s)_s, (b_s)_{s \in S})$. Because $\mathcal{H}^{(1)}$ is a free $R[\mathbf{a}_s]$-module, the localization map
   \[ \mathcal{H}^{(1)} \longrightarrow \mathcal{H}^{(1)}\otimes_{R[\mathbf{a}_s]} R[\mathbf{a}_s,\mathbf{a}_s^{-1}] \]
   is injective. It therefore suffices to prove the indepence in the localization, that is it suffices to prove it in the case the $a_s$ are invertible. In this case we may use \eqref{eq:relation_theta_thetahat} as a definition of $\widehat{\theta}_\mathfrak{o}$. From the definition of $\theta_\mathfrak{o}$ it follows immediately that the map defined this way satisfies $\widehat{\theta}_\mathfrak{o}(w) = T_1 \ldots T_r T_u$ for every reduced expression $w = n_{s_1}\ldots n_{s_r} u$.
\end{proof}

\begin{definition}\label{def:integral_bernstein_map}
   The map 
   \[ \widehat{\theta}: W^{(1)} \longrightarrow \op{Hom}_{\op{Set}}(\mathcal{O},\mathcal{H}^{(1)}),\quad w \longmapsto (\mathfrak{o} \mapsto \widehat{\theta}_\mathfrak{o}(w)) \]
   defined in the above theorem is called the \textbf{integral Bernstein map}.
\end{definition}

\begin{rmk}\label{rmk:specialization_argument}
   The above technique of establishing a certain identity for generic pro-$p$ Hecke algebras with arbitrary parameters by reducing it to the case where the $a_s$ are invertible is the main advantage of considering Hecke algebras with two formal parameters over considering only one-parameter Hecke algebras or Hecke algebras with fixed parameters.
   
   We will use this argument over and over again, and will therefore often refer to it simply as the `\textbf{specialization argument}'.
\end{rmk}

\begin{notation}\label{not:overline_bbf_X}
   In \cref{rmk:gen_len_product_rule} we considered the composition $\overline{\bbf{X}}$ of the `$2$-coboundary' $\bbf{X}: W \times W \rightarrow \N[\mathfrak{H}]$ with the quotient map $\N[\mathfrak{H}] \rightarrow \N[W\backslash\mathfrak{H}]$. Let us, by abuse of notation, write $\overline{\bbf{X}}$ to also denote the composition of $\overline{\bbf{X}}$ with the evaluation map $\N[W\backslash \mathfrak{H}] \rightarrow R$ sending $\mathbf{a}_{[s]}$ ($s \in S$) to $a_s$. Let us further denote by $\overline{\bbf{X}}$ the composition of $\overline{\bbf{X}}: W \times W \rightarrow R$ with $\pi \times \pi: W^{(1)} \times W^{(1)} \rightarrow W \times W$. With these conventions, we have the following corollary of the previous theorem and of \cref{lem:gamma_product_rule}.
\end{notation}

\begin{cor}\label{cor:hat mult rule}
   For every $w,w' \in W^{(1)}$
   \[ \widehat{\theta}_\mathfrak{o}(w)\widehat{\theta}_{\mathfrak{o}\bullet{}w}(w') = \overline{\bbf{X}}(w,w') \widehat{\theta}_\mathfrak{o}(ww') \]
\end{cor}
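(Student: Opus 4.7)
The plan is to combine the cocycle rule for the unnormalized Bernstein map $\theta$ (\cref{thm:ex_theta_braid}) with the coboundary relation for $\gamma$ from \cref{lem:gamma_product_rule}, after first reducing to the case of invertible parameters via the specialization argument (\cref{rmk:specialization_argument}). More precisely, both sides of the asserted identity lie in $\mathcal{H}^{(1)}$ and, expanded in the canonical basis $\{T_w\}$, are polynomial expressions in the parameters $a_s$ and the components of $b_s$. By \cref{rmk:base_change_and_universal_coefficients}(i), it therefore suffices to verify the identity over the universal ring $R^{\op{univ}}$ (or a suitable polynomial ring over $\Z$ in the $a_s$), and since the Hecke algebra is free over this ring, one may pass to the localization making the $a_s$ invertible.

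Once the $a_s$ are units, \cref{thm:existence_widehat_theta} provides the factorization
\[ \widehat{\theta}_\mathfrak{o}(w) = \overline{\gamma}_\mathfrak{o}(\pi(w))\, \theta_\mathfrak{o}(w), \]
where the factor $\overline{\gamma}_\mathfrak{o}(\pi(w)) \in R$ is a central scalar. I would then compute directly
\begin{align*}
 \widehat{\theta}_\mathfrak{o}(w)\,\widehat{\theta}_{\mathfrak{o}\bullet{}w}(w')
 &= \overline{\gamma}_\mathfrak{o}(\pi(w))\,\overline{\gamma}_{\mathfrak{o}\bullet{}w}(\pi(w'))\, \theta_\mathfrak{o}(w)\,\theta_{\mathfrak{o}\bullet{}w}(w') \\
 &= \overline{\gamma}_\mathfrak{o}(\pi(w))\,\overline{\gamma}_{\mathfrak{o}\bullet{}w}(\pi(w'))\, \theta_\mathfrak{o}(ww'),
\end{align*}
where the second equality is the cocycle rule for $\theta$. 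Comparing with the target factorization $\overline{\bbf{X}}(w,w')\,\widehat{\theta}_\mathfrak{o}(ww') = \overline{\bbf{X}}(w,w')\,\overline{\gamma}_\mathfrak{o}(\pi(ww'))\,\theta_\mathfrak{o}(ww')$, it remains only to check the scalar identity
\[ \overline{\gamma}_\mathfrak{o}(\pi(w))\,\overline{\gamma}_{\mathfrak{o}\bullet{}w}(\pi(w')) = \overline{\bbf{X}}(w,w')\,\overline{\gamma}_\mathfrak{o}(\pi(ww')) \quad \text{in } R. \]

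This last identity is obtained from \cref{lem:gamma_product_rule} by applying the $W$-equivariant specialization $\N[\mathfrak{H}] \to R$, $\mathbf{a}_H \mapsto a_H$. The only point requiring attention — the only potential obstacle — is that \cref{lem:gamma_product_rule} contains the $W$-twist $\pi(w)(\gamma_{\mathfrak{o}\bullet{}w}(\pi(w')))$, whereas on the right we need the untwisted product. This is resolved by noting that the map $\mathbf{a}_H \mapsto a_H$ is $W$-invariant: by \eqref{eq:condstar}, $a_H$ depends only on the $W$-orbit of $H$, so the specialization factors through $\N[W\backslash\mathfrak{H}] \to R$ and kills the $W$-action. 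After specialization, \cref{lem:gamma_product_rule} yields exactly the required scalar identity. Combining everything and reversing the specialization argument proves the corollary in full generality.
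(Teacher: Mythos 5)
Your proof follows exactly the paper's own argument: reduce via the specialization argument to the case of invertible $a_s$, factor $\widehat{\theta}_\mathfrak{o} = \overline{\gamma}_\mathfrak{o}\circ\pi\cdot\theta_\mathfrak{o}$ using \cref{thm:existence_widehat_theta}, apply the cocycle rule for $\theta$ from \cref{thm:ex_theta_braid}, and finish with the scalar identity coming from \cref{lem:gamma_product_rule} — and you correctly observe that the $W$-twist in that lemma is killed by the specialization $\mathbf{a}_H\mapsto a_H$, since $a_H$ depends only on the $W$-orbit of $H$. The only small imprecision is in your framing of the specialization step: the reduction should run over the polynomial ring $R[\mathbf{a}_s]$ (with $R$ the original base ring) as in the proof of \cref{thm:existence_widehat_theta}, not over $R^{\op{univ}}$, which is only available when $T$ is finite — but this does not affect the substance of the argument.
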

\begin{proof}
   By the specialization argument, it suffices to prove this when the $a_s$ are invertible. In this case, the claim follows by combining the identity $\widehat{\theta}_\mathfrak{o}(w) = \overline{\gamma}_\mathfrak{o}(\pi(w)) \theta_\mathfrak{o}(w)$ with the cocycle property of $\theta$ and the equation
   \[ \overline{\gamma}_\mathfrak{o}(\pi(w)) \overline{\gamma}_{\mathfrak{o}\bullet{}w}(\pi(w')) = \overline{\bbf{X}}(w,w') \overline{\gamma}_\mathfrak{o}(\pi(ww')) \]
   following immediately from \cref{lem:gamma_product_rule}.
\end{proof}

\begin{rmk}\label{rmk:hat comm rules}
   Let us record a few relations that will be useful later. First of all, we have that for any $u \in \Omega^{(1)}$ and any orientation $\mathfrak{o} \in \mathcal{O}$
   \[ \widehat{\theta}_\mathfrak{o}(u) = T_u \]
   by construction. This together with \cref{rmk:coboundary_vanishes} and the formula proven in the previous corollary shows that
   \[ \widehat{\theta}_\mathfrak{o}(w) T_u = \widehat{\theta}_\mathfrak{o}(w u) \]
   and that
   \[ T_u \widehat{\theta}_{\mathfrak{o}\bullet{}u}(w) = \widehat{\theta}_\mathfrak{o}(uw) \]
   for any $w \in W^{(1)}$. In particular, we get that
   \[ T_u \widehat{\theta}_\mathfrak{o}(w) T_u^{-1} = \widehat{\theta}_{\mathfrak{o}\bullet{}u^{-1}}(uwu^{-1}) \]
   Moreover, since the group $T$ acts trivial on orientations by definition, for $u = t \in T$ these relations simplify to
   \[ \widehat{\theta}_\mathfrak{o}(w) T_t = \widehat{\theta}_\mathfrak{o}(wt) \]
   and
   \[ T_t \widehat{\theta}_\mathfrak{o}(w) = \widehat{\theta}_\mathfrak{o}(tw) \]
   respectively. Using the conjugation action $w(t) = wtw^{-1}$ of $W^{(1)}$ on $T$, these relations combine to give
   \[ \widehat{\theta}_\mathfrak{o}(w) T_t = T_{w(t)} \widehat{\theta}_\mathfrak{o}(w) \]
   and more generally
   \begin{equation}\label{eq:widehat_t_comm_rule} \widehat{\theta}_\mathfrak{o}(w) b = w(b) \widehat{\theta}_\mathfrak{o}(w) \end{equation}
   for any $b \in R[T] \subseteq \mathcal{H}^{(1)}$.
\end{rmk}

\begin{cor}\label{cor:hat change of basis}
   \[ \widehat{\theta}_\mathfrak{o}(w) = T_w + \sum_{w' < w} c_{w,w'} T_{w'} \]
   for some $c_{w,w'} \in R$, almost all of them being zero. In particular, $(\widehat{\theta}_\mathfrak{o}(w))_{w \in W^{(1)}}$ is an $R$-basis of $\mathcal{H}^{(1)}$.
\end{cor}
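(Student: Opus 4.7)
The plan is to establish the displayed expansion by induction on $\ell(w)$, and then to deduce the basis property from the resulting triangularity. For the base case $\ell(w) = 0$, we have $w = u \in \Omega^{(1)}$ and \cref{thm:existence_widehat_theta} gives $\widehat{\theta}_\mathfrak{o}(u) = T_u$ with empty correction sum. For the inductive step, fix a reduced expression $w = n_{s_1}\cdots n_{s_r} u$ and put $w_1 := n_{s_2}\cdots n_{s_r} u$, so that $\ell(w_1) = r-1$. A direct check unwinding the right action of $W^{(1)}$ on $\mathcal{O}$ shows that the factors indexed $2,\dots,r$ in the defining formula of \cref{thm:existence_widehat_theta} for $\widehat{\theta}_\mathfrak{o}(w)$ coincide with those for $\widehat{\theta}_{\mathfrak{o}\bullet{}n_{s_1}}(w_1)$, giving the key recursion
\[ \widehat{\theta}_\mathfrak{o}(w) = T_1 \cdot \widehat{\theta}_{\mathfrak{o}\bullet{}n_{s_1}}(w_1), \]
and the induction hypothesis yields $\widehat{\theta}_{\mathfrak{o}\bullet{}n_{s_1}}(w_1) = T_{w_1} + \sum_{w'' < w_1} d_{w''} T_{w''}$ with finite support.

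It then suffices to compute $T_1 \cdot T_{w_1}$ and $T_1 \cdot T_{w''}$. When $\varepsilon_1 := \mathfrak{o}(1,s_1) = +1$ (so $T_1 = T_{n_{s_1}}$), the leading term gives $T_{n_{s_1}} T_{w_1} = T_w$ by reducedness, while for each $w'' < w_1$ the product $T_{n_{s_1}} T_{w''}$ is evaluated via the Iwahori--Matsumoto relations, case-split on the sign of $\ell(n_{s_1}w'') - \ell(w'')$. If this sign is $+1$, then $T_{n_{s_1}} T_{w''} = T_{n_{s_1}w''}$ and the standard Bruhat lifting property (applied in $W$ via $\pi$: from $\pi(w'') < \pi(w_1)$ and $\ell(s_1\pi(w_1)) > \ell(\pi(w_1))$ one deduces $s_1\pi(w'') \leq s_1\pi(w_1) = \pi(w)$, strict since $\pi(w'') \neq \pi(w_1)$) places $n_{s_1}w''$ strictly below $w$. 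If the sign is $-1$, write $w'' = n_{s_1}\widetilde{w}''$ and apply the quadratic relation to get $a_{s_1}T_{n_{s_1}^2 \widetilde{w}''} + T_{w''} \cdot \widetilde{w}''^{-1}(b_{s_1})$; since $\pi(n_{s_1}^2 \widetilde{w}'') = \pi(\widetilde{w}'') < \pi(w'') < \pi(w)$, and multiplying $T_{w''}$ on the right by an element of $R[T]$ merely changes $w''$ by a $T$-factor (leaving $\pi$ invariant), both summands lie in $\sum_{w'<w} R \cdot T_{w'}$. When $\varepsilon_1 = -1$, split $T_1 = T_{n_{s_1}} - b_{s_1}$; the $T_{n_{s_1}}$-part is handled as above, while the $-b_{s_1}$-part, after commutation through the product via \eqref{eq:gencommrule}, becomes an $R$-linear combination of $T_{w_1 t}$ and $T_{w'' t}$ with $t \in T$, all having $\pi$-image strictly below $\pi(w)$.

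Once the expansion is in hand, the basis property follows by a standard triangularity argument. For spanning, one inducts on the Bruhat order on $W$ (which is well-founded because $\ell$ descends strictly along any descending chain), solving the relation as $T_w = \widehat{\theta}_\mathfrak{o}(w) - \sum_{w' < w} c_{w,w'} T_{w'}$, a finite correction lying in the span by the inductive hypothesis. For linear independence, given a finite relation $\sum_{w \in F} \lambda_w \widehat{\theta}_\mathfrak{o}(w) = 0$ with some $\lambda_w \neq 0$, pick a Bruhat-maximal $v$ in $\{w \in F : \lambda_w \neq 0\}$; then the $T_v$-coefficient of the sum reduces to $\lambda_v$, since only $\widehat{\theta}_\mathfrak{o}(w)$ with $v \leq w$ can contribute a $T_v$, forcing $w = v$ by maximality. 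This yields $\lambda_v = 0$, a contradiction.

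The main obstacle will be the length-decreasing subcase of the inductive step, which requires carefully combining the quadratic relation $T_{n_s}^2 = a_s T_{n_s^2} + T_{n_s} b_s$ with the commutation rule $T_w \cdot b = w(b) \cdot T_w$ and verifying that every resulting term projects strictly below $\pi(w)$ in the Bruhat order. The other subcases and the final triangularity argument are essentially bookkeeping, built respectively on the Bruhat lifting property and on the well-foundedness of the Bruhat order on $W$.
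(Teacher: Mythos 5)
Your proof is correct and follows essentially the same approach as the paper, which cites the proof of \cref{prop:change_of_basis}: expand the product form $T_1\cdots T_r T_u$ of $\widehat{\theta}_\mathfrak{o}(w)$ from a reduced expression, use the Iwahori--Matsumoto relations and the commutation rule \eqref{eq:gencommrule} to push the result into the $T_w$-basis, and observe that everything below the leading term $T_w$ (with coefficient $1$, since each factor $T_i$ has leading term $T_{n_{s_i}}$) lies strictly below $w$ in the Bruhat order, after which linear independence and spanning are formal consequences of triangularity with unit diagonal and well-foundedness of $<$. You organize this as an explicit induction on $\ell(w)$ via the partial cocycle recursion $\widehat{\theta}_\mathfrak{o}(w) = T_1\,\widehat{\theta}_{\mathfrak{o}\bullet n_{s_1}}(w_1)$ and fill in the details of the length-decreasing subcase and the Bruhat lifting property, which the paper leaves implicit, but the underlying argument is the same.
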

\begin{proof}
   The proof is the same as for \cref{prop:change_of_basis}.
\end{proof}

\begin{rmk}\label{rmk:comm subalgebras}
   Consider an orientation $\mathfrak{o}$ and a submonoid $U \leq \op{Stab}_{W^{(1)}}(\mathfrak{o})$. By \cref{cor:hat change of basis}, the $R$-submodule $\mathcal{A}^{(1)}_\mathfrak{o}(U)$ of $\mathcal{H}^{(1)}$ spanned by $\widehat{\theta}_\mathfrak{o}(x)$, $x \in U$ is in fact a free $R$-module on $\{\widehat{\theta}_\mathfrak{o}(x)\}_{x \in U}$. By \cref{cor:hat mult rule}, this submodule $\mathcal{A}^{(1)}_\mathfrak{o}(U) \subseteq \mathcal{H}^{(1)}_\mathfrak{o}$ is also an $R$-subalgebra. When the $a_s$ are units in $R$, the $\theta_\mathfrak{o}(x)$, $x \in U$ provide a different basis of $\mathcal{A}^{(1)}_\mathfrak{o}(U)$ inducing an isomorphism of the monoid algebra $R[U]$ with $\mathcal{A}^{(1)}_\mathfrak{o}(U)$. In particular, $\mathcal{A}^{(1)}_\mathfrak{o}(U)$ is commutative if $U$ is commutative.
   From the specialization argument it follows that this last statement is true even if the $a_s$ are not invertible. In fact, this also follows directly from the product formula (\cref{cor:hat mult rule}) and the fact that
   \[ \overline{\bbf{X}}(w,w') = \overline{\bbf{X}}(w',w) \]
   whenever $ww' = w'w$, which itself follows immediately from formula \eqref{eq:simple_formula_for_coboundary}.
\end{rmk}

The statement of \cref{cor:hat mult rule} says informally that $d\widehat{\theta} = \overline{\bbf{X}}$. The fact that $d\sqrt{\bbf{L}} = \bbf{X}$ suggests that we can restore the cocycle property of $\widehat{\theta}$ by formally twisting it with $\sqrt{\bbf{L}}^{-1}$. This is made precise in the following definition.

\begin{definition}\label{def:normalized_bernstein_map}
   Assume that the parameters $a_s \in R$ are units and squares in $R$. Recall that $a_s$ only depends on the class $[s] \in W\backslash \mathfrak{H}$. For every such class $[s]$ choose a square root $\sqrt{a_{[s]}} \in R^\times$ of $a_{[s]} = a_s$ and let
   \[ \widetilde{\theta}_\mathfrak{o}(w) := \overline{\sqrt{\bbf{L}}}(w)^{-1} \widehat{\theta}_\mathfrak{o}(w) \quad \forall \mathfrak{o} \in \mathcal{O},\ w \in W^{(1)} \]
   Here $\overline{\sqrt{\bbf{L}}}$ denotes the composition of maps
   \[ W^{(1)} \stackrel{\pi}{\longrightarrow} W \stackrel{\sqrt{\bbf{L}}}{\longrightarrow} \Z[\sqrt{\mathfrak{H}}] \longrightarrow R \]
   where $\sqrt{\bbf{L}}: W \rightarrow \Z[\sqrt{\mathfrak{H}}]$ is the formal square root defined in \cref{rmk:gen_len_product_rule} of the generalized length function $\bbf{L}$ and
   \[ \Z[\sqrt{\mathfrak{H}}] \longrightarrow R \]
   is the morphism of monoids sending a formal square $\sqrt{\mathbf{a}_H}$ to $\sqrt{a_{[H]}}$.
   
   The map
   \[ \widetilde{\theta}: W^{(1)} \longrightarrow \op{Hom}_{\op{Set}}(\mathcal{O},\mathcal{H}^{(1)}),\quad w \longmapsto (\mathfrak{o} \mapsto \widetilde{\theta}_\mathfrak{o}(w)) \]
   is called the \textbf{normalized Bernstein map} (with respect to the chosen square roots $\sqrt{a_s}$).
\end{definition}

In the situation of the above definition, we have the following immediate corollary of \cref{cor:hat mult rule} and \cref{rmk:gen_len_product_rule}.
\begin{cor}\label{cor:normalized_bernstein_mult_rule}
   For all $w,w' \in W^{(1)}$ and $\mathfrak{o} \in \mathcal{O}$
   \[ \widetilde{\theta}_\mathfrak{o}(ww') = \widetilde{\theta}_\mathfrak{o}(w) \widetilde{\theta}_{\mathfrak{o}\bullet{}w}(w') \]
\end{cor}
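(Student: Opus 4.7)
The plan is a direct computation combining the multiplication rule for $\widehat{\theta}$ (\cref{cor:hat mult rule}) with the generalized length identity (\cref{lem:gen_len_product_rule}), taking care that the normalization factor $\overline{\sqrt{\bbf{L}}}$ lies in the central subring $R \subseteq \mathcal{H}^{(1)}$.

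First I would unwind the definition and push all normalization scalars to the left:
\[ \widetilde{\theta}_\mathfrak{o}(w) \widetilde{\theta}_{\mathfrak{o}\bullet{}w}(w') = \overline{\sqrt{\bbf{L}}}(w)^{-1} \widehat{\theta}_\mathfrak{o}(w) \cdot \overline{\sqrt{\bbf{L}}}(w')^{-1} \widehat{\theta}_{\mathfrak{o}\bullet{}w}(w') = \bigl(\overline{\sqrt{\bbf{L}}}(w)\, \overline{\sqrt{\bbf{L}}}(w')\bigr)^{-1} \widehat{\theta}_\mathfrak{o}(w) \widehat{\theta}_{\mathfrak{o}\bullet{}w}(w'), \]
using that $\overline{\sqrt{\bbf{L}}}(w') \in R$ is central in $\mathcal{H}^{(1)}$.

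Next I would apply \cref{cor:hat mult rule} to rewrite the product of the integral Bernstein elements as $\overline{\bbf{X}}(w,w') \widehat{\theta}_\mathfrak{o}(ww')$. The identity then reduces to the scalar claim
\[ \overline{\sqrt{\bbf{L}}}(w)\, \overline{\sqrt{\bbf{L}}}(w') = \overline{\bbf{X}}(w,w')\, \overline{\sqrt{\bbf{L}}}(ww') \quad \text{in } R. \]
This I would deduce from \cref{lem:gen_len_product_rule}, which in its square-root form (see \cref{rmk:gen_len_product_rule}(ii)) reads $\sqrt{\bbf{L}}(w) \cdot w(\sqrt{\bbf{L}}(w')) = \bbf{X}(w,w') \sqrt{\bbf{L}}(ww')$ in $\Z[\sqrt{\mathfrak{H}}]$. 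The key observation is that after composing with the homomorphism $\Z[\sqrt{\mathfrak{H}}] \rightarrow R$ defined by $\sqrt{\mathbf{a}_H} \mapsto \sqrt{a_{[H]}}$, the $W$-action disappears, since $\sqrt{a_{[H]}}$ depends only on the $W$-orbit of $H$; hence $w(\sqrt{\bbf{L}}(w'))$ and $\sqrt{\bbf{L}}(w')$ have the same image $\overline{\sqrt{\bbf{L}}}(w') \in R$.

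There is no real obstacle here — the statement is by design the cocycle identity obtained by twisting $\widehat{\theta}$ by the $1$-cochain $\overline{\sqrt{\bbf{L}}}^{-1}$ whose coboundary cancels the defect $\overline{\bbf{X}}$ of $\widehat{\theta}$. The only point requiring mild attention is confirming that the scalar and algebra multiplications can be reorganized freely, which follows from $R$ being the base ring of $\mathcal{H}^{(1)}$, and that the $W$-action on $\Z[\sqrt{\mathfrak{H}}]$ descends trivially under the evaluation map, which is precisely the content of the invariance $a_H = a_{wHw^{-1}}$ built into the hypotheses of \cref{thm:exhecke}.
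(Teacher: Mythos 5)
Your proof is correct and matches the paper's approach exactly: the paper notes the corollary is immediate from \cref{cor:hat mult rule} and \cref{rmk:gen_len_product_rule}, and you have simply spelled out the straightforward bookkeeping (pulling the central scalars $\overline{\sqrt{\bbf{L}}}$ to one side, applying the $\widehat{\theta}$-product formula, and reducing to the scalar identity that follows from $d\sqrt{\bbf{L}} = \bbf{X}$ after evaluation kills the $W$-action).
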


\begin{rmk}\label{rmk:usefulness_of_normalized_bernstein}
   For our purposes the main reason for introducing the normalized Bernstein map lies in the fact that it get transformed into the integral Bernstein map under a certain isomorphism of Hecke algebras. More precisely, in the situation of the above definition we have an isomorphism
   \[ \varphi: \mathcal{H}^{(1)}(a_s,b_s) \stackrel{\sim}{\longrightarrow} \mathcal{H}^{(1)}(1,\sqrt{a_s}^{-1}b_s) \]
   of $R$-modules determined by $T_w \mapsto \overline{\sqrt{\bbf{L}}}(w) T_w$. Note that $\mathcal{H}^{(1)}(1,\sqrt{a_s}^{-1}b_s)$ is well-defined as the parameters again satisfy the conditions of \cref{thm:exhecke}. This isomorphism is also an isomorphism of $R$-algebras, which follows easily by combining the presentation of $\mathcal{H}^{(1)}(a_s,b_s)$ given in \cref{sub:Presentations of generic pro-$p$ Hecke algebras via (generalized) braid groups} with \cref{rmk:coboundary,rmk:gen_len_product_rule} and verifying the following quadratic relation
   \[ (\sqrt{a}_s T_{n_s})^2 = a_s T_{n_s^2} + (\sqrt{a}_s T_{n_s})b_s \]
   in the Hecke algebra $\mathcal{H}^{(1)}(1,\sqrt{a_s}^{-1}b_s)$.
   
   The normalized Bernstein map $\widetilde{\theta}$ of $\mathcal{H}^{(1)}(a_s,b_s)$ and the integral Bernstein map $\widehat{\theta}$ of $\mathcal{H}^{(1)}(1,\sqrt{a_s}^{-1}b_s)$ are now related as follows
   \begin{equation}\label{eq:rel_integral_and_normalized} \widehat{\theta}_\mathfrak{o}(w) = \varphi(\widetilde{\theta}_\mathfrak{o}(w)) \quad \forall \mathfrak{o} \in \mathcal{O},\ w \in W^{(1)} \end{equation}
\end{rmk}

\subsection{Bernstein relations} 
\label{sub:Bernstein relations}
In this section we again fix a generic pro-$p$ Hecke algebra $\mathcal{H}^{(1)} = \mathcal{H}^{(1)}(a_s,b_s)$ and assume that the parameters $a_s \in R$ are units and squares in $R$, and that a choice of square roots $\sqrt{a_s}$ and consequently of a normalized Bernstein map $\widetilde{\theta}$ has been made according to the previous section.

The goal of this section is to compute the difference
\begin{equation}\label{eq:or_diff} \widetilde{\theta}_\mathfrak{o}(w) - \widetilde{\theta}_{\mathfrak{o}'}(w) \end{equation}
as a sum over certain hyperplanes, for two orientations $\mathfrak{o},\mathfrak{o}' \in \mathcal{O}$ that are `adjacent'. This computation will be crucial in \cref{sec:Affine pro-$p$ Hecke algebras}, where we will use it to show that certain elements $z_\mathfrak{o}(\gamma)$ of an affine pro-$p$ Hecke algebra lie in the center. In the classical case ($W^{(1)} = W$) this computation is essentially equivalent to Bernstein's relations for the Iwahori-Hecke algebra.

We remind the reader that (see \cref{term:basic})
\[ \mathfrak{H} = \{ wsw^{-1} : s \in S,\ w \in W_{\op{aff}} \} = \{ wsw^{-1} : s \in S,\ w \in W \} \subseteq W_{\op{aff}} \]
denotes the set of \textit{hyperplanes} of the underlying Coxeter group $W_{\op{aff}}$ of $W^{(1)}$.

The next proposition introduces some canonical elements in the generic pro-$p$ Hecke algebra, which will appear in the sum expansion of expression \eqref{eq:or_diff}.

\begin{propdef}\label{propdef:bernstein}
   For any hyperplane $H \in \mathfrak{H}$ and any orientation $\mathfrak{o} \in \mathcal{O}$, there exists a unique element $\Xi_\mathfrak{o}(H) \in \mathcal{H}^{(1)}$, such that if $s \in S$, $w \in W^{(1)}$ with
   \[ \pi(wn_s w^{-1}) = H \]
   then
   \[ \Xi_\mathfrak{o}(H) = \sqrt{a_s}^{-1} w(b_s) \cdot{} \widetilde{\theta}_\mathfrak{o}(w n_s^{-1} w^{-1}) = \sqrt{a_s}^{-1} \widetilde{\theta}_\mathfrak{o}(w n_s^{-1} w^{-1})\cdot{} w(b_s) \]
\end{propdef}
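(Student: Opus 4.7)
First I would verify that the two given expressions for $\Xi_\mathfrak{o}(H)$ coincide. This is a consequence of the commutation rule $\widetilde{\theta}_\mathfrak{o}(x)\cdot b = x(b)\cdot \widetilde{\theta}_\mathfrak{o}(x)$ (for $x \in W^{(1)}$, $b \in R[T]$), which $\widetilde{\theta}$ inherits from $\widehat{\theta}$ via the scalar rescaling of \cref{def:normalized_bernstein_map} together with \eqref{eq:widehat_t_comm_rule}, combined with the identity $n_s^{-1}(b_s) = b_s$ from \eqref{eq:paraminv}. Applying the commutation rule with $x = wn_s^{-1}w^{-1}$ and $b = w(b_s)$ gives $\widetilde{\theta}_\mathfrak{o}(wn_s^{-1}w^{-1})\cdot w(b_s) = w(n_s^{-1}(b_s))\cdot\widetilde{\theta}_\mathfrak{o}(wn_s^{-1}w^{-1}) = w(b_s)\cdot\widetilde{\theta}_\mathfrak{o}(wn_s^{-1}w^{-1})$.

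Next I would establish independence of the choice of pair $(s,w)$. Suppose $(s,w)$ and $(t,v)$ are two admissible pairs, so that $\pi(w)s\pi(w)^{-1} = \pi(v)t\pi(v)^{-1} = H$. Setting $u := v^{-1}w$ we get $\pi(u)s\pi(u)^{-1} = t$, so that
\[ \tau_0 := n_t^{-1} u n_s u^{-1} \]
has trivial image in $W$ and therefore lies in $T$. Equivalently $u n_s u^{-1} = n_t \tau_0$, and hence $u n_s^{-1} u^{-1} = \tau_0^{-1} n_t^{-1}$. Conjugating by $v$ and using that $T$ is a normal subgroup of $W^{(1)}$ stable under the $W^{(1)}$-action gives
\[ w n_s^{-1} w^{-1} = v(\tau_0^{-1}) \cdot v n_t^{-1} v^{-1}. \]
Since $v(\tau_0^{-1}) \in T \subseteq \Omega^{(1)}$ acts trivially on orientations, an application of the cocycle rule (\cref{cor:normalized_bernstein_mult_rule}) together with the fact that $\widetilde{\theta}_\mathfrak{o}$ coincides with $T_\bullet$ on elements of $\Omega^{(1)}$ (since $\bbf{L}$ is trivial there) yields
\[ \widetilde{\theta}_\mathfrak{o}(w n_s^{-1} w^{-1}) = v(\tau_0^{-1})\cdot \widetilde{\theta}_\mathfrak{o}(v n_t^{-1} v^{-1}). \]

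To conclude, I would match the parameter factors using \eqref{eq:condstar} and \cref{rmk:condstar}. From $t\pi(u) = \pi(u)s$ and \eqref{eq:condstar} applied with roles $s \leftrightarrow t$ we get $a_s = a_t$ (so $\sqrt{a_s} = \sqrt{a_t}$, as this scalar depends only on the $W$-class $[H]$) and $(n_t u n_s^{-1} u^{-1})\cdot u(b_s) = b_t$. Computing $n_t u n_s^{-1} u^{-1} = n_t \tau_0^{-1} n_t^{-1} = n_t(\tau_0^{-1})$, this reads $u(b_s) = n_t(\tau_0)\cdot b_t$. Then inside the commutative algebra $R[T]$,
\[ u(b_s)\cdot \tau_0^{-1} = n_t(\tau_0)\cdot \tau_0^{-1}\cdot b_t = b_t, \]
the last equality being \cref{rmk:condstar}(ii) applied to the reflection $t$ and the element $\tau_0 \in T$. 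Applying $v$ to both sides gives $w(b_s)\cdot v(\tau_0^{-1}) = v(b_t)$, and combining,
\[ \sqrt{a_s}^{-1}\, w(b_s)\, \widetilde{\theta}_\mathfrak{o}(w n_s^{-1} w^{-1}) = \sqrt{a_s}^{-1}\bigl(w(b_s)\, v(\tau_0^{-1})\bigr)\widetilde{\theta}_\mathfrak{o}(v n_t^{-1} v^{-1}) = \sqrt{a_t}^{-1}\, v(b_t)\, \widetilde{\theta}_\mathfrak{o}(v n_t^{-1} v^{-1}). \]

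The main obstacle is the careful bookkeeping of the $W^{(1)}$-action on $T$ versus multiplication inside $R[T]$, together with the correct invocation of both parts of \cref{rmk:condstar}; conceptually, the argument amounts to the observation that any two lifts of $s_H \in W$ to $W^{(1)}$ differ by an element of $T$, and the compatibility conditions on the parameters $(a_s, b_s)$ have been tailored precisely so that the scalar $\sqrt{a_s}^{-1} w(b_s)$ compensates for this ambiguity.
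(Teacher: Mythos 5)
Your proposal is correct and follows essentially the same strategy as the paper's proof: the equality of the two expressions is checked via the commutation rule together with $n_s^{-1}(b_s) = b_s$, and independence of the choice of $(s,w)$ reduces to the parameter condition \eqref{eq:condstar} applied to the change-of-lift element in $T$. The only cosmetic differences are that the paper first reduces to the case $a_s = 1$ (so $\widetilde{\theta} = \widehat{\theta}$) via the rescaling isomorphism of \cref{rmk:usefulness_of_normalized_bernstein} and absorbs the $T$-element on the right of $\widehat{\theta}_\mathfrak{o}$ using the relation $\widehat{\theta}_\mathfrak{o}(w)T_t = \widehat{\theta}_\mathfrak{o}(wt)$, whereas you keep the $\sqrt{a_s}$ factors throughout, name the discrepancy $\tau_0$, and pull it off on the left via the cocycle rule, which obliges you to additionally invoke \cref{rmk:condstar}(ii) — itself just a specialization of \eqref{eq:condstar}.
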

\begin{proof}
   Applying the isomorphism $\varphi$ of \cref{rmk:usefulness_of_normalized_bernstein}, we may assume that $a_s = 1$ for all $s \in S$ and that $\widetilde{\theta} = \widehat{\theta}$. Moreover, we observe that $w(b_s)$ and $\widehat{\theta}_\mathfrak{o}(wn_s^{-1} w^{-1})$ commute with each other. Indeed, by applying the commutation relation \eqref{eq:widehat_t_comm_rule} this is easily reduced to show the basic identity
   \[ n_s^{-1}(b_s) = b_s \]
   which was already seen to be true in \eqref{eq:paraminv}. Therefore, it only remains to show that the expression
   \[ w(b_s) \cdot{} \widehat{\theta}_\mathfrak{o}(w n_s^{-1} w^{-1}) = \widehat{\theta}_\mathfrak{o}(w n_s^{-1} w^{-1})\cdot{} w(b_s) \]
   only depends on the element
   \[ \pi(w n_s w^{-1}) = H \in \mathfrak{H} \]
   and not on the choice of $w \in W^{(1)}$ and $s \in S$.
   So let $w_1,w_2 \in W^{(1)}$ and $s,t \in S$ with
   \[ \pi(w_1 n_s w_1^{-1}) = \pi(w_2 n_t w_2^{-1}) \]
   By the above equation, we may apply condition \eqref{eq:condstar} of \cref{thm:exhecke} on the existence of generic pro-$p$ Hecke algebras to $w = w_1^{-1}w_2$ (in the notation of said theorem). Condition \eqref{eq:condstar} then states that
   \[ (n_s w n_t^{-1} w^{-1}) \cdot{} w(b_t) = b_s \]
   as an equality in $R[T]$. Acting on both sides with $w_1$, we get the formula
   \[ w_1(b_s) = (w_1 n_s w_1^{-1} w_2 n_t^{-1} w_2^{-1}) \cdot{} w_2(b_t) \]
   Bearing in mind that $w_1 n_s w_1^{-1} w_2 n_t^{-1} w_2^{-1} \in T$, we can use the relations proved in \cref{rmk:hat comm rules} to compute
   \begin{align*} \widehat{\theta}_\mathfrak{o}(w_1 n_s^{-1} w_1^{-1})\cdot{} w_1(b_s) & = \widehat{\theta}_\mathfrak{o}(w_1 n_s^{-1} w_1^{-1})\cdot{}(w_1 n_s w_1^{-1} w_2 n_t^{-1} w_2^{-1}) \cdot{} w_2(b_t) \\
      & = \widehat{\theta}_\mathfrak{o}(w_1 n_s^{-1} w_1^{-1} w_1 n_s w_1^{-1} w_2 n_t^{-1} w_2^{-1}) \cdot{} w_2(b_t) \\
      & = \widehat{\theta}_\mathfrak{o}(w_2 n_t^{-1} w_2^{-1}) \cdot{} w_2(b_t)
   \end{align*}
\end{proof}

The classical Bernstein relations compute the difference \eqref{eq:or_diff} when $\mathfrak{o}' = \mathfrak{o}\bullet{}s_\alpha$ for a `simple root' $\alpha$ of a root system and the orientation $\mathfrak{o}$ is `spherical' (cf. \cref{def:spherical_orientation}). The following definition allows us to state the Bernstein relations in a more general context.

Recall that by \cref{lem:or_depend_on_half-space}, an orientation $\mathfrak{o}$ of a Coxeter group is given by defining for every hyperplane $H \in \mathfrak{H}$ a notion of \textit{positive/negative crossing} for passing from one half-space (with respect to $H$) into the other. It therefore makes sense to say that two orientation agree (or disagree) at a hyperplane $H$ if the signs attached by the orientations to passing from one half-space with respect to $H$ into the other are equal (or unequal).

\begin{definition}\label{def:adjacency_of_or}
   Two orientations $\mathfrak{o},\mathfrak{o}' \in \mathcal{O}$ of $W$ are said to be \textbf{adjacent} if for every wall $H \in \mathfrak{H}$ at which $\mathfrak{o}$ and $\mathfrak{o}'$ disagree, we have
   \[ \mathfrak{o}\bullet{}s_H = \mathfrak{o}' \]
\end{definition}
Note that the notion of adjacency is symmetric in $\mathfrak{o}$ and $\mathfrak{o}'$. We are now ready to give the `Bernstein relation'.
\begin{theorem}\label{thm:bernstein_relation}
   Let $w \in W^{(1)}$ and $\mathfrak{o}, \mathfrak{o}' \in \mathcal{O}$ be adjacent. Then
   \begin{equation}\label{eq:bernstein_relation} \widetilde{\theta}_\mathfrak{o}(w)-\widetilde{\theta}_{\mathfrak{o}'}(w) = \left(\sum_H \mathfrak{o}(1,H) \Xi_{\mathfrak{o}'}(H)\right) \widetilde{\theta}_\mathfrak{o}(w) \end{equation}
      where the sum is taken over all hyperplanes $H \in \mathfrak{H}$ which separate $1$ and $w$, and at which $\mathfrak{o}$ and $\mathfrak{o}'$ disagree.
\end{theorem}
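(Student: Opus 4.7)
The plan is to execute the telescoping-sum strategy laid out in the introduction in its full pro-$p$ generality; adjacency is what provides the combinatorial room needed to make each summand recognisable as one of the well-defined geometric pieces $\Xi_{\mathfrak{o}'}(H)$ provided by \cref{propdef:bernstein}.

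First I will reduce to the case $a_s = 1$ for every $s \in S$, using the isomorphism $\varphi$ of \cref{rmk:usefulness_of_normalized_bernstein}, under which $\widetilde{\theta}$ becomes $\widehat{\theta}$ in the algebra $\mathcal{H}^{(1)}(1, \sqrt{a_s}^{-1} b_s)$ and in which $\overline{\bbf{X}} \equiv 1$. I then fix a \emph{reduced} expression $w = n_{s_1}\cdots n_{s_r}u$ with $u \in \Omega^{(1)}$ and expand, following \cref{thm:existence_widehat_theta},
\[ \widehat{\theta}_\mathfrak{o}(w) = T_1 \cdots T_r T_u, \qquad \widehat{\theta}_{\mathfrak{o}'}(w) = T'_1 \cdots T'_r T_u, \]
where $T_i \in \{T_{n_{s_i}},\, T_{n_{s_i}} - b_{s_i}\}$ according to the sign $\varepsilon_i = \mathfrak{o}(s_1\cdots s_{i-1}, s_i)$, and similarly $T'_i$ corresponds to $\varepsilon'_i$. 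The telescoping identity
\[ \textstyle\prod_i T_i - \prod_i T'_i = \sum_{i} T'_1\cdots T'_{i-1}\,(T_i - T'_i)\,T_{i+1}\cdots T_r \]
(with $T_u$ appended on both sides), together with the observation that $T_i - T'_i = \varepsilon_i b_{s_i}$ when $\varepsilon_i \neq \varepsilon'_i$ and $0$ otherwise, expresses the difference $\widehat{\theta}_\mathfrak{o}(w) - \widehat{\theta}_{\mathfrak{o}'}(w)$ as a sum indexed by those positions $i$ at which $\mathfrak{o}$ and $\mathfrak{o}'$ disagree on the hyperplane $H_i := (s_1\cdots s_{i-1}) s_i (s_1\cdots s_{i-1})^{-1}$. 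Because the expression is reduced these are exactly the hyperplanes appearing in the statement, with $\varepsilon_i = \mathfrak{o}(1,H_i)$ by \cref{lem:or_depend_on_half-space}.

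The crux is the identification of each summand. Setting $v_i := n_{s_1}\cdots n_{s_{i-1}}$, one has $T'_1\cdots T'_{i-1} = \widehat{\theta}_{\mathfrak{o}'}(v_i)$ and $T_{i+1}\cdots T_r T_u = \widehat{\theta}_{\mathfrak{o}\bullet v_i n_{s_i}}(n_{s_{i+1}}\cdots n_{s_r} u)$. Here adjacency enters decisively: since $\mathfrak{o}' = \mathfrak{o}\bullet s_{H_i}$ and $s_{H_i}v_i s_i = v_i$ in $W$, the orientation $\mathfrak{o}\bullet v_i n_{s_i}$ coincides with $\mathfrak{o}'\bullet v_i$. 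The cocycle rule of \cref{cor:hat mult rule} therefore collapses the two factors to $\widehat{\theta}_{\mathfrak{o}'}(v_i n_{s_{i+1}}\cdots n_{s_r}u) = \widehat{\theta}_{\mathfrak{o}'}((v_i n_{s_i}^{-1}v_i^{-1})\cdot w)$. Applying the cocycle rule once more (using adjacency in the reverse direction $\mathfrak{o}'\bullet s_{H_i} = \mathfrak{o}$) splits this as $\widehat{\theta}_{\mathfrak{o}'}(v_i n_{s_i}^{-1} v_i^{-1})\,\widehat{\theta}_\mathfrak{o}(w)$, and sliding $b_{s_i}$ past $\widehat{\theta}_{\mathfrak{o}'}(v_i)$ via \eqref{eq:widehat_t_comm_rule} turns the $i$-th term into
\[ \mathfrak{o}(1,H_i)\cdot v_i(b_{s_i})\widehat{\theta}_{\mathfrak{o}'}(v_i n_{s_i}^{-1}v_i^{-1})\cdot \widehat{\theta}_\mathfrak{o}(w) = \mathfrak{o}(1,H_i)\,\Xi_{\mathfrak{o}'}(H_i)\,\widehat{\theta}_\mathfrak{o}(w), \]
by \cref{propdef:bernstein}. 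Summation over $i$ yields \eqref{eq:bernstein_relation}.

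The main obstacle is precisely this double application of the cocycle rule. Without adjacency, the intermediate orientations produced ($\mathfrak{o}\bullet v_i n_{s_i}$ and $\mathfrak{o}'\bullet s_{H_i}$) would not collapse onto the pair $\{\mathfrak{o},\mathfrak{o}'\}$, so the telescoping sum could not be rewritten as a linear combination of $\Xi_{\mathfrak{o}'}(H)$'s acting on $\widehat{\theta}_\mathfrak{o}(w)$. The ``miracle proposition'' \cref{propdef:bernstein} is equally indispensable: it guarantees that $v_i(b_{s_i})\widehat{\theta}_{\mathfrak{o}'}(v_i n_{s_i}^{-1}v_i^{-1})$ depends only on the hyperplane $H_i$ and not on the lift $(v_i, s_i)$, which is what makes the resulting formula independent of the chosen reduced expression of $w$.
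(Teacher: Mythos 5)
Your proof is correct and takes essentially the same route as the paper: specialize to $a_s=1$ via \cref{rmk:usefulness_of_normalized_bernstein}, expand the difference as a telescoping sum, use the quadratic relation to reduce $T_i - T'_i$ to $\varepsilon_i b_{s_i}$, invoke adjacency to reconcile the orientations, and recognize each summand via \cref{propdef:bernstein}. The only (cosmetic) difference is that the paper recognizes the product with the $i$-th factor deleted directly as $\widehat\theta_{\mathfrak{o}'}(n_{s_1}\cdots\widehat{n_{s_i}}\cdots n_{s_r}u)$ and applies the cocycle rule once, working initially with an arbitrary (not necessarily reduced) expression, whereas you fix a reduced expression at the outset, split the product into two $\widehat\theta$-halves, and apply the cocycle rule twice to merge and then re-split them; both reductions hinge on the same observation that $s_{H_i}\cdot(s_1\cdots s_{i-1}s_i) = s_1\cdots s_{i-1}$ and that adjacency gives $\mathfrak{o}\bullet s_{H_i} = \mathfrak{o}'$.
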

\begin{proof}
   We may again invoke \cref{rmk:usefulness_of_normalized_bernstein} to reduce to the case $a_s = 1$ and $\widetilde{\theta} = \widehat{\theta} = \theta$. Now take \textit{any} (not necessarily reduced) expression
   \[ w = n_{s_1}\ldots n_{s_r} u, \quad s_i \in S,\ u \in \Omega^{(1)} \]
   Using this expression, the cocycle rule and the definition of the Bernstein map together give the following explicit expressions
   \[ \widehat{\theta}_\mathfrak{o}(w) = T_{n_{s_1}^{\varepsilon_1}}^{\varepsilon_1} \ldots T_{n_{s_r}^{\varepsilon_r}}^{\varepsilon_r} T_u,\quad \widehat{\theta}_{\mathfrak{o}'}(w) = T_{n_{s_1}^{\varepsilon_1'}}^{\varepsilon_1'}\ldots T_{n_{s_r}^{\varepsilon_r'}}^{\varepsilon_r'} T_u \]
   where
   \[ \varepsilon_i = \mathfrak{o}(s_1\ldots s_{i-1},s_i),\quad \varepsilon_i' = \mathfrak{o}'(s_1\ldots s_{i-1},s_i) \]
   We expand the difference $\widehat{\theta}_\mathfrak{o}(w) - \widehat{\theta}_{\mathfrak{o}'}(w)$ now as a telescopic sum
   \begin{align*}
      \widehat{\theta}_\mathfrak{o}(w) - \widehat{\theta}_{\mathfrak{o}'}(w) & = \sum_{i = 1}^r T_{n_{s_1}^{\varepsilon'_1}}^{\varepsilon'_1} \ldots T_{n_{s_{i-1}}^{\varepsilon'_{i-1}}}^{\varepsilon'_{i-1}} \left(T_{n_{s_i}^{\varepsilon_i}}^{\varepsilon_i}-T_{n_{s_i}^{\varepsilon'_i}}^{\varepsilon'_i}\right) T_{n_{s_{i+1}}^{\varepsilon_{i+1}}}^{\varepsilon_{i+1}} \ldots T_{n_{s_r}^{\varepsilon_r}}^{\varepsilon_r} T_u
   \end{align*}
   In this sum the $i$-th summand vanishes unless $\varepsilon_i \neq \varepsilon'_i$, so let us fix an index $i$ where $\varepsilon_i \neq \varepsilon'_i$. Observing that
   \[ T_{n_s^{\varepsilon}}^{\varepsilon} - T_{n_s^{-\varepsilon}}^{-\varepsilon} = \varepsilon b_s \quad \forall s \in S,\ \varepsilon \in \{\pm \} \]
   and using the commutation rule (cf. \eqref{eq:widehat_t_comm_rule})
   \[ \widehat{\theta}_\mathfrak{o}(w) b = w(b) \widehat{\theta}_\mathfrak{o}(w) \quad \forall w \in W^{(1)},\ b \in R[T] \]
   we see that the $i$-th summand can be rewritten as
   \[ \varepsilon_i \widetilde{w}(b_{s_i}) T_{n_{s_1}^{\varepsilon'_1}}^{\varepsilon'_1} \ldots T_{n_{s_{i-1}}^{\varepsilon'_{i-1}}}^{\varepsilon'_{i-1}} T_{n_{s_{i+1}}^{\varepsilon_{i+1}}}^{\varepsilon_{i+1}} \ldots T_{n_{s_r}^{\varepsilon_r}}^{\varepsilon_r} T_u \]
   where we have put
   \[ \widetilde{w} := n_{s_1}\ldots n_{s_{i-1}} \]
   Since $\mathfrak{o}$ and $\mathfrak{o}'$ disagree at
   \[ s_H = H := \pi(\widetilde{w} n_{s_i} \widetilde{w}^{-1}) = (s_1\ldots s_{i-1}) s_i (s_1\ldots s_{i-1})^{-1} \]
   and $\mathfrak{o}$ and $\mathfrak{o}'$ are adjacent, we have
   \[ \mathfrak{o}' = \mathfrak{o}\bullet{}s_H \]
   In particular, for $j > i$ we have
   \begin{align*} \varepsilon_j & = \mathfrak{o}(s_1\ldots s_{j-1},s_j) \\
      & = \mathfrak{o}(s_H s_1 \ldots \widehat{s_i} \ldots s_{j-1},s_j) \\
      & = (\mathfrak{o}\bullet{}s_H)(s_1\ldots \widehat{s_i} \ldots s_{j-1},s_j) \\
      & = \mathfrak{o}'(s_1\ldots \widehat{s_i} \ldots s_{j-1},s_j)
   \end{align*}
   This implies that
   \begin{align*} T_{n_{s_1}^{\varepsilon'_1}}^{\varepsilon'_1} \ldots T_{n_{s_{i-1}}^{\varepsilon'_{i-1}}}^{\varepsilon'_{i-1}} T_{n_{s_{i+1}}^{\varepsilon_{i+1}}}^{\varepsilon_{i+1}} \ldots T_{n_{s_r}^{\varepsilon_r}}^{\varepsilon_r} T_u & \stackrel{\text{(!)}}{=} \widehat{\theta}_{\mathfrak{o}'}(n_{s_1}\ldots \widehat{n_{s_i}} \ldots n_{s_r} u) \\
      & = \widehat{\theta}_{\mathfrak{o}'}(\widetilde{w} n_{s_i}^{-1} \widetilde{w}^{-1} w) \\
      & = \widehat{\theta}_{\mathfrak{o}'}(\widetilde{w} n_{s_i}^{-1} \widetilde{w}^{-1}) \widehat{\theta}_{\mathfrak{o}'\bullet{}H}(w) \\
      & = \widehat{\theta}_{\mathfrak{o}'}(\widetilde{w} n_{s_i}^{-1} \widetilde{w}^{-1}) \widehat{\theta}_{\mathfrak{o}}(w)
   \end{align*}
   Recalling \cref{propdef:bernstein}, we see that
   \[ \widetilde{w}(b_{s_i}) \widehat{\theta}_{\mathfrak{o}'}(\widetilde{w} n_{s_i}^{-1} \widetilde{w}^{-1}) = \Xi_{\mathfrak{o}'}(H) \]
   and therefore
   \[ \widehat{\theta}_{\mathfrak{o}}(w) - \widehat{\theta}_{\mathfrak{o}'}(w) = \sum_{\substack{i \in \{1,\ldots,r\} \\ \varepsilon_i \neq \varepsilon'_i}} \varepsilon_i \Xi_{\mathfrak{o}'}(H_i) \widehat{\theta}_{\mathfrak{o}}(w) \]
   where
   \[ H_i := (s_1\ldots s_{i-1}) s_i (s_1\ldots s_{i-1})^{-1} \]
   is the hyperplane crossed by the gallery $(s_1,\ldots,s_r)$ in the $i$-th step. Until now we have not assumed this gallery, i.e. the expression
   \[ w = n_{s_1} \ldots n_{s_r} u \]
   to be reduced. Assume now that this is the case. Then the hyperplanes crossed by the gallery $(s_1,\ldots,s_r)$ are exactly the hyperplanes separating $1$ and $w$. Moreover, in this case we have
   \[ \varepsilon_i = \mathfrak{o}(1,H_i) \]
   and hence the theorem follows.
\end{proof}

As already mentioned, the `Bernstein relation' proven above will be used to show that certain elements of affine pro-$p$ Hecke algebras lie in the center. This application of the Bernstein relation will involve showing that
\[ \widehat{\theta}_\mathfrak{o}(x) - \widehat{\theta}_{\mathfrak{o}\bullet{}s_\alpha}(x) = -\left( \widehat{\theta}_\mathfrak{o}(s_\alpha(x)) - \widehat{\theta}_{\mathfrak{o}\bullet{}s_\alpha}(s_\alpha(x))\right) \]
for $x$ an element of a certain subgroup $X^{(1)} \subseteq W^{(1)}$ and $s_\alpha \in W$ a reflection associated to a simple root $\alpha$. This will follow from the above theorem and the following elementary property of the elements $\Xi_\mathfrak{o}(H)$.

\begin{lemma}\label{lem:property_of_bernstein}
   Let $H \in \mathfrak{H}$, $\mathfrak{o}$ an orientation and $x \in C_{W^{(1)}}(T)$ an element of the centralizer of $T$ in $W^{(1)}$. Then we have that
   \[ \Xi_{\mathfrak{o}}(H) \cdot{} \widetilde{\theta}_{\mathfrak{o}\bullet{}s_H}(s_H(x)x^{-1}) = \Xi_{\mathfrak{o}}(\pi(x)H\pi(x)^{-1}) \]
   where $s_H(x)$ denotes the induced action of $W$ on $C_{W^{(1)}}(T)$ by conjugation.
\end{lemma}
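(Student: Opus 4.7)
The plan is to pick a convenient lift of $H$, re-express both sides of the equation via \cref{propdef:bernstein} with this lift and a closely related lift of $\pi(x)H\pi(x)^{-1}$, and then reduce everything to a purely group-theoretic identity in $W^{(1)}$ via the cocycle rule.

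Concretely, I would first choose $s \in S$ and $w \in W^{(1)}$ with $\pi(wn_sw^{-1}) = H$, and set $y := wn_s^{-1}w^{-1}$, so that $\pi(y) = s_H$. Since the conjugation action of $W^{(1)}$ on $C_{W^{(1)}}(T)$ factors through $W$ (different lifts of $s_H$ differ by an element of $T$, which commutes with $x$), the element $s_H(x)$ coincides with $yxy^{-1}$. The key observation is that $\pi((xw)n_s(xw)^{-1}) = \pi(x)H\pi(x)^{-1}$, so \cref{propdef:bernstein} applied to the pair $(xw, s)$ gives
\[ \Xi_\mathfrak{o}(\pi(x)H\pi(x)^{-1}) = \sqrt{a_s}^{-1}\,(xw)(b_s)\cdot\widetilde{\theta}_\mathfrak{o}\bigl(xw n_s^{-1}w^{-1}x^{-1}\bigr) = \sqrt{a_s}^{-1}\,w(b_s)\cdot\widetilde{\theta}_\mathfrak{o}(xyx^{-1}), \]
where in the second equality I use that $x \in C_{W^{(1)}}(T)$ acts trivially on $T$ and therefore on all of $R[T]$, so that $(xw)(b_s) = x(w(b_s)) = w(b_s)$. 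Comparing with $\Xi_\mathfrak{o}(H) = \sqrt{a_s}^{-1}\,w(b_s)\cdot\widetilde{\theta}_\mathfrak{o}(y)$, the lemma is reduced to the identity
\[ \widetilde{\theta}_\mathfrak{o}(y)\cdot\widetilde{\theta}_{\mathfrak{o}\bullet s_H}(yxy^{-1}x^{-1}) = \widetilde{\theta}_\mathfrak{o}(xyx^{-1}). \]

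By \cref{rmk:or_pro_p} the $W^{(1)}$-action on orientations factors through $\pi$, hence $\mathfrak{o}\bullet s_H = \mathfrak{o}\bullet y$, and the cocycle rule (\cref{cor:normalized_bernstein_mult_rule}) rewrites the left-hand side as $\widetilde{\theta}_\mathfrak{o}(y\cdot yxy^{-1}x^{-1}) = \widetilde{\theta}_\mathfrak{o}(y^2 x y^{-1} x^{-1})$. It therefore suffices to prove the group-theoretic identity
\[ y^2 x y^{-1} x^{-1} = xyx^{-1} \quad\text{in}\quad W^{(1)}, \]
equivalently $y^2 = xy^2x^{-1}$. But $\pi(y)^2 = s_H^2 = 1$ forces $y^2 \in T$, and $x$ centralizes $T$ by hypothesis, so the identity holds.

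There is no real obstacle: the only nontrivial step is the reinterpretation of $\Xi_\mathfrak{o}(\pi(x)H\pi(x)^{-1})$ through the lift $xw$, and the crucial input there is exactly that $x \in C_{W^{(1)}}(T)$ leaves $w(b_s) \in R[T]$ invariant. Everything else is a direct application of the cocycle rule and the fact that $y^2$ lies in $T$.
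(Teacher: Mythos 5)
Your proof is correct and takes essentially the same approach as the paper: choose a lift $(w,s)$ of $H$ and the translated lift $(xw,s)$ of $\pi(x)H\pi(x)^{-1}$, apply \cref{propdef:bernstein} to both, use $(xw)(b_s)=w(b_s)$ since $x\in C_{W^{(1)}}(T)$, and invoke the cocycle rule. The one point where you add value is that you make explicit the group-theoretic identity $y^2xy^{-1}x^{-1}=xyx^{-1}$ (equivalently $y^2x=xy^2$, which holds because $y^2\in T$ and $x$ centralizes $T$); in the paper this step is only flagged with a ``(!)'' without justification, and your unpacking is exactly the verification the reader needs.
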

\begin{proof} Letting $w \in W^{(1)}$ and $s \in S$ be such that
   \[ H = \pi(w n_s w^{-1}) \]
   we have by definition that
   \begin{align*}
      \Xi_{\mathfrak{o}}(H) & = \sqrt{a_s}^{-1} w(b_s) \widetilde{\theta}_\mathfrak{o}(w n_s^{-1} w^{-1})
   \end{align*}
   Since $w n_s^{-1} w^{-1}$ acts both on $\mathfrak{o}$ and on $s_H(x)$ via $s_H$, we have that
   \begin{align*}
      \widetilde{\theta}_\mathfrak{o}(w n_s^{-1} w^{-1}) \widetilde{\theta}_{\mathfrak{o}\bullet{}s_H}(s_H(x)x^{-1}) & = \widetilde{\theta}_{\mathfrak{o}}(w n_s^{-1} w^{-1} s_H(x) x^{-1}) \stackrel{\text{(!)}}{=} \widetilde{\theta}_\mathfrak{o}(x w n_s^{-1} w^{-1} x^{-1})
   \end{align*}
   Therefore
   \begin{align*} \Xi_{\mathfrak{o}}(H) \cdot{} \widetilde{\theta}_{\mathfrak{o}\bullet{}s_H}(s_H(x)x^{-1}) & = \sqrt{a_s}^{-1} w(b_s) \widetilde{\theta}_\mathfrak{o}(x w n_s^{-1} w^{-1} x^{-1}) \\
      & = \sqrt{a_s}^{-1} (xw)(b_s) \widetilde{\theta}_\mathfrak{o}(xw n_s^{-1} w^{-1} x^{-1}) \\
      & = \Xi_\mathfrak{o}(\pi(x)H\pi(x)^{-1})
   \end{align*}
   where we have used the fact that $x$ acts trivially on $T$ on the second line, and the definition of $\Xi$ on the third line.
\end{proof}


\section{Affine pro-\texorpdfstring{$p$}{p} Hecke algebras} 
\label{sec:Affine pro-$p$ Hecke algebras}

In this final section we want to apply the general theory developed in the preceding sections to the study of a special class of generic pro-$p$ Hecke algebras, the `\textit{affine} pro-$p$ Hecke algebras'. We will give a description of the center of these algebras and prove that there they are module-finite over their center in \cref{sub:The structure of affine pro-$p$ Hecke algebras}, recovering classical results of Bernstein-Zelevinsky in the case of $W = W^{(1)}$. 

In order to obtain these results, we need to assume that the group $W$ is of a special form. Basically we need $W$ to be a semi-direct product $W = X\rtimes W_0$ of a finitely generated commutative group $X$ and a finite reflection group $W_0$. Moreover, we need to assume that there exists a representation of $W$ as a group of isometries preserving a locally finite affine hyperplane arrangement which is compatible with the abstract decompositions $W = W_{\op{aff}} \rtimes \Omega$ and $W = X\rtimes W_0$. Finally we need to assume that $X$ is `large enough' with respect to this representation. This will be made precise in the next section.

\subsection{Affine extended Coxeter groups and affine pro-\texorpdfstring{$p$}{p} Hecke algebras} 
\label{sub:Affine extended Coxeter groups and affine pro-$p$ Hecke algebras}

Before we give the definition of an \textit{affine} extended Coxeter group, let us introduce some notations and recall some basic facts from the theory of affine reflection groups (see for instance \cite[Ch. V, \textsection 1]{Bourbaki}).

Given a finite dimensional euclidean vector space $V$ and a hyperplane\footnote{Unless specified otherwise, hyperplane means affine hyperplane.} $H \leq V$, there exists a unique element $s_H \in \op{Aut}_{\op{Euclid}}(V)$ of the group of euclidean motions such that $s_H \neq \op{id}$ and $s_H$ operates on $H$ as the identity. This element $s_H$ is called the \textit{reflection} with respect to $H$. Given a set $\mathfrak{H}$ of hyperplanes in $V$, we let
\[ W(\mathfrak{H}) = \left< s_H : H \in \mathfrak{H}\right> \leq \op{Aut}_{\op{Euclid}}(V) \]
denote the group generated by the reflections with respect to the hyperplanes in $\mathfrak{H}$. If $\alpha \in V^\vee$ is a non-zero functional and $k \in \R$, we write
\[ H_{\alpha,k} := \{ x \in V : \alpha(x) + k = 0 \} \]
and $s_{\alpha,k} := s_{H_{\alpha,k}}$, $s_{\alpha} := s_{\alpha,0}$.

A point $x \in V$ is called \textit{special} with respect to $\mathfrak{H}$ if for every $H \in \mathfrak{H}$ there exists a hyperplane $H' \in \mathfrak{H}$ parallel to $H$ with $x \in H'$. A set $\mathfrak{H}$ of hyperplanes in $V$ is called \textit{locally finite} if for every $x \in X$ there exists a neighbourhood $U$ of $x$ such that $\{ H \in \mathfrak{H} : H\cap U \neq \emptyset \}$ is finite.

Assume that a locally finite set $\mathfrak{H}$ of hyperplanes on $V$ is given. The elements of the set
\[ \mathfrak{C} := \pi_0(V - \bigcup_{H \in \mathfrak{H}} H) \]
of connected components of the complement of all hyperplanes are called \textit{chambers}\footnote{It is common to use the term \textit{alcove} instead of chamber if the hyperplanes $H \in \mathfrak{H}$ aren't all linear, but we will not make this distinction.}. A hyperplane $H \in \mathfrak{H}$ is called a \textit{wall} of a chamber $C$ if $H \cap \overline{C}$ has non-empty interior as a subset of $H$, or equivalently, if the affine span of $H\cap \overline{C}$ equals $H$. We let
\[ S(C) := \{ H \in \mathfrak{H} : \text{$H$ wall of $C$} \} \]
denote the set of all walls of $C$. If the group $W(\mathfrak{H})$ leaves the set $\mathfrak{H}$ invariant, then it follows\footnote{In \cite{Bourbaki} is assumed that the group $W(\mathfrak{H})$ acts properly discontinuously, and the local finiteness is deduced as a consequence. However, it is enough to only assume that $\mathfrak{H}$ is locally finite and $W(\mathfrak{H})$ preserves $\mathfrak{H}$, as these assumptions already imply that $W(\mathfrak{H})$ acts properly discontinuously.} that for every chamber $C$ the pair $(W(\mathfrak{H}),\{ s_H : H \in S(C)\})$ is a Coxeter group (cf. \cite[Ch. V, \textsection 3.2, Théorème 1]{Bourbaki}) and that $S(C)$ is finite (cf. \cite[Ch. V, \textsection 3.6, Théorème 3]{Bourbaki}).

We will now give the definition of `affine' extended Coxeter groups.

\begin{definition}\label{def:affine_coxeter_group}
   An \textbf{affine extended Coxeter group} $W$ consists of a group $W$ together with a homomorphism
   \[ \rho: W \longrightarrow \op{Aut}_{\op{aff}}(V) \]
   of $W$ into the group of affine automorphisms of a finite-dimensional real vector space $V$, a locally finite set $\mathfrak{H}$ of (affine) hyperplanes in $V$, a chamber $C_0 \in \pi_0(V-\bigcup_{H \in \mathfrak{H}} H)$ and for every $H \in \mathfrak{H}$ an element $\widetilde{s}_H \in W$ such that the following hold.

   \begingroup 
   \renewcommand\arraystretch{2}
   \renewcommand{\tabcolsep}{0pt}
   \begin{longtable}[l]{l @{\extracolsep{1cm}} p{13cm}}
      (\textbf{ACI}) \label{axiom:ACI} & $W$ leaves $\mathfrak{H}$ invariant, i.e. $\rho(w)(H) \in \mathfrak{H}$ for all $w \in W$ and $H \in \mathfrak{H}$. \\
      (\textbf{ACII}) \label{axiom:ACII} & For every $H \in \mathfrak{H}$, $\rho(\widetilde{s}_H)$ is a \textit{reflection} fixing $H$. \\
      (\textbf{ACIII}) \label{axiom:ACIII} & Letting $\rho_0$ denote the composition of $\rho$ with the projection
         \[ \op{Aut}_{\op{aff}}(V) = V \rtimes \op{GL}(V) \longrightarrow \op{GL}(V) \]
         onto the linear part, the group
         \[ W_0 := \rho_0(W) \]
         is \textit{finite}. \\
      (\textbf{ACIV}) \label{axiom:ACIV} & $0 \in V$ is a special point of $\mathfrak{H}$. \\
      (\textbf{ACV}) \label{axiom:ACV} & The subgroup $\rho(W) \cap V$ of translations in $\rho(W)$ generates the quotient $V/L$ as an $\R$-vector space, where
      \[ L = \bigcap_{H \in \mathfrak{H},\ 0 \in H} H \] \\
      (\textbf{ACVI}) \label{axiom:ACVI} & For every $H \in \mathfrak{H}$ and $w \in W$ we have
         \[ w\widetilde{s}_H w^{-1} = \widetilde{s}_{w(H)} \]
         where we abbreviate $w(H) = \rho(w)(H)$. \\
      (\textbf{ACVII}) \label{axiom:ACVII} & For every pair $H_1, H_2 \in S(C_0)$ of walls of $C_0$ such that $\rho(\widetilde{s}_{H_1}\widetilde{s}_{H_2})$ is of finite order $m_{1,2}$, we have the relation
         \[ (\widetilde{s}_{H_1} \widetilde{s}_{H_2})^{m_{1,2}} = 1 \]
         in $W$. \\
      (\textbf{ACVIII}) \label{axiom:ACVIII} & The group $W_0$ is generated by the images of the $\widetilde{s}_H$, $H \in \mathfrak{H}$ under the natural map $W \rightarrow W_0$. \\
      (\textbf{ACIX}) \label{axiom:ACIX} & $0 \in \overline{C_0}$ \\
      (\textbf{ACX}) \label{axiom:ACX} & Let
         \[ X = \rho^{-1}(V) \leq W \]
         denote the subgroup of all elements of $W$ which are mapped to a translation under $\rho$. Then $X$ is finitely generated and commutative.
   \end{longtable}
   \endgroup
\end{definition}

Note that given the remaining axioms, \axiom{ACIV} and \axiom{ACIX} are always satisfied up to a translation and only serve to fix notation. The rationale behind the above definition of an \textit{affine extended Coxeter group} is to have a set of axioms which are easy to verify in examples. However, as it stands the definition does not even mention extended Coxeter groups. Our first task will therefore be to `unpack' this definition.

\begin{lemma}\label{lem:unpacking_affine_extended_Coxeter_groups}
   Let $W = (W,V,\rho,\mathfrak{H},C_0,(\widetilde{s}_H)_H)$ be an affine extended Coxeter group. Let
   \[ W_{\op{aff}} := \left<\widetilde{s}_H : H \in \mathfrak{H}\right>,\quad S := \{ \widetilde{s}_H : H \in S(C_0) \} \]
   and
   \[ \Omega := \op{Stab}_W(C_0) \]
   Then the following holds.
   \begin{enumerate}
      \item[(i)] There exists a positive definite scalar product on $V$ invariant with respect to $W_0$, i.e. such that $W$ acts by euclidean motions.
      \item[(ii)] $(W_{\op{aff}},S)$ is a Coxeter group and for any choice of an invariant scalar product, $\rho$ induces an isomorphism
         \[ (W_{\op{aff}},S) \stackrel{\sim}{\longrightarrow} (W(\mathfrak{H}),\{ s_H : H \in S(C_0) \}) \]
         of Coxeter groups, where $s_H$ denotes the orthogonal reflection with respect to $H$ and $W(\mathfrak{H})$ denotes the group generated by $s_H$ for $H \in \mathfrak{H}$. In particular, $W(\mathfrak{H})$ and the $s_H$ do not depend on the choice of the scalar product.
      \item[(iii)] $(W,W_{\op{aff}},S,\Omega)$ is an extended Coxeter group.
      \item[(iv)] The group $W_0$ is equal to the special subgroup of $(W(\mathfrak{H}), \{ s_H : H \in S(C_0)\})$ generated by the $s_H$ with $0 \in H$. In particular, $(W_0, \{ s_H : H \in S(C_0),\ 0 \in H \})$ is a Coxeter group. Moreover, the subspace $L \leq V$ of \axiom{ACV} is given by
         \[ L = \bigcap_{H \in \mathfrak{H},\ 0 \in H} H = V^{W_0} \]
      \item[(v)] Let
         \[ \Phi := \{ \alpha \in V^\vee : \forall k \in \R\ \ H_{\alpha,k} \in \mathfrak{H} \Leftrightarrow k \in \Z \} \]
         Then $(\R \Phi, \Phi)$ is a reduced root system and
         \[ \mathfrak{H} = \{ H_{\alpha,k} : \alpha \in \Phi,\ k \in \Z \} \]
         Moreover, $V \stackrel{\sim}{\longrightarrow} V^{\vee \vee}$ induces an isomorphism
         \[ V/L \stackrel{\sim}{\longrightarrow} (\R \Phi)^\vee \]
      \item[(vi)]
      The map
      \[ W_0 \longrightarrow \op{GL}(V/L)\simeq \op{GL}((\R\Phi)^\vee) \]
      induced by $\rho_0: W_0 \rightarrow \op{GL}(V)$ is injective and identifies $W_0$ with the Weyl group $W(\Phi^\vee)$ of the dual root system $((\R \Phi)^\vee, \Phi^\vee)$. Moreover, this is an identification of Coxeter groups if we endow $W(\Phi^\vee)$ with the generating set $\{ s^\vee_\alpha : \alpha \in \Delta \}$ corresponding to the basis
      \[ \Delta = \{ \alpha \in \Phi : H_\alpha \in S(C_0),\ \left.\alpha\right|_{C_0} > 0 \} \]
      The basis $\Delta$ corresponds to the positive root system $\Phi^+ \subseteq \Phi$ given by
      \[ \Phi^+ = \{ \alpha \in \Phi: \alpha(x) > 0 \} \]
      where $x \in C_0$ is arbitrary.

   \item[(vii)] The exact sequence
      \[ \begin{xy} \xymatrix{ 0 \ar[r] & X \ar[r] & W \ar[r]^{\rho_0} & W_0 \ar[r] & 1 } \end{xy} \]
      splits via the map $W_0 \rightarrow W$ given by the composition
      \[ W_0 \subseteq W(\mathfrak{H}) \stackrel{\rho^{-1}}{\longrightarrow} W_{\op{aff}} \subseteq W \]
      Viewing $W_0$ as a subgroup of $W_{\op{aff}}$ via this splitting, $W_0$ equals the special subgroup of $(W_{\op{aff}},S)$ generated by
      \[ S_0 := \{ s_\alpha : \alpha \in \Delta \} = \{ \widetilde{s}_H : H \in S(C_0),\ 0 \in H \} \subseteq S \]
      where $s_\alpha := \widetilde{s}_{H_{\alpha,0}}$.
   \end{enumerate}
\end{lemma}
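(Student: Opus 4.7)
The strategy is to reduce each of the seven assertions to standard facts about affine reflection groups from \cite[Ch.~V, \textsection 3]{Bourbaki}, using the axioms as a bridge between the abstract group $W$ and the concrete geometry of the hyperplane arrangement $\mathfrak{H}$.

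First, for (i), I would average an arbitrary positive definite bilinear form on $V$ over the finite group $W_0$ (finite by \axiom{ACIII}) to obtain a $W_0$-invariant scalar product. Translations automatically preserve any scalar product, so $W$ then acts by euclidean motions via $\rho$. Since the only euclidean involution fixing a hyperplane $H$ pointwise is the orthogonal reflection $s_H$, \axiom{ACII} forces $\rho(\widetilde{s}_H) = s_H$; in particular $\rho(W_{\op{aff}}) \leq W(\mathfrak{H})$.

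For (ii), Bourbaki tells us that $(W(\mathfrak{H}), \{s_H : H \in S(C_0)\})$ is a Coxeter group with braid/dihedral relations coming from the orders $m_{1,2}$ of the products $s_{H_1} s_{H_2}$. The lifts $\widetilde{s}_H$ for $H \in S(C_0)$ satisfy precisely these involution and braid relations by \axiom{ACII} and \axiom{ACVII}, yielding an inverse surjection $W(\mathfrak{H}) \twoheadrightarrow \langle \widetilde{s}_H : H \in S(C_0)\rangle$ whose composition with $\rho$ is the identity on generators; this is the desired isomorphism. That every $\widetilde{s}_H$ with $H$ arbitrary already lies in this subgroup then follows from \axiom{ACVI} and the transitivity of $W(\mathfrak{H})$ on $\mathfrak{H}$. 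For (iii), $W_{\op{aff}}\cap \Omega$ maps under $\rho$ to the trivial stabilizer of $C_0$ in $W(\mathfrak{H})$, hence is trivial by (ii); and every $w \in W$ decomposes as $w_0 u$ with $w_0 \in W_{\op{aff}}$ and $u \in \Omega$ using transitivity of $W(\mathfrak{H})$ on chambers. The $\Omega$-invariance of $S$ is then immediate from \axiom{ACVI}.

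For (iv)--(vii) the arguments are essentially further unpackings. In (iv), \axiom{ACVIII} combined with \axiom{ACIV} identifies $W_0 \leq W(\mathfrak{H})$ as the special subgroup generated by orthogonal reflections at the linear hyperplanes in $\mathfrak{H}$ (since the linear part of the orthogonal reflection at $H_{\alpha,k}$ is the orthogonal reflection at $H_{\alpha,0}$), and $L = V^{W_0}$ follows because the fixed locus of such a reflection is exactly the hyperplane. In (v), the root-system structure on $\Phi$ is the standard output of Bourbaki's classification of locally finite invariant hyperplane arrangements admitting a special point at the origin, with \axiom{ACV} furnishing the surjectivity (and an easy dimension count the injectivity) of the map $V/L \to (\R\Phi)^\vee$. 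Faithfulness in (vi) follows by splitting off an orthogonal $W_0$-stable complement $L^\perp$ of $L$, on which $W_0$ acts faithfully; identification with $W(\Phi^\vee)$ is standard root-system duality under the chosen scalar product. Finally for (vii), the isomorphism of (ii) provides the section $W_0 \hookrightarrow W(\mathfrak{H}) \stackrel{\rho^{-1}}{\to} W_{\op{aff}} \hookrightarrow W$, and the description of this image in terms of $S_0$ follows from (iv) together with the standard fact that the positive Weyl chamber attached to $\Delta$ contains $C_0$, so that $S_0 \subseteq S$.

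The main obstacle will be the bookkeeping in (ii), namely checking that the Coxeter presentation of $W(\mathfrak{H})$ by walls of $C_0$ lifts verbatim to a presentation of $W_{\op{aff}}$ using only the relations furnished by \axiom{ACII}, \axiom{ACVI}, and \axiom{ACVII}, without any hidden relations intruding in $W$. Once this is secured, the remaining statements follow almost mechanically from standard affine root theory.
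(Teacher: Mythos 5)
Your proposal matches the paper's proof essentially step for step: averaging for (i), lifting the Coxeter presentation through \axiom{ACII}/\axiom{ACVI}/\axiom{ACVII} and checking surjectivity via transitivity of $W(\mathfrak{H})$ on walls for (ii), the simple-transitivity argument for (iii), reduction via \axiom{ACVIII}/\axiom{ACIV}/\axiom{ACIX} for (iv), and root-system duality for (vi)--(vii). One small miscalibration: you flag (ii) as the main bookkeeping obstacle, but in the paper's actual proof (v) is where the bulk of the work lies — one cannot simply cite Bourbaki for the root-system structure, because the arrangement here is only assumed to be a $W$-invariant locally finite arrangement with a special origin, not a priori an affine root arrangement. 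The paper has to \emph{prove} that the parallelism classes $\Lambda_\alpha$ of $\mathfrak{H}$ are of the form $\Z\delta$ (using local finiteness and the translation action coming from \axiom{ACV}), that the resulting set $\Phi$ satisfies axioms (RSI)--(RSIII), and that the chosen normalization $k\in\Z$ is consistent; your sketch treats all of this as ``standard output.'' These details would need to be filled in, but they do not change the route — you have the correct structure and the correct inputs.
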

\begin{proof}
   Point (i) follows immediately from the finiteness of $W_0$, since given any positive definite scalar product $B: V \times V \rightarrow \R$, the expression
   \[ (x,y) := \sum_{w \in W_0} B(w(x),w(y)),\quad x,y \in V \]
   defines a $W_0$-invariant positive definite scalar product. To prove (ii) we may assume a $W_0$-invariant scalar product has been fixed. In this case we may invoke \cite[Ch. V, \textsection 3.2, Théorème 1]{Bourbaki} to conclude that $(W(\mathfrak{H}),\{ s_H : H \in S(C_0)\})$ is a Coxeter group. Since $\rho(\widetilde{s}_H)$ is a reflection fixing $H$ by \axiom{ACII} and $W$ acts by euclidean motions with respect to the chosen scalar product, we must have $\rho(\widetilde{s}_H) = s_H$ for every $H \in \mathfrak{H}$. Since $W_{\op{aff}}$ is generated by the $\widetilde{s}_H$, this shows that we have a well-defined group homomorphism
   \[ \rho: W_{\op{aff}} \longrightarrow W(\mathfrak{H}) \]
   that moreover maps $S$ into $S(C_0)$. Since $W(\mathfrak{H},S(C_0))$ is a Coxeter group, by one of the various characterizations (\cite[Ch. IV, \textsection 1.3, Définition 3]{Bourbaki}) of Coxeter groups, $W(\mathfrak{H})$ has a presentation
   \[ W(\mathfrak{H}) = \left< s_H,\ H \in S(C_0)\ |\ (s_{H} s_{H'})^m = 1\text{ if $m = \op{ord}(s_{H}s_{H'}) < \infty$} \right> \]
   and hence by property \axiom{ACVII} there exists a unique homomorphism
   \[ \varphi: W(\mathfrak{H}) \longrightarrow W_{\op{aff}} \]
   of groups with $\varphi(s_H) = \widetilde{s}_H$ for every $H \in S(C_0)$. Since $\rho \circ{} \varphi = \op{id}$, it follows that $\varphi$ is injective. We claim that $\varphi$ is also surjective, or equivalently that $S$ generates $W_{\op{aff}}$. From the theory of affine reflection groups it follows (cf. \cite[Ch. V, \textsection 3.2, Corollaire]{Bourbaki}) that for every $H \in \mathfrak{H}$ there exists an element $w \in W(\mathfrak{H})$ and a wall $H' \in S(C_0)$ such that $w s_{H'} w^{-1} = s_H$ or equivalently $w(H') = H$. Writing
   \[ w = s_{H_1}\ldots s_{H_r},\quad H_i \in S(C_0) \]
   and putting
   \[ \widetilde{w} := \widetilde{s}_{H_1}\ldots \widetilde{s}_{H_r} \in \left<S\right> \subseteq W_{\op{aff}} \]
   we have $\rho(\widetilde{w}) = w$ and hence by \axiom{ACVI}
   \[ \widetilde{w}\widetilde{s}_{H'} \widetilde{w}^{-1} = \widetilde{s}_{\rho(\widetilde{w})(H')} = \widetilde{s}_H \]
   lies in the subgroup of $W_{\op{aff}}$ generated by $S$. Since $W_{\op{aff}}$ is generated by the $\widetilde{s}_H$, it follows that $\left<S\right> = W_{\op{aff}}$ and hence that $\varphi$ is an isomorphism of groups. Since $\rho \circ{} \varphi = \op{id}$, also $\rho$ must be an isomorphism of groups. Moreover, as $\rho$ preserves the distinguished sets of generators, it is also an isomorphism of Coxeter groups.

   Now to prove (iii) we need only need to verify that $\Omega$ preserves the subset $S \subseteq W_{\op{aff}}$ under conjugation and that every element $w \in W$ can be written as a product $w = w' u$ with $w' \in W_{\op{aff}}$ and $u \in \Omega$. But the invariance of $S$ follows immediately from \axiom{ACVI} and the fact $\Omega$ permutes the walls of $C_0$ (as it preserves $C_0$ and therefore also $\overline{C_0}$ setwise). Because $W(\mathfrak{H})$ acts transitive on the set $\pi_0(V-\bigcup_{H \in \mathfrak{H}} H)$ of chambers (see \cite[Ch. V, \textsection 3.2, Théorème 1]{Bourbaki}), we can find $w'' \in W(\mathfrak{H})$ with $\rho(w)(C_0) = w''(C_0)$. Since $\rho(W_{\op{aff}}) = W(\mathfrak{H})$, we can find $w' \in W_{\op{aff}}$ with $\rho(w') = w''$. It follows that $u := w'^{-1}w \in \Omega$.

   Next, we show that (iv) holds. Observe that by \axiom{ACVIII}, the group $W_0$ is generated by the set of linear parts of the $s_H$ with $H \in \mathfrak{H}$. By \axiom{ACIV}, the point $0 \in V$ is special and hence the aforementioned set coincides with $\{s_H : H \in \mathfrak{H},\ 0 \in H \}$. In particular, $W_0 \subseteq W(\mathfrak{H})$ and the formula $L = V^{W_0}$ hold. Let $F \subseteq V$ be the unique facet of $(V,\mathfrak{H})$ containing $0$. By \axiom{ACIX}, $F$ is a face of $C_0$. From \cite[Ch. V, \textsection 3.3, Proposition 1]{Bourbaki} it therefore follows that $W_0$ must be contained in the subgroup of $W(\mathfrak{H})$ generated by the $s_H$ with $H \in S(C_0)$ and $F \subseteq H$. So we have the inclusion
   \[ W_0 = \left< s_H : H \in \mathfrak{H},\ 0 \in H \right> \subseteq \left< s_H : H \in S(C_0),\ 0 \in H \right> \]
   and hence equality holds.

   Claim (v) follows from \axiom{ACIV}, \axiom{ACV} and a slight modification of the arguments in \cite[Ch. VI, \textsection 2.5, Proposition 8]{Bourbaki}. Fix an invariant positive definite scalar product $(\--,\--)$ on $V$. Given $H \in \mathfrak{H}$ with $0 \in H$, let $\alpha \in V^\vee$ be any element with $\op{ker}(\alpha) = H$. Consider
   \[ \Lambda_\alpha := \{ k \in \R : H_{\alpha,k} \in \mathfrak{H} \} \]
   Then $k \mapsto H_{\alpha,k}$ gives a bijection between $\Lambda_\alpha$ and the $H' \in \mathfrak{H}$ parallel to $H$. Then $\Lambda$ must contain a positive element, for we have $0 \in \Lambda_\alpha$ and by \axiom{ACV} there exists an element $w \in W$ such that $\rho(w)$ equals the translation by a vector $v \in V$ with $\alpha(v) \neq 0$. Replacing $w$ by $w^{-1}$ if necessary, we may assume that $\alpha(v) < 0$. Since $W$ preserves $\mathfrak{H}$, it follows that
   \[ \rho(w)(H_{\alpha,0}) = v+H_{\alpha,0} = H_{\alpha,-\alpha(v)} \in \mathfrak{H} \]
   and hence $-\alpha(v) \in \Lambda$. Let now $\delta > 0$ be the smallest positive element of $\Lambda_\alpha$. This element exists because $\mathfrak{H}$ is locally finite. We claim that $\Lambda_\alpha = \Z \delta$. To see this, first note that given any two parallel hyperplanes $H'$ and $H''$ the product $s_{H''}s_{H'}$ of the associated orthogonal reflections equals the translation by $2t$, where $t$ is the unique vector orthogonal to $H'$ with $H'' = t+H'$. Let now $H' = H_{\alpha,k}$, $H'' = H_{\alpha,\ell}$ with $k, \ell \in \Lambda_\alpha$ and let $n \in V$ be the unique vector orthogonal to $H$ satisfying $\alpha(n) = 1$. Then $H_{\alpha,k} = -k n + H_\alpha$ and $H_{\alpha,\ell} = -\ell n + H_\alpha$. Since $W(\mathfrak{H})$ leaves $\mathfrak{H}$ invariant, it follows that
   \[ (s_{H''}s_{H'})(H') = 2(k-\ell)n + H' = H_{\alpha,2\ell - k} \]
   must again be a member of $\mathfrak{H}$, i.e. $2\ell - k \in \Lambda_\alpha$. Taking $\ell = 0$ it follows that $\Lambda_\alpha$ is stable under inversion. Taking $\ell = \delta$ it follows that $\Lambda_\alpha$ is stable under translation by $\pm 2 \delta$. Every element $k \in \Lambda_\alpha$ can therefore be written in the form $k = x+n\delta$ with $n \in \Z$ and $0 \leq x < 2\delta$. If $x \leq \delta$, it follows that $x = \delta$ by minimality of $\delta$. It $\delta < x \leq 2\delta$, it follows by the above that $0 \leq 2\delta - x < \delta$ lies in $\Delta_\alpha$ and hence $2\delta - x = \delta$ by minimality. In both cases it follows that $k \in \Z \delta$.

   From the above discussion it is now clear that given $H \in \mathfrak{H}$ with $0 \in H$ there exists $\alpha \in V^\vee$ uniquely determined up to $\pm$ such that
   \[ \{ H' \in \mathfrak{H} : \text{$H'$ parallel to $H$}\} = \{ H_{\alpha,k} : k \in \Z \} \]
   Then 
   \[ \Phi = \{ \alpha \in V^\vee : \forall k \in \R\ \ H_{\alpha,k} \in \mathfrak{H} \Leftrightarrow k \in \Z \} \]
   is just the set of these $\alpha$. Obviously $(\R \Phi, \Phi)$ is reduced if it is a root system, so it suffices to verify the root system axioms $\op{(RSI)}$-$\op{(RSIII)}$ (see \cite[Ch. VI, \textsection 1.1]{Bourbaki}). There are only a finite number of $H \in \mathfrak{H}$ with $0 \in H$ by the local finitness of $\mathfrak{H}$ and hence it follows readily that $\Phi$ is finite. Moreover, $0 \not\in \Phi$ by construction, and hence (RSI) is verified.
   
   Now we prove (RSII). First, we remark that $\R \Phi$ equals the image of the dual of the projection $V \twoheadrightarrow V/L$. This is equivalent to the claim that $V \stackrel{\sim}{\longrightarrow} V^{\vee \vee}$ induces an isomorphim $(V/L) \stackrel{\sim}{\rightarrow} (\R \Phi)^\vee$ and follows from
   \[ L = \bigcap_{H \in \mathfrak{H},\ 0 \in H} H = \bigcap_{\alpha \in \Phi} \op{ker}(\alpha) \]
   Given $\alpha \in \Phi$, the associated reflection $s_\alpha \in O(\R \Phi)$ is given by the restriction
   \[ s_\alpha = \left. s_H^\vee\right|_{\R \Phi} \]
   of the transpose of the orthogonal reflection $s_H \in O(V)$ with respect to $H = \op{ker}(\alpha)$. This holds since both are elements of $O(\R \Phi)$ having as fix-point set the hyperplane
   \[ \alpha^{\perp} = \R \Phi \cap \{ \omega \in V^\vee : \omega(v) = 0 \} \]
   where $v \in V$ is any vector $\neq 0$ orthogonal to $H$. Since $s_H$ leaves $\mathfrak{H}$ invariant, it follows that $s_\alpha$ leaves $\Phi$ invariant; thus $s_{\alpha,\alpha^\vee} = s_\alpha$ for $\alpha^\vee := 2\frac{(\alpha,\cdot{})}{(\alpha,\alpha)} \in (\R \Phi)^{\vee\vee}$ leaves $\Phi$ invariant, and (RSII) is verified.
   Lastly to prove (RSIII), let $\alpha,\beta \in \Phi$ be given. Identifying $(\R \Phi)^\vee$ with the subspace $L^\perp \leq V$, the dual root $\alpha^\vee$ is the unique element of $V$ orthogonal to $H_\alpha$ satisfying $\alpha(\alpha^\vee) = 2$. In particular letting $H' = H_{\alpha,0}$ and $H'' = H_{\alpha,1} = -\frac{1}{2}\alpha^\vee + H'$ we have that
   \[ (s_{H''}s_{H'})(H_{\beta,0}) = -\alpha^\vee + H_{\beta,0} = H_{\beta,\beta(\alpha^\vee)} \in \mathfrak{H} \]
   and hence $\beta(\alpha^\vee) \in \Z$ since $\beta \in \Phi$.

   Next, we prove (vi) keeping the choice of an invariant scalar product on $V$. The injectivity of the map $W_0 \rightarrow \op{GL}(V/L)$ follows from the fact $W_0$ is finite and hence acts by semi-simple transformations on $V$. Indeed since $L = V^{W_0}$, any $w \in W_0$ lying in the kernel of $W_0 \rightarrow \op{GL}(V/L)$ acts trivially on $L$ and $V/L$ and hence must act trivially on $V$ by semi-simplicity. In the proof of (iv) we have already seen that $W_0$ is generated by the $s_H$ with $H \in \mathfrak{H}$ and $0 \in H$. By (v) we know that $H$ is of the form $H = \op{ker}(\alpha)$ with $\alpha$. Moreover, we have already seen that the image of $s_H$ under $W_0 \rightarrow \op{GL}(V/L)\simeq \op{GL}((\R \Phi)^\vee)$ equals the transpose $s_\alpha^\vee$ of the reflection associated to $\alpha$. This shows that the image of $W_0 \hookrightarrow \op{GL}((\R \Phi)^\vee)$ is given by $W(\Phi)^\vee = W(\Phi^\vee)$. Moreover it's clear by the previous remarks that under $W_0 \stackrel{\sim}{\longrightarrow} W(\Phi^\vee)$ the generating set $\{s_H : H \in S(C_0) \}$ corresponds to $\{ s_\alpha^\vee : \alpha \in \Delta \}$.
   Now to see that $\Delta$ is a basis of the root system $\Phi$, let $D_0 \in \pi_0(V-\bigcup_{\alpha \in \Phi} \op{ker}(\alpha))$ be the unique chamber of the spherical arrangement containing $C_0$. The image $\pi(D_0)$ of $D_0$ under $\pi: V \twoheadrightarrow V/L \simeq (\R \Phi)^\vee$ then is a chamber of the linear arrangement on $(\R \Phi)^\vee$ induced by $\Phi$. Moreover, for $\alpha \in \Phi$ the hyperplane $\op{ker}(\alpha)$ is a wall of $C_0$ if and only if the hyperplane in $(\R \Phi)^\vee$ associated to $\alpha$ is a wall of $\pi(D_0)$, and $\alpha$ is positive on $C_0$ if and only if $\alpha$ is positive on $\pi(D_0)$. By the theory of root systems, it then follows that $\Delta$ is a basis of $\Phi$, in fact $\Delta$ is the basis of $\Phi$ associated to the dual chamber $\pi(D_0)^\vee \subseteq \R \Phi$ (see \cite[Ch. VI, \textsection 1.5, Rémarque 5]{Bourbaki}). Moreover, it is obvious that $\Phi^+$ consists of the roots which take positive values on $\pi(D_0)$. It hence follows (see \cite[Ch. VI, \textsection 1.6]{Bourbaki}) that $\Phi$ coincides with the set of positive roots associated to $\pi(D_0)^\vee$. Since we have $\Delta \subseteq \Phi^+$, it follows that $\Delta$ is the root basis associated to $\Phi^+$.
   
   Finally (vii) follows immediately from (iv)-(vi) and the fact that for $\alpha \in \Phi$ we have $\rho^{-1}(s_\alpha) = \widetilde{s}_{H_{\alpha,0}}$.
\end{proof}

\begin{example}\label{ex:affine_coxeter_group}
   \begin{enumerate}
      \item Let $(X,\Phi,X^\vee,\Phi^\vee)$ be a root datum (in the sense of \cite[Exposé XXI]{SGAIII}) and $\Delta \subseteq \Phi^\vee$ a root basis. In particular, $\Phi \subseteq X$ and $\Phi^\vee \subseteq X^\vee$ are finite subsets that are in bijection via a \textit{given} pair of inverse bijections
         \[ \Phi \stackrel{\sim}{\leftrightarrow} \Phi^\vee \]
         \textit{both} denoted by $\alpha \mapsto \alpha^\vee$, and $X,X^\vee$ are free abelian groups of finite rank in duality via a \textit{given} pairing
      \[ \left<\cdot{},\cdot{}\right>: X^\vee \times X \longrightarrow \Z \]
   Let $W_0 := W(\Phi)$ be the finite Weyl group, i.e. the subgroup of $\op{GL}_\Z(X)$ generated by the reflections $s_\alpha$, $\alpha \in \Phi$ given by
   \[ s_\alpha(x) = x - \left<\alpha^\vee,x\right>\alpha \]
   Let $W := X \rtimes W_0$ be the \textit{extended affine Weyl group}. Let us now see that $W$ carries a canonical structure of an affine extended Coxeter group in the sense of \cref{def:affine_coxeter_group}, and therefore also a canonical structure of an extended Coxeter group via \cref{lem:unpacking_affine_extended_Coxeter_groups}.

   We let $V := X\otimes_\Z \R$ and let $\rho: W \longrightarrow \op{GL}_{\op{aff}}(V)$ be the inclusion
   \[ W = X\rtimes W_0 \subseteq V \rtimes \op{GL}(V) \simeq \op{GL}_{\op{aff}}(V) \]
   This action leaves invariant the collection $\mathfrak{H}$ of hyperplanes given by $H_{\alpha,k}$, $\alpha \in \Phi^\vee$, $k \in \Z$ where
   \[ H_{\alpha,k} = \{ x \in V : \left<\alpha,x\right> + k = 0 \} \]
   Since $\rho$ is injective, the choice of the $\widetilde{s}_H$ is unique in this case. Moreover, it is clear that for any choice of a chamber $C_0$ with $0 \in \overline{C_0}$, the axioms \axiom{ACI}-\axiom{ACX} are satisfied, in particular if we let $C_0$ be chamber corresponding to $\Delta$ determined by the conditions
   \[ 0 \in \overline{C_0}\quad \text{and}\quad C_0 \subseteq \{ x \in V : \left<\alpha,x\right> > 0\quad \forall \alpha \in \Delta \} \]
   Moreover, the groups $W_0$ and $X$ of \cref{def:affine_coxeter_group} coincide with the groups denoted by the same letters here. The root system $\Phi$ and the basis $\Delta$ constructed in the lemma above coincide with $\Phi^\vee$ and $\Delta$ respectively. The structure $W = (W,W_{\op{aff}},S,\Omega)$ of an extended Coxeter group induced on $W$ by the above lemma can be made more explicit as follows. Let $Q := \Z \Phi \leq X$ be the root lattice. Then elementary arguments (see \cite[Ch. VI, \textsection 1.2, Proposition 1]{Bourbaki}) show that the affine Weyl group $W_{\op{aff}} \leq W$ is the semi-direct product $W_{\op{aff}} = Q \rtimes W_0$. Hence, there is an isomorphism
   \[ \Omega \simeq X/Q \]
   By definition, the generating set $S$ of $W_{\op{aff}}$ consists of the reflections $s_H$ for all walls $H$ of $C_0$. Using the theory of root systems it can be seen that the walls of $C_0$ either of the form $H = H_{\alpha,0}$ with $\alpha \in \Delta$ or $H = H_{-\alpha,1}$ with $\alpha$ a \textit{highest coroot}, i.e. a maximal element of $\Phi^\vee$ with respect to the partial order
   \[ \alpha \leq \beta \quad \Leftrightarrow\quad \left<\alpha,x\right> \leq \left<\beta,x\right> \ \ \forall x \in C_0 \]
   Hence
   \[ S = \{ s_\alpha : \alpha \in \Delta \} \cup \{ s_{-\alpha,1} : \text{$\alpha \in \Phi^\vee$ maximal} \} \]
   where (by slight abuse of notation) $s_\alpha$ and $s_{\alpha,k}$ for $\alpha \in \Phi^\vee$, $k \in \Z$ denote the elements of $W_{\op{aff}}$ given by
   \[ s_\alpha(x) = x - \left<\alpha,x\right> \alpha^\vee\quad \text{and} \quad s_{\alpha,k}(x) = x - (\left<\alpha,x\right> + k)\alpha^\vee \]
\item We specialize the above example now to the root datum of the group $\op{GL}_n$. In this case we have
   \[ X = X^\vee = \Z^n \]
   with the pairing between $X$ and $X^\vee$ being the canonical one. Moreover
   \[ \Phi = \Phi^\vee = \{ e_i - e_j : 1 \leq i,j \leq n,\ i \neq j\} \]
   and the correspondence $\alpha \leftrightarrow \alpha^\vee$ between roots and coroots is the identity. The finite Weyl group $W_0$ identifies with the symmetric group $S_n$ on $n$ letters. The choice of the (co-)root basis
   \[ \Delta = \{ e_2 - e_1, \ldots{}, e_n - e_{n-1} \} \]
   makes $W_0 = S_n$ into a Coxeter group with generators $s_1,\ldots{},s_{n-1}$, where
   \[ s_i = s_{e_{i+1}-e_i} = (i\ i+1) \]
   is the transposition permuting the $i$-th and $i+1$-th coordinate. The chamber determined by $\Delta$ is given by
   \[ C_0 = \{ x \in \R^n : x_1 < \ldots < x_n < x_1 + 1 \} \]
   The root sublattice $Q = \Z \Phi \leq \Z^n$ is the kernel of the `augmentation map'
   \[ \Z^n \longrightarrow \Z,\quad e_i \mapsto 1 \]
   hence the group $\Omega \simeq X/Q$ (which as a subgroup of $W$ depends on the choice of $C_0$!) is canonically isomorphic to $\Z$, with canonical generator $u$ given by
   \[ u = \tau^{e_n} (n\ n-1\ \ldots\ 1) \]
   The highest (co-)root is unique and given by $\alpha = e_n - e_1$. Hence, the generating set $S$ of $W_{\op{aff}}$ is given by
   \[ S = \{ s_1,\ldots{},s_{n-1}, s_{-\alpha,1} \} \]
   with
   \[ s_{-\alpha,1} = \tau^{e_n - e_1} (1\ n) \]
   Writing $s_0 = s_{-\alpha,1}$ and viewing $\{0,1,\ldots{},n-1\}$ as the group $\Z/n\Z$, the action of $\Omega$ on $S$ is determined by
   \begin{equation}\label{eq:Omega_acting_on_S} u s_i u^{-1} = s_{i-1} \end{equation}
   \end{enumerate}
\end{example}

We are now in the position to define the principal object of study of this article, the class of \textit{affine} generic pro-$p$ Hecke algebras (or simply \textit{affine pro-$p$ Hecke algebras}) as those algebras whose underlying extended Coxeter group $W$ arises as in the above lemma from an affine extended Coxeter group. Since the description of the structure of these algebras will depend on the decomposition $W = X\rtimes W_0$, it makes sense to make the affine extended Coxeter group part of the datum.

\begin{definition}\label{def:affine_pro-$p$_Hecke_algebra}
   An \textbf{affine pro-$p$ Hecke algebra} $\mathcal{H}^{(1)}$ over a ring $R$ consists of a generic pro-$p$ Hecke algebra $\mathcal{H}^{(1)}$ over $R$ and an affine extended Coxeter group $W$ such that the extended Coxeter group underlying the pro-$p$ Coxeter group $W^{(1)}$ associated with $\mathcal{H}^{(1)}$ coincides with the extended Coxeter group associated to $W$ by \cref{lem:unpacking_affine_extended_Coxeter_groups}.
  \end{definition}

  \begin{terminology}
     Following tradition and to prevent confusion with the chambers $C \in \pi_0(V-\bigcup_{H \in \mathfrak{H}} H)$, the connected components of the complement of the finite linear hyperplane arrangement $\{ H \in \mathfrak{H} : 0 \in H \}$ will be called \textit{Weyl chambers}. They will usually denoted by the letter 'D', while 'C' will be used to denote the chambers of the affine hyperplane arrangement $\mathfrak{H}$.
   \end{terminology}

   The main goal of this article will be to describe the center of affine pro-$p$ Hecke algebras using the Bernstein maps introduced in the previous section. As in the classical work of Benstein and Lusztig, this involves constructing big (almost) commutative subalgebras of $\mathcal{H}^{(1)}$. In view of \cref{rmk:comm subalgebras}, this amounts to constructing orientations with big stabilizers, which we will do later in \cref{sub:Spherical orientations}.

\subsection{Main examples of affine pro-\texorpdfstring{$p$}{p} Hecke algebras} 
\label{sub:Main examples of affine pro-$p$ Hecke algebras}
In this section we want to consider the main examples of affine pro-$p$ Hecke algebras, the classic \textit{affine Hecke algebras} and two `new' examples, the \textit{pro-$p$-Iwahori Hecke} algebras and the \textit{affine Yokonuma-Hecke} algebras. We point out that the last two examples slightly overlap.

\begin{example}[Affine Hecke algebras and Iwahori-Hecke algebras]\label{ex:affine Hecke algebras}
   The \textit{affine Hecke algebras} are (cf. \cite[4.1]{Macdonald}) the generic pro-$p$ Hecke algebras for the \textit{extended affine Weyl groups}, i.e. for pro-$p$ Coxeter groups $W^{(1)}$ of the form $W^{(1)} = W$, $T = 1$, $W = X \rtimes W_0$ for a root datum $(X,\Phi,X^\vee,\Phi^\vee)$ with chosen basis $\Delta \subseteq \Phi^\vee$ as in \cref{ex:affine_coxeter_group}. As was explained there, the group $W$ carries a canonical structure of an affine extended Coxeter group in the sense of \cref{def:affine_coxeter_group}, hence these algebras are affine pro-$p$ Hecke algebras in the sense of \cref{def:affine_pro-$p$_Hecke_algebra}.

   Affine Hecke algebras play an important role in various different but related subjects, including the representation theory of reductive groups over local fields, the theory of orthogonal polynomials \cite{Macdonald}, the theory of knot invariants, and in physics in the study of certain exactly solvable systems (see \cite{Martin}). Historically, affine Hecke algebras made their debut in the first of the subjects mentioned, namely in the 1965 paper of Iwahori and Matsumoto \cite{IwahoriMatsumoto} that elucidated the structure of double coset algebras $H_R(G,I)$ (cf. \cref{ex:pro-$p$-Iwahori Hecke algebras}) attached to pairs $(G,I)$, where $G = \mathbf{G}(F)$ is the group of rational points of a split, connected, semisimple reductive group (Chevalley group) $\mathbf{G}$ over a nonarchimedean local field $F$, and $I \leq G$ is a certain open compact subgroup nowadays referred to as `Iwahori subgroup'.
   
   One of the main results (Propositions 3.5, 3.7 and 3.8) of \cite{IwahoriMatsumoto} was the description of a presentation of $H_R(G,I)$ in terms of the extended affine Weyl group $W = X \rtimes W_0$ of the root datum corresponding to $\mathbf{G}$, i.e. an isomorphism of $H_R(G,I)$ with an affine Hecke algebra. More precisely, they showed that $H_R(G,I)$ is isomorphic to the $R$-algebra generated by symbols $T_w$, $w \in W$ subject to the relations
   \begin{align*}
      \tag{1} T_w T_{w'} & = T_{ww'} & & \text{if $\ell(w)+\ell(w') = \ell(ww')$} \\
      \tag{2} T_s^2 & = q + (q-1) T_s && s \in S
   \end{align*}
   where $q$ denotes the cardinality of the residue field of $F$. Hence, $H_R(G,I)$ identifies with the generic pro-$p$ Hecke algebra $\mathcal{H}^{(1)}(a_s,b_s)$ for $W^{(1)} = W$ and constant parameters $a_s = q$, $b_s = q-1$ by \cref{prop:first_presentation_of_hecke}, which is an affine Hecke algebra.
   
   The algebras of the form $H_R(G,I)$ are commonly referred to as \textit{Iwahori-Hecke algebras}. Sometimes the terms `affine Hecke algebra' and `Iwahori-Hecke algebra' are used synonymous, but here we will distinguish between the two. The notion of Iwahori subgroup is defined in great generality for any connected reductive group $\mathbf{G}$ over a local field \cite[3.7]{TitsCorvallis}, and one can consider the corresponding algebras $H_R(G,I)$. These more general Iwahori-Hecke algebras have a similar presentation in terms of a certain group $W = X \rtimes W_0$ which admits the structure of an affine extended Coxeter group but where the constant coefficients $q$ and $q-1$ are replaced by coefficients $q_s$ and $q_s - 1$ that can depend on $s$ (cf. \cref{lem:pro-$p$-Iwahori Hecke algebras are generic pro-$p$ Hecke algebras}).
   
   This was first proved by Vignéras \cite[Proposition 4.1, 4.4]{VigProI}, although it has long been a part of mathematical folkore that `Iwahori-Hecke algebras for non-split groups are affine Hecke algebras for unequal parameters'. The latter is in fact not true. The algebra $H_R(G,I)$ is isomorphic to the generic pro-$p$ Hecke algebra $\mathcal{H}^{(1)}(q_s,q_s-1)$ associated to the affine extended Coxeter group $W$ and hence is an affine pro-$p$ Hecke algebra, but it is not always an affine Hecke algebra (in our sense) as the group $W = X\rtimes W_0$ does not necessarily arise from a root datum. In fact, $X$ is a finitely generated abelian group with nontrivial torsion part in general. However, when the group $\mathbf{G}$ is split, this subtlety disappears and the corresponding Iwahori-Hecke algebras are affine Hecke algebras with constant coefficients $a_s = q$, $b_s = q-1$ for the extended affine Weyl group corresponding to the root datum of $\mathbf{G}$.

   The most important structural results concerning affine Hecke algebras in general are the `Bernstein relations', the `Bernstein presentation' and the computation of the center in terms of invariants of certain commutative subalgebras. These results were obtained by Bernstein and Zelevinsky in an unpublished work for the special case of constant parameters $a_s = q, b_s = q-1$. Lusztig later published a generalized version of these results in \cite{Lusztig}, where he took the parameters to be of the form $a_s = q_s$, $b_s = q_s - 1$ with $q_s = v^{2n_s}$ for some integers $n_s$ and an invertible formal variable $v \in R = \C[v,v^{-1}]$. Lusztig obtained these results using a group homomorphism $\theta$ from the group $X$ of translations into the group of units of the affine Hecke algebra. We will see below (in \eqref{eq:theta_coincides_with_T_on_dominant}) that this map coincides with the restriction of our map $\widetilde{\theta}_\mathfrak{o}$ (see \cref{def:normalized_bernstein_map}) to $X \leq W$, where $\mathfrak{o} = \mathfrak{o}_D$ denotes the spherical orientation (see \cref{def:spherical_orientation}) corresponding to the dominant Weyl chamber $D$. These results of Bernstein, Zelevinsky and Lusztig were further generalized by Vignéras in \cite{VigGen} to allow for parameters of the form $a_s = q_s$, $b_s = q_s-1$ with $q_s$ not necessarily invertible or admitting a square root.
   
   The Bernstein relations and the description of the center in all of the above cases are recovered here in \cref{thm:bernstein_relation,thm:structure of affine pro-$p$ Hecke algebras}. Note that the results of \cref{thm:structure of affine pro-$p$ Hecke algebras} hold unconditionally in these cases since $T = 1$ (cf. \cref{rmk:finiteness_of_orbits}). For the readers convenience we will quote the construction of the Bernstein-Zelevinsky subalgebra and the description of the center from \cref{thm:structure of affine pro-$p$ Hecke algebras} for our special case. For every spherical orientation $\mathfrak{o} = \mathfrak{o}_D$ of $W$, associated to a Weyl chamber $D$ (see \cref{def:spherical_orientation}), the integral Bernstein map $\widehat{\theta}_\mathfrak{o}: W \rightarrow \mathcal{H}^{(1)}$ (see \cref{def:integral_bernstein_map}) gives rise to a commutative subalgebra
   \[ \mathcal{A}_\mathfrak{o} := \bigoplus_{x \in X} R \widehat{\theta}_\mathfrak{o}(x) \subseteq \mathcal{H}^{(1)} \]
   whose multiplicative structure is determined by the product rule (\cref{cor:hat mult rule})
   \[ \widehat{\theta}_\mathfrak{o}(x)\widehat{\theta}_\mathfrak{o}(y) = \overline{\bbf{X}}(x,y) \widehat{\theta}_\mathfrak{o}(x+y) \]
   If the parameters $a_s \in R$ are units and squares, then we can also consider the normalized Bernstein map $\widetilde{\theta}_\mathfrak{o}: W \rightarrow \mathcal{H}^{(1)}$ whose restriction to $X$ is determined by the fact that it is multiplicative and satisfies the following relation (see \cref{def:normalized_bernstein_map} for details)
   \begin{equation}\label{eq:theta_coincides_with_T_on_dominant} \widetilde{\theta}_\mathfrak{o}(x) = \overline{\sqrt{\bbf{L}}}(x)^{-1} T_x \quad \forall x \in X\cap \overline{D} \end{equation}
      These properties together imply that $\left.\widetilde{\theta}_\mathfrak{o}\right|_X$ coincides with the map denoted by $\theta$ by Lusztig \cite{Lusztig}, which appears in the classical Bernstein-Lusztig basis $\{\theta_x T_w\}_{x \in X, w \in W_0}$. Moreover, $\widetilde{\theta}_\mathfrak{o}$ is related to the integral Bernstein map via $\widetilde{\theta}_\mathfrak{o}(w) = \overline{\sqrt{\bbf{L}}}(L)^{-1}(w) \widehat{\theta}_\mathfrak{o}(w)$ (see \cref{thm:existence_widehat_theta}). It follows from $\mathcal{A}_\mathfrak{o}$ can also be expressed as
   \[ \mathcal{A}_\mathfrak{o} = \bigoplus_{x \in X} R \widetilde{\theta}_\mathfrak{o}(x) \]
   and that $\widetilde{\theta}_\mathfrak{o}$ induces an isomorphism of the group algebra $R[X]$ with $\mathcal{A}_\mathfrak{o}$. In any case, the group $W_0$ acts on $\mathcal{A}_\mathfrak{o}$ by permuting the basis elements $w(\widehat{\theta}_\mathfrak{o}(x)) = \widehat{\theta}_\mathfrak{o}(w(x))$ and the center of $\mathcal{H}^{(1)}$ is given by the $W_0$-invariants
   \[ Z(\mathcal{H}^{(1)}) = \mathcal{A}_\mathfrak{o}^{W_0} = \bigoplus_{\gamma \in W_0\backslash X} R z_\gamma \]
   with
   \[ z_\gamma = \sum_{x \in \gamma} \widehat{\theta}_\mathfrak{o}(x) \]
   independent of the orientation (Weyl chamber) chosen.
\end{example}

\begin{example}[The affine Hecke algebra of $\op{GL}_n$]\label{ex:affine_hecke_algebra_of_gl_n}
   We now specialize our discussion to the case where the root datum defining $W$ is the root datum of $\op{GL}_n$. The affine Hecke algebra of $W$ with parameters $a_s, b_s$ will be denoted by $H_n^{\op{aff}}(a_s,b_s)$ or simply by $H_n^{\op{aff}}$. The generalized braid groups $\mathcal{A}(W)$, $\mathcal{A}(W_0)$ (see \cref{def:generalized_braid_group}) associated to $W = X\rtimes W_0$ and $W_0$ will be denoted by $\mathfrak{A}_n^{\op{aff}}$ and $\mathfrak{A}_n$ respectively. From \cref{ex:affine_coxeter_group}(ii) we recall that 
   \[ W = \Z^n \rtimes S_n = W_{\op{aff}} \rtimes \Omega,\quad W_{\op{aff}} = \left<S\right>,\quad S = \{s_0, \ldots{}, s_n\},\quad \Omega = \left<u\right> \]
   where
   \[ s_0 = \tau^{e_n - e_1} (1\ n),\quad s_i = (i\ i+1) \text{ for } i > 0\quad \text{and}\quad u = \tau^{e_n} (n\ n-1\ \ldots 1) \]
   with $u$ acting on $S$ as
   \[ u s_i u^{-1} = s_{i-1} \]
   In particular, all the generators $s \in S$ are conjugate under $W$ and condition \eqref{eq:condstar} on the parameters $a_s$, $b_s$ is equivalent to
   \[ a_s = a_t,\ b_s = b_t \quad \forall s,t \in S \]
   Hence, we can write $H_n^{\op{aff}}(a,b) = H_n^{\op{aff}}(a_s,b_s)$ with parameters $a,b \in R$ subject to no further constraint.
   The `affine braid group' $\mathfrak{A}_n^{\op{aff}}$ can be interpreted (\cite{Nguyen},\cite{Lek}) topologically as a group of braids as follows. Consider the real affine hyperplane arrangement
   \[ \mathfrak{H} = \{ H_\alpha : \alpha \in \Phi_{\op{af}} \},\quad H_{\alpha} = \{ x \in A : \alpha(x) = 0 \},\quad \Phi_{\op{af}} = \{ \alpha + k : \alpha \in \Phi,\ k \in \Z \} \subset \op{Hom}(A,\R) \]
   in $A = \R^n$ induced by the root datum
   \[ (X,\Phi,X^\vee,\Phi^\vee) = (\Z^n, \{ e_i - e_j : i \neq j \}, \Z^n, \{e_i - e_j : i \neq j\}) \]
   of $\op{GL}_n$ (cf. \cref{ex:affine_coxeter_group}). The complement $X := A - \bigcup_{H \in \mathfrak{H}} H$ is disconnected, the connected components being in bijection with the infinite group $W_{\op{aff}}$, but the complement $Y := A_\C - \bigcup_{H \in \mathfrak{H}} H_\C$ of the complexified arrangement is connected. The fundamental groupoid $\pi_1(Y)$ of $Y$ can be described as follows (see \cite{Nguyen},\cite{Lek}). For any two points $x,y \in X \subseteq Y$ let $\mathcal{P}_{y,x}$ be the subspace of the space of all paths $\gamma:[0,1] \rightarrow Y$ consisting of those $\gamma$ which satisfy
   \begin{enumerate}
      \item $\gamma(0) = x, \gamma(1) = y$
      \item $\forall t \in [0,1],\ \alpha \in \Phi_{\op{af}}\quad \Re(\alpha_\C(\gamma(t))) = 0 \ \ \Rightarrow\ \ \alpha(x)\alpha(y) < 0$
      \item $\forall t \in [0,1],\ \alpha \in \Phi_{\op{af}}\quad \alpha(x)\alpha(y) < 0\ \ \Rightarrow\ \ \Im(\alpha_\C(\gamma(t)))\cdot{}(\alpha(x) - \alpha(y)) \geq 0$
   \end{enumerate}
   In words the second condition says that the real part of $\gamma$ should only cross these hyperplanes $H \in \mathfrak{H}$ which separate $x$ and $y$, while the third condition means that for every hyperplane $H_\alpha \in \mathfrak{H}$, $\alpha \in \Phi_{\op{af}}$ separating $x$ and $y$, the path $\alpha_\C \circ{} \gamma: [0,1] \rightarrow \C$ should wind around the origin counter-clockwise and should stay completely in either the upper or lower half-plane. It is easy to see that $\mathcal{P}_{y,x}$ is contractible, hence giving rise to a well-defined homotopy-class
   \[ \gamma_{y,x} \in \op{Hom}_{\pi_1(Y)}(x,y) \]
It is even easier to see that
\[ w(\mathcal{P}_{y,x}) = \mathcal{P}_{w(y),w(x)} \]
and hence
\[ w(\gamma_{y,x}) = \gamma_{w(y),w(x)} \]
for $w \in W$. Moreover, for any three points $x,y,z \in X$ it holds true that
\[ \mathcal{P}_{z,y} \circ{} \mathcal{P}_{y,x} \subseteq \mathcal{P}_{z,x} \]
and therefore that
\[ \gamma_{z,y} \circ{} \gamma_{y,x} = \gamma_{z,x} \]
if the set of hyperplanes separating $x$ and $y$ is disjoint from the set of hyperplanes separating $y$ and $z$, i.e. if
\[ d(C_x,C_y) + d(C_y, C_Z) = d(C_x,C_z) \]
where $C_p$ denotes the connected component of $X$ (chamber) containing $p$ and $d(C,C')$ denotes the distance between two chambers. One can now show (\cite{Nguyen},\cite{Lek}) that the full subgroupoid of $\pi_1(Y)$ corresponding to $X \subseteq Y$ is described algebraically as the free groupoid on symbols $\gamma_{y,x}$ subject to the relation
\[ d(C_x,C_y) + d(C_y,C_z) = d(C_x, C_z) \quad \Rightarrow \quad \gamma_{z,y} \circ{} \gamma_{y,x} = \gamma_{z,x} \]
From this one deduces a description of the fundamental group of the quotient space $W\backslash Y$, where the action of $W$ is naturally extended to $A_\C$. Indeed, $W$ acts properly discontinuously and without fix points on $Y$, therefore $Y \rightarrow W\backslash Y$ is a covering map with Galois group $W$. Fixing a base point $x_0 \in X$ and letting
\[ T_w := p_\ast(\gamma_{x_0,w(x_0)}^{-1}) \in \pi_1(W\backslash Y, p(x_0)),\quad w \in W \]
it follows easily from the above that
\[ T_wT_{w'} = T_{ww'} \quad\text{if}\quad \ell(w)+\ell(w') = \ell(ww') \]
and that the elements $T_w$ together with the above relation define a presentation of $\pi_1(W\backslash Y, p(x_0))$ and hence an isomorphism of this group with $\mathfrak{A}_n^{\op{aff}}$. The interpretation of $\pi_1(W\backslash Y,p(x_0))$ as a group of `affine braids' arises by viewing $W\backslash Y$ as the iterated quotient
\[ W\backslash Y \simeq S_n\backslash (\Z^n \backslash Y) \simeq S_n \backslash ((\C^\times)^n - \Delta) \]
where $\Delta = \bigcup_{i \neq j} \{ z_i = z_j \}$ is the diagonal and $\Z^n \backslash Y$ is identified with $(\C^\times)^n - \Delta$ via $z \mapsto \op{exp}(2\pi i z)$. A loop $\gamma$ in $S_n \backslash ((\C^\times)^n - \Delta)$ around $p(x_0)$ can be identified with the braid
   \[ \bigcup_{i = 1}^n \{ (t,\widehat{\gamma}(t)_i) : t \in [0,1] \} \subseteq [0,1] \times \C^\times \]
   where $\widehat{\gamma}$ denotes any lift of $\gamma$ to a path in $(\C^\times)^n - \Delta$. Under this bijection, composition of paths corresponds to `stacking' of braids (rescaling the $t$-coordinate by $\frac{1}{2}$), and the inverse of a braid is given by its reflection along the $t=\frac{1}{2}$-plane. This is illustrated in \cref{fig:braid-group-as-group-of-braids} (for the base point $x_0 = \left(0,\frac{1}{n},\dots,\frac{n-1}{n}\right)$ and $n = 3$), where the `missing' central line $[0,1] \times \{0\} \subseteq [0,1]\times \C$ has been enlarged to a flagpole for better visibility. \Cref{fig:affine-braid-group-generators} depicts the braids corresponding to some representatives of the generators $T_i = T_{s_i}$, $X_1 = T_{-e_1}^{-1}$ of the group $\mathfrak{A}_3^{\op{aff}}$ appearing in \cref{lem:presentation_of_affine_braid_group} below.

   \begin{figure}[t]
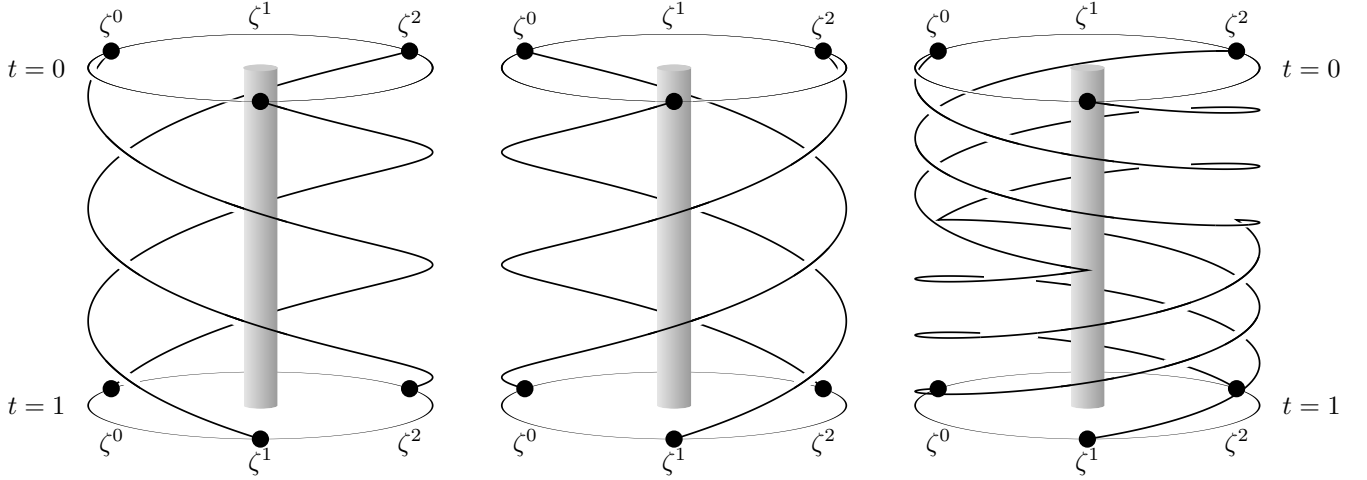

      \centering
      \begin{subfigure}[t]{0.333\textwidth}
         \centering
         \psfrag{zeta0}[cc][bl]{$\zeta^0$}
         \psfrag{zeta1}[cc][bl]{$\zeta^1$}
         \psfrag{zeta2}[cc][bl]{$\zeta^2$}
         \psfrag{t0}[cc][bl]{$t=0$}
         \psfrag{t1}[cc][bl]{$t=1$}
         \includegraphics[width=\textwidth]{affine-braid-group4.eps}
         \caption{The braid $T$ given by the path $\widehat{\gamma}(t) = (e^{2\pi i t}, \zeta e^{2\pi i t}, \zeta^2 e^{2\pi i t})$.}
      \end{subfigure}%
      \begin{subfigure}[t]{0.333\textwidth}
         \centering
         \psfrag{zeta0}[cc][bl]{$\zeta^0$}
         \psfrag{zeta1}[cc][bl]{$\zeta^1$}
         \psfrag{zeta2}[cc][bl]{$\zeta^2$}
         \includegraphics[width=\textwidth]{affine-braid-group5.eps}
         \caption{The inverse $T^{-1}$ of the braid $T$.}
      \end{subfigure}%
      \begin{subfigure}[t]{0.333\textwidth}
         \centering
         \psfrag{zeta0}[cc][bl]{$\zeta^0$}
         \psfrag{zeta1}[cc][bl]{$\zeta^1$}
         \psfrag{zeta2}[cc][bl]{$\zeta^2$}
         \psfrag{t0}[cc][bl]{$t=0$}
         \psfrag{t1}[cc][bl]{$t=1$}
         \includegraphics[width=\textwidth]{affine-braid-group6.eps}
         \caption{The `composite' $T T^{-1}$.}
      \end{subfigure}
      \caption{Loops in $S_n\backslash((\C^\times)^n - \Delta)$ based at $\left[(1,\zeta,\zeta^2,\dots,\zeta^{n-1})\right]$ ($\zeta := \exp{\frac{2\pi i}{n}}$) can be identified with braids in $\left[0,1\right] \times \C^\times$ with endpoints $\{(0,\zeta^i) : i = 0,\dots,n-1\}$ and $\{(1,\zeta^i) : i = 0,\dots,n-1\}$, by lifting a loop $\gamma$ to a path $\widehat{\gamma}$ in $(\C^\times)^n - \Delta$ and associating to it the braid $\bigcup_{i = 1}^n \{ (t, \widehat{\gamma}(t)_i) : t \in \left[0,1\right]\}$.}
      \label{fig:braid-group-as-group-of-braids}
   \end{figure}

   \begin{figure}[t]
      \centering

      \begin{subfigure}[t]{0.3\textwidth}
         \centering
         \psfrag{zeta0}[cc][bl]{$\zeta^0$}
         \psfrag{zeta1}[cc][bl]{$\zeta^1$}
         \psfrag{zeta2}[cc][bl]{$\zeta^2$}
         \includegraphics[width=\textwidth]{affine-braid-group1.eps}
         \caption{$T_1 = T_{s_1}$}
      \end{subfigure}%
      \begin{subfigure}[t]{0.3\textwidth}
         \centering
         \psfrag{zeta0}[cc][bl]{$\zeta^0$}
         \psfrag{zeta1}[cc][bl]{$\zeta^1$}
         \psfrag{zeta2}[cc][bl]{$\zeta^2$}
         \includegraphics[width=\textwidth]{affine-braid-group2.eps}
         \caption{$T_2 = T_{s_2}$}
      \end{subfigure}%
      \begin{subfigure}[t]{0.3\textwidth}
         \centering
         \psfrag{zeta0}[cc][bl]{$\zeta^0$}
         \psfrag{zeta1}[cc][bl]{$\zeta^1$}
         \psfrag{zeta2}[cc][bl]{$\zeta^2$}
         \includegraphics[width=\textwidth]{affine-braid-group3.eps}
         \caption{$X_1 = T_{-e_1}^{-1}$}
      \end{subfigure}
      \caption{The generators of $\widetilde{\mathfrak{A}}_3^{\op{aff}}$ viewed as braids in $\left[0,1\right]\times \C^\times$.}
      \label{fig:affine-braid-group-generators}
   \end{figure}

The classical Artin braid group $\mathfrak{A}_n$ can be interpreted similarly either as the fundamental group of $S_n\backslash (\C^n - \Delta)$ or as a group braids (without a flagpole). From the topological picture it is therefore clear that there should be a canonical map
\[ \mathfrak{A}_n^{\op{aff}} \longrightarrow \mathfrak{A}_n \]
induced by the inclusion $S_n\backslash ((\C^{\times})^n - \Delta) \subseteq S_n\backslash (\C^n - \Delta)$, corresponding to `removing the flagpole' on the level of braids. However, this map is \emph{not} simply given by $T_w \mapsto T_{p(w)}$, where $p$ denotes the canonical projection $W = X\rtimes W_0 \rightarrow W_0$. To describe it we need another presentation of the group $\mathfrak{A}(W)$.

\begin{lemma}\label{lem:presentation_of_affine_braid_group}
   Let $\widetilde{\mathfrak{A}}_n^{\op{aff}}$ be the group generated by elements
   \[ T_1,\ldots{}T_{n-1}, X_1 \]
   subject to the relations
   \begin{alignat}{2}
      \tag{1} T_i T_j & = T_j T_i & \quad & \text{for all $i,j = 1,\ldots{},n-1$ such that $|i-j| > 1$} \\
      \tag{2} T_i T_{i+1} T_i & = T_{i+1} T_i T_{i+1} & \quad & \text{for all $i = 1,\ldots{},n-2$} \\
      \tag{3} X_1 T_1 X_1 T_1 & = T_1 X_1 T_1 X_1 & & \\
      \tag{4} X_1 T_i & = T_i X_1 & \quad & \text{for all $i = 2,\ldots{},n-1$}
   \end{alignat}
   Then there are inverse isomorphisms $\Phi: \widetilde{\mathfrak{A}}_n^{\op{aff}} \rightarrow \mathfrak{A}_n^{\op{aff}}$, $\Psi: \mathfrak{A}_n^{\op{aff}} \rightarrow \widetilde{\mathfrak{A}}_n^{\op{aff}}$ of groups determined by
   \begin{align*}
      \Phi(T_i) & = T_{s_i} \quad i = 1,\ldots{},n-1 \\
      \Phi(X_1) & = T_{-e_1}^{-1}
   \end{align*}
   and
   \begin{align*}
      \Psi(T_{s_i}) & = T_i \quad i = 1,\ldots{},n-1 \\
      \Psi(T_{s_0}) & = \Psi(T_u) T_1 \Psi(T_u)^{-1} \\
      \Psi(T_{u}) & = T_{n-1} \ldots {}T_1 X_1
   \end{align*}
\end{lemma}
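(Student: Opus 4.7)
The strategy is to verify that $\Phi$ and $\Psi$ are well-defined group homomorphisms and then check that they are mutually inverse on generators. A preliminary observation is that $\mathfrak{A}_n^{\op{aff}} = \mathfrak{A}(W)$ admits the equivalent finite presentation with generators $T_{s_0},\ldots,T_{s_{n-1}},T_u$ subject to (a) the $\widetilde{A}_{n-1}$-braid relations among the $T_{s_i}$ and (b) the conjugation relations $T_u T_{s_i} T_u^{-1} = T_{s_{i-1}}$ (indices mod $n$), reflecting the decomposition $W = W_{\op{aff}} \rtimes \langle u \rangle$ and the action \eqref{eq:Omega_acting_on_S}; this follows from the defining relations $T_{ww'}=T_wT_{w'}$ (in the length-additive case) applied to reduced expressions $w = n_{s_{j_1}}\cdots n_{s_{j_r}}u^k$.

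For the well-definedness of $\Phi$ I verify that the images $T_{s_1},\ldots,T_{s_{n-1}},T_{-e_1}^{-1}\in\mathfrak{A}_n^{\op{aff}}$ satisfy relations (1)--(4). Relations (1) and (2) are immediate, since $s_is_j$ (with $|i-j|>1$) and $s_is_{i+1}s_i$ are reduced expressions in $W_0\subseteq W$. For relation (4), $s_i$ (with $i\geq 2$) fixes $e_1$ and hence commutes with $\tau^{-e_1}$ in $W$; using the standard length formula for extended affine Weyl groups one checks $\ell(s_i\tau^{-e_1}) = \ell(\tau^{-e_1}s_i) = 1 + \ell(\tau^{-e_1}) = n$, so length-additivity gives $T_{s_i}T_{-e_1} = T_{s_i\tau^{-e_1}} = T_{\tau^{-e_1}s_i} = T_{-e_1}T_{s_i}$. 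The crucial check is relation (3), corresponding to the identity $\tau^{-e_1}s_1\tau^{-e_1}s_1 = s_1\tau^{-e_1}s_1\tau^{-e_1} = \tau^{-(e_1+e_2)}$ in $W$. The length-additive identity $T_{s_1}T_{-e_1} = T_{s_1\tau^{-e_1}} = T_{\tau^{-e_2}s_1} = T_{-e_2}T_{s_1}$ gives $T_{s_1}T_{-e_1}T_{s_1}^{-1} = T_{-e_2}$, so setting $X_2 := T_1 X_1 T_1^{-1} = T_{-e_2}^{-1}$, relation (3) becomes equivalent to $X_1 X_2 = X_2 X_1$, that is, to the commutation $T_{-e_1}T_{-e_2} = T_{-e_2}T_{-e_1}$ in $\mathfrak{A}(W)$. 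This commutation is the substantive content of the fact that the translation lattice embeds as a free abelian subgroup of the affine braid group; it can be established either by the classical algebraic argument (using length-additive products in the dominant cone and extending by the fraction construction) or by the topological realization of $\mathfrak{A}_n^{\op{aff}}$ as $\pi_1(W\backslash Y)$, in which the commutation of translations is topologically evident.

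For the well-definedness of $\Psi$, using the finite presentation of $\mathfrak{A}_n^{\op{aff}}$ above: the braid relations among $\Psi(T_{s_i}) = T_i$ for $i\geq 1$ are exactly (1) and (2); the conjugation relation for $i = 1$ is the definition of $\Psi(T_{s_0})$; for $i\geq 2$ the relations $\Psi(T_u) T_i \Psi(T_u)^{-1} = T_{i-1}$ follow by unwinding $\Psi(T_u) = T_{n-1}\cdots T_1 X_1$, using (4) to pass $X_1$ past $T_i$ (so that $X_1 X_1^{-1}$ cancels) and (1), (2) to reduce to the standard identity $T_{n-1}\cdots T_1 T_i T_1^{-1}\cdots T_{n-1}^{-1} = T_{i-1}$ in the finite braid group. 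The remaining $\widetilde{A}_{n-1}$-braid relations involving $\Psi(T_{s_0})$ and the conjugation relation for $i = 0$ then reduce to the previously established ones by conjugation.

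Finally, $\Phi\circ\Psi = \mathrm{id}$ and $\Psi\circ\Phi = \mathrm{id}$ are checked on generators. The non-trivial check for $\Phi\circ\Psi$ is $\Phi(\Psi(T_u)) = T_u$, equivalently $T_u T_{-e_1} = T_{s_{n-1}}\cdots T_{s_1}$; this follows from the $W$-identity $u\tau^{-e_1} = s_{n-1}\cdots s_1$ (computed directly: $u\tau^{-e_1}u^{-1} = \tau^{u(-e_1)} = \tau^{-e_n}$ because the $S_n$-part of $u$ sends $1\mapsto n$, and $u = \tau^{e_n}s_{n-1}\cdots s_1$) together with the length-additivity $\ell(u) + \ell(\tau^{-e_1}) = 0 + (n-1) = n-1 = \ell(s_{n-1}\cdots s_1)$. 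The reverse check $\Psi(\Phi(X_1)) = X_1$ then follows from the inverted identity $T_{-e_1}^{-1} = T_{s_1}^{-1}\cdots T_{s_{n-1}}^{-1} T_u$, which under $\Psi$ yields $T_1^{-1}\cdots T_{n-1}^{-1}\cdot T_{n-1}\cdots T_1 X_1 = X_1$. The main obstacle throughout is the verification of relation (3), which ultimately reduces to the non-trivial commutation of translations in the affine braid group.
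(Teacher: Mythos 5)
Your reduction of relation (3) is incorrect, and the fact you then invoke is false.

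You set $X_2 := T_1 X_1 T_1^{-1}$ and claim relation (3) is \emph{equivalent} to $X_1 X_2 = X_2 X_1$. But relation (3) reads $X_1 (T_1 X_1 T_1) = (T_1 X_1 T_1) X_1$, i.e.\ $X_1$ commutes with $Y := T_1 X_1 T_1$, not with $X_2 = T_1 X_1 T_1^{-1}$. Unwinding, the two conditions $[X_1,Y]=1$ and $[X_1,X_2]=1$ differ precisely by the condition $[T_1^2, X_1] = 1$, and the latter fails in $\mathfrak{A}_n^{\op{aff}}$: for $n=2$ one has $T_{s_1}^2\,T_{-e_1}^{-1}\,T_{s_1}^{-2} = T_{s_1}^{-1}T_{s_0}^2 T_u T_{s_1}^{-1} T_u^{-1} T_{s_1} \neq T_{-e_1}^{-1}$, since $T_{s_0}$ and $T_{s_1}$ generate a free subgroup of $\mathfrak{A}(W_{\op{aff}})$ (there is no braid relation for $m(s_0,s_1)=\infty$).

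Worse, the target statement $T_{-e_1} T_{-e_2} = T_{-e_2} T_{-e_1}$ is simply false in $\mathfrak{A}_n^{\op{aff}}$. For $n=2$: $T_{-e_1} = T_u^{-1}T_{s_1}$, $T_{-e_2} = T_{s_1}T_u^{-1}$, so $T_{-e_1}T_{-e_2} = T_{s_0}^2 T_u^{-2}$ while $T_{-e_2}T_{-e_1} = T_{s_1}^2 T_u^{-2}$, which are distinct. The misconception is in the sentence ``the translation lattice embeds as a free abelian subgroup of the affine braid group'': what embeds and commutes is $x \mapsto \theta_{\mathfrak{o}}(x)$ for a \emph{fixed} spherical orientation $\mathfrak{o}$, and $\theta_{\mathfrak{o}}(x) = T_x$ only when $x$ lies in the closure of the corresponding Weyl chamber. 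Since $-e_1$ and $-e_2$ do not lie in a common closed Weyl chamber, there is no $\mathfrak{o}$ making both $\theta_{\mathfrak{o}}(-e_1) = T_{-e_1}$ and $\theta_{\mathfrak{o}}(-e_2) = T_{-e_2}$, so commutativity of the $\theta_{\mathfrak{o}}$ buys you nothing here. (Interestingly, the correct reduction of (3) is to the commutativity of $T_{-e_1}$ and $Y = T_{e_2}$: now $-e_1$ and $e_2$ \emph{do} lie in a common closed Weyl chamber, so this \emph{is} an instance of the cocycle rule for a single spherical orientation. Your idea is salvageable, but you conjugated by $T_1^{-1}$ where the relation forces $T_1$.) The paper instead verifies (3) and (4) by a direct braid computation using $T_u T_{s_i} T_u^{-1} = T_{s_{i-1}}$ after substituting $T_{-e_1}^{-1} = T_{s_1}^{-1}\cdots T_{s_{n-1}}^{-1}T_u$, which avoids invoking commutativity of any non-length-additive pair of translations.
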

\begin{proof}
   We only give some brief indications as the proof consists mostly of straightforward computations. First of all, for every extended Coxeter group $W$ the decomposition $W = W_{\op{aff}} \ltimes \Omega$ induces an isomorphism
   \[ \mathfrak{A}(W_{\op{aff}} \ltimes \Omega) \simeq \mathfrak{A}(W_{\op{aff}}) \ltimes \Omega \]
   where the action of $\Omega$ on $\mathfrak{A}(W_{\op{aff}})$ is determined by $u(T_w) = T_{u(w)}$. Moreover, one sees easily that there is an isomorphism
   \[ \mathfrak{A}(W_{\op{aff}}) \simeq \left< \{ T_s\}_{s \in S} : \underbrace{T_s T_t T_s \ldots }_{\text{$m$ factors}} = \underbrace{T_t T_s T_t \ldots }_{\text{$m$ factors}},\quad s,t \in S,\ \op{ord}(st) = m < \infty \right> \]
   To see that $\Psi$ is well-defined it is therefore enough to check that
   \begin{align*} \Psi(T_{s_i}) \Psi(T_{s_j}) & = \Psi(T_{s_j}) \Psi(T_{s_i}),\quad i,j = 1,\ldots{},n-1,\ |i-j| > 1 \\
      \Psi(T_{s_i}) \Psi(T_{s_{i+1}}) \Phi(T_{s_i}) & = \Psi(T_{s_{i+1}}) \Psi(T_{s_i}) \Psi(T_{s_{i+1}}),\quad i = 1,\ldots{},n-2 \\
      \Psi(T_{u^{-1}}) \Psi(T_{s_i}) \Psi(T_{u^{-1}})^{-1} & = \Psi(T_{s_{i-1}}),\quad i \in \{0,\ldots{},n-1\} = \Z/n\Z
   \end{align*}
   The first two relations are immediate and the last one follows from a lengthy computation. By definition, the well-definedness of $\Phi$ amounts to checking relations (1)-(4). Again relations (1) and (2) are immediate, while (3) and (4) follow from (1), (2) and
   \[ T_{u^{-1}} T_{s_i} T_{u^{-1}}^{-1} = T_{s_{i-1}} \]
   Finally, more straightforward and lengthy computations show that $\Phi$ and $\Psi$ are inverse to each other.
\end{proof}

In terms of this description, the map $\mathfrak{A}_n^{\op{aff}} \longrightarrow \mathfrak{A}_n$ is then given by
\[ \widetilde{\mathfrak{A}}_n^{\op{aff}} \longrightarrow \mathfrak{A}_n,\quad T_i \longmapsto T_{s_i},\ X_1 \mapsto 1 \]
Writing $H_n = H_n(a,b)$ for the generic pro-$p$ Hecke algebra of $W_0$ with constant parameters $a,b$, the above morphism of groups induces a morphism of algebras
\[ \pi: H_n^{\op{aff}} \longrightarrow H_n \]
by \cref{prop:second_presentation_of_hecke} (one easily checks that the quadratic relations are preserved). Explicitly, this map is the identity on $H_n$ (viewing it as a subalgebra of $H_n^{\op{aff}}$) and sends the generator $T_{s_0}$ to the elements
\[ T_{s_{n-1}} \ldots T_{s_1} T_{s_1} T_{s_{n-1}}^{-1} \ldots T_{s_1}^{-1} \]
The map $\pi$ is very important because it gives a description of the center of $H_n$ in terms of the center of $H_n^{\op{aff}}$. Namely, it turns out that $\pi$ maps the center of $H_n^{\op{aff}}$ surjectively onto the center of $H_n$. This should be contrasted with the fact that
\[ Z(H_n^{\op{aff}})\cap H_0 = R \]
More explicitly, the center of $H_n$ is the algebra of symmetric polynomials in the pairwise commutative \textit{Jucys-Murphy elements} $J_1,\ldots{},J_n$ given recursively by
\[ J_1 := 1,\quad J_{i+1} = T_{s_i} J_i T_{s_i} \]
The following lifts of the $J_i$ under $\pi$ are also called Jucys-Murphy elements
\[ J^{\op{aff}}_1 := X_1,\quad J_{i+1}^{\op{aff}} = T_{s_i} J_i^{\op{aff}} T_{s_i} \]
The elements $J_1^{\op{aff}},\ldots{},J_n^{\op{aff}}$ also commute pairwise. In fact, they are nothing else but the images of the standard basis vectors $e_i \in \Z^n$ under the unnormalized Bernstein map.

\begin{lemma}\label{lem:jucys-murphys_equals_bernstein-zelevinsky}
   Let $\mathfrak{o} = \mathfrak{o}_D$ be the spherical orientation (see \cref{def:spherical_orientation}) of $W$ associated to the dominant Weyl chamber
   \[ D = \{ x \in \R^n : x_1 < \ldots < x_n \} \]
   Then
   \[ J_i^{\op{aff}} = \theta_\mathfrak{o}(e_i),\quad i = 1,\ldots{},n \]
   as an equality in $H_n^{\op{aff}}$ (in fact already in $\mathfrak{A}_n^{\op{aff}}$).
\end{lemma}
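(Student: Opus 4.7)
The plan is to proceed by induction on $i$. A crucial preliminary observation is that $\theta_\mathfrak{o}$ restricted to $X$ is a group homomorphism: indeed $X \subseteq \op{Stab}_W(\mathfrak{o})$ (translations preserve the direction at infinity), so the cocycle rule of \cref{thm:ex_theta_braid} simplifies to multiplicativity on $X$. Moreover, for any dominant $x \in X \cap \overline{D}$ we have $\theta_\mathfrak{o}(x) = T_x$, because every crossing in the minimal gallery from $C_0$ to $\tau^x C_0$ moves toward $D$ at infinity and hence receives sign $+1$.

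For the base case $i = 1$: since $-e_1 = (-1,0,\ldots,0) \in \overline{D}$, we have $\theta_\mathfrak{o}(-e_1) = T_{-e_1}$, and multiplicativity yields $\theta_\mathfrak{o}(e_1) = T_{-e_1}^{-1} = X_1 = J_1^{\op{aff}}$.

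For the inductive step, assume $\theta_\mathfrak{o}(e_i) = J_i^{\op{aff}}$. Starting from the identity $e_{i+1} = s_i e_i s_i$ in $W$ (since $s_i$ conjugates $\tau^{e_i}$ to $\tau^{s_i(e_i)} = \tau^{e_{i+1}}$) and applying the cocycle rule, while using that $e_i \in X$ stabilizes $\mathfrak{o}\bullet s_i$, one finds
\[ \theta_\mathfrak{o}(e_{i+1}) = \theta_\mathfrak{o}(s_i)\,\theta_{\mathfrak{o}\bullet s_i}(e_i)\,\theta_{\mathfrak{o}\bullet s_i}(s_i) = T_{s_i}^{-1}\,\theta_{\mathfrak{o}\bullet s_i}(e_i)\,T_{s_i}, \]
where we used that $\mathfrak{o}(1,s_i) = -1$ (the move $C_0 \to s_i C_0$ crosses a wall of $D$ \emph{away} from $D$) and correspondingly $(\mathfrak{o}\bullet s_i)(1,s_i) = +1$. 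It thus suffices to establish
\[ \theta_{\mathfrak{o}\bullet s_i}(e_i) = T_{s_i}^2\,\theta_\mathfrak{o}(e_i). \]

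To prove this last identity I would use the explicit reduced expression $\tau^{e_i} = s_i s_{i+1}\cdots s_{n-1}\,s_0 s_1 \cdots s_{i-2}\,u$ of length $n-1$, derived from $\tau^{e_i} = (s_i\cdots s_{n-1})\,u\,(s_1\cdots s_{i-1})$ by iterating the relation $u s_j u^{-1} = s_{j-1}$ from \eqref{eq:Omega_acting_on_S}. The orientations $\mathfrak{o}$ and $\mathfrak{o}\bullet s_i$ differ only at hyperplanes parallel to $H_{\alpha_i}$ (with $\alpha_i = e_{i+1} - e_i$), and the length formula $\ell(\tau^{e_i}) = \sum_{\alpha > 0}|\langle \alpha, e_i\rangle|$ shows that the gallery crosses exactly $|\langle \alpha_i, e_i\rangle| = 1$ such hyperplane. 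A direct inspection of the expression shows this unique crossing occurs at the very first step (the wall $H_{\alpha_i,0} = \{x_i = x_{i+1}\}$), where the sign flips from $-1$ to $+1$; this multiplies the product $\theta_\mathfrak{o}(e_i) = T_{s_i}^{-1}T_{s_{i+1}}^{-1}\cdots T_{s_{n-1}}^{-1}T_{s_0}\cdots T_{s_{i-2}}T_u$ by an extra $T_{s_i}^2$ on the left. Substituting back gives $\theta_\mathfrak{o}(e_{i+1}) = T_{s_i}\theta_\mathfrak{o}(e_i)T_{s_i} = T_{s_i}J_i^{\op{aff}}T_{s_i} = J_{i+1}^{\op{aff}}$. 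The main obstacle is this combinatorial sign-tracking, which becomes transparent because the chosen reduced expression localizes the unique $\alpha_i$-parallel crossing at step $1$.
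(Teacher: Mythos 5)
Your proof is correct, and it is more self-contained than the paper's. After the same base case, the paper reduces the inductive step to the identity $T_{s_i}\theta_\mathfrak{o}(e_i)T_{s_i} = \theta_\mathfrak{o}(e_{i+1})$ and then simply cites \cite[3.2.4]{Macdonald} for it. You instead prove this identity directly from the paper's own machinery. Writing $e_{i+1} = s_i e_i s_i$, applying the cocycle rule, and using that $X$ stabilizes spherical orientations yields
\[
\theta_\mathfrak{o}(e_{i+1}) = \theta_\mathfrak{o}(s_i)\,\theta_{\mathfrak{o}\bullet s_i}(e_i)\,\theta_{\mathfrak{o}\bullet s_i}(s_i) = T_{s_i}^{-1}\,\theta_{\mathfrak{o}\bullet s_i}(e_i)\,T_{s_i},
\]
where $\mathfrak{o}(1,s_i) = -1$ and $(\mathfrak{o}\bullet s_i)(1,s_i) = +1$ are forced by the definition of $\mathfrak{o}_D$ and \axiom{OR1}. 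The remaining claim $\theta_{\mathfrak{o}\bullet s_i}(e_i) = T_{s_i}^2\theta_\mathfrak{o}(e_i)$ then follows because $\mathfrak{o}$ and $\mathfrak{o}\bullet s_i$ differ only at hyperplanes parallel to $H_{\alpha_i}$, of which exactly one (namely $|\alpha_i(e_i)| = 1$) separates $C_0$ from $\tau^{e_i}C_0$; your reduced expression $\tau^{e_i} = s_i\cdots s_{n-1}\,s_0\cdots s_{i-2}\,u$ of length $n-1$ places that unique crossing at the first step, so the two products for $\theta_\mathfrak{o}(e_i)$ and $\theta_{\mathfrak{o}\bullet s_i}(e_i)$ agree except that the initial $T_{s_i}^{-1}$ is replaced by $T_{s_i}$. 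All the steps check out (the expression $\tau^{e_i} = (s_i\cdots s_{n-1})u(s_1\cdots s_{i-1})$ is a correct identity in $W$, and iterating $us_ju^{-1}=s_{j-1}$ gives the stated reduced word). Your approach has the advantage of keeping the argument internal to the formalism of orientations and the cocycle rule, at the cost of the explicit combinatorial bookkeeping; the paper's citation is shorter but makes the induction opaque.
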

\begin{proof}
   By induction. For $i = 1$ the statement follows immediately from the definitions. Indeed, $-e_1 \in \overline{D}$ and therefore
   \[ \theta_\mathfrak{o}(e_1) = \theta_\mathfrak{o}(-e_1)^{-1} = T_{-e_1}^{-1} = X_1 = J_1^{\op{aff}} \]
   For the induction step we need to prove that
   \[ T_{s_i} \theta_\mathfrak{o}(e_i) T_{s_i} = \theta_\mathfrak{o}(e_{i+1}) \]
   But this is shown in \cite[3.2.4]{Macdonald}, where the notation $Y^x$ is used instead of $\theta_\mathfrak{o}(x)$.
\end{proof}
\end{example}

\begin{example}[pro-$p$-Iwahori Hecke algebras]\label{ex:pro-$p$-Iwahori Hecke algebras} Let $F$ be a nonarchimedean local field, i.e. a field endowed with a nontrivial discrete valuation $\nu_F: F \rightarrow \Z \cup \{+\infty\}$ whose residue field $k$ is a finite field with cardinality $q$ a power of some prime $p$. Let $\mathbf{G}$ be a connected reductive group over $F$, $G = \mathbf{G}(F)$ the group of rational points, $I \leq G$ an Iwahori subgroup in the sense of \cite[3.7]{TitsCorvallis} and $I(1) \leq I$ its pro-$p$ radical. Recall that the pro-$p$ radical of a profinite group containing an open pro-$p$ subgroup is by definition (see \cite[3.6]{VigHenn}) its largest open normal pro-$p$ subgroup. Finally, let $R$ be a commutative ring.

     To this data one associates an $R$-algebra, the pro-$p$-Iwahori Hecke algebra, as follows. Let
     \[ \mathcal{H}^{(1)} := H_R(G,I(1)) = \op{End}_G(\op{ind}^G_{I(1)} \bbf{1}_R) \]
     be the ring of endomorphisms of the $G$-representation induced from the trivial representation of $I(1)$. This $R$-algebra is canonically identified with the convolution algebra $R[I(1)\backslash G/I(1)]$ of $I(1)$-double cosets, where the product of the basis elements $T_t, T_{t'}$ corresponding to double cosets $t,t' \in I(1)\backslash G/I(1)$ is given by
     \[ T_t  T_{t'} = \sum_{t''} m(t,t'; t'') T_{t''} \]
     Here the sum runs over all double cosets and $m(t,t'; t'')$ denotes the number of $I^{(1)}$-left cosets of $t \cap gt'^{-1}$ for $g \in t''$ arbitrary.
     
     Vignéras \cite{VigProI} has shown that the set $I(1)\backslash G/I(1)$ is in bijection with a certain group $W^{(1)}$ (which can be given the structure of a pro-$p$ Coxeter group) and that the basis elements $T_w$ of $\mathcal{H}^{(1)}$ satisfy relations of Iwahori-Matsumoto type
     \begin{align*} T_w T_{w'} & = T_{ww'}, \quad \text{ if } \ell(w)+\ell(w') = \ell(ww') \\
        T_s^2 & = a_s T_{s^2} + b_s T_s, \quad \text{ if } \ell(s) = 1
      \end{align*}
      Given a suitable structure of a pro-$p$ Coxeter group on $W^{(1)}$ and an affine extended Coxeter group on $W$, the above presentation implies that $\mathcal{H}^{(1)}$ is an affine pro-$p$ Hecke algebra in the sense of \cref{def:affine_pro-$p$_Hecke_algebra}. Our goal now is to explicitly construct these structures.

      Let $C$ be the chamber of the building of $\mathbf{G}$ which corresponds to the Iwahori subgroup $I = I_C$ and let $\mathbf{S} \leq \mathbf{G}$ denote the maximal split torus corresponding to an apartment containing $C$. Let $\mathbf{Z} \leq \mathbf{N} \leq \mathbf{G}$ respectively denote the centralizer and normalizer of $\mathbf{S}$ in $\mathbf{G}$, and let $Z = \mathbf{Z}(F)$ and $N = \mathbf{N}(F)$ denote their groups of rational points. Let
     \[ Z_0 := Z \cap I,\quad Z_0(1) := Z \cap I(1),\quad Z_k := Z_0/Z_0(1) \]
     The groups $Z_0$ and $Z_0(1)$ are normal in $N$ (cf. \cite[3.7]{VigProI}) and thus we may form the quotient groups
     \[ W := N/Z_0,\quad W^{(1)} := N/Z_0(1) \]
     The inclusion $N \subseteq G$ induces a bijection (\cite[Proposition 3.35]{VigProI})
     \[ W^{(1)} \stackrel{\sim}{\longrightarrow} I^{(1)}\backslash G/I^{(1)},\quad [n] \longmapsto I^{(1)} n I^{(1)} \]
     and therefore $\mathcal{H}^{(1)}$ has a canonical basis $T_w$ indexed by elements $w \in W^{(1)}$. Moreover, we have an exact sequence
     \begin{equation}\label{eq:pro-p-Coxeter} \begin{xy} \xymatrix{ 1 \ar[r] & Z_k \ar[r] & W^{(1)} \ar[r] & W \ar[r] & 1 } \end{xy} \end{equation}
        with $Z_k$ finite abelian (in fact $Z_k$ identifies with the rational points of a torus over $k$, cf. \cite[3.7]{VigProI}).
     
        Let us now see how $W$ can be given the structure of an affine extended Coxeter group. The theory of buildings associates to the triple $(\mathbf{G},\mathbf{S},F)$ an apartment $A = A(\mathbf{G},\mathbf{S},F)$ (see \cite[1.2]{TitsCorvallis}), which is an affine space over the vector space $V = X_\ast(\mathbf{S})\otimes \R$ endowed with a homomorphism
     \[ \nu: N \longrightarrow \op{Aut}_{\op{aff}}(A) \]
     into the group of affine automorphisms of $A$, such that we have a commutative diagram
     \begin{equation}\label{eq:comm_diag} \begin{xy} \xymatrix{ 1 \ar[r] & Z \ar[d]^\nu \ar[r] & N \ar[d]^\nu \ar[r] & W_0 \ar[d] \ar[r] & 1 \\ 1 \ar[r] & V \ar[r] & \op{Aut}_{\op{aff}}(A) \ar[r] & \op{GL}(V) \ar[r] & 1 } \end{xy} \end{equation}
     Here the rightmost vertical map is the canonical (faithful) representation of the finite Weyl group $W_0$ as a reflection group in $V$, and the leftmost vertical map is uniquely determined by the condition
     \[ \chi(\nu(z)) = -\nu_F(\chi(z)) \quad \forall z \in Z,\ \chi \in X^\ast(\mathbf{Z}) \]
      This condition implies that $\nu(Z) \leq V$ is a discrete subgroup of rank $\op{dim}(V)$, i.e. a lattice in $V$. As $I$ is compact so is $Z_0 = I \cap Z$, and hence $Z_0 \leq \op{ker}(\nu)$ since $\nu$ is continuous. Therefore, $\nu$ factors to a map
     \[ \nu: W \longrightarrow \op{Aut}_{\op{aff}}(A) \]
     which after an appropriate choice of an origin in $A$ and hence an identification $A \simeq V$ will define a map $\rho: W \rightarrow \op{Aut}_{\op{aff}}(V)$. Regardless of this choice the above diagram shows that the subgroup $\rho_0(W)$ defined in \axiom{ACIII} is equal to the image of $W_0$ in $\op{GL}(V)$, and hence \axiom{ACIII} is verified. Moreover, the injectivity of $W_0 \rightarrow \op{GL}(V)$ and the commutativity of the above diagram imply that
     \[ \rho(W) \cap V = \nu(Z) \]
     regardless of the choice of an origin. In particular \axiom{ACV} holds, since $\nu(Z)$ is a lattice in $V$. Now in order to choose an origin, we first need to define the locally finite set $\mathfrak{H}$ of hyperplanes.
     
     For this we need to recall a few more facts from the theory of buildings. Let
     \[ \Phi := \Phi(\mathbf{G},\mathbf{S}) \subseteq X^\ast(\mathbf{S}) \subseteq V^\vee \]
     denote the root system\footnote{Note that in general, $\Phi$ is not reduced and so in particular it will \textit{not} be the reduced root system $\Phi$ attached to the affine extended Coxeter group $W$ by \cref{lem:unpacking_affine_extended_Coxeter_groups}.} of the pair $(\mathbf{G},\mathbf{S})$.
     
     Let us for a root $a \in \Phi$ denote by $\mathbf{U}_a \leq \mathbf{G}$ the root subgroup corresponding to $a$, let $U_a := \mathbf{U}_a(F)$ and $U_a^\ast := U_a - \{1\}$. For every $u \in U^\ast_a$, the set $U_{-a} u U_{-a} \cap N$ consists of a single element, denoted $m(u)$ in \cite[1.4]{TitsCorvallis}. The linear part of the image $\nu(m(u)) \in \op{Aut}_{\op{aff}}(A)$ is the reflection $s_a \in \op{GL}(V)$ associated to $a$, which implies that $\nu(m(u))$ is an affine reflection. Let $\alpha(a,u)$ denote the affine function whose linear part is $a$ and whose vanishing set is the hyperplane fixed by $\nu(m(u))$. For any affine function $\alpha: A \rightarrow \R$ with linear part $a$, let
     \[ X_\alpha := \{ u \in U_a^\ast : \alpha(a,u) \geq \alpha \} \cup \{1\} \]
     It is a major result of the theory of buildings that $X_\alpha$ is in fact a subgroup of $U_a$. However, from the definition it is immediately clear that the $X_\alpha$ with $\alpha$ running over all affine functions with linear part $a$ form an exhaustive and separated filtration of $U_a$. Moreover, any two elements $m(u), m(u')$ with $u,u' \in U_a^\ast$ differ only by an element of $Z$. Since $\nu(Z) \leq V$ is a discrete subgroup, it follows that the filtration $\{X_\alpha\}_\alpha$ is locally constant and that
     \[ \mathfrak{H} := \{ \{ \alpha(a,u) = 0 \} : a \in \Phi,\ u \in U_a^\ast\} \]
     is a locally finite set of hyperplanes. The set $\mathfrak{H}$ is left invariant under the action of $W$, thus verifying \axiom{ACI}, which follows from
     \[ nU_a^\ast n^{-1} = U_{n(a)}^\ast,\quad n \in N,\ a \in \Phi \]
     and
     \begin{equation}\label{eq:m_equivariance} nm(u)n^{-1} = m(nun^{-1}), \quad n \in N,\ a \in \Phi,\ u \in U_a^\ast \end{equation}
     The last equation also shows that
     \[ n^{-1} X_\alpha n = X_{\alpha \circ \nu(n)} \]
     Let $W(\mathfrak{H})$ denote the subgroup of $\op{Aut}_{\op{aff}}(A)$ generated by all $\nu(m(u))$ with $a \in \Phi'$, $u \in U_a^\ast$. Then $W(\mathfrak{H})$ leaves $\mathfrak{H}$ invariant as $\mathfrak{H}$ is already invariant under $W$. Fixing a $W_0$-invariant positive definite scalar product on $V$, the group $W(\mathfrak{H})$ becomes the affine reflection group generated by the orthogonal reflections $s_H$ with $H \in \mathfrak{H}$. By \cite[Ch. V, \textsection 3.10, Proposition 10]{Bourbaki}, there exists a special point $p \in A$. As $W(\mathfrak{H})$ maps special points to special points and acts transitively on the set of chambers, we may assume that $p$ lies in the closure of the chamber $C$ which corresponds to $I$.
     
     Identifying $A$ and $V$ via $p$, we will assume from now on that $A = V$ and $p = 0$ and we will put $\rho := \nu$. Letting $C_0 := C$, we fulfill \axiom{ACIV} and \axiom{ACIX}. This gives $W(\mathfrak{H})$ the structure of a Coxeter group (cf. \cref{sub:Affine extended Coxeter groups and affine pro-$p$ Hecke algebras}) by letting the set of distinguished generators be the set $S(C_0)$ of reflections with respect to the walls of $C_0$.
     
   We now want to construct lifts $\widetilde{s}_H \in W$ of the reflections $s_H \in W(\mathfrak{H})$, $H \in \mathfrak{H}$ satisfying \axiom{ACVI} and \axiom{ACVII}. Note that \axiom{ACII} and \axiom{ACVIII} are then satisfied automatically. Consider the subgroup $N_{\op{aff}} \leq N$ generated by $Z_0$ and all $m(u)$, $u \in U_a^\ast$, $a \in \Phi$. From relation \eqref{eq:m_equivariance} it follows that it is a normal subgroup of $N$. Moreover, by construction the map $\nu$ restricts to a surjection
     \[ N_{\op{aff}} \twoheadrightarrow W(\mathfrak{H}) \]
     which we claim has kernel $Z_0$ (cf. \cite[3.9]{VigProI}). Admitting this claim, it follows that the subgroup
     \[ W_{\op{aff}} = N_{\op{aff}}/Z_0 \subseteq N/Z_0 = W \]
     maps isomorphically onto $W(\mathfrak{H})$ under $\nu$ and hence that \axiom{ACVI} and \axiom{ACVII} are fulfilled by letting $\widetilde{s}_H$ be the unique preimage in $W_{\op{aff}}$ of $s_H$ under $\nu$.
     
     Let us now show that $\op{ker}(\nu)\cap N_{\op{aff}} = Z_0$. We already saw that $Z_0$ is contained in the kernel. The reverse inclusion follows from the following characterization of the Iwahori subgroup given by Haines and Rapoport (Def. 1, Prop. 3 and Lemma 17 in \cite{HaRa})
     \[ I_C = \op{Fix}_G(\overline{C}) \cap G_{\op{aff}} \]
     Here $\overline{C}$ denotes the chamber in the reduced building of $\mathbf{G}$ corresponding to $C$ and $\op{Fix}_G(\overline{C})$ denotes the subgroup of all elements of $G$ fixing $\overline{C}$ pointwise. Note that $\op{Fix}_G(C) \subseteq \op{Fix}_G(\overline{C})$. Moreover, $G_{\op{aff}}$ denotes the subgroup of $G$ generated by all parahoric subgroups, or equivalently, the subgroup generated by $Z_0$ and the root subgroups $U_a$, $a \in \Phi$. It follows that $N_{\op{aff}} \subseteq G_{\op{aff}}$ and therefore that
     \[ \op{ker}(\nu)\cap N_{\op{aff}} \subseteq \op{Fix}_G(\overline{C}) \cap G_{\op{aff}} = I \]
     Since $\op{ker}(\nu) \subseteq Z$, this implies
     \[ \op{ker}(\nu)\cap N_{\op{aff}} \subseteq Z \cap I =  Z_0 \]
     We have therefore now verified conditions \axiom{ACI}--\axiom{ACIX}. It remains to show that \axiom{ACX} holds, i.e. that the subgroup
     \[ X = \rho^{-1}(V) \stackrel{\text{(!)}}{=} Z/Z_0 \leq W \]
     is finitely generated and commutative. But this is shown in \cite[Theorem 1.0.1]{HaRo}, where are $Z$,$Z_0$ are denoted by $M(F)$, $M(F)_1$. To apply this theorem one has to note that $Z_0$ is the unique parahoric subgroup of $Z$ (see the discussion before \cite[Proposition 3.15]{VigProI}).

     We have therefore given $W = N/Z_0$ the structure of an affine extended Coxeter group. By \cref{lem:unpacking_affine_extended_Coxeter_groups}, this induces on $W$ the structure $W = (W,W_{\op{aff}},S,\Omega)$ of an extended Coxeter group. The exact sequence \eqref{eq:pro-p-Coxeter} therefore makes $W^{(1)} = N/Z_0(1)$ into a pro-$p$ Coxeter group, provided we specify lifts of the generators $\widetilde{s}_H \in W_{\op{aff}}$, $H \in S(C_0)$ that satisfy the braid relations. This is the content of the next lemma. In order to state it, we need to recall a few more things from \cite{TitsCorvallis}.
     
     Let $\{\overline{X}_\alpha\}_{\alpha}$ denote the family of quotients of the descending filtration $\{X_\alpha\}_{\alpha}$, i.e.
     \[ \overline{X}_\alpha = X_\alpha/X_{\alpha+\varepsilon} \]
     for $\varepsilon > 0$ sufficiently small. If $a \in \Phi$ with $2a \in \Phi$, then the inclusion $U_{2a} \subseteq U_a$ induces an inclusion $\overline{X}_{2\alpha} \subseteq \overline{X}_\alpha$ for every $\alpha$ with $\alpha_0 = a$. The set $\Phi_{\op{af}}$ of affine roots is then defined to be (cf. \cite[1.6]{TitsCorvallis})
     \[ \Phi_{\op{af}} = \{ \alpha : \alpha_0 \in \Phi,\ \overline{X}_{2\alpha} \neq \overline{X}_{\alpha} \} \]
     where $\overline{X}_{2\alpha} = \{1\}$ by convention if $2\alpha_0 \not\in \Phi$. Note that if $\overline{X}_\alpha \neq 1$ but $\alpha \not\in \Phi_{\op{af}}$, then necessarily $2\alpha \in \Phi_{\op{af}}$. Hence, every $H \in \mathfrak{H}$ is of the form $H = \{\alpha = 0\}$ for some $\alpha \in \Phi_{\op{af}}$.

\begin{lemma}\label{lem:lifting_braid_relations}
   In the situation of the above example, the following holds. Given a wall $H \in S(C_0)$ let $\alpha \in \Phi_{\op{af}}$ denote the unique affine root with
   \[ H = \{ \alpha = 0 \},\quad C_0 \subseteq \{ \alpha > 0 \} \quad \text{and} \quad \frac{1}{2}\alpha \not\in \Phi_{\op{af}} \]
   and put $n_H = m(u)$ for some arbitrary $u \in X_{\alpha}$ with nonzero image under $X_\alpha \twoheadrightarrow \overline{X}_\alpha$. Then for all $H,H' \in S(C_0)$ with $\op{ord}(s_H s_{H'}) < \infty$ we have the relation
   \[ n_H n_{H'} n_H \ldots \equiv n_{H'} n_H n_{H'} \ldots \mod{Z_0}(1) \]
   in $N$, where the number of factors on both sides equals $\op{ord}(s_H s_{H'})$.
\end{lemma}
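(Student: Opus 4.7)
The plan is to factor the proof through the ordinary Coxeter braid relation and then control the $p$-adic error term by reducing to a rank-2 calculation.

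First, I would observe that by \cref{lem:unpacking_affine_extended_Coxeter_groups}(ii) the pair $(W(\mathfrak{H}),\{s_H : H \in S(C_0)\})$ is a Coxeter group, so the ordinary braid relation
\[ s_H s_{H'} s_H \cdots = s_{H'} s_H s_{H'} \cdots \quad (\op{ord}(s_H s_{H'}) \text{ factors on each side}) \]
holds in $W(\mathfrak{H})$. Since the restriction $\nu: N_{\op{aff}} \twoheadrightarrow W(\mathfrak{H})$ has kernel $Z_0$ (as already used in the verification of \axiom{ACVI},\axiom{ACVII}), the element
\[ z := (n_H n_{H'} n_H \cdots)\cdot{}(n_{H'} n_H n_{H'} \cdots)^{-1} \in N_{\op{aff}} \]
lies in $Z_0$, and it suffices to prove that its image $\bar z$ in the finite abelian quotient $Z_k = Z_0/Z_0(1)$ is trivial.

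Second, I would reduce to semisimple rank $2$. The linear parts $a = \alpha_0$ and $b = \alpha'_0$ of the affine roots $\alpha,\alpha'$ defining $H,H'$ generate a rank-2 subsystem $\Psi \subseteq \Phi$, and the connected reductive subgroup $\mathbf{G}' := \left<\mathbf{Z},\ \mathbf{U}_c : c \in \Psi\right> \leq \mathbf{G}$ contains both $n_H$ and $n_{H'}$ and shares the same maximal split torus $\mathbf{S}$ and centralizer $\mathbf{Z}$. The whole computation of $\bar z$ therefore takes place inside $\mathbf{G}'$, allowing us to assume $\mathbf{G}$ has semisimple rank $2$ with $\Psi$ of Dynkin type $A_1\times A_1$, $A_2$, $B_2$, $G_2$, or (in the non-reduced residual case) $BC_2$.

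Third, I would carry out the rank-2 computation. Since $Z_k$ identifies canonically with $\mathbf{T}(k)$ for a torus $\mathbf{T}$ over the finite residue field $k = \F_q$, its order is coprime to $p$, so $\bar z$ is determined entirely by its image in this residue-field torus. Using the Bruhat-Tits formula expressing $m(u) m(u')^{-1}$ in terms of the cocharacter $a^\vee$ of $\mathbf{S}$ (cf.\ \cite[1.4]{TitsCorvallis}), together with the explicit Steinberg commutation relations among the root subgroups $\mathbf{U}_c(F)$ for $c \in \Psi$, one expresses $\bar z$ as a product of commutators in the commutative group $\mathbf{T}(k)$ and therefore as the identity. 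The case $\Psi = A_1 \times A_1$ is trivial because the two factors commute and the braid relation $n_H n_{H'} = n_{H'} n_H$ already holds in $N$; the cases $A_2$ and $B_2$ reduce after passing to a faithful representation to the classical $\op{SL}_3$/$\op{Sp}_4$ computations; the types $G_2$ and $BC_2$ require slightly more bookkeeping but follow by the same principle.

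The main obstacle is the rank-2 calculation in the exceptional types, particularly $G_2$ and the non-split residual types, where the commutator relations among the $\mathbf{U}_c$ are most intricate. The critical input that makes this obstacle tractable (and is the reason the argument works at all) is the fact that $\mathbf{T}(k)$ has order prime to $p$, so we only need to verify that $\bar z$ is a product of commutators rather than establish an exact identity in $Z_0$.
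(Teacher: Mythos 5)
Your overall shape is right — reduce to the rank-2 subsystem generated by $\alpha_0,\alpha'_0$ and argue mod $p$ — but two key steps are not actually proofs and a third is based on a misattribution.

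First, the reduction. You pass to the subgroup $\mathbf{G}' = \langle \mathbf{Z},\ \mathbf{U}_c : c \in \Psi\rangle$ of $\mathbf{G}$, but the quantity you must control is $Z_0(1) = Z \cap I(1)$, which is defined via the Iwahori of $\mathbf{G}$, not of $\mathbf{G}'$. It is not automatic that the pro-$p$ radicals of the Iwahori subgroups of $\mathbf{G}$ and $\mathbf{G}'$ cut out the same subgroup of $Z$, so your "the whole computation takes place inside $\mathbf{G}'$" is not justified. The paper does something genuinely different: since $H,H'$ are non-parallel, the intersection $H \cap H'$ contains a nonempty face $F$ of $C_0$; one works with the parahoric $K_F$ of $\mathbf{G}$ itself and its reductive quotient $\overline{G}_F^{\op{red}}$ over the residue field. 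Because $K_F(1) \subseteq I(1)$ (monotonicity of pro-$p$ radicals along inclusions of faces), the mod-$K_F(1)$ statement one proves in the finite group transports directly to a mod-$Z_0(1)$ statement, with no worry about matching up Iwahoris of different reductive groups.

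Second, the heart of the argument. The assertion "one expresses $\bar z$ as a product of commutators in $\mathbf{T}(k)$ and therefore as the identity" is just a restatement of the goal: being a product of commutators in a commutative group is trivially trivial, so the entire content is *why* $\bar z$ should be such a product, and this is not substantiated. The Bruhat--Tits formula you cite from \cite[1.4]{TitsCorvallis} ($m(u)m(u')^{-1}$ expressed via $a^\vee$) is for $u,u'$ in the *same* root group $U_a$, so it does not bear on the braid relation between two different simple reflections. The input the paper actually uses here is \cite[6.1.8]{BTI}: in any group with a root group datum (applied to the finite reductive group $\overline{G}_F^{\op{red}}(k)$ with root system $\Phi_F$, restricted to the rank-2 subsystem $(\Z\alpha_0 + \Z\alpha'_0)\cap\Phi_F$), the canonically normalized lifts $m(\cdot)$ of the simple reflections satisfy the braid relation on the nose. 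That is a substantive theorem, not a formal consequence of $\mathbf{T}(k)$ being abelian, and your sketch does not supply a substitute for it — the case-by-case listing of $A_2$, $B_2$, $G_2$, $BC_2$ is the problem, not the solution.

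Third, the claimed role of the prime-to-$p$ order of $\mathbf{T}(k)$ is a red herring. It is not used at all in the paper's proof, nor is it what makes the argument "work": the argument works because the braid relation *already holds exactly* in the finite quotient $\overline{G}_F^{\op{red}}(k)$, not because some defect can be killed by a prime-to-$p$ order. The reduction mod $p$ enters only through the compatibility $\overline{m(u)} = m(\overline{u})$ between the $m$-map on the $p$-adic group and the $m$-map on the residue-field group, which the paper establishes and you do not address.
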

\begin{proof}
   If $H,H'$ are parallel, then either $H = H'$ or $\op{ord}(s_H s_{H'}) = \infty$, in which case there is nothing to prove. So we may assume that $H,H'$ are not parallel, and hence that the intersection $H \cap H'$ contains a non-empty face of the fundamental chamber $C_0$. To every face $F$ of $C_0$ is associated a subgroup (parahoric) $K_F \leq G$ as follows (see also \cite[3.7]{VigProI}). Every face $F$ of $C_0$ corresponds to a face in the apartment $A^{\natural}$ corresponding to $\mathbf{S}$ in the reduced building of $\mathbf{G}$. To every \textit{nonempty} bounded subset $\Omega \subseteq A^{\natural}$ is attached (\cite[4.6.26]{BTII} and \cite[5.1.9]{BTII}) a smooth affine group scheme $\mathfrak{G}_\Omega^0$ over the ring of integers of the local field $F$ ($\mathcal{O}^{\natural}$ in the notation of \cite{BTII}) with generic fiber $G$. In the notation of \cite{BTII} the parahoric $K_F$ corresponding to $F$ is then defined to be (see \cite[5.2.6]{BTII} and the remark before \cite[5.2.9]{BTII})
   \[ K_F = \mathfrak{G}_F^0(\mathcal{O}) \cap G(K^{\natural}) = \mathfrak{G}_F^0(\mathcal{O}^{\natural}) \]
   From \cite[5.2.4]{BTII} it follows that the group $K_F$ is also characterized as the subgroup generated by $Z_0$ and the $X_\alpha$ for all $\alpha$ with $\alpha_0 \in \Phi$ and $F \subseteq \{ \alpha > 0 \}$.

   For $F = C_0$ one has $K_F = I$ and for any two faces $F,F'$ of $C_0$ (see \cite[Corollary 3.21]{VigProI})
   \[ F \subseteq \overline{F'} \quad \Rightarrow\quad K_{F} \supseteq K_{F'}\ \ \text{and}\ \  K_{F}(1) \subseteq K_{F'}(1) \]
   Here $K_{F}(1)$ denotes the pro-$p$ radical of $K_F$. In particular
   \begin{equation}\label{eq:all_contained_in_I(1)} K_F(1) \subseteq I(1) \end{equation}
   for all faces $F$ of $C_0$. Let now $F \neq \emptyset$ be a face of $C_0$ contained in $H \cap H'$. The subset
   \[ \Phi_F := \{ \alpha_0 : \alpha \in \Phi_{\op{af}},\ F \subseteq \{ \alpha = 0 \} \} \subseteq \Phi \]
   is a sub root system of $\Phi$. Moreover, elementary arguments show that $\alpha_0, \alpha'_0 \in \Phi_F$ are part of a basis of $\Phi_F$. Here it is used that $\frac{1}{2}\alpha, \frac{1}{2}\alpha' \not\in \Phi_{\op{af}}$.

   Let $\overline{G}_F = \mathfrak{G}_F^0 \times_{\op{Spec}(\mathcal{O}_F)} \op{Spec}(k)$ be the reduction of the group scheme $\mathfrak{G}_F^0$ and let $\overline{G}_F^{\op{red}}$ denote the quotient of $\overline{G}_F$ by its unipotent radical. Identifying $\overline{G}_F^{\op{red}}$ with the unique Levi subgroup of $\overline{G}_F$ containing the reduction $\overline{S}$ of the canonical model of $S$ over $\mathcal{O}_F$, the group $\overline{G}_F^{\op{red}}$ coincides with the group denoted by the same symbol in \cite[3.5]{TitsCorvallis}. The canonical map
   \[ K_F = \mathfrak{G}_F^0(\mathcal{O}_F) \longrightarrow \overline{G}_F^{\op{red}}(k) \]
   is surjective and its kernel is equal to the pro-$p$-radical $K_F(1)$ by \cite[3.7]{VigHenn}. The group $\overline{G}_F^{\op{red}}$ is a connected reductive group over $k$ and its root system with respect to the maximal split subtorus $\overline{S}$, as a subset of $X^\ast(\overline{S}) = X^\ast(S)$, is equal to $\Phi_F$ (see \cite[3.5.1]{TitsCorvallis}). Moreover, for any $\alpha \in \Phi_{\op{af}}$ with $F \subseteq \{ \alpha = 0 \}$ we have
   \begin{equation}\label{eq:x_a_contained} X_\alpha \subseteq K_F \end{equation}
   and (see \cite[3.5.1]{TitsCorvallis})
   \begin{equation}\label{eq:x_bar_is_u_bar} \overline{X}_\alpha = \overline{U}_{\alpha_0}(k) \end{equation}
   as an equality of subgroups of $\overline{G}_F^{\op{red}}(k) = K_F/K_F(1)$. Here $\overline{U}_{\alpha_0}$ denotes the root subgroup of $\overline{G}_F^{\op{red}}$ corresponding to $\alpha_0 \in \Phi_F$.

   Let now $\alpha,\alpha' \in \Phi_{\op{af}}$ and $u \in X_{\alpha}$, $u' \in X_{\alpha'}$ with $n_H = m(u)$, $n_{H'} = m(u')$ be as in the statement of this lemma. Denote by $\overline{u}, \overline{u'}$ the images of $u,u'$ under $K_F \twoheadrightarrow \overline{G}_F^{\op{red}}(k)$. By \eqref{eq:x_bar_is_u_bar} and the choice of $u,u'$, the elements $\overline{u},\overline{u'}$ are not reduced to the neutral element. Applying \eqref{eq:x_bar_is_u_bar} to the reductions of the elements appearing in the decomposition of $m(u)$ and $m(u')$ respectively, it follows that
   \begin{equation}\label{eq:m_compatible_with_reduction} \overline{m(u)} = m(\overline{u}),\quad \overline{m(u')} = m(\overline{u'}) \end{equation}
   by uniqueness, where $\overline{m(u)}$, $\overline{m(u')}$ denote the images of $m(u),m(u') \in K_F$ under $K_F \twoheadrightarrow \overline{G}_F^{\op{red}}(k)$ and $m(\overline{u})$, $m(\overline{u'})$ are associated to $\overline{u},\overline{u'}$ in the same way as $m(u),m(u')$ are associated to $u,u'$. In fact, $m(\overline{u}),m(\overline{u'})$ are the elements canonically associated to the elements $\overline{u},\overline{u'}$ and the root datum $(Z(\overline{S})(k),(\overline{U}_a(k))_{a \in \Phi_F})$ (in the sense of \cite[6.1.1]{BTI}) by \cite[6.1.2 (2)]{BTI}. Applying Proposition \cite[6.1.8]{BTI} to the root datum given by restricting $(Z(\overline{S})(k), (\overline{U}_a(k))_{a \in \Phi_F})$ to the rank two sub root system $(\Z \alpha_0 + \Z \alpha'_0) \cap \Phi_F$, it follows that
   \begin{equation}\label{eq:braid_relation_in_finite_quotient} m(\overline{u}) m(\overline{u'}) m(\overline{u}) \ldots = m(\overline{u'}) m(\overline{u}) m(\overline{u'}) \ldots \end{equation}
      where the number of factors on both sides equals the order of $s_{\alpha_0} s_{\alpha'_0} \in W_0(\Phi_F) \subseteq W_0(\Phi)$. Let $x$ be an arbitrary point of the face $F$. As $s_H$ and $s_{H'}$ both lie in the stabilizer $\op{Aut}_{\op{aff}}(V)_x$ and the map $\op{Aut}_{\op{aff}}(V) \rightarrow \op{GL}(V)$ restricts to an injection $\op{Aut}_{\op{aff}}(V)_x \hookrightarrow \op{GL}(V)$, the order of the homomorphic image $s_{\alpha_0} s_{\alpha'_0}$ of $s_H s_H' \in \op{Aut}_{\op{aff}}(V)$ is equal to the order of $s_H s_H'$. Hence, it follows from \eqref{eq:braid_relation_in_finite_quotient} and \eqref{eq:m_compatible_with_reduction} that $ab^{-1} \in K_F(1)$ where
   \[ a = m(u)m(u')m(u) \ldots \quad b = m(u')m(u)m(u') \ldots \]
   and the number of factors on the right hand side of each equation equals the order of $s_H s_H'$. On the other hand we have $ab^{-1} \in Z$ since $Z$ equals the kernel of the composition $N \rightarrow W(\mathfrak{H}) \rightarrow W_0$, and $a,b$ are mapped to the same element under $N \rightarrow W(\mathfrak{H})$. Hence
   \[ ab^{-1} \in Z \cap I^{(1)} = Z_0(1) \]
   and the claim follows.
\end{proof}

Now, for a generator $s = \widetilde{s}_H$, $H \in S(C_0)$ of $W_{\op{aff}}$ let $n_s \in W^{(1)} = N/Z_0(1)$ be the class of an element $n_H \in N$ as chosen according to the lemma. Then the lemma states that $W^{(1)}$ together with the choice of these lifts becomes a pro-$p$ Coxeter group in the sense of \cref{def:pro-$p$ Coxeter group}.

We can now finally state the relation between pro-$p$-Iwahori Hecke algebras and generic pro-$p$ Hecke algebras.

\begin{lemma}\label{lem:pro-$p$-Iwahori Hecke algebras are generic pro-$p$ Hecke algebras}
   Given a generator $s = \widetilde{s}_H \in W_{\op{aff}}$, $H \in S(C_0)$, let $\alpha \in \Phi_{\op{af}}$ be the unique affine root with $H = \{ \alpha = 0\}$ and $\frac{1}{2}\alpha \not\in \Phi_{\op{af}}$. Then the following holds.
   \begin{enumerate}
      \item We have
         \[ q_s := \# I n_s I /I = \# I(1) n_s I(1)/I(1) = \# \overline{X}_{\alpha} = q^{d(v)} \]
   where $d(v) \in \N$ denotes the integer associated to the vertex $v$ of the local Dynkin diagram $\Delta(\Phi_{\op{af}})$ (see \cite[1.8]{TitsCorvallis}) corresponding to $H$.
\item Let $F$ be any face of $C_0$ contained in $H$, let $\overline{G}_{F,s}$ be the subgroup of $\overline{G}^{\op{red}}_F$ generated by $\overline{X}_\alpha$ and $\overline{X}_{-\alpha}$ (cf. proof of \cref{lem:lifting_braid_relations}) and let
   \[ Z_{k,s} := \overline{G}_{F,s} \cap Z_k \leq Z_k \]
   Then $Z_{k,s}$ is independent of the choice of $F$.
\item Let
   \[ c_s := \sum_{t \in Z_{k,s}} c_s(t)t \in R[Z_k] \]
   with
   \[ c_s(t) := \# \left( n_s \overline{X}_\alpha n_s \cap \overline{X}_\alpha n_s t \overline{X}_\alpha \right) \]
   where the intersection is taken inside $\overline{G}^{\op{red}}_F(k)$ (for any $F$ as in (ii)), and $n_s$ denotes (by abuse of notation) the image of the element $n_H = m(u) \in X_\alpha$ under $X_\alpha \twoheadrightarrow \overline{X}_\alpha \subseteq \overline{G}^{\op{red}}_F(k)$.

   Then the families $(q_s)_s$, $(c_s)_s$ fulfill condition \eqref{eq:condstar} with respect to the pro-$p$ Coxeter group $W^{(1)}$ defined above, and the $R$-linear isomorphism
   \[ \mathcal{H}^{(1)}_R(q_s,c_s) \stackrel{\sim}{\longrightarrow} H_R(G,I^{(1)}),\quad T_w \longmapsto T_w \]
   is a morphism of $R$-algebras. In addition, if the order of $Z_{k,s}$ equals $q_s - 1$ (e.g. if $\mathbf{G}$ is split), the coefficients $c_s(t)$ are equal to $1$ for all $t$.
   \end{enumerate}
\end{lemma}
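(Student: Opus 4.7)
The plan is to prove the three claims successively, and then derive the algebra isomorphism and the remark about the split case.

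For (i), I would use the Iwahori factorization of $I(1)$. Fix an ordering on the affine roots that are strictly positive on $C_0$. Then $I(1)$ decomposes uniquely as an ordered product of $Z_0(1)$ and the filtration pieces $X_\beta$ (for $\beta$ an affine root strictly positive on $C_0$), with the smallest piece in the direction of $\alpha$ being $X_\alpha$ itself (by the choice $\frac{1}{2}\alpha \notin \Phi_{\op{af}}$ and the fact that $H = \{\alpha = 0\}$ is a wall of $C_0$). Conjugation by $n_s$ realizes the reflection $s_{\alpha_0}$ on the root data and sends $X_\beta$ to $X_{s_\alpha \cdot \beta}$. For every $\beta \neq \alpha$ strictly positive on $C_0$, $s_\alpha \cdot \beta$ is again strictly positive on $C_0$ (since $H$ is a wall), so those filtration pieces are permuted within $I(1)$; only $X_\alpha$ is sent to $X_{-\alpha}$. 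It follows that $n_s I(1) n_s^{-1} \cap I(1)$ agrees with $I(1)$ except in the $\alpha$-direction, where $X_\alpha$ is replaced by $X_{\alpha+\varepsilon}$ for $\varepsilon > 0$ sufficiently small. Hence $[I(1) : n_s I(1) n_s^{-1} \cap I(1)] = [X_\alpha : X_{\alpha+\varepsilon}] = \#\overline{X}_\alpha$, giving the count of left $I(1)$-cosets in $I(1) n_s I(1)$. The same calculation with $I$ in place of $I(1)$ gives the equality of the two indices, and the identification $\#\overline{X}_\alpha = q^{d(v)}$ is the content of \cite[1.8]{TitsCorvallis}.

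For (ii), I would show that $\overline{G}_{F,s}$ depends only on $\alpha$ and not on $F$. The subgroup $\overline{G}_{F,s} \subseteq \overline{G}_F^{\op{red}}(k)$ is the group of $k$-points of the closed reductive subgroup of $\overline{G}_F^{\op{red}}$ generated by the root subgroups $\overline{U}_{\alpha_0}$ and $\overline{U}_{-\alpha_0}$ (and $\overline{U}_{\pm 2\alpha_0}$ if $2\alpha \in \Phi_{\op{af}}$). This is a connected reductive group of semisimple rank one, isomorphic to $\op{SL}_2$, $\op{PGL}_2$, or the quasi-split $\op{SU}_3$ (over $k$), together with its canonical embedding into the centralizer of the coroot kernel of $\alpha_0$ in $\overline{S}$. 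Both this group and its natural embedding can be obtained as the reduction of a closed smooth group subscheme $\mathfrak{H}_{\alpha} \subseteq \mathfrak{G}_F^0$ constructed functorially from $\alpha$ (essentially the Bruhat-Tits group scheme of the rank-one Levi associated to $\alpha$). Hence the intersection $\overline{G}_{F,s} \cap Z_k$ equals the image in $Z_k$ of $\mathfrak{H}_\alpha(\mathcal{O}) \cap Z$, which is manifestly independent of the choice of $F$.

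For (iii), the verification of \eqref{eq:condstar} splits into two parts. The equality $q_s = q_t$ when $s \sim t$ in $W$ follows from (i) together with the fact that the integer $d(v)$ is an invariant of the $W$-orbit of the wall (since $W$ acts on the local Dynkin diagram). The equation $(n_s w n_t^{-1} w^{-1}) w(c_t) = c_s$ reduces, using the equivariance relation $\eqref{eq:m_equivariance}$ and the definition of $c_s$ in terms of intersections inside $\overline{G}_F^{\op{red}}(k)$, to a purely combinatorial statement in the (rank-one) group $\overline{G}_{F,s}$ after transport of structure along the conjugation-by-$w$ isomorphism of the rank-one subgroups attached to the conjugate walls. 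For the algebra isomorphism $\mathcal{H}^{(1)}_R(q_s,c_s) \to H_R(G,I^{(1)})$, by \cref{prop:first_presentation_of_hecke} it suffices to verify the braid-type relation $T_w T_{w'} = T_{ww'}$ when $\ell(ww') = \ell(w)+\ell(w')$ (which follows from a standard Iwahori-factorization argument showing that the double cosets multiply cleanly: $I(1) w I(1) \cdot I(1) w' I(1) = I(1) ww' I(1)$ with trivial multiplicity), and the quadratic relation $T_{n_s}^2 = q_s T_{n_s^2} + T_{n_s} c_s$. For the latter, I would compute $T_{n_s} \ast T_{n_s}$ by enumerating the $q_s$ left $I(1)$-cosets of $I(1) n_s I(1)$ via (i) through representatives in $\overline{X}_\alpha$ (lifted to $X_\alpha$), and observe that pairwise products of these $q_s^2$ representatives lie either in $I(1) n_s^2 I(1) = n_s^2 I(1)$ (giving the contribution $q_s T_{n_s^2}$, one summand for each coset) or in $I(1) n_s I(1)$; in the latter case the multiplicity decomposition by the residual $Z_k$-component reproduces exactly the definition of $c_s(t)$ via intersections in $\overline{G}_F^{\op{red}}(k)$. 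Finally, if $\#Z_{k,s} = q_s - 1 = \#\overline{X}_\alpha - 1$, then the rank-one group $\overline{G}_{F,s}$ is forced to be split $\op{SL}_2$ or $\op{PGL}_2$ over $k$, and the Bruhat decomposition of this group implies that each non-identity element of the diagonal torus arises with multiplicity exactly one in the expansion, giving $c_s(t) = 1$ for all $t \in Z_{k,s}$.

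The main obstacle in this program is part (ii): making precise the claim that the rank-one reductive subgroup $\overline{G}_{F,s}$, together with its intersection with $Z_k$, is intrinsically attached to $\alpha$ requires either an explicit Bruhat-Tits-style construction of a subordinate group scheme $\mathfrak{H}_\alpha$ and a careful compatibility argument across varying faces $F$, or else a case-by-case analysis using the classification of absolutely almost simple groups of relative rank one. A secondary technical point lies in computing the convolution $T_{n_s} \ast T_{n_s}$ so as to match the abstract coefficients $c_s(t)$ with the concrete structure constants of the finite reductive quotient; this works cleanly because the entire computation takes place inside $K_F/K_F(1) = \overline{G}_F^{\op{red}}(k)$ thanks to \eqref{eq:all_contained_in_I(1)}, which collapses the infinite-dimensional convolution to a finite one.
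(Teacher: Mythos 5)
The paper's proof is almost entirely a citation proof, whereas you are attempting a from-scratch argument: for (i), the paper simply refers to \cite[Corollary 3.30]{VigProI}; for (ii) it sketches an argument (citing \cite[Propositions 3.26, 4.4]{VigProI} for the details); and for (iii) it invokes a clever logical shortcut---Vignéras showed (\cite[Theorem 4.7]{VigProI}) that condition \eqref{eq:condstar} is not merely \emph{sufficient} but \emph{necessary} for the existence of an algebra structure on the free $R$-module over $W^{(1)}$ satisfying \eqref{eq:heckecond1} and \eqref{eq:heckecond2}, so that \eqref{eq:condstar} follows automatically from the fact that $H_R(G,I(1))$ satisfies those two relations (by \cite[Propositions 4.1, 4.4]{VigProI}), with no transport-of-structure computation needed.

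Your argument for (i), via Iwahori factorization and the action of conjugation by $n_s$ on the affine root filtration pieces $X_\beta$, is the right idea and essentially unrolls the content of \cite[Corollary 3.30]{VigProI}. Your sketch for the quadratic relation in (iii), reducing the convolution $T_{n_s} \ast T_{n_s}$ to a finite computation inside $\overline{G}_F^{\op{red}}(k)$, is also a plausible route, though fully justifying that the structure constants of the finite reductive quotient reproduce $c_s(t)$ as defined would reproduce a large part of \cite[Proposition 4.4]{VigProI}.

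The real gap---which you flag yourself---is (ii). Neither of your two proposed routes (constructing a Bruhat--Tits group scheme $\mathfrak{H}_\alpha$ intrinsically attached to $\alpha$, or a case-by-case analysis of relative-rank-one groups) is carried out, and both are heavier than necessary. The paper does something genuinely different and cleaner: for faces $F' \subseteq \overline{F}$ of $C_0$ contained in $H$, the inclusion of parahorics $K_F \subseteq K_{F'}$ induces an embedding $K_{F,k} \hookrightarrow K_{F',k}$ onto a Levi subgroup $M_F$ of a parabolic of $K_{F',k}$, and this embedding is the identity on $Z_k$ and on the root groups $\overline{U}_a(k)$ for $a \in \Phi_F$. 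Hence the subgroup $\overline{G}_{F,s}$ generated by $\overline{X}_{\pm\alpha}$ maps onto $\overline{G}_{F',s}$ by an isomorphism that is the identity on $Z_k$, which gives $\overline{G}_{F,s}\cap Z_k = \overline{G}_{F',s}\cap Z_k$ immediately; the general case then follows since any two faces of $C_0$ contained in $H$ are linked through a common face in the closure of both. This argument stays entirely within the special fibers of the parahorics already at hand, avoiding the construction of an auxiliary group scheme. I would recommend replacing your sketch for (ii) with this parahoric-Levi argument, and replacing the transport-of-structure verification of \eqref{eq:condstar} in (iii) with the necessity observation from \cite[Theorem 4.7]{VigProI}, which together shorten the proof considerably.
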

\begin{proof}
   ad (i): For the first equality and second equality we refer to \cite[Corollary 3.30]{VigProI}, recalling that (see \eqref{eq:x_bar_is_u_bar}) $\overline{X}_\alpha$ is naturally identified with the group $U_{\alpha_0}(k)$ (denoted $U_{\alpha,\mathfrak{F},k}$ in \cite{VigProI}). The last equality follows directly from the definition of the integer $d(v)$ as the sum of the dimensions of $k$-vector spaces (cf. \cite[1.6]{TitsCorvallis}) $\overline{X}_\alpha / \overline{X}_{2\alpha}$ and $X_{2\alpha}$.

   ad (ii): The independence of $Z_{k,s}$ from the choice of $F$ is implicit in the proof of \cite[Proposition 4.4]{VigProI}. It can also be seen as follows (cf. \cite[Proposition 3.26]{VigProI}). Given two (nonempty) faces $F,F'$ of $C_0$ with $F' \subseteq \overline{F}$, we have an inclusion $K_{F} \subseteq K_{F'}$. The image of $K_F$ in $K_{F',k}$ under the natural map $K_{F'} \twoheadrightarrow K_{F',k} = K_{F'}/K_{F'}(1)$ is equal to the subgroup $M_F$ generated by $Z_k$ and the groups $\overline{U}_a(k)$, $a \in \Phi_F \subseteq \Phi_{F'}$. Moreover, $M_F$ appears as a Levi subgroup of a parabolic subgroup $Q_F = M_F \ltimes U_F$, such that the inverse image of the unipotent radical $U_F$ under $K_F \rightarrow K_{F',k}$ equals $K_F(1)$. Hence, we have an induced injective map
   \[ K_{F,k} \stackrel{\sim}{\longrightarrow} M_F \subseteq K_{F',k} \]
   which is the identity on $Z_k$ and the $\overline{U}_a(k)$, $a \in \Phi_F$. In particular, the subgroup $\overline{G}_{F',s}$ of $\overline{G}^{\op{red}}_{F'} = K_{F',k}$ generated by $\overline{X}_\alpha = \overline{U}_{\alpha_0}(k)$ and $\overline{X}_{-\alpha} = \overline{U}_{-\alpha_0}(k)$ equals the image of $\overline{G}_{F,s}$ under the embedding $K_{F,k} \hookrightarrow K_{F',k}$. As this embedding is the identity on $Z_k$, it follows that
   \[ \overline{G}_{F,s} \cap Z_k = \overline{G}_{F',s} \cap Z_k \]

   ad (iii): As Vignéras has observed \cite[Theorem 4.7]{VigProI}, the condition \eqref{eq:condstar} is not only sufficient but also necessary for the existence of an algebra structure on the free $R$-module over $W^{(1)}$ satisfying \eqref{eq:heckecond1} and \eqref{eq:heckecond2}. As the latter two conditions are satisfied for $H_R(G,I(1))$ by \cite[Proposition 4.1]{VigProI} and \cite[Proposition 4.4]{VigProI}, the first claim follows. For the proof of the second claim we refer to \cite[Proposition 4.4]{VigProI}.
\end{proof}

We have therefore now recognized $H_R(G,I(1))$ as an affine pro-$p$ Hecke algebra. Since in this case the abelian group $T = Z_k$ underlying $W^{(1)}$ is finite, all the structure results of \cref{thm:structure of affine pro-$p$ Hecke algebras} hold unconditionally for $H_R(G,I(1))$ (cf. \cref{rmk:finiteness_of_orbits}). In particular the center of $H_R(G,I(1))$ is finitely generated as an $R$-algebra, and $H_R(G,I(1))$ is module-finite over its center.
\end{example}

\begin{example}[Affine Yokonuma-Hecke algebras]\label{ex:affine-yokonuma-hecke-algebra}
   The Yokonuma-Hecke algebras $Y_{d,n}$ of \cref{ex:yokonuma} have a natural variant, the \textit{affine Yokonuma-Hecke algebras} $Y_{d,n}^{\op{aff}}$. According to \cite[Introduction]{ChlouSe} these algebras have first been introduced by Juyumaya and Lambropoulou \cite{JuLa} under the name of `\textit{$d$-th framization of the Iwahori-Hecke algebra of $B$-type}'. Later they were studied by Chlouveraki and Poulain d'Andecy \cite{ChlouPou} under the name `\textit{affine Yokonuma-Hecke algebra}'. The different terminologies reflect the two different ways in which $Y_{d,n}^{\op{aff}}$ can be seen as modifications of other algebras. This is visualized in the following commutative diagram (defined down below)
   \begin{equation}\label{eq:framization_and_affinization} \begin{xy} \xymatrix{ Y_{d,n}^{\op{aff}} \ar[d] \ar[r] & H_n^{\op{aff}} \ar[d]^\pi \\ Y_{d,n} \ar[r] & H_n } \end{xy} \end{equation}
   were the left column is the `framization' of the right column, and the upper row is the `affinization' of the lower one.
   
   Chlouveraki and Sécherre have recognized \cite{ChlouSe} the algebra $Y_{d,n}^{\op{aff}}$ as (in our terminology) generic pro-$p$ Hecke algebras for the split pro-$p$ Coxeter group $W^{(1)} = T \rtimes W$, $T = (\Z/d\Z)^n$, $W = \Z^n \rtimes S_n$. In fact, we will see in a moment that they are affine pro-$p$ Hecke algebras in the sense of \cref{def:affine_pro-$p$_Hecke_algebra}.
   
   Let us first recall the definition (cf. \cite[3.1]{ChlouPou}) of the affine Yokonuma-Hecke algebras. For integers $d,n \geq 1$, the algebra $Y_{d,n}^{\op{aff}}$ is the algebra over $R = \C[u^{\pm 1},v]$ generated by elements
   \[ g_1,\ldots{},g_{n-1}, t_1,\ldots{},t_n, X_1, X_1^{-1} \]
   subject to the relations
   \begin{alignat}{2}
      \tag{1} g_i g_j & = g_j g_i & \quad & \text{for all $i,j = 1,\ldots{},n-1$ such that $|i-j| > 1$} \\
      \tag{2} g_i g_{i+1} g_i & = g_{i+1} g_i g_{i+1} & \quad & \text{for all $i = 1,\ldots{},n-2$} \\
      \tag{3} t_i t_j & = t_j t_i & \quad & \text{for all $i,j = 1,\ldots{},n$} \\
      \tag{4} g_i t_j & = t_{s_i(j)} g_i & \quad & \text{for all $i = 1,\ldots{},n-1$ and $j = 1,\ldots{},n$} \\
      \tag{5} t_j^d & = 1 & \quad & \text{for all $j = 1,\ldots{},n$} \\
      \tag{6} X_1 X_1^{-1} & = X_1^{-1} X_1 = 1 & & \\
      \tag{7} X_1 g_1 X_1 g_1 & = g_1 X_1 g_1 X_1 & & \\
      \tag{8} X_1 g_i & = g_i X_1 & \quad & \text{for all $i = 2,\ldots{},n-1$} \\
      \tag{9} X_1 t_j & = t_j X_1 & \quad & \text{for all $j = 1,\ldots{},n$} \\
      \tag{10} g_i^2 & = u^2 + ve_i g_i & \quad & \text{for all $i = 1,\ldots{},n-1$}
   \end{alignat}
   where as in \cref{ex:yokonuma} we let
   \[ e_i = \frac{1}{d}\sum_{0 \leq s < d} (t_i/t_{i+1})^s \]
   Note that this definition of the Yokonuma-Hecke algebra slightly differs from the one given in \cite{ChlouPou}, as we are considering $Y_{d,n}^{\op{aff}}$ as an algebra over the ring $\C[u^{\pm},v]$ in two formal variables. The algebra of \cite{ChlouPou} is obtained by specializing $Y_{d,n}^{\op{aff}}$ along the ring homomorphism $\C[u^{\pm 1},v] \rightarrow \C[q^{\pm 1}]$ sending $u \mapsto 1$ and $v \mapsto q-q^{-1}$.

   Let us now recognize $Y_{d,n}^{\op{aff}}$ as an affine pro-$p$ Hecke algebra. More precisely, let us show that $Y_{d,n}^{\op{aff}}$ is isomorphic to a generic pro-$p$ Hecke algebra $\mathcal{H}^{(1)}$ for the pro-$p$ Coxeter group $W^{(1)} = (\Z/d\Z)^n \rtimes W$, where $W = \Z^n \rtimes S_n$ is the affine extended Coxeter group of \cref{ex:affine_coxeter_group}(ii), (cf. also \cref{ex:affine_hecke_algebra_of_gl_n}) acting on $(\Z/d\Z)^n$ by permuting the coordinates via the projection $W \twoheadrightarrow W_0 = S_n$. By \cref{prop:second_presentation_of_hecke}, we have an isomorphism $\mathcal{H}^{(1)} \simeq R[\mathfrak{A}(W^{(1)})]/I$, where $I$ is the ideal generated by the elements $T_{n_s}^s - a_s T_{n_s^2} - b_s T_{n_s}$. It therefore suffices to see that $Y_{d,n}^{\op{aff}}$ is a quotient of $R[\mathfrak{A}(W^{(1)})]$ by the same ideal $I$.
   
   For this we need the following `framed version' of \cref{lem:presentation_of_affine_braid_group}, providing two descriptions of the \textbf{$d$-modular framed affine braid group}.

\begin{lemma}\label{lem:affine-pro-p-braid}
   Let $\widetilde{\mathfrak{A}}_{d,n}^{\op{aff},(1)}$ denote the group generated by elements
   \[ g_1,\ldots{},g_{n-1},t_1,\ldots,{}t_n,X_1 \]
   subject to the relations (1)-(9) above, and let $\mathfrak{A}_{d,n}^{\op{aff},(1)} = \mathfrak{A}(W^{(1)})$ with $W^{(1)}$ as above. Then there are inverse isomorphisms $\Phi: \widetilde{\mathfrak{A}}_{d,n}^{\op{aff},(1)} \rightarrow \mathfrak{A}_{d,n}^{\op{aff},(1)}$ and $\Psi: \mathfrak{A}_{d,n}^{\op{aff},(1)} \rightarrow \widetilde{\mathfrak{A}}_{d,n}^{\op{aff},(1)}$ determined by
   \begin{align*}
      \Phi(g_i) & = T_{s_i} \quad i = 1,\ldots{},n-1 \\
      \Phi(t_i) & = T_{t_i} \quad i = 1,\ldots{},n \\
      \Phi(X_1) & = T_{-e_1}^{-1}
   \end{align*}
   and
   \begin{align*}
      \Psi(T_{s_i}) & = g_i \quad i = 1,\ldots{},n-1 \\
      \Psi(T_{s_0}) & = \Psi(T_u) g_1 \Psi(T_u)^{-1} \\
      \Psi(T_u) & = g_{n-1} \ldots g_1 X_1 \\
      \Psi(T_{t_i}) & = t_i \quad i = 1,\ldots{},n
   \end{align*}
   where $t_1,\ldots{},t_n$ denote the canonical generators of $(\Z/d\Z)^n$.
\end{lemma}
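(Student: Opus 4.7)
The strategy mirrors exactly that of \cref{lem:presentation_of_affine_braid_group}; the genuinely new ingredient is the abelian factor $T = (\Z/d\Z)^n$, and I must verify that $\Phi$ and $\Psi$ respect the relations governing how $T$ interacts with the remaining generators. As a preparatory step, I would write down a presentation of $\mathfrak{A}(W^{(1)})$ using the decomposition $W^{(1)} = T \rtimes (W_{\op{aff}} \rtimes \Omega)$: iterating the isomorphism $\mathfrak{A}(G \rtimes H) \simeq \mathfrak{A}(G) \rtimes H$ that is valid whenever $H$ is contained in the length-zero part and permutes the generators of $G$, one obtains generators $\{T_s\}_{s \in S} \cup \{T_{t_j}\}_{j=1}^n \cup \{T_u\}$ subject to the braid relations for the $T_s$, the commutation and order-$d$ relations among the $T_{t_j}$, the semidirect-product relations $T_s T_{t_j} = T_{s(t_j)} T_s$ and $T_u T_{t_j} T_u^{-1} = T_{u(t_j)}$, and $T_u T_{s_i} T_u^{-1} = T_{s_{i-1}}$ (indices mod $n$), as in equation \eqref{eq:Omega_acting_on_S}.

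Next, well-definedness of $\Phi$: most of the relations (1)--(9) translate directly. Relations (1), (2) are the braid relations and hold by length-additivity; (3), (5) say that the $T_{t_j}$ generate a copy of $T = (\Z/d\Z)^n$; (4) is the semidirect-product relation $s_i t_j s_i^{-1} = t_{s_i(j)}$; (6) is trivial. Relations (7) and (8) involve only the subalgebra generated by the $T_{s_i}$ and $T_{-e_1}^{\pm 1}$, so their verification is identical to the corresponding step in the proof of \cref{lem:presentation_of_affine_braid_group}. The crucial new relation is (9), $X_1 t_j = t_j X_1$, which holds because $-e_1 \in \Z^n \le W$ acts trivially on $T$ through $W \twoheadrightarrow W_0 = S_n$, so $-e_1$ and $t_j$ commute in $W^{(1)}$; as the product has length equal to the sum of lengths, this commutation lifts to $T_{-e_1} T_{t_j} = T_{t_j} T_{-e_1}$ in $\mathfrak{A}(W^{(1)})$.

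For $\Psi$, I would verify the defining relations of $\mathfrak{A}(W^{(1)})$ listed above in the target $\widetilde{\mathfrak{A}}_{d,n}^{\op{aff},(1)}$. The braid relations among $\Psi(T_{s_i})$ for $i \ge 1$, the identity $\Psi(T_u) g_i \Psi(T_u)^{-1} = g_{i-1}$, and the consistency of $\Psi(T_{s_0}) := \Psi(T_u) g_1 \Psi(T_u)^{-1}$ are exactly the computations already performed for \cref{lem:presentation_of_affine_braid_group}, provided one can commute $X_1$ past any $t_j$ — which is precisely relation (9). What is left is checking the conjugation of the $t_j$'s: $g_i t_j g_i^{-1} = t_{s_i(j)}$ is relation (4), while $\Psi(T_u) t_j \Psi(T_u)^{-1} = t_{u(j)}$ reduces, after commuting $X_1$ across the $t_j$'s via (9), to iterated application of (4). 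Finally $\Phi \circ \Psi = \op{id}$ and $\Psi \circ \Phi = \op{id}$ on generators: on the `affine-braid' part this is inherited verbatim from \cref{lem:presentation_of_affine_braid_group}, while on $T$ both composites are manifestly the identity by construction. The only real obstacle is the preparatory step of writing down and justifying the iterated semidirect-product presentation of $\mathfrak{A}(W^{(1)})$; once this is in place, every verification either reduces to the unframed lemma or to the elementary observation that translations in $W$ commute with all elements of $T$.
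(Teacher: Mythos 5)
Your plan is essentially the paper's proof, unfolded: the paper's entire argument is the one-liner that $\mathfrak{A}(W^{(1)}) = T \rtimes \mathfrak{A}(W)$ and $\widetilde{\mathfrak{A}}_{d,n}^{\op{aff},(1)} = T \rtimes \widetilde{\mathfrak{A}}_n^{\op{aff}}$, after which \cref{lem:presentation_of_affine_braid_group} finishes. Your relation-by-relation verification of $\Phi$ and $\Psi$ is a correct (if much longer) expansion of the same idea, and your isolation of relation (9) --- commutation of $X_1$ past the $t_j$ --- as the genuinely new ingredient, lifted to the braid group via length-additivity with the length-zero element $t_j$, is exactly right.

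One point in the preparatory step deserves a second look. The template you quote, $\mathfrak{A}(G \rtimes H) \simeq \mathfrak{A}(G) \rtimes H$ valid when $H$ lies in the length-zero part and permutes the distinguished generators of $G$, handles the inner decomposition $W = W_{\op{aff}} \rtimes \Omega$, but it does \emph{not} apply to the outer decomposition $W^{(1)} = T \rtimes W$: there the length-zero factor $T$ is the \emph{normal} factor, so with $G = T$, $H = W$ the hypothesis ``$H$ is length-zero'' fails. (And if one tried to force it through anyway, since $T$ is length-zero one has $\mathfrak{A}(T) = T$, so the template would produce $T \rtimes W$ rather than the needed $T \rtimes \mathfrak{A}(W)$.) What the outer level actually requires is the dual statement $\mathfrak{A}(T \rtimes G) \simeq T \rtimes \mathfrak{A}(G)$ for $T$ abelian, normal, and length-zero, with the split extension $W^{(1)} = T \rtimes W$ furnishing the section $\mathfrak{A}(W) \to \mathfrak{A}(W^{(1)})$. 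This is just as elementary as the template you state, but it is a different fact. Once that substitution is made, your derived presentation of $\mathfrak{A}(W^{(1)})$ is correct and the rest of your verification of $\Phi$, $\Psi$, and their composites goes through as you describe.
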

\begin{proof} Follows immediately from $\mathfrak{A}(W^{(1)}) = T \rtimes \mathfrak{A}(W)$, $\widetilde{\mathfrak{A}}^{\op{aff},(1)}_{d,n} = T \rtimes \widetilde{\mathfrak{A}}_n^{\op{aff}}$ and \cref{lem:presentation_of_affine_braid_group} (where the $T_i$ have to be replaced by $g_i$).
\end{proof}

From the above lemma, it follows readily that the affine Yokonuma-Hecke algebra $Y_{d,n}^{\op{aff}}$ is the quotient of the group algebra $R[\mathfrak{A}_{d,n}^{\op{aff},(1)}]$ by the ideal generated by the relations in (10). This doesn't yet prove that $Y_{d,n}^{\op{aff}}$ is isomorphic to the generic pro-$p$ Hecke algebra $\mathcal{H}^{(1)}$ (with the obvious parameters $a_s$, $b_s$), since we still need to show that the latter exists. Moreover, carefully comparing the relations in (10) with the generators of the ideal $I$ realizing the isomorphism $\mathcal{H}^{(1)} \simeq R[\mathfrak{A}(W^{(1)})]/I$ of \cref{prop:second_presentation_of_hecke}, one notes that $I$ is generated by one extra relation not appearing in (10). However, as we will see now, this extra relation is redundant.

\begin{theorem}\label{thm:affine_yokonuma_are_pro-$p$}
   Let $W^{(1)} = T \rtimes W$, $T = (\Z/d\Z)^n$, $n_s = s$, $W = \Z^n \rtimes S_n$ be the split pro-$p$ Coxeter group constructed above. For $i = 0,\ldots{},n-1$ put
   \[ a_{s_i} := u^2 \in R,\quad b_{s_i} := \frac{v}{d} \sum_{s \in \Z/d\Z} (t_i/t_{i+1})^s \in R[T] \]
   where $t_0 := t_n$ by convention and the group $T$ is written multiplicatively. Then the following holds.
   \begin{enumerate}
      \item The parameter families $(a_s)_{s \in S}$, $(b_s)_{s \in S}$ defined above satisfy condition \eqref{eq:condstar} of \cref{thm:exhecke}, and hence the generic pro-$p$ Hecke algebra $\mathcal{H}^{(1)}_{d,n} := \mathcal{H}^{(1)}(a_s,b_s)$ for these parameters exists.
      \item There is an isomorphism of $R$-algebras
         \[ Y_{d,n}^{\op{aff}} \stackrel{\sim}{\longrightarrow} \mathcal{H}^{(1)}_{d,n} \]
         determined by
         \begin{align*}
            g_i & \mapsto T_{s_i} \\
            t_i & \mapsto T_{t_i} \\
            X_1 & \mapsto T_{-e_1}^{-1}
         \end{align*}
         where the element $T_{-e_1}^{-1} \in \mathcal{H}^{(1)}_{d,n}$ is well-defined since the parameters $a_s$ are invertible in $R$ (cf. \cref{sub:Presentations of generic pro-$p$ Hecke algebras via (generalized) braid groups}).
      \item Via the structure of an affine extended Coxeter group on $W$ from \cref{ex:affine_coxeter_group}, $\mathcal{H}^{(1)}_{d,n}$ (and hence $Y_{d,n}^{\op{aff}}$) becomes an affine pro-$p$ Hecke algebra. Moreover, as the group $T$ is finite, all results of \cref{thm:structure of affine pro-$p$ Hecke algebras} apply without restriction (cf. \cref{rmk:finiteness_of_orbits}). In particular $\mathcal{H}^{(1)}_{d,n}$ is finite as a module over its center, and the latter is given by the invariants
         \[ Z(\mathcal{H}^{(1)}_{d,n}) = \left(\mathcal{A}^{(1)}_\mathfrak{o}\right)^W \]
         of the subalgebra
         \[ \mathcal{A}^{(1)}_\mathfrak{o} = \bigoplus_{x \in X^{(1)}} R \widehat{\theta}_\mathfrak{o}(x) \subseteq \mathcal{H}^{(1)}_{d,n} \]
         where $\mathfrak{o}$ is any spherical orientation of $W$. Since the parameters $a_s$ are units in $R$, the unnormalized Bernstein map $\theta_\mathfrak{o}$ exists and provides an isomorphism
         \[ R[X^{(1)}] \stackrel{\sim}{\longrightarrow} \mathcal{A}^{(1)}_\mathfrak{o},\quad x \mapsto \widehat{\theta}_\mathfrak{o}(x) \quad (x \in X^{(1)}) \]
         Since the group $X^{(1)} = T \times \Z^n$ is commutative, the algebra $\mathcal{A}^{(1)}_\mathfrak{o}$ is also commutative and hence by \cref{thm:structure of affine pro-$p$ Hecke algebras} is equal to its centralizer in $\mathcal{H}^{(1)}_{d,n}$.
   \end{enumerate}
\end{theorem}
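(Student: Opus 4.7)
For (i), I would use the simplification from \cref{rmk:condstar}. Because $W^{(1)} = T \rtimes W$ is a split extension and we take the canonical lifts $n_s = s \in W \subseteq W^{(1)}$, every element of the form $n_s w n_t^{-1} w^{-1}$ (whenever this projects to $1$ in $W$) already equals $1$ in $W^{(1)}$. Hence condition \eqref{eq:condstar} reduces to the two statements that $a_s = a_t$ for $s,t \in S$ conjugate under $W$ (trivially true since all $a_{s_i} = u^2$) and $w(b_{s_j}) = b_{s_i}$ for \emph{some} $w \in W$ with $s_i \pi(w) = \pi(w) s_j$, for each pair of conjugate generators. Since all $s_i$ are conjugate in the extended affine Weyl group of type $\widetilde{A}_{n-1}$, it suffices to treat the two basic cases: for $s_i, s_{i+1}$ with $1 \leq i \leq n-2$ one can choose a transposition $w \in W_0 = S_n$ sending the pair $\{i+1,i+2\}$ to $\{i,i+1\}$ and then directly verify that $w(b_{s_{i+1}}) = b_{s_i}$; and for $s_0, s_1$ one takes $w = u$, using the relation $u s_1 u^{-1} = s_0$ from \eqref{eq:Omega_acting_on_S} and computing that $u$ permutes the indices as $i \mapsto i-1 \pmod{n}$, so that $u(b_{s_1}) = \frac{v}{d}\sum_s (t_n/t_1)^s = b_{s_0}$. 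This establishes (i).

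For (ii), since the parameters $a_s = u^2$ are units in $R = \C[u^{\pm 1},v]$, \cref{prop:second_presentation_of_hecke} gives the presentation
\[ \mathcal{H}^{(1)}_{d,n} \simeq R[\mathfrak{A}(W^{(1)})]/\mathfrak{a}, \]
where $\mathfrak{a}$ is the ideal generated by $T_{n_s}^2 - a_s T_{n_s^2} - T_{n_s} b_s$ for $s \in S = \{s_0, \ldots, s_{n-1}\}$. Substituting $n_s = s$, $n_s^2 = 1$, this reads $T_{s_i}^2 = u^2 + b_{s_i} T_{s_i}$ after verifying (as in the remark following \cref{thm:exhecke}) that the order is irrelevant because $s_i(b_{s_i}) = b_{s_i}$. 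Combining with the presentation of $R[\mathfrak{A}(W^{(1)})]$ from \cref{lem:affine-pro-p-braid} yields a presentation in terms of $g_i, t_j, X_1$ with relations (1)--(9) and the $n$ quadratic relations indexed by $s_0, \ldots, s_{n-1}$. The $n-1$ relations for $s_1, \ldots, s_{n-1}$ are exactly (10). The remaining quadratic relation for $s_0$ is redundant: writing $T_{s_0} = h g_1 h^{-1}$ with $h = \Psi(T_u) = g_{n-1}\cdots g_1 X_1$, the relation becomes $g_1^2 = u^2 + h^{-1}(b_{s_0}) g_1$, and the explicit computation $h^{-1}(b_{s_0}) = v e_1$ (using that $\bar{u}^{-1}$ acts as the cyclic shift $i \mapsto i+1 \bmod n$ on indices) recovers precisely relation (10) for $g_1$. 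This gives the claimed isomorphism with $Y_{d,n}^{\op{aff}}$.

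For (iii), the structure of an affine extended Coxeter group on $W = \Z^n \rtimes S_n$ constructed in \cref{ex:affine_coxeter_group}(ii) coincides with the extended Coxeter group underlying the pro-$p$ Coxeter group $W^{(1)} = T \rtimes W$, so $\mathcal{H}^{(1)}_{d,n}$ is affine in the sense of \cref{def:affine_pro-$p$_Hecke_algebra}. Since $T = (\Z/d\Z)^n$ is finite, the finiteness hypotheses needed in \cref{thm:structure of affine pro-$p$ Hecke algebras} are automatic (see \cref{rmk:finiteness_of_orbits}), yielding the center description $Z(\mathcal{H}^{(1)}_{d,n}) = (\mathcal{A}^{(1)}_\mathfrak{o})^W$ and module-finiteness. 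Commutativity of $\mathcal{A}^{(1)}_\mathfrak{o}$ follows from commutativity of $X^{(1)} = T \times \Z^n$ via \cref{rmk:comm subalgebras}, and the identification with $R[X^{(1)}]$ follows from the same remark because the $a_s$ are invertible.

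\textbf{Main obstacle.} The conceptual content is light; everything follows from the general machinery once the axioms are verified. The only place where a nontrivial calculation is unavoidable is the conjugation computation showing that the quadratic relation for $s_0$ is redundant in (ii) --- this is essentially the verification that the rewriting $\Psi(T_{s_0}) = h g_1 h^{-1}$ intertwines the parameters $b_{s_0}$ and $v e_1$ via the action of $\bar{u}$ on $T$, and all the combinatorics of indices must be kept straight.
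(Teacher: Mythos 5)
There is a genuine gap in your treatment of (i). You claim that for the split extension $W^{(1)} = T \rtimes W$ with $n_s = s$, every element $n_s w n_t^{-1} w^{-1}$ that projects to $1$ in $W$ already equals $1$ in $W^{(1)}$. This is false: writing $w = t' w'$ with $t' \in T$ and $w' \in W$, a short computation in the semidirect product gives
\[ n_s w n_t^{-1} w^{-1} = s(t')\,t'^{-1} \in T, \]
which is nontrivial for generic $t'$. As a consequence you have only verified condition (i) of \cref{rmk:condstar}, and you have silently dropped condition (ii). In particular the constraint $s(t)t^{-1}\,b_s = b_s$ for all $s \in S$, $t \in T$ is never checked, and this is not a formality: it is precisely what forces $b_{s_i}$ to be a multiple of the averaging idempotent $\frac{1}{d}\sum_{s}(t_i/t_{i+1})^s$ rather than an arbitrary element of $R[T]$. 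The paper's proof of (i) verifies $s(b_s) = b_s$ and $s(t)t^{-1}b_s = b_s$ by an explicit computation, and also determines the full centralizer $\{w \in W : wsw^{-1}=s\} = \{1,s\}\cdot\langle u^n\rangle$ (so that all cases of \cref{rmk:condstar}(ii)(b) are covered). Your proof must do the same; the reduction to ``$w(b_{s_j}) = b_{s_i}$ for some $w$, for each pair of conjugate generators'' is not a correct reformulation of \eqref{eq:condstar}.

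Your arguments for (ii) and (iii) are fine and follow essentially the same route as the paper. In (ii), the paper shows $I = I'$ by the single observation that $T_u\bigl(T_{s_1}^2 - a_{s_1} - b_{s_1}T_{s_1}\bigr)T_{u^{-1}} = T_{s_0}^2 - a_{s_0} - b_{s_0}T_{s_0}$ via \eqref{eq:affine-yoko-1}; your expansion through $\Psi(T_u) = g_{n-1}\cdots g_1 X_1$ is the same conjugation step, just made explicit in the generators $g_i, X_1$.
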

\begin{proof}
   We begin by showing (i) using the equivalent reformulation of condition \eqref{eq:condstar} given in \cref{rmk:condstar}. From \cref{ex:affine_coxeter_group}(ii) recall that
   \[ us_iu^{-1} = s_{i-1} \]
   for all $i \in \{0,\ldots{},n-1\} = \Z/n\Z$. In particular all elements of $S$ are conjugate. The first of the two conditions of \cref{rmk:condstar} therefore follows from the equation
   \begin{equation}\label{eq:affine-yoko-1} u(b_{s_i}) = b_{s_{i-1}} \end{equation}
      Recall here that $u = \tau^{e_n} s_{n-1} \ldots s_1$ and hence $u$ acts on $T = (\Z/d\Z)^n$ via the cycle $s_{n-1} \ldots s_1 = (n-1\ \ldots 1)$. Now in order to see that the second condition of \cref{rmk:condstar} holds true, first note that by a general result on reflection groups (see \cite[Ch. V, \textsection 3.3, Proposition 2 (I)]{Bourbaki}), it follows that for every $s \in S$
   \[ \{ w \in W : wsw^{-1} = s \} = \{ 1, s \} \cdot{} \{ v \in \Omega: vsv^{-1} = s \} = \{ 1,s \} \cdot{} \{ u^{nk} : k \in \Z \} \]
   Here
   \[ u^n = \tau^{e_1+\ldots{}+e_n} \in X \]
   Taking $\widetilde{w} = w$ to be the canonical lift for the split pro-$p$ group $W^{(1)} = T \rtimes W$ for every $w$ as above, condition (ii) of \cref{rmk:condstar} follows then from
   \[ s(b_s) = b_s,\quad s \in S \]
   and
   \[ s(t)t^{-1} b_s = b_s,\quad s \in S,\ t \in T \]
   The latter two equations follow by a simple computation, for instance
   \[ s_i(b_{s_i}) = \frac{v}{d} \sum_{s \in \Z/d\Z} s_i (t_i/t_{i+1})^s = \frac{v}{d} \sum_{s \in \Z/d\Z} (t_{i+1}/t_i)^s = \frac{v}{d} \sum_{s \in \Z/d\Z} (t_i/t_{i+1})^{-s} = b_{s_i} \]
   and writing $t = t_1^{k_1} \ldots t_n^{k_n}$ we have
   \[ s_i(t)t^{-1} b_{s_i} = t_i^{k_{i+1}-k_i} t_{i+1}^{k_i - k_{i+1}} b_{s_i} = (t_i/t_{i+1})^{k_{i+1}-k_i} b_{s_i} = b_{s_i} \]
   Now claim (ii) is an almost immediate consequence of (i), \cref{lem:affine-pro-p-braid} and \cref{prop:second_presentation_of_hecke}, since $Y_{d,n}^{\op{aff}}$ and $\mathcal{H}^{(1)}_{d,n}$ both are quotients of the group algebra $R[\mathfrak{A}^{(1)}_{d,n}]$ by ideals $I$ and $I'$ respectively, generated by the elements
   \[ T_{n_{s_i}}^2 - a_{s_i}T_{n_{s_i}^2} - b_{s_i} T_{n_{s_i}} = T_{s_i}^2 - a_{s_i} - b_{s_i} T_{s_i} \]
   However, for $I'$ the index $i$ ranges from $0$ to $n$ whereas for $I$ it only ranges from $1$ to $n$. But by equation \eqref{eq:affine-yoko-1} we have
   \[ T_u \left(T_{s_1}^2 - a_{s_1} - b_{s_1} T_{s_1}\right) T_{u^{-1}} = T_{s_0}^2 - a_{s_0} - b_{s_0} T_{s_0} \]
   and therefore $I = I'$. In fact, this argument shows that
   \[ I = I' = (T_{s_1}^2 - a_{s_1} - b_{s_1} T_{s_1}) \]
   Finally for (iii), there is nothing to prove.
\end{proof}

\begin{rmk}
   By definition, the algebras $Y_{d,n}^{\op{aff}}$ and $\mathcal{H}^{(1)}_{d,n}$ are algebras over the ring $R = \C[u^{\pm 1},v]$. However, the definition of $Y_{d,n}^{\op{aff}}$ and the verification of condition \eqref{eq:condstar} did not make use of the invertibility of $u$, i.e. both algebras can already be defined over $\C[u,v]$. In contrast, the above isomorphism between $Y_{d,n}^{\op{aff}}$ and $\mathcal{H}^{(1)}_{d,n}$ \textit{does} make explicit use of the invertibility of $u$. This poses the question whether both algebras are isomorphic over $\C[u,v]$.
\end{rmk}

\begin{rmk}
   In the beginning of \cref{sub:Main examples of affine pro-$p$ Hecke algebras} we remarked that the examples of pro-$p$-Iwahori Hecke algebras and affine Yokonuma-Hecke algebras overlap. Let us now make this more precise. It is not hard to see that whenever $d$ is of the form
   \[ d = q-1, \quad \text{$q = p^r$ a prime-power} \]
   the pro-$p$ Coxeter group $W^{(1)} = T\rtimes W$ considered above (together with the structure of an affine extended Coxeter group on $W$ !) can be identified with the pro-$p$ Coxeter $W^{(1)} = N/Z_0(1)$ associated to the reductive group $\op{GL}_n$, the diagonal subtorus in $\op{GL}_n$ and the Iwahori subgroup
   \[ I = \begin{pmatrix}
      \mathcal{O}_F^\times & \mathfrak{p}_F & \ldots & \mathfrak{p}_F \\
      \mathcal{O}_F & \ddots & & \vdots \\
      \vdots & \ddots & \ddots & \mathfrak{p}_F \\
      \mathcal{O}_F & \ldots & \mathcal{O}_F & \mathcal{O}_F^\times
   \end{pmatrix} \leq \op{GL}_n(F) \]
   by \cref{ex:pro-$p$-Iwahori Hecke algebras}, where $F$ denotes any nonarchimedean local field with residue field $k = \mathcal{O}_F/\mathfrak{p}_F$ of cardinality $q$. Explicitly, the choice of a uniformizing element $\pi \in \mathcal{O}_F$ provides a splitting of the exact sequence
   \[ \begin{xy} \xymatrix{ 1 \ar[r] & Z_k \ar[r] & W^{(1)} \ar[r] & W \ar[r] & 1 } \end{xy} \]
      by identifying an element $w = \tau^x \sigma \in W = \Z^n \rtimes S_n$ with the class of the monomial matrix $(\pi^{-x_i} \delta_{\sigma(i),j})_{i,j}$. Moreover, the choice of a primitive $d$-th root of unity in $k$ provides an isomorphism of the group $Z_k = (k^\times)^n$ with $(\Z/d\Z)^n$. Unwinding the definition of the groups $Z_{k,s}$ in \cref{lem:pro-$p$-Iwahori Hecke algebras are generic pro-$p$ Hecke algebras}, one sees that for $s = s_i$, $i \in \{1,\ldots{},n-1\}$
      \[ Z_{k,s} = \{ (t_i/t_{i+1})^j : j \in \Z/d\Z \} \]
      as subgroups of $Z_k = T$, where we recall that the $t_i$ denote the standard generators of $T = (\Z/d\Z)^n$ and that this group is written multiplicatively. From this and \cref{lem:pro-$p$-Iwahori Hecke algebras are generic pro-$p$ Hecke algebras}, it then follows immediately that we have an isomorphism
      \[ Y_{d,n}^{\op{aff}} \otimes_R \C \simeq \mathcal{H}^{(1)}_{d,n} \otimes_R \C \simeq H_\C(\op{GL}_n(F),I(1)) \]
      where the base change $\-- \otimes_R \C$ is with respect to the homomorphism $R = \C[u^{\pm 1},v] \rightarrow \C$ sending $u$ to $\sqrt{q}$ and $v$ to $q-1$.
\end{rmk}

We now come back to the commutative diagram \eqref{eq:framization_and_affinization}
\begin{equation*} \begin{xy} \xymatrix{ Y_{d,n}^{\op{aff}} \ar[d] \ar[r] & H_n^{\op{aff}} \ar[d]^\pi \\ Y_{d,n} \ar[r] & H_n } \end{xy} \end{equation*}
   that was mentioned in the beginning, and will define all the maps involved. The right vertical arrow is the quotient map $\pi$ constructed in \cref{ex:affine_hecke_algebra_of_gl_n}. We recall that $\pi$ was induced by the map
   \begin{align*} \mathfrak{A}_n^{\op{aff}} \simeq \widetilde{\mathfrak{A}}_n^{\op{aff}} & \longrightarrow \mathfrak{A}_n \\
      T_i & \longmapsto T_{s_i},\quad i = 1,\ldots{},n-1 \\
      X_1 & \longmapsto 1
   \end{align*}
   between braid groups. In fact, the whole diagram \eqref{eq:framization_and_affinization} is induced by a diagram
   \begin{equation}\label{eq:framization_and_affinization_of_braid_groups} \begin{xy} \xymatrix{ \mathfrak{A}_n^{\op{aff},(1)} \ar[d] \ar[r] & \mathfrak{A}_n^{\op{aff}} \ar[d]^\pi \\
   \mathfrak{A}_n^{(1)} \ar[r] &  \mathfrak{A}_n } \end{xy} \end{equation}
   of braid groups, where $\mathfrak{A}_n^{(1)} = \mathfrak{A}(T \rtimes W_0)$ denotes the $d$-modular framed braid group. The horizontal arrows are given by the projection onto the second factor, with respect to the isomorphisms (cf. \cref{ex:yokonuma-pres})
\[ \mathfrak{A}_n^{\op{aff},(1)} \simeq T \rtimes \mathfrak{A}_n^{\op{aff}},\quad \mathfrak{A}_n^{(1)} \simeq T \rtimes \mathfrak{A}_n \]
Finally the left vertical arrow is given by
\[ \mathfrak{A}_n^{\op{aff},(1)} \simeq T \rtimes \mathfrak{A}_n^{\op{aff}} \stackrel{\op{id}\times \pi}{\longrightarrow} T \rtimes \mathfrak{A}_n \simeq \mathfrak{A}_n^{(1)} \]
It is easy to see that \eqref{eq:framization_and_affinization_of_braid_groups} respects the quadratic relations and hence induces a diagram between Hecke algebras.
\end{example}

\subsection{Some finiteness properties of affine extended Coxeter groups} 
\label{sub:Some finiteness properties of affine extended Coxeter groups}
In this section, we will collect some properties of affine extended Coxeter groups and their associated affine hyperplane arrangements that will be needed in the structure theorem of affine pro-$p$ Hecke algebras. In particular, we will prove some finiteness properties which will directly imply corresponding finitness properties for affine pro-$p$ Hecke algebras.

Throughout this section, we will fix an affine extended Coxeter group $W$. The notations introduced in \cref{def:affine_coxeter_group} and \cref{lem:unpacking_affine_extended_Coxeter_groups} will be used freely.

\begin{rmk}\label{rmk:abstract_and_concrete_coxeter_geometry}
   Let us begin by relating the abstract geometric terminology introduced in \cref{term:basic} to the concrete geometry of the hyperplane arrangement $(V,\mathfrak{H})$. It is a basic result (see. \cite[Ch. V, \textsection 3.2, Théorème 1]{Bourbaki}) of the theory of affine reflection groups that $W(\mathfrak{H})$ acts simply transitively on the set of chambers $\pi_0(V-\bigcup_{H \in \mathfrak{H}} H)$. Since $\rho$ induces an isomorphism $W_{\op{aff}} \stackrel{\sim}{\longrightarrow} W(\mathfrak{H})$, also $W_{\op{aff}}$ acts simply transitively on the set of chambers. Via the map $w \mapsto w\bullet{}C_0$ we can therefore identify the set of `abstract chambers' (in the sense of \cref{term:basic}) with the chambers in $V$. Moreover, under this identification the `abstract orbit map' $W \longrightarrow W_{\op{aff}}$ of \cref{term:basic} coincides with the actual orbit map given by $w \mapsto w\bullet{}C_0$.
   The identification of abstract and concrete chambers also extends to hyperplanes such that the notion of `separation' is preserved. More precisely, the map
   \begin{align*} \mathfrak{H} & \longrightarrow \{ w s w^{-1} : w \in W_{\op{aff}},\ s \in S \} \\
      H & \longmapsto \widetilde{s}_H
   \end{align*}
   is a bijection, and for $H \in \mathfrak{H}$ and $w,w' \in W_{\op{aff}}$ it holds true that $H$ separates $w(C_0)$ from $w'(C_0)$ if and only if the abstract hyperplane $\widetilde{s}_H$ separates the abstract chambers $w,w'$ in the sense of \cref{term:basic}. The bijectivity follows easily from the fact that $\rho$ gives an isomorphism $W_{\op{aff}} \stackrel{\sim}{\longrightarrow} W(\mathfrak{H})$ that satisfies $\rho(\widetilde{s}_H) = s_H$ and maps the set $S \subseteq W_{\op{aff}}$ bijectively onto the set of reflections with respect to the walls of the fundamental chamber $C_0$. That the notion of `separation' is preserved follows from the fact that the set of abstract hyperplanes separating $1,w$ and the set of concrete hyperplanes separating $C_0, w\bullet{}C_0$ respectively can both be read off from the choice of a reduced expression $w = s_1\ldots s_r$. 
   We may therefore identify concrete and abstract hyperplanes without harm and write
   \[ H = \widetilde{s}_H \]
   Using the formal notation $s_H = H$ of \cref{term:basic}, we therefore have
   \[ \widetilde{s}_H = s_H \in W_{\op{aff}} \]
   and the compatibility $\rho(\widetilde{s}_H) = s_H$ can be written as
   \[ \rho(s_H) = s_H \in W(\mathfrak{H}) \]
   Whenever it matters, it will be either be stated explicitly or it will be clear from the context whether we view $s_H$ as an element of $W_{\op{aff}}$ or of $W(\mathfrak{H})$, so that no confusion will arise.
\end{rmk}

   As we just saw, the abstract geometry of an affine extended Coxeter group $W$ is faithfully reflected (no pun intended) in the geometry of the affine hyperplane arrangement $(V,\mathfrak{H})$. Using the extra structure available on $(V,\mathfrak{H})$, this dictionary between abstract and concrete geometry makes somes questions concerning $W$ very transparent.
   
   Consider for instance the following basic problem of Coxeter geometry. Given chambers $C,C'$ and $C''$, when does
   \[ d(C,C'') = d(C,C') + d(C',C'') \]
   hold true? This problem can be made more transparent with the help of the following `vector-valued' distance.

\begin{definition}\label{def:vector-valued_distance}
   Given chambers $C,C' \in \pi_0(V-\bigcup_{H \in \mathfrak{H}} H)$ the element
   \[ \vec{d}(C,C') \in \Z^{\Phi^+} \]
   defined component-wise via
   \[ \vec{d}(C,C')_\alpha = \pi_0(-\alpha)(C') - \pi_0(-\alpha)(C) \in \Z \]
   is called the \textbf{vector-valued distance} between $C$ and $C'$. Here $\pi_0(-\alpha)$ denotes the map induced on connected components by
   \[ -\alpha: V-\bigcup_{H \in \mathfrak{H}}H \longrightarrow \R-\Z \]
   and the difference $\pi_0(-\alpha)(C') - \pi_0(-\alpha)(C)$ is to be understood in the sense of affine spaces over $\Z$, with $\pi_0(\R-\Z)$ carrying the obvious affine structure. In other words
   \[ \vec{d}(C,C')_\alpha = k'-k \]
   if $k, k' \in \Z$ are such that
   \[ -\alpha(C) \subseteq \left]k,k+1\right[ \quad\text{and}\quad -\alpha(C') \subseteq \left]k',k'+1\right[ \]
\end{definition}

\begin{rmk}\label{rmk:vector-valued_vs_plain_old_distance}
   From the definition it is quite obvious that $|\vec{d}(C,C')_\alpha|$ equals the number of hyperplanes of the form $H_{\alpha,k}$, $k \in \Z$ separating $C$ from $C'$. In particular the vector-valued and the normal distance are related by the formula
   \begin{equation}\label{eq:vector-valued_and_plain_old_distance} d(C,C') = |\vec{d}(C,C')| := \sum_{\alpha \in \Phi^+} |\vec{d}(C,C')_\alpha| \end{equation}
   which justifies the terminology. In particular, using \cref{rmk:abstract_and_concrete_coxeter_geometry}, the length $\ell$ on $W$ can be expressed in terms of $\vec{d}$ as
   \[ \ell(w) = d(C_0,w(C_0)) = \sum_{\alpha \in \Phi^+} |\vec{d}(C_0,w(C_0))_\alpha| \]
   where $C_0$ denotes the fundamental chamber and $w(C_0) = \rho(w)(C_0)$ the action of $W$ via $\rho: W \rightarrow \op{Aut}_{\op{aff}}(V)$. By definition, an element $x \in X$ acts by translation by $\rho(x) \in V$ on $V$. It is therefore easy to see that 
   \[ \vec{d}(C_0,\rho(x)(C_0))_\alpha = -\alpha(\rho(x)) \]
   leading to the more useful formula
   \begin{equation}\label{eq:length_of_translation} \ell(x) = \sum_{\alpha \in \Phi^+} |\alpha(\rho(x))| ,\quad x \in X\end{equation}
\end{rmk}

\begin{rmk}\label{rmk:vector-valued_distance}
   Let us now return to the problem posed above, to determine when three chambers $C,C',C''$ fulfill the relation
   \[ d(C,C'') = d(C,C') + d(C',C'') \]
   and let us see how the vector-valued distance helps in making this problem more transparent. Immediately from the definition it follows that
   \[ \vec{d}(C,C'') = \vec{d}(C,C') + \vec{d}(C',C'') \]
   By equation \eqref{eq:vector-valued_and_plain_old_distance}, we are therefore reduced to determine for which $x,y \in \Z^{\Phi^+}$ we have
   \[ |x+y| = |x| + |y| \]
   where $|x| = \sum_\alpha |x_\alpha|$. Let $\preceq$ denote relation on $\Z^{\Phi^+}$ defined by
   \begin{align}\label{eq:well_partial_order}
      x \preceq y & :\Leftrightarrow |y| = |x|+|y-x| \\
      & \Leftrightarrow x_\alpha (y_\alpha - x_\alpha) \geq 0 \quad \forall \alpha \notag \\
      & \Leftrightarrow x_\alpha = 0 \ \vee \ (x_\alpha y_\alpha > 0\ \wedge \ |x_\alpha| \leq |y_\alpha|) \quad \forall \alpha \notag
   \end{align}
   It is easy to see that $\preceq$ is a partial order. Moreover, the above problem can now be phrased equivalently in terms of $\preceq$ as
   \begin{equation}\label{eq:reducedness_and_weak_Bruhat_order} d(C,C'') = d(C,C')+d(C',C'') \quad \Leftrightarrow \quad \vec{d}(C,C') \preceq \vec{d}(C,C'') \end{equation}
\end{rmk}

\begin{rmk}\label{rmk:weak_Bruhat_order}
   In particular, fixing a chamber $C$, the relation
   \begin{align*} C' \preceq_C C'' \quad & :\Leftrightarrow \quad d(C,C'') = d(C,C') + d(C',C'') \\
      & \Leftrightarrow\quad \vec{d}(C,C') \preceq \vec{d}(C,C'')
   \end{align*}
   defines a partial order on chambers. For $C = C_0$ this is just the \textit{weak Bruhat order}, i.e. the partial order induced on $W_{\op{aff}}$ via
   \begin{align*} w' \preceq w'' \quad & :\Leftrightarrow\quad w'(C_0) \preceq_{C_0} w''(C_0) \\
      & \Leftrightarrow d(C_0,w''(C_0)) = d(C_0,w'(C_0)) + d(w'(C_0),w''(C_0)) \\
      & \Leftrightarrow \quad \ell(w'') = \ell(w') + \ell((w')^{-1}w'')
   \end{align*}
   is the weak Bruhat order.
   
   It is known that the weak Bruhat order on an affine Coxeter group is a \textit{well partial order}, in fact the affine Coxeter groups are characterized among the infinite Coxeter group as those for which this property holds (see \cite{Hultman}). In the next lemma we will prove that $\preceq$ defines a well partial order on $\Z^{\Phi^+}$, thus recovering the first statement about the weak Bruhat order, as the proof is not difficult and moreover the result is crucial for the structure theory of affine pro-$p$ Hecke algebras. In fact, the well partial order property guarantees that $\mathcal{H}^{(1)}$ is finitely generated as a left module over a certain subalgebra $\mathcal{A}^{(1)}_\mathfrak{o} \subseteq \mathcal{H}^{(1)}$ (see the proof of \cref{thm:structure of affine pro-$p$ Hecke algebras}), which is an important step in showing that $\mathcal{H}^{(1)}$ is finitely generated as a module over its center.
\end{rmk}

Let us recall the notion of a well partial order (cf. \cite{Kruskal}).

\begin{definition}\label{def:well partial order}
   A partial order $\leq$ on a set $X$ is said to be a \textbf{well partial order} if for every nonempty subset $\Lambda \subseteq X$ the set $\op{min}(\Lambda)$ of minimal elements of $\Lambda$ is \textit{nonempty} and \textit{finite}.
\end{definition}

Obviously this generalizes the notion of a well ordering from total orders to partial orders, hence the name. Let us now show that $\preceq$ defines a well partial order on $\Z^{\Phi^+}$.

\begin{lemma}\label{lem:weak Bruhat is wpo}
   $(\Z^{\Phi^+},\preceq)$ is a well partial order.
\end{lemma}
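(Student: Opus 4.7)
The plan is to reduce the assertion to Dickson's lemma, using the crucial fact that the root system $\Phi$ of an affine extended Coxeter group is finite (by \cref{lem:unpacking_affine_extended_Coxeter_groups}(v)), and hence $\Phi^+$ is finite. Since $(\Z^{\Phi^+}, \preceq)$ is then a \emph{finite} direct product of copies of $(\Z, \preceq_\alpha)$ with the componentwise order, and a finite product of well partial orders is a well partial order, it suffices to prove that each factor $(\Z, \preceq_\alpha)$ is a well partial order.

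First I would analyse the order $\preceq_\alpha$ on a single copy of $\Z$. Unwinding the definition in \eqref{eq:well_partial_order}, $0 \preceq_\alpha y$ for every $y \in \Z$, while for $x \neq 0$ we have $x \preceq_\alpha y$ iff $x$ and $y$ have the same sign and $|x| \leq |y|$. Thus $(\Z,\preceq_\alpha)$ decomposes into the minimum element $0$ together with two pairwise incomparable chains $\Z_{>0}$ and $\Z_{<0}$, each linearly and well-ordered (by $|\cdot|$). Since any well-ordered set is trivially a well partial order, and since adjoining a bottom element to a finite disjoint union of well partial orders yields a well partial order, $(\Z,\preceq_\alpha)$ is a well partial order.

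Next I would perform the product step via the standard characterization: a poset $(X,\leq)$ is a well partial order iff every infinite sequence $(y^{(n)})_{n \in \N}$ in $X$ admits an infinite ascending subsequence $y^{(n_1)} \leq y^{(n_2)} \leq \cdots$. Given any infinite sequence $(x^{(n)})_{n \in \N}$ in $\Z^{\Phi^+}$, enumerate $\Phi^+ = \{\alpha_1,\dots,\alpha_r\}$ and extract recursively: using that $(\Z,\preceq_{\alpha_1})$ is wpo, extract an infinite subsequence ascending in the $\alpha_1$-component; from that, using that $(\Z,\preceq_{\alpha_2})$ is wpo, extract a further infinite subsequence ascending in the $\alpha_2$-component; and so on. After $r$ steps -- possible because $\# \Phi^+ = r < \infty$ -- the resulting infinite subsequence is ascending in every coordinate, hence ascending with respect to the componentwise order $\preceq$. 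This yields the desired conclusion.

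No step presents any serious obstacle; the only essential input beyond elementary manipulation is the finiteness of $\Phi^+$, which is automatic from the axioms of an affine extended Coxeter group.
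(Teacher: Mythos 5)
Your proposal is correct and follows essentially the same route as the paper: both proofs reduce to showing $(\Z,\preceq)$ is a wpo and then pass to the finite power coordinate by coordinate via successive subsequence extraction (the paper does this in one pass and records the product principle separately in the remark that follows). The only cosmetic difference is that you invoke the ``infinite ascending subsequence'' characterization of wpo explicitly rather than the ``$\op{min}(\Lambda)$ nonempty and finite'' definition of the paper; the equivalence is standard and is what the paper's argument implicitly rests on as well.
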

\begin{proof}
   Let $\Lambda \subseteq X$ be a nonempty subset and assume that $\op{min}(\Lambda)$ was infinite. We would then find a sequence $(\lambda_n)_{n \in \N}$ of pairwise distinct elements $\lambda_n \in \op{min}(\Lambda)$, which would necessarily be also pairwise incomparable. Choose a numbering $\Phi^+ = \{\alpha_1,\ldots,\alpha_m\}$ of the positive roots, and look at the sequence $(\lambda_n(\alpha_1))_{n \in \N}$ of `first coordinates'.
   
   There are two possibilities, either this sequence is finite or infinite. In the first case we may (after possibly replacing $(\lambda_n)_{n \in \N}$ by a subsequence) assume that the sequence $(\gamma_n(\alpha_1))_{n \in \N}$ is constant. In the second case we can assume (again replacing $(\lambda_n)_{n \in \N}$ by a subsequence if necessary) that the sequence $(\gamma_n(\alpha_1))_{n \in \N}$ is strictly increasing or decreasing with respect to the usual total order on $\Z$, i.e. strictly increasing with respect to the well partial order (!) $x \preceq y :\Leftrightarrow x(y-x) \geq 0$ on $\Z$.
   
   Repeating this procedure with $\alpha_2, \alpha_3, \dots , \alpha_m$, we may therefore assume that for every $\alpha \in \Phi^+$ the sequence $(\gamma_n(\alpha))_{n \in \N}$ is either constant or strictly increasing with respect to the well partial order $\preceq$ on $\Z$. In particular, since the order $(\Z^{\Phi^+},\preceq)$ is just the power of the order $(\Z,\preceq)$, we would have $\lambda_1 \preceq \lambda_2$, contradicting the fact that the $\lambda_n$ are pairwise incomparable.
\end{proof}

\begin{cor}\label{cor:weak Bruhat is wpo}
   For every chamber $C$, the relation $\preceq_C$ on the set of chambers defined in \cref{rmk:weak_Bruhat_order} is a well partial order.
\end{cor}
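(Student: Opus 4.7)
The plan is to transfer the well partial order property from $(\Z^{\Phi^+},\preceq)$ to the set of chambers via the vector-valued distance. First I would observe that for a fixed chamber $C$, the map
\[ \vec{d}(C,-): \pi_0(V - \textstyle\bigcup_{H \in \mathfrak{H}} H) \longrightarrow \Z^{\Phi^+},\quad C' \longmapsto \vec{d}(C,C') \]
is injective. Indeed, $\vec{d}(C,C')_\alpha = \pi_0(-\alpha)(C') - \pi_0(-\alpha)(C)$ determines, for each $\alpha \in \Phi^+$, the unique integer $k'$ with $-\alpha(C') \subseteq \, ]k',k'+1[$, and by part (v) of \cref{lem:unpacking_affine_extended_Coxeter_groups} every hyperplane of $\mathfrak{H}$ is of the form $H_{\alpha,k}$ for some $\alpha \in \Phi$ and $k \in \Z$. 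Since a chamber is determined by the connected component (i.e. open half-space) of $V - H$ in which it lies for every $H \in \mathfrak{H}$, injectivity follows.

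Next, by \cref{rmk:weak_Bruhat_order} combined with \eqref{eq:reducedness_and_weak_Bruhat_order}, the map $\vec{d}(C,-)$ is order-preserving and order-reflecting in the sense that
\[ C' \preceq_C C'' \quad \Longleftrightarrow \quad \vec{d}(C,C') \preceq \vec{d}(C,C'') \]
so that $\vec{d}(C,-)$ identifies the poset of chambers (with $\preceq_C$) with a sub-poset of $(\Z^{\Phi^+},\preceq)$.

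Finally, the well partial order property passes to sub-posets: if $(X,\leq)$ is a well partial order and $Y \subseteq X$, then for any nonempty $\Lambda \subseteq Y$ the minimal elements of $\Lambda$ in $Y$ contain the minimal elements of $\Lambda$ in $X$ (which are finite and nonempty by hypothesis), and conversely every minimal element of $\Lambda$ in $Y$ dominates some minimal element in $X$, of which there are only finitely many; an easy argument shows $\op{min}_Y(\Lambda)$ is itself nonempty and finite. Applying this to the image of $\vec{d}(C,-)$ inside $(\Z^{\Phi^+},\preceq)$ and invoking \cref{lem:weak Bruhat is wpo} yields the corollary. The only slightly delicate point is the verification that $\vec{d}(C,-)$ is injective; the order-compatibility is essentially tautological given the definitions.
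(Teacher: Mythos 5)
Your proof is correct and is essentially the argument the paper has in mind—it leaves the corollary without an explicit proof, treating it as immediate from \cref{lem:weak Bruhat is wpo} together with the identification $C' \preceq_C C'' \Leftrightarrow \vec{d}(C,C') \preceq \vec{d}(C,C'')$ established in \cref{rmk:weak_Bruhat_order}. Your verification of injectivity of $\vec{d}(C,-)$ is sound (and in fact already follows from order-preservation and order-reflection plus antisymmetry of the partial order, so it is not logically needed as a separate step). One small simplification: your discussion of why well partial orders pass to sub-posets is more elaborate than necessary. For $\Lambda \subseteq Y \subseteq X$ with $Y$ carrying the induced order, the set of minimal elements of $\Lambda$ is identical whether one regards $\Lambda$ as a subset of $Y$ or of $X$, since minimality only depends on the order restricted to $\Lambda$ itself; so $\op{min}(\Lambda)$ is nonempty and finite directly from the well partial order property of $X$, with no further argument about one set of minimal elements containing or dominating the other.
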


\begin{rmk}\label{rmk:products_of_wpos_are_wpos}
   Obviously the above proof holds verbatim with $(\Z,\preceq)$ replaced by any well partial order, and the argument recovers the basic fact that finite products of well partial orders are again well partial orders (cf. \cite{Kruskal}).
\end{rmk}

As already mentioned, the well partial order property of $\preceq$ is crucial for proving the finiteness of $\mathcal{H}^{(1)}$ as a left module over $\mathcal{A}^{(1)}_\mathfrak{o}$. But it is also crucial for proving yet another finiteness property, namely it ensures that $\mathcal{A}^{(1)}_\mathfrak{o}$ is finitely generated as an algebra (see \cref{thm:structure of affine pro-$p$ Hecke algebras}). This rests on the finiteness property of the submonoids $X_D \leq X$ defined below, which we will prove in the next lemma.

\begin{definition}\label{def:X_D}
   Given a Weyl chamber $D \in \pi_0\left(V-\bigcup_{\alpha \in \Phi}H_\alpha\right)$, we let
   \[ X_D := \{ x \in X\ :\ \rho(x) \in \overline{D} \} \]
   be the submonoid of $X$ consisting of all elements which acts by translation by an element of the closure of $D \subseteq V$ under $\rho: W \rightarrow \op{Aut}_{\op{aff}}(V)$.
\end{definition}

\begin{lemma}\label{lem:finiteness_of_X_D}
   $X_D$ is finitely generated as a monoid.
\end{lemma}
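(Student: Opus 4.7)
The plan is to deduce the finite generation of $X_D$ from \cref{cor:weak Bruhat is wpo} by identifying $X_D$ (modulo a subgroup) with an injectively embedded subset of the chambers on which the natural partial order $x \leq y :\Leftrightarrow y - x \in X_D$ pulls back to the weak Bruhat order $\preceq_{C_0}$.

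First I would pass to a quotient on which the orbit map $x \mapsto \rho(x) + C_0$ is injective. Let $X_L := \rho^{-1}(L) \cap X$, a subgroup of the finitely generated abelian group $X$ (axiom \axiom{ACX}) and therefore itself finitely generated; it lies in $X_D$ since $L \subseteq \overline{D}$. A translation by $v \in V$ stabilizes $C_0$ precisely when $v \in L$: writing $C_0 = L \times C_0'$ with $C_0'$ a bounded chamber in $L^\perp$, the condition $v + C_0 = C_0$ forces $v_\perp + C_0' = C_0'$ and hence $v_\perp = 0$. Consequently the orbit map factors through an injection $\overline{X}_D := X_D / X_L \hookrightarrow \{\text{chambers}\}$, and it suffices to show that $\overline{X}_D$ is finitely generated as a monoid, for then adjoining to a finite set of lifts any finite monoid generating set of $X_L$ (for example a group generating set together with inverses) yields one for $X_D$.

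Next I would identify the partial order $\overline{x} \leq \overline{y} :\Leftrightarrow y - x \in X_D$ on $\overline{X}_D$ with the restriction of $\preceq_{C_0}$ under the embedding into chambers, via the formula $\vec{d}(C_0, x(C_0))_\alpha = -\alpha(\rho(x))$ of \cref{rmk:vector-valued_vs_plain_old_distance}. The key geometric observation is that for $x \in X_D$ the sign of each coordinate $-\alpha(\rho(x))$ is determined by $\alpha$ and $D$ alone (according to which side of $H_\alpha$ the chamber $D$ lies on), and hence is uniform over all of $X_D$. Combined with the componentwise description of $\preceq$ on $\Z^{\Phi^+}$ in \eqref{eq:well_partial_order}, this sign-uniformity collapses the condition $\vec{d}(C_0, x(C_0)) \preceq \vec{d}(C_0, y(C_0))$ to the magnitude inequalities $|\alpha(\rho(x))| \leq |\alpha(\rho(y))|$, which in turn rearrange to $\rho(y) - \rho(x) \in \overline{D}$, i.e.\ to $y - x \in X_D$. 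Invoking \cref{cor:weak Bruhat is wpo}, we conclude that $(\overline{X}_D, \leq)$ is a well partial order.

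Finally I would close by the standard well-founded induction: let $M \subseteq \overline{X}_D \setminus \{0\}$ be the set of minimal elements, which is finite by the well partial order property. Every $\overline{x} \in \overline{X}_D$ is a finite sum of elements of $M$---for $\overline{x} \neq 0$, pick $\overline{m} \in M$ with $\overline{m} \leq \overline{x}$, note that $\overline{x} - \overline{m} \in \overline{X}_D$ is strictly smaller, and apply the induction hypothesis. I anticipate the main obstacle to be the sign-uniformity step matching the two partial orders: one must handle carefully the case of vanishing coordinates (where the sign is a priori unconstrained) and the fact that $\overline{D}$ is not pointed when $L \neq 0$ (which is exactly why the quotient by $X_L$, rather than by $\ker(\rho|_X)$, is the right reduction). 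Once this matching is cleanly set up, everything else is routine.
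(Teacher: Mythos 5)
Your proof is correct and takes essentially the same route as the paper's: the quotient by $X_L$ is exactly the paper's passage to $\Xi_D = \nu(\rho(X_D)) \subseteq \Z^{\Phi^+}$ (since $\ker(\nu\circ\rho) = X_L$), the sign-uniformity computation matching $\preceq$ to the cone condition is the same, and the finite-generation argument via minimal elements and well-founded induction is the same. The only cosmetic difference is that you route the well-partial-order property through the chamber set and \cref{cor:weak Bruhat is wpo} (which costs you the extra observation that the translation stabilizer of $C_0$ is exactly $L$, asserted via boundedness of $C_0'$ but worth a line of justification, e.g.\ from $\Phi$ spanning $(V/L)^\vee$), whereas the paper stays in $\Z^{\Phi^+}$ and invokes \cref{lem:weak Bruhat is wpo} directly.
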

\begin{proof}
   Consider the evaluation map
   \begin{align*} \nu: V & \longrightarrow \R^{\Phi^+} \\
      v & \longmapsto (\alpha \mapsto \alpha(v))
   \end{align*}
   Since the action of $X$ preserves the set $\mathfrak{H}$ of affine hyperplanes and by definition of $\Phi$ we have for every $\alpha \in \Phi$
   \[ \forall k \in \R\ \ H_{\alpha,k} \in \mathfrak{H} \Leftrightarrow k \in \Z \]
   it follows from $\rho(x)(H_{\alpha,0}) = H_{\alpha,-\alpha(\rho(x))} \in \mathfrak{H}$ that $\alpha(\rho(x)) \in \Z$ for every $x \in X$. Hence, $\nu(\rho(x))$ lies in $\Z^{\Phi^+}$ and we may consider the image
   \[ \Xi_D := \nu(\rho(X_D)) \subseteq \Z^{\Phi^+} \]
   As $\nu$ is a group homomorphism, $\Xi_D \subseteq \Z^{\Phi^+}$ is a submonoid. Moreover, the partial order $\preceq$ restriced to $\Xi_D$ is compatible with the monoid structure in the sense that
   \begin{equation}\label{eq:compatible} a \preceq a+b \quad \forall a,b \in \Xi_D \end{equation}
   To see this, let $\varepsilon_{D,\alpha}$ denote the sign of $\alpha(q)$ for $q \in D$ arbitrary. We then have the following equivalence for an element $v \in V$
   \[ v \in \overline{D} \quad \Leftrightarrow\quad \forall \alpha \in \Phi^+\ \ \varepsilon_{D,\alpha} \alpha(v) \geq 0 \]
   The implication `$\Rightarrow$' is obvious and the reverse implication follows by choosing a point $q \in D$ and noting that $v$ lies in the closure of the half-open line segement
   \[ \{ (1-\lambda) q + \lambda v\ :\ 0 \leq \lambda < 1 \} \subseteq D \]
   This implies that $\Xi_D$ is characterized as
   \begin{equation}\label{eq:characterizing_xi_D} \Xi_D = \{ a \in \nu(\rho(X)) \ :\ \forall \alpha \in \Phi^+\ \ \varepsilon_{D,\alpha} a(\alpha) \geq 0 \} \end{equation}
   In particular, for $a,b \in \Xi_D$ and every $\alpha \in \Phi^+$ we have $a(\alpha)b(\alpha) \geq 0$ and hence $a \preceq a+b$ by definition of $\preceq$.
 
   Let us now call an element $a \in \Xi_D$ \textit{irreducible} if $a \neq 0$ and $a$ cannot be written as a sum $a = b+c$ with $b,c \in \Xi_D$ and $b,c \neq 0$. Since $\preceq$ is a well partial order, it is in particular a well-founded relation. This implies that every element $a \in \Xi_D$ can be written as a (possibly empty) sum of irreducible elements. Indeed, if this was not the case, we repeatedly expand $a$ as a sum
   \[ a = a_1 + b_1 = a_2 + b_2 + b_1 = a_3 + b_3 + b_2 + b_1 = \ldots \]
   with $a_i, b_i \neq 0$ and it would follow from property \eqref{eq:compatible} that we would have an infinite strictly descending chain
   \[ \ldots \prec a_2 \prec a_1 \prec a \]
   contradicting the fact that $\preceq$ is well-founded.
   
   Hence, every element $a \in \Xi_D$ can be written as a sum of irreducible elements. Because of property \eqref{eq:compatible}, every element of $\Xi_D - \{0\}$ minimal with respect to $\preceq$ is irreducible. But the converse also holds. Indeed, by \eqref{eq:characterizing_xi_D} and the fact that $\nu(\rho(X)) \subseteq \Z^{\Phi^+}$ is a subgroup, it follows that for $a,b \in \Xi_D$ we have the implication
   \[ a \preceq b \quad \Rightarrow \quad b-a \in \Xi_D \]
   Namely if $b-a$ would not lie in $\Xi_D$, it would follow from \eqref{eq:characterizing_xi_D} that $\varepsilon_{D,\alpha} (b-a)(\alpha) < 0$ for some $\alpha \in \Phi^+$. If $a(\alpha) = 0$, this would imply that $\varepsilon_{D,\alpha} b(\alpha) < 0$ and hence $b \not\in \Xi_D$ by \eqref{eq:characterizing_xi_D} again. If $a(\alpha) \neq 0$, it would follow from $a \preceq b$ that $\op{sgn}(a(\alpha)) = \op{sgn}((b-a)(\alpha))$ and hence $\varepsilon_{D,\alpha} a(\alpha) < 0$ implying $a \not\in \Xi_D$ by \eqref{eq:characterizing_xi_D}.

   Therefore, the irreducible elements are precisely the minimal elements of $\Xi_D - \{0\}$ with respect to $\preceq$. Since $\preceq$ is a well partial order, this set is finite. Hence, there exist finitely many elements $x_1,\ldots{},x_r \in X_D$ such that every element $x \in X$ can be written as
   \[ x = \sum_{i = 1}^r n_i x_i + y \]
   with $n_i \in \Z_{\geq 0}$ and
   \[ y \in \op{ker}(\nu\circ{}\rho) \]
   By \axiom{ACX} $X$, is a finitely generated abelian group. Hence, the subgroup $\op{ker}(\nu\circ{}\rho) \leq X$ is also finitely generated as a group, say by $y_1,\ldots{},y_s$. Hence
   \[ \{ x_1, \ldots{}, x_r, y_1, -y_1, \ldots{}, y_s, -y_s \} \]
   forms a set of generators of $X_D$ as a monoid. 
\end{proof}

For later reference we need to record another property of $X_D$.

\begin{lemma}\label{lem:X_D_generates}
   The submonoid $X_D \leq X$ generates $X$ as a group, i.e. every element $x \in X$ can be written as
   \[ x = y-z \]
   with $y,z \in X_D$.
\end{lemma}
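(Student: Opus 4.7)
The plan is to reduce the claim to the existence of a single ``strictly dominant'' element $w \in X$, namely one with $\rho(w) \in D$ (the open Weyl chamber, not just its closure). Granting such a $w$, given an arbitrary $x \in X$ one sets $y := x + nw$ and $z := nw$ for sufficiently large $n \in \N$. Then $z \in X_D$ trivially, and $\rho(y) = \rho(x) + n\rho(w)$; since $\overline{D}$ is a closed convex cone and $\rho(w)$ lies in its interior $D$, for each $\alpha \in \Phi^+$ the quantity $\varepsilon_{D,\alpha}\alpha(\rho(x) + n\rho(w)) = \varepsilon_{D,\alpha}\alpha(\rho(x)) + n\,\varepsilon_{D,\alpha}\alpha(\rho(w))$ will be strictly positive for $n \gg 0$, so $\rho(y) \in D \subseteq \overline{D}$ and thus $y \in X_D$ as well. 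This yields the desired decomposition $x = y - z$.

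The step requiring work is the existence of $w \in X$ with $\rho(w) \in D$. The key observation is that each hyperplane $H_\alpha$ ($\alpha \in \Phi$) defining the Weyl chamber decomposition contains $L = V^{W_0} = \bigcap_{0 \in H \in \mathfrak{H}} H$, so the open cone $D$ is the preimage under the projection $\pi\colon V \twoheadrightarrow V/L$ of the corresponding Weyl chamber $\pi(D)$ in $V/L \simeq (\R\Phi)^\vee$. Hence it suffices to produce an element of $\pi(\rho(X))$ lying in the open cone $\pi(D) \subseteq V/L$.

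By axiom \axiom{ACV}, $\rho(X) = \rho(W) \cap V$ spans $V/L$ as an $\R$-vector space, so the finitely generated abelian group $A := \pi(\rho(X)) \subseteq V/L$ has $\R$-span equal to $V/L$. Consequently $A \otimes_\Z \Q$ is a $\Q$-subspace of $V/L$ of maximal $\Q$-dimension, hence dense in $V/L$; in particular it meets the nonempty open set $\pi(D)$. Clearing denominators, some positive integer multiple of an element of $A$ lies in $\pi(D)$ (since $\pi(D)$ is a cone stable under multiplication by positive reals, hence by positive integers). Pulling back, we obtain $w \in X$ with $\pi(\rho(w)) \in \pi(D)$, i.e.\ $\rho(w) \in D$, completing the argument.

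The only subtlety — and the only place where a nontrivial input is used — is the density argument in the last paragraph, which relies critically on \axiom{ACV} (the translations in $\rho(W)$ $\R$-span $V/L$) together with the fact that $D$ is a genuine open cone in $V$ that is pulled back from $V/L$.
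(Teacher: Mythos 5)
Your argument is correct and takes the same route as the paper: reduce to producing a single element $w \in X$ with $\rho(w)$ in the \emph{open} cone $D$ (via $x = (x+nw) - nw$ for $n \gg 0$), project along $V \twoheadrightarrow V/L$, and invoke \axiom{ACV} to find such a $w$. The only slip is the justification ``of maximal $\Q$-dimension, hence dense'': that inference is not valid as stated (and ``maximal $\Q$-dimension'' is not well-defined here --- $V/L$ has uncountable $\Q$-dimension while the $\Q$-span of $A$ is finite-dimensional over $\Q$). The correct reason your $\Q$-span is dense is simply that $A$ $\R$-spans $V/L$ by \axiom{ACV}, so it contains an $\R$-basis, and the $\Q$-span of an $\R$-basis is dense. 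The paper packages the same fact differently, observing that $A$ contains a \emph{full sublattice} of $V/L$ and that a full sublattice meets every nonempty open cone; both routes rest on the identical input from \axiom{ACV}.
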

\begin{proof}
   It suffices to show that the subset (using the notation of the proof of the previous lemma)
   \[ \{ x \in X\ :\ \rho(x) \in D \} = \{ x \in X\ :\ \forall \alpha \in \Phi^+\ \ \varepsilon_{D,\alpha} \alpha(\rho(x)) > 0 \} \subseteq X_D \]
   is non-empty, since if $y$ denotes an element of this set, then for $n \in N$ sufficiently large we have
   \[ x + ny \in X_D \]
   and hence
   \[ x = (x+ny) - ny \]
   with $x+ny, ny \in X_D$. Let $p: V \twoheadrightarrow V/L$ denote the projection, where
   \[ L = \bigcap_{\alpha \in \Phi^+} H_\alpha \]
   is the common kernel of the $\alpha \in \Phi^+$. Denoting $\overline{\alpha}: V/L \rightarrow \R$ the functional induced by $\alpha \in \Phi^+$, we have
   \[ \{ x \in X\ :\ \forall \alpha \in \Phi^+\ \varepsilon_{\alpha,D} \alpha(\rho(x)) \} = \{ x\in X\ :\ \forall \alpha \in \Phi^+\ \varepsilon_{D,\alpha} \overline{\alpha}(p(\rho(x))) > 0 \} \]
   Thus it suffices to show that the subgroup $p(\rho(X)) \leq V/L$ has non-empty intersection. But since by \axiom{ACV} this subgroup generates $V/L$ as an $\R$-vector space, it contains a basis and hence a full sublattice of $V/L$. Since
   \[ \{ v \in V/L\ :\ \forall \alpha \in \Phi^+\ \ \varepsilon_{D,\alpha} \overline{\alpha}(v) > 0 \} \subseteq V/L \]
   is a non-empty open cone, it has non-empty intersection with every full sublattice of $V/L$.
\end{proof}

\subsection{Spherical orientations} 
\label{sub:Spherical orientations}
In this section we fix an affine extended Coxeter group $W$.

Our goal (in view of \cref{rmk:comm subalgebras}) is to construct for every Weyl chamber $D$ an orientation $\mathfrak{o}_D$ satisfying
\[ \mathfrak{o}_D \bullet{} x = \mathfrak{o}_D \quad \forall x \in X \]
and
\[ \mathfrak{o}_D \bullet{} w = \mathfrak{o}_{w^{-1}(D)} \quad \forall w \in W_0 \]
The construction of $\mathfrak{o}_D$ can be seen as a variant of the orientations $\mathfrak{o}_{w_0}$ defined in \cref{def:chamber_orientations}. Instead of `orienting towards' a chamber $w \in W_{\op{aff}}$ of the affine chamber complex corresponding to $W_{\op{aff}}$, we orient towards the chamber induced by $D$ in the `spherical chamber complex at infinity'. In fact, we will show that $\mathfrak{o}_D$ is the limit
\[ \mathfrak{o}_D = \op{lim}_{w_0} \mathfrak{o}_{w_0} \]
in the sense of \textit{nets}, where the limit is taken over the directed set of chambers endowed with the \textit{dominance order} induced by $D$ (defined below).

Let us now define these orientations.

\begin{definition}\label{def:spherical_orientation}
   Given a Weyl chamber $D \in \pi_0\left(V-\bigcup_{\alpha \in \Phi}H_\alpha\right)$, the associated \textbf{spherical orientation} $\mathfrak{o}_D$ of $W$ is the map
   \[ \mathfrak{o}_D: W\times S \longrightarrow \{\pm\} \]
   defined as follows. Given $w \in W$ and $s \in S$, let $(\alpha,k) \in \Phi\times\Z$ be the unique pair such that $\alpha$ is \textit{$D$-positive}, i.e.
   \[ D \subseteq \{ v \in V\ :\ \alpha(v) > 0 \} \]
   and such that $H_{\alpha,k}$ is the hyperplane separating $w(C_0)$ and $ws(C_0)$. Then let
   \[ \mathfrak{o}_D(w,s) := \op{sgn} \left(\pi_0(\alpha)(ws(C_0)) - \pi_0(\alpha)(w(C_0))\right) \]
   where
   \[ \pi_0(\alpha): \pi_0\left(V-\bigcup_{H \in \mathfrak{H}}H\right) \longrightarrow \pi_0(\R-\Z) \]
   is the map induced on connected components by the restriction of $\alpha$, and the difference is to be understood with respect to the structure on $\pi_0(\R-\Z)$ of an affine space over $\Z$.
\end{definition}

\begin{rmk}\label{rmk:X stabilizes sph or}
   From the defining formula of $\mathfrak{o}_D$ it follows immediately that
   \[ \mathfrak{o}_D \bullet{}w = \mathfrak{o}_{\rho_0(w)^{-1}(D)} \quad \forall w \in X \]
   In particular
   \[ \mathfrak{o}_D \bullet{}x = \mathfrak{o}_D \quad \forall x \in X \]
   and
   \[ \mathfrak{o}_D \bullet{} w = \mathfrak{o}_{w^{-1}(D)} \quad \forall w \in W_0 \]
   However, we still have to show that $\mathfrak{o}_D$ actually is an orientation.
\end{rmk}

\begin{rmk}\label{rmk:intuition_of_spherical_orientations}
   In \cref{lem:or_depend_on_half-space} we have seen that orientations are given by singling out for every hyperplane $H \in \mathfrak{H}$ one of the two half-spaces bounded by $H$ as positive, such that $\mathfrak{o}(w,s) = +$ iff $ws$ lies in the positive half-space bounded by $H = wsw^{-1}$, where the notions of hyperplane and half-space are to be understood in the sense of abstract Coxeter geometry. Unwinding the above definition, one sees that under the dictionary between the abstract geometry of $W$ and the concrete geometry of the hyperplane arrangement $(V,\mathfrak{H})$, the orientation $\mathfrak{o}_D$ is given by letting
   \[ U^+_H = \{ v \in V\ :\ \alpha(v)+k > 0 \} \]
   be the positive half-space bounded by $H = H_{\alpha,k}$ if $\alpha$ is $D$-positive.
\end{rmk}

\begin{definition}\label{def:dominance_order}
   Given a Weyl chamber $D \in \pi_0\left(V-\bigcup_{\alpha \in \Phi} H_\alpha\right)$ the \textbf{dominance order} $\preccurlyeq_D$ \textbf{associated to $D$} is the partial order on the set of chambers given by
   \[ C \preccurlyeq_D C' \quad :\Leftrightarrow\quad \pi_0(\alpha)(C) \leq \pi_0(\alpha)(C') \quad \forall \alpha \text{ $D$-positive} \]
   where $\pi_0(\R-\Z)$ is endowed with the total order $\leq$ induced from $\R$.
\end{definition}

\begin{rmk}
   Obviously $\preccurlyeq$ is a partial order. Moreover, any two chambers $C,C'$ are dominated $C,C' \preccurlyeq_D C''$ by a third, thus making the set of chambers endowed with $\preccurlyeq_D$ into a directed set.
   
   Indeed, for a $D$-positive root $\alpha$ let
   \[ r_\alpha := \op{max}(\op{sup} \pi_0(\alpha)(C),\op{sup} \pi_0(\alpha)(C')) \in \Z \]
   Then any chamber $C''$ contained in
   \[ U := \{ v \in V : \alpha(v) > r_\alpha \quad \forall \alpha \text{ $D$-positive}\} \]
   satisfies $C,C' \preccurlyeq_D C''$. It's easy to see that such a chamber always exists. Since
   \[ D = \{ v \in V : \alpha(v) > 0 \quad \forall \alpha \text{ $D$-positive} \} \neq \emptyset \]
   it follows that $U$ must also be non-empty, hence it (as an open non-empty subset in $V$) must meet some chamber $C''$, which then must already be contained in $U$.
\end{rmk}

Let us now show that `spherical orientations' are indeed orientations.

\begin{prop}\label{prop:spherical_orientations_as_limits}
   The map $\mathfrak{o}_D$, considered as an element of the mapping space $\{\pm\}^{W\times S}$ with its compact-open topology (cf. \cref{rmk:set_of_orientations_is_closed,rmk:homeomorphism_of_orientations}), is the limit
   \[ \mathfrak{o}_D = \op{lim}_C \mathfrak{o}_C \]
   in the sense of nets, where the limit is taken over the directed set of chambers endowed with the dominance order $\preccurlyeq_D$, and where $\mathfrak{o}_C = \mathfrak{o}_w$ denotes the orientation towards the `chamber' $w$ in the sense of \cref{def:chamber_orientations} and $w \in W_{\op{aff}}$ is the unique abstract chamber corresponding to $C$ via $w(C_0) = C$.
   
   In particular, $\mathfrak{o}_D$ lies in the closure of
   \[ \{\mathfrak{o}_w : w \in W_{\op{aff}}\} \subseteq \{\pm\}^{W\times S} \]
   and hence by \cref{rmk:homeomorphism_of_orientations} it also lies in the subset of orientations.
\end{prop}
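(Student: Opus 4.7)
The plan is to prove convergence of the net $(\mathfrak{o}_C)_C$ pointwise on $W \times S$; this suffices since the topology on $\{\pm\}^{W\times S}$ is the product topology, and once convergence is established, membership of $\mathfrak{o}_D$ in $\mathcal{O}$ follows at once because $\mathcal{O} \subseteq \{\pm\}^{W\times S}$ is closed (\cref{rmk:set_of_orientations_is_closed,rmk:homeomorphism_of_orientations}) and each $\mathfrak{o}_C$ already lies in $\mathcal{O}$.

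Fix $(w,s) \in W \times S$, let $H \in \mathfrak{H}$ be the unique hyperplane separating $w(C_0)$ from $ws(C_0)$, and write $H = H_{\alpha,k}$ with $\alpha \in \Phi$ chosen to be $D$-positive. The first step is to reinterpret both signs geometrically as statements about half-spaces of $H$. Unwinding \cref{def:spherical_orientation} directly, one sees that $\mathfrak{o}_D(w,s) = +1$ iff $ws(C_0)$ lies in the half-space $\{v \in V : \alpha(v)+k > 0\}$, since $w(C_0)$ and $ws(C_0)$ lie on opposite sides of $H$ and $D$-positivity pins down which side is the ``positive'' one. On the other hand, combining \cref{def:chamber_orientations} with the dictionary of \cref{rmk:abstract_and_concrete_coxeter_geometry}, the condition $\mathfrak{o}_C(w,s) = +1$, that is $\ell(w_0^{-1}ws) < \ell(w_0^{-1}w)$ for $w_0 \in W_{\op{aff}}$ with $w_0(C_0) = C$, translates into $d(C, ws(C_0)) < d(C, w(C_0))$, which holds precisely when $C$ and $ws(C_0)$ lie on the same side of $H$. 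A short case distinction on the side of $H$ containing $ws(C_0)$ then shows that the required equality $\mathfrak{o}_C(w,s) = \mathfrak{o}_D(w,s)$ is equivalent to the single condition $C \subseteq \{v : \alpha(v)+k > 0\}$.

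It therefore suffices to exhibit a chamber $C^\ast$ lying in $\{v : \alpha(v)+k > 0\}$, for then every $C \succcurlyeq_D C^\ast$ satisfies $\alpha|_C > -k$: indeed, $\alpha$ being $D$-positive, the defining inequality $\pi_0(\alpha)(C) \geq \pi_0(\alpha)(C^\ast)$ of $C \succcurlyeq_D C^\ast$ forces exactly this as soon as $\alpha|_{C^\ast} > -k$. Such a $C^\ast$ clearly exists, since the half-space is a non-empty open subset of $V$ and the complement of $\bigcup_{H' \in \mathfrak{H}} H'$ is dense. Applying this argument for each pair $(w,s)$ separately then yields the claimed limit.

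I do not expect any substantive obstacle. The only care required is in unwinding the length-theoretic definition of $\mathfrak{o}_C$ into the geometric ``same-side'' criterion via the dictionary of \cref{rmk:abstract_and_concrete_coxeter_geometry}, and in observing that although $\preccurlyeq_D$ simultaneously controls \emph{all} $D$-positive roots, the argument only needs control of the single root $\alpha$ labelling the given $H$, which is a strictly weaker demand automatically granted by $\preccurlyeq_D$-dominance.
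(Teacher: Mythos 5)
Your proof is correct and follows essentially the same route as the paper's: reduce to pointwise convergence on $W\times S$, reinterpret both $\mathfrak{o}_D(w,s)$ and $\mathfrak{o}_C(w,s)$ as half-space conditions for the unique hyperplane $H = H_{\alpha,k}$ separating $w(C_0)$ and $ws(C_0)$, observe that the two signs agree precisely when $C$ lies in the positive half-space of $H$ determined by $D$-positivity of $\alpha$, and then show that the set of such $C$ is non-empty and upward-closed under $\preccurlyeq_D$. The paper's version is slightly terser (it states the ``same half-space'' criterion for $\mathfrak{o}_C$ directly rather than unwinding the length-theoretic definition as you do), but the substance of the argument is identical.
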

\begin{proof}
   To show that
   \[ \mathfrak{o}_D = \op{lim}_C \mathfrak{o}_C \]
   means concretely to show that for every $w \in W$ and $s \in S$ we have
   \[ \mathfrak{o}_D(w,s) = \mathfrak{o}_C(w,s) \]
   for $C$ sufficiently large with respect to $\preccurlyeq_D$. Recall that we have $\mathfrak{o}_C(w,s) = +$ iff $ws(C_0)$ is closer to $C$ than $w(C_0)$, i.e. iff the hyperplane $H$ separating $w(C_0)$ and $ws(C_0)$ also separates $w(C_0)$ from $C$, i.e. if $C$ and $ws(C_0)$ lie in the same half-space with respect to $H$. Let $H = H_{\alpha,k}$ with $\alpha$ $D$-positive. Then on the other hand we have $\mathfrak{o}_D(w,s) = +$ iff $\pi_0(\alpha)(w(C_0)) < \pi_0(\alpha)(ws(C_0))$, i.e. if $ws(C_0)$ lies in the positive half-space $U^+_H$ determined by $\mathfrak{o}_D$. Therefore, $\mathfrak{o}_D(w,s) = \mathfrak{o}_C(w,s)$ iff $C$ lies in the positive half-space $U^+_H$. Moreover, if $C,C'$ are chambers with $C \subseteq U^+_H$ and $C \preccurlyeq_D C'$ then $C'$ also lies in $U^+_H$. Letting $C$ denote an arbitrary chamber contained in $U^+_H$, we therefore have
   \[ \mathfrak{o}_D(w,s) = \mathfrak{o}_{C'}(w,s) \]
   for every chamber $C'$ with $C \preccurlyeq_D C'$.
\end{proof}

\subsection{Some (almost) commutative subalgebras} 
\label{sub:Some (almost) commutative subalgebras}
In this section, we let $\mathfrak{o}$ denote an arbitrary spherical orientation (see \cref{def:spherical_orientation}) of $W$. In \cref{rmk:comm subalgebras}, we saw that every submonoid $U \leq \op{Stab}_{W^{(1)}}(\mathfrak{o})$ gives rise to a subalgebra $\mathcal{A}^{(1)}_\mathfrak{o}(U) \subseteq \mathcal{H}^{(1)}$ that has a canonical $R$-basis $\{\widehat{\theta}_\mathfrak{o}(x)\}_{x \in U}$ indexed by the elements of $U$. By \cref{rmk:X stabilizes sph or}, we may take $U = X^{(1)}$.

\begin{definition}
   \[ \mathcal{A}^{(1)}_\mathfrak{o} := \mathcal{A}^{(1)}_\mathfrak{o}(X^{(1)}) = \bigoplus_{x \in X^{(1)}} R\widehat{\theta}_\mathfrak{o}(x) \]
\end{definition}

As a first step towards the computation of the center of $\mathcal{H}^{(1)}$ in \cref{thm:center}, we will now determine the centralizer of the subalgebra $\mathcal{A}^{(1)}_\mathfrak{o}$ of $\mathcal{H}^{(1)}$. Here and in \cref{thm:center}, we will make use of the following auxiliary notion.

\begin{definition}\label{def:support}
   Given an element
   \[ z = \sum_{w \in W^{(1)}} c_w \widehat{\theta}_\mathfrak{o}(w) \in \mathcal{H}^{(1)},\quad c_w \in R \]
   and an orientation $\mathfrak{o}$ of $W^{(1)}$, the set
   \[ \op{supp}_\mathfrak{o}(z) := \{ w \in W^{(1)} : c_w \neq 0 \} \]
   is called the \textbf{support} of $z$ (with respect to $\mathfrak{o}$).
\end{definition}

\begin{prop}\label{prop:centralizer}
   The centralizer $C_{\mathcal{H}^{(1)}}(\mathcal{A}^{(1)}_\mathfrak{o})$ of the $R$-subalgebra
   \[ \mathcal{A}^{(1)}_\mathfrak{o} \subseteq \mathcal{H}^{(1)} \]
   is given by the $X^{(1)}$-invariants
   \[ C_{\mathcal{H}^{(1)}}(\mathcal{A}^{(1)}_\mathfrak{o}) = \left(\mathcal{A}^{(1)}_{\mathfrak{o}}\right)^{X^{(1)}} \]
   with respect to the $R$-linear $X^{(1)}$-action on $\mathcal{A}^{(1)}$ determined by
   \[ x(\widehat{\theta}_\mathfrak{o}(y)) = \widehat{\theta}_\mathfrak{o}(xyx^{-1}) \]
   In particular $\op{Z}(\mathcal{H}^{(1)}) \subseteq \left(\mathcal{A}^{(1)}_\mathfrak{o}\right)^{X^{(1)}}$.
\end{prop}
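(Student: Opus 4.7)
The goal is to prove both inclusions; the easy one $(\mathcal{A}^{(1)}_\mathfrak{o})^{X^{(1)}} \subseteq C_{\mathcal{H}^{(1)}}(\mathcal{A}^{(1)}_\mathfrak{o})$ follows by direct computation. For $a = \sum_{y \in X^{(1)}} c_y \widehat{\theta}_\mathfrak{o}(y) \in (\mathcal{A}^{(1)}_\mathfrak{o})^{X^{(1)}}$ and any $x \in X^{(1)}$, since both $x$ and $y$ stabilize $\mathfrak{o}$ (\cref{rmk:X stabilizes sph or}), \cref{cor:hat mult rule} yields $\widehat{\theta}_\mathfrak{o}(x)\widehat{\theta}_\mathfrak{o}(y) = \overline{\bbf{X}}(x,y)\widehat{\theta}_\mathfrak{o}(xy)$ and $\widehat{\theta}_\mathfrak{o}(y)\widehat{\theta}_\mathfrak{o}(x) = \overline{\bbf{X}}(y,x)\widehat{\theta}_\mathfrak{o}(yx)$. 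Commutativity of $\pi(x),\pi(y) \in X$ forces $\overline{\bbf{X}}(x,y) = \overline{\bbf{X}}(y,x)$ (cf. \cref{rmk:comm subalgebras}); reindexing by $v = xy$ and using $\pi(xyx^{-1}) = \pi(y)$, the vanishing of $[\widehat{\theta}_\mathfrak{o}(x),a]$ is equivalent to the identity $c_y = c_{xyx^{-1}}$ for all $y$, i.e. precisely to $X^{(1)}$-invariance of $a$.

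For the reverse inclusion I would argue by a leading-term analysis in the $\{T_w\}$-basis. Given $z = \sum_w c_w \widehat{\theta}_\mathfrak{o}(w) \in C_{\mathcal{H}^{(1)}}(\mathcal{A}^{(1)}_\mathfrak{o})$ with finite support, expand $z = \sum c'_w T_w$; by the triangularity of \cref{cor:hat change of basis}, the maximal elements of $\op{supp}_\mathfrak{o}(z)$ coincide with those of the $T_w$-support. Choose $x \in X^{(1)}$ with $\rho(\pi(x))$ sufficiently deep in the Weyl chamber $D$ associated to $\mathfrak{o}$ so that (i) every sign along a reduced expression for $x$ is $+$ with respect to $\mathfrak{o}$, forcing $\widehat{\theta}_\mathfrak{o}(x) = T_x$, and (ii) $\ell(xw) = \ell(x) + \ell(w)$ and $\ell(wx) = \ell(w) + \ell(x)$ for every $w$ in the (finite) support of $z$. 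Both conditions are verifiable using the vector-valued distance of \cref{def:vector-valued_distance} together with the geometric fact that for $\rho(\pi(x))$ far enough inside $D$, the hyperplanes between $C_0$ and $x(C_0)$ are disjoint from those between $x(C_0)$ and $xw(C_0)$, and analogously for the other side. Thus $T_xT_w = T_{xw}$, $T_wT_x = T_{wx}$, and matching $T_v$-coefficients in $T_xz = zT_x$ (substituting $v = xw$) yields $c'_w = c'_{xwx^{-1}}$ for every $w$ in the support.

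Now if some maximal $w \in \op{supp}_\mathfrak{o}(z)$ satisfies $\pi(w) = y\sigma$ with $\sigma \in W_0 \setminus \{1\}$, then $\pi(xwx^{-1}) = (y + (1-\sigma)(\pi(x)))\sigma$; by the faithfulness of the $W_0$-action on $V/L$ (\cref{lem:unpacking_affine_extended_Coxeter_groups}(vi)), there exists $x_0 \in X^{(1)}_D$ with $(1-\sigma)(\pi(x_0)) \neq 0$, so the family $\{x_0^n w x_0^{-n}\}_{n \geq 1}$ consists of infinitely many distinct conjugates, each with the same nonzero coefficient $c'_w$, contradicting finiteness of the support. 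Hence every maximal element of $\op{supp}_\mathfrak{o}(z)$ lies in $X^{(1)}$. The resulting top piece $z_{\text{top}} := \sum_{w^{\ast} \text{ maximal}} c_{w^\ast} \widehat{\theta}_\mathfrak{o}(w^\ast)$ lies in $\mathcal{A}^{(1)}_\mathfrak{o}$; the identity $c_{w^\ast} = c_{xw^\ast x^{-1}}$ for $x \in X^{(1)}_D$, combined with the fact that $X^{(1)}_D$ generates $X^{(1)}$ as a group (via \cref{lem:X_D_generates} and the inclusion $T \subseteq X^{(1)}_D$, using that group actions propagate invariance from a generating set), shows $z_{\text{top}}$ is $X^{(1)}$-invariant, hence by the easy direction it lies in $C_{\mathcal{H}^{(1)}}(\mathcal{A}^{(1)}_\mathfrak{o})$. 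Therefore $z - z_{\text{top}}$ still centralizes $\mathcal{A}^{(1)}_\mathfrak{o}$ but has strictly smaller top support, and iterating forces $\op{supp}_\mathfrak{o}(z) \subseteq X^{(1)}$, i.e. $z \in (\mathcal{A}^{(1)}_\mathfrak{o})^{X^{(1)}}$.

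The main obstacle is the bootstrapping at each inductive step: one must know that the subtracted top piece $z_{\text{top}}$ is itself in the centralizer, which is not immediate from $z \in C_{\mathcal{H}^{(1)}}(\mathcal{A}^{(1)}_\mathfrak{o})$. This is handled by promoting the coefficient identity $c_{w^\ast} = c_{xw^\ast x^{-1}}$, initially established only for very dominant $x$, to full $X^{(1)}$-invariance via \cref{lem:X_D_generates}, after which the easy direction applies. A secondary technical point is the simultaneous length-additivity condition (ii) for every $w$ in the support; this is guaranteed because the support is finite and the subset of $x \in X^{(1)}_D$ satisfying (ii) is a translate of a cofinite sub-semigroup.
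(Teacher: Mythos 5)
Your proof of the easy inclusion $(\mathcal{A}^{(1)}_\mathfrak{o})^{X^{(1)}} \subseteq C_{\mathcal{H}^{(1)}}(\mathcal{A}^{(1)}_\mathfrak{o})$ is fine and matches the paper. The hard inclusion has a genuine gap: condition (ii) in your choice of $x$ is unachievable. You demand $x$ deep in the Weyl chamber $D$ associated to $\mathfrak{o}$ (so that $\widehat{\theta}_\mathfrak{o}(x) = T_x$) \emph{and simultaneously} $\ell(xw) = \ell(x)+\ell(w)$, $\ell(wx)=\ell(w)+\ell(x)$ for every $w$ in the support. These two requirements are incompatible already for $w = s_\alpha$ a simple reflection: if $\rho(\pi(x)) = \lambda$ is deep in the dominant chamber, then $\ell(\tau^\lambda s_\alpha) = \ell(\tau^\lambda) - 1$, not $\ell(\tau^\lambda)+1$. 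More generally, via \cref{rmk:vector-valued_distance}, $\ell(xw) = \ell(x)+\ell(w)$ forces $\alpha(\rho(\pi(x)))\cdot\vec{d}(C_0,w(C_0))_\alpha \leq 0$ for every $\alpha \in \Phi^+$, i.e. $w(C_0)$ must lie in a cone that depends on \emph{which chamber $\rho(\pi(x))$ is deep in}. A fixed $D$ cannot serve all $w$; the "geometric fact" you invoke about disjointness of crossed hyperplanes is false in general (the hyperplane separating $x(C_0)$ from $xw(C_0)$ already separates $C_0$ from $x(C_0)$ whenever $w$ moves "back toward $C_0$" relative to $D$). Since (ii) is what lets you write $T_xT_w = T_{xw}$ and $T_wT_x = T_{wx}$, the leading-coefficient matching collapses.

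The paper circumvents exactly this difficulty. It does \emph{not} ask for $\widehat{\theta}_\mathfrak{o}(x)=T_x$; instead it fixes a maximal-length $w\notin X^{(1)}$ in $\op{supp}_\mathfrak{o}(z)$ and chooses $\pi(x)$ in the monoid $X_{D_w}$ for $D_w$ the Weyl chamber containing $w(C_0)$ (\cref{lem:centralizer_prop_lem1}), which yields only the \emph{one-sided} additivity $\ell(xw)=\ell(x)+\ell(w)$. The lack of the other-sided additivity is then handled inside the product $z\widehat{\theta}_\mathfrak{o}(x) = \sum_{w'} c_{w'}\widehat{\theta}_\mathfrak{o}(w')\widehat{\theta}_\mathfrak{o}(x)$ by inserting $\widehat{\theta}_\mathfrak{o}(x) = \widehat{\theta}_{\mathfrak{o}\bullet w'}(x) + \bigl(\widehat{\theta}_\mathfrak{o}(x)-\widehat{\theta}_{\mathfrak{o}\bullet w'}(x)\bigr)$; the error term is a sum of $T_{w''}$ with $w'' < \tau^{\pi(x)}$ by \cref{cor:hat change of basis}, hence contributes nothing at the top length $\ell(x)+\ell(w)$. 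Matching coefficients of $\widehat{\theta}_\mathfrak{o}(\tau^{\pi(x)}w)$ then forces $w' := \tau^{\pi(x)-w(\pi(x))}w$ into the support, and the unboundedness statement in \cref{lem:centralizer_prop_lem1} (take $\ell(w(\pi(x))-\pi(x)) > 2\ell(w)$) gives $\ell(w') > \ell(w)$, the desired contradiction. A separate argument using \cref{lem:centralizer_prop_lem2} then upgrades $\op{supp}_\mathfrak{o}(z) \subseteq X^{(1)}$ to full $X^{(1)}$-invariance of the coefficients, which is roughly what you intended with your appeal to \cref{lem:X_D_generates}, but in the paper it is carried out without the false simultaneous length-additivity.
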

\begin{proof}
   First, we show that $C_{\mathcal{H}^{(1)}}(\mathcal{A}^{(1)}_\mathfrak{o}) \subseteq \mathcal{A}^{(1)}_\mathfrak{o}$. For this, consider an arbitrary element $z$ of the centralizer of $\mathcal{A}^{(1)}_\mathfrak{o}$ in $\mathcal{H}^{(1)}$. Write
   \[ z = \sum_{w \in W^{(1)}} c_w \widehat{\theta}_\mathfrak{o}(w),\quad c_w \in R \]
   We need to show that $\op{supp}_\mathfrak{o}(z) \subseteq X^{(1)}$. Assume this is not the case and choose $w \in \op{supp}_\mathfrak{o}(z) - X^{(1)}$ with $\ell(w)$ maximal. Fix an element $x \in X^{(1)}$ such that $\pi(x) \in \Xi$, where $\Xi \subseteq X$ is the set associated to $w$ by \cref{lem:centralizer_prop_lem1} below. Consider now the elements $\widehat{\theta}_\mathfrak{o}(x)z$ and $z\widehat{\theta}_\mathfrak{o}(x)$. Using the product formula (\cref{cor:hat mult rule}) and the fact that $\mathfrak{o}$ is invariant under $X$, we see that on the one hand we have
   \begin{align*} \widehat{\theta}_\mathfrak{o}(x) z & = \sum_{w' \in W^{(1)}} c_{w'} \widehat{\theta}_\mathfrak{o}(x) \widehat{\theta}_\mathfrak{o}(w') = \sum_{w' \in W^{(1)}} c_{w'} \overline{\bbf{X}}(x,w') \widehat{\theta}_\mathfrak{o}(\tau^x w') \end{align*}
   On the other hand we have (again using the product formula) 
   \begin{align*}
      z \widehat{\theta}_\mathfrak{o}(x) & = \sum_{w' \in W^{(1)}} c_{w'} \widehat{\theta}_\mathfrak{o}(w') (\widehat{\theta}_{\mathfrak{o}\bullet{}w'}(x) + \widehat{\theta}_\mathfrak{o}(x) - \widehat{\theta}_{\mathfrak{o}\bullet{}w'}(x)) \\
      & = \sum_{w' \in W^{(1)}} c_{w'} \overline{\bbf{X}}(w',x) \widehat{\theta}_\mathfrak{o}(w' \tau^x) + \sum_{w' \in W^{(1)}} c_{w'} \widehat{\theta}_\mathfrak{o}(w') (\widehat{\theta}_\mathfrak{o}(x) - \widehat{\theta}_{\mathfrak{o}\bullet{}w'}(x)) 
   \end{align*}
   By the change of basis formula (\cref{cor:hat change of basis}), the expansions of the two elements $\widehat{\theta}_\mathfrak{o}(x)$ and $\widehat{\theta}_{\mathfrak{o}\bullet{}w'}(x)$ in the Iwahori-Matsumoto basis $\{T_{w''}\}_{w'' \in W^{(1)}}$ have the same leading term $T_x$ with respect to the Bruhat order on $W^{(1)}$. Therefore, $\widehat{\theta}_\mathfrak{o}(x) - \widehat{\theta}_{\mathfrak{o}\bullet{}w'}(x)$ is an $R$-linear combination of terms $T_{w''}$ with $w'' < \tau^x$ and hence $\ell(w'') < \ell(x)$. It follows that in the expansion of $\widehat{\theta}_\mathfrak{o}(w') (\widehat{\theta}_\mathfrak{o}(x) - \widehat{\theta}_{\mathfrak{o}\bullet{}w'}(x))$ in the Iwahori-Matsumoto basis only terms $T_{w''}$ with
   \[ \ell(w'') < \ell(w')+\ell(x) \leq \ell(w)+\ell(x) = \ell(\tau^x w) \]
   appear. Using \cref{cor:hat change of basis} again, it follows that the same is true for the expansion of this expression in the basis $\{\widehat{\theta}_\mathfrak{o}(w'')\}_{w'' \in W^{(1)}}$. In particular, the coefficient of $\widehat{\theta}_\mathfrak{o}(\tau^x w)$ vanishes. Comparing the coefficients of $\widehat{\theta}_\mathfrak{o}(\tau^x w)$ on both sides of the equation $\widehat{\theta}_\mathfrak{o}(x) z = z \widehat{\theta}_\mathfrak{o}(x)$, we see that there exists $w' \in W^{(1)}$ such that $\tau^x w = w' \tau^x$ and
\[ c_{w} \overline{\bbf{X}}(x,w) = c_{w'} \overline{\bbf{X}}(w',x) \]
Since $\pi(x) \in \Xi$ we have $\ell(\tau^x w) = \ell(x) + \ell(w)$ by definition of $\Xi$ and hence $\overline{\bbf{X}}(x,w) = 1$ by \cref{rmk:coboundary}. Since $c_w \neq 0$ by assumption, it follows from the above equation that $c_{w'} \neq 0$ and hence $w' \in \op{supp}_\mathfrak{o}(z)$. Moreover, we have
   \[ w' = \tau^x w \tau^{-x} = \tau^{x - w(x)} w \]
   By \cref{lem:centralizer_prop_lem1} below, we can assume that $x$ has been chosen such that $\ell(w(x) - x) > 2\ell(w)$. But then
   \[ \ell(w') = \ell(\tau^{x - w(x)}w) \geq \ell(\tau^{x - w(x)}) - \ell(w) > \ell(w) \]
   But this is a contradiction to the choice of $w$, and hence we have shown that
   \[ C_{\mathcal{H}^{(1)}}(\mathcal{A}^{(1)}_\mathfrak{o}) \subseteq \mathcal{A}^{(1)}_\mathfrak{o} \]
   Now in order to show that
   \[ C_{\mathcal{H}^{(1)}}(\mathcal{A}^{(1)}_\mathfrak{o}) \subseteq \left(\mathcal{A}^{(1)}_\mathfrak{o}\right)^{X^{(1)}} \]
   we have to show that the coefficients of $z$ satisfy
   \[ c_x = c_{yxy^{-1}} \quad \forall x,y \in X^{(1)} \]
   By \cref{lem:centralizer_prop_lem2} below, it suffices to show this for $y \in X^{(1)}$ satisfying $\ell(xy) = \ell(x)+\ell(y)$. From
   \[ \widehat{\theta}_\mathfrak{o}(y) z = z \widehat{\theta}_\mathfrak{o}(y) \]
   and the product formula it follows immediately that
   \[ \overline{\bbf{X}}(y,x) c_x = \overline{\bbf{X}}(yxy^{-1},y) c_{yxy^{-1}} \]
   Since the image of $X^{(1)}$ under $\pi: W^{(1)} \rightarrow W$ is commutative, we have
   \[ \overline{\bbf{X}}(yxy^{-1},y) = \overline{\bbf{X}}(x,y) \]
   by definition of $\overline{\bbf{X}}$. Moreover, from $\ell(xy) = \ell(x)+\ell(y)$ and \cref{rmk:coboundary} it follows that 
   \[ \overline{\bbf{X}}(x,y) = \overline{\bbf{X}}(y,x) = 1 \]
   Therefore
   \[ c_x = c_{yxy^{-1}} \]
   Thus it only remains to show the reverse inclusion
   \[ \left(\mathcal{A}^{(1)}_\mathfrak{o}\right)^{X^{(1)}} \subseteq C_{\mathcal{H}^{(1)}}(\mathcal{A}^{(1)}_\mathfrak{o}) \]
   So let
   \[ z = \sum_{x \in X^{(1)}} c_x \widehat{\theta}_\mathfrak{o}(x) \]
   be an element of the invariants, i.e.
   \[ c_x = c_{yxy^{-1}},\quad \forall x,y \in X^{(1)} \]
   We need to show that
   \[ z \widehat{\theta}_\mathfrak{o}(y) = \widehat{\theta}_\mathfrak{o}(y) z \quad \forall y \in X^{(1)} \]
   This amounts to showing that
   \[ \overline{\bbf{X}}(y,x) c_x = \overline{\bbf{X}}(yxy^{-1},y) c_{yxy^{-1}} \]
   for \textit{all} $x,y \in X^{(1)}$. But since
   \[ \overline{\bbf{X}}(yxy^{-1},y) = \overline{\bbf{X}}(x,y) \]
   this follows from
   \[ \overline{\bbf{X}}(x,y) = \overline{\bbf{X}}(y,x) \]
\end{proof}

\begin{lemma}\label{lem:centralizer_prop_lem1}
   Let $W = X\rtimes W_0$ be an affine extended Coxeter group (see \cref{def:affine_coxeter_group} and \cref{lem:unpacking_affine_extended_Coxeter_groups} for notation). Let $w \in W$ with $w \not\in X$. Then the set
   \[ \Xi := \{ x \in X : \ell(\tau^x w) = \ell(x) + \ell(w) \} \]
   satisfies
   \[ \sup \{ \ell(w(x)-x) : x \in \Xi \} = \infty \]
\end{lemma}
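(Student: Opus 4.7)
Write $w = \tau^y w_0$ with $y \in X$ and $w_0 \in W_0$; since $w \notin X$, we have $w_0 \neq 1$, so $w_0 - 1 \colon V \to V$ is non-zero. The plan is to find $z \in X$ such that $nz \in \Xi$ for all $n \geq 1$ and $(w_0 - 1)\rho(z) \neq 0$. Once such $z$ is found, the elements $x_n := nz$ witness the unboundedness: in the additive notation on $X$ we have $w(x_n) - x_n = n(w_0 - 1)(z)$ (since conjugation of $\tau^{x_n}$ by $w$ has linear part $w_0$), and \eqref{eq:length_of_translation} yields
\[\ell(w(x_n) - x_n) = n \sum_{\alpha \in \Phi^+} |\alpha((w_0 - 1)\rho(z))| \to \infty, \]
provided the sum is strictly positive.

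The first step is to reformulate the condition $\ell(\tau^x w) = \ell(x) + \ell(w)$ as a cone condition on $\rho(x)$. By \eqref{eq:reducedness_and_weak_Bruhat_order}, it is equivalent to $\vec{d}(C_0, \tau^x C_0) \preceq \vec{d}(C_0, \tau^x w C_0)$, and a direct computation from \cref{def:vector-valued_distance} using $\tau^x C = \rho(x) + C$ gives $\vec{d}(C_0, \tau^x C_0)_\alpha = -\alpha(\rho(x))$ and $\vec{d}(C_0, \tau^x w C_0)_\alpha = -\alpha(\rho(x)) + \delta_\alpha$, where $\delta_\alpha := \vec{d}(C_0, wC_0)_\alpha$. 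The characterization $u \preceq u + v \iff u_\alpha v_\alpha \geq 0\ \forall \alpha$ from \eqref{eq:well_partial_order} then reduces $x \in \Xi$ to $\rho(x) \in \mathcal{C}$, where
\[\mathcal{C} := \{v \in V : \alpha(v)\delta_\alpha \leq 0 \text{ for all } \alpha \in \Phi^+\}\]
is a closed convex cone through the origin.

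Next I will show $\op{int}(\mathcal{C}) \neq \emptyset$. For any $q \in C_0$ and $q' \in wC_0$, the definition of $\vec{d}$ and the openness of the alcoves force $\op{sgn}(\alpha(q - q')) = \op{sgn}(\delta_\alpha)$ whenever $\delta_\alpha \neq 0$, so $v := q' - q$ satisfies $\alpha(v)\delta_\alpha < 0$ for all such $\alpha$, placing $v$ in $\op{int}(\mathcal{C})$. Since each $\alpha \in \Phi^+$ vanishes on $L$, the cone $\mathcal{C}$ is $L$-invariant and descends to a cone $\mathcal{C}/L$ in $V/L$ with non-empty interior.

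To conclude, by axioms \axiom{ACV} and \axiom{ACX}, the image of $\rho(X)$ in $V/L$ is a full-rank lattice, so it meets every non-empty open subset of $V/L$ stable under positive scaling. Since $w_0 \neq 1$ acts faithfully on $V/L$ (\cref{lem:unpacking_affine_extended_Coxeter_groups}(vi)), $\ker((w_0 - 1)|_{V/L})$ is a proper subspace, so $\op{int}(\mathcal{C}/L) \setminus \ker((w_0 - 1)|_{V/L})$ is a non-empty open set stable under positive scaling, which the lattice must meet; lifting to $X$ yields the desired $z$ with $\rho(z) \in \op{int}(\mathcal{C})$ and $(w_0 - 1)\rho(z) \neq 0$. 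Strict positivity of $\sum_\alpha |\alpha((w_0 - 1)\rho(z))|$ follows from the orthogonal decomposition $V = \ker(w_0 - 1) \oplus \op{im}(w_0 - 1)$ with respect to a $W_0$-invariant inner product (which exists by \cref{lem:unpacking_affine_extended_Coxeter_groups}(i)) combined with $L \subseteq \ker(w_0 - 1)$: this forces $(w_0 - 1)\rho(z) \in \op{im}(w_0 - 1) \perp L$, hence $(w_0 - 1)\rho(z) \notin L = \bigcap_\alpha \ker \alpha$. The main content is the reformulation as the cone condition; beyond that the argument is a routine density/dimension count, and I anticipate no serious obstacle.
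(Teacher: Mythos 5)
Your proof is correct and follows essentially the same route as the paper: characterize $\Xi$ via the vector-valued distance and the partial order $\preceq$, observe that $\Xi$ contains (the lattice points of) a cone with non-empty interior, use \eqref{eq:length_of_translation} to reduce to finding one $z$ in the cone with $(w_0-1)\rho(z) \notin L$, and then exploit \axiom{ACV} and the faithfulness of $W_0 \to \op{GL}(V/L)$. The only differences are matters of packaging. You work directly with the exact cone $\mathcal{C}$ determined by the signs of $\vec{d}(C_0,wC_0)$, whereas the paper works with the subcone $X_D$ for the Weyl chamber $D$ containing $w(C_0)$ (which it has already studied via \cref{lem:finiteness_of_X_D,lem:X_D_generates}); these contain each other's lattice points where it matters, so both suffice. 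You also avoid the paper's contradiction argument (which would force $w \in X$ via \cref{lem:X_D_generates}) by a direct open-cone/full-rank-lattice density count; this is arguably a little more self-contained at the cost of one extra geometric step. One small redundancy in your write-up: your choice of $z$ already guarantees $(w_0-1)\rho(z) \notin L$ (you selected $p(\rho(z)) \notin \ker((w_0-1)|_{V/L})$), so the subsequent orthogonality argument upgrading $\neq 0$ to $\notin L$ is not actually needed; but it is harmless.
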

\begin{proof}
   By \cref{rmk:weak_Bruhat_order} we know that
   \begin{equation}\label{eq:characterizing_len_using_preceq} \ell(\tau^x w) = \ell(x) + \ell(w) \quad \Leftrightarrow \quad \vec{d}(C_0,\tau^x(C_0)) \preceq \vec{d}(C_0,(\tau^x w)(C_0)) \end{equation}
   where $C_0$ denotes the fundamental chamber and $\vec{d}$ is the `vector-valued distance' with values in $\Z^{\Phi^+}$ and $\preceq$ the partial order on $\Z^{\Phi^+}$ defined in \cref{rmk:vector-valued_distance}. Moreover, from the definition of $\vec{d}$ it follows immediately that
   \[ \vec{d}(C_0,\tau^x(C_0)) = -\nu(\rho(x)) \quad \text{and}\quad \vec{d}(C_0,(\tau^x w)(C_0)) = -\nu(\rho(x)) + \vec{d}(C_0,w(C_0)) \]
   where $\nu$ is the evaluation
   \[ \nu: V \longrightarrow \R^{\Phi^+},\quad v \longmapsto (\alpha \mapsto \alpha(v)) \]
   map. Note that $\nu(\rho(x)) \in \Z^{\Phi^+}$, as we verified in the proof of \cref{lem:finiteness_of_X_D}. Let $D \in \pi_0\left(V-\bigcup_{\alpha \in \Phi} H_\alpha\right)$ be the Weyl chamber containing $w(C_0)$. For $\alpha \in \Phi^+$ let
   \[ \varepsilon_{D,\alpha} := \op{sgn}(\alpha(p)) \]
   where $p \in D$ is any point. Then
   \[ D = \{ x \in V\ :\ \forall \alpha \in \Phi^+\ \ \varepsilon_{D,\alpha} \alpha(x) > 0 \} \]
   and hence the closure of $D$ is given by (cf. proof of \cref{lem:finiteness_of_X_D})
   \[ \overline{D} = \{ x \in V\ :\ \forall \alpha \in \Phi^+\ \ \varepsilon_{D,\alpha} \alpha(x) \geq 0 \} \] 
   Moreover, by choosing $p$ to lie in $w(C_0)$ it follows easily from the definition of $\vec{d}$ (remembering that $0 \in \overline{C_0}$) that
   \[ - \varepsilon_{D,\alpha} \vec{d}(C_0,w(C_0))_\alpha \geq 0 \quad \forall \alpha \in \Phi^+ \]
   From the above and the definition of $\preceq$ it follows that
   \[ -\nu(\rho(x)) \preceq -\nu(\rho(x)) + \vec{d}(C_0,w(C_0)) \]
   for all $x \in X_D$, where
   \[ X_D = \{ x \in X\ :\ \rho(x) \in \overline{D} \} \]
   From \eqref{eq:characterizing_len_using_preceq} it therefore follows that
   \[ X_D \subseteq \Xi \]
   Since
   \[ \ell(x) = |\vec{d}(C_0,\tau^x (C_0))| = |-\nu(\rho(x))| = \sum_{\alpha \in \Phi^+} |\alpha(\rho(x))| \]
   it follows from the definition of $X_D$ that
   \[ \ell(x+y) = \ell(x)+\ell(y) \quad \forall x,y \in X_D \]
   In particular we have $\ell(nx) = n\ell(x)$ for $n \in \N$, so in order to prove the claim it suffices to show that
   \[ \{ \ell(w(x)-x)\ :\ x \in X_D \} \]
   contains a nonzero element. If this was not the case, we would have
   \[ \rho(w(x)-x) = \rho_0(w)(\rho(x))-\rho(x) \in L = \bigcap_{\alpha \in \Phi^+} H_\alpha \]
   for all $x \in X_D$, where we recall that $\rho_0: W \rightarrow \op{GL}(V)$ denotes the composition of $\rho: W \rightarrow \op{Aut}_{\op{aff}} = V \rtimes \op{GL}(V)$ with the projection onto the linear part. But every $x \in X$ can be written as a difference $x = y-z$ with $y,z \in X_D$ by \cref{lem:X_D_generates}, hence we would have
   \[ \rho_0(w)(v) - v \in L \]
   for all $v \in \rho(X)$. Since the image of $\rho(X) \subseteq V$ under $V \twoheadrightarrow V/L$ generates the vector space $V/L$ by \axiom{ACV}, it would follow that $\rho_0(w)$ acts trivially on the quotient $V/L$. But by \cref{lem:unpacking_affine_extended_Coxeter_groups} the group $W_0 = \rho_0(W)$ acts faithfully on $V/L$, hence
   \[ w \in \op{ker}(\rho_0) = X \]
   contradicting the assumption.
\end{proof}

\begin{lemma}\label{lem:centralizer_prop_lem2}
   For all $x \in X^{(1)}$
   \[ \{ yxy^{-1} : y \in X^{(1)} \} = \{ yxy^{-1} : y \in X^{(1)},\ \ell(x y) = \ell(x)+\ell(y) \} \]
\end{lemma}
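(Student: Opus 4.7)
The inclusion $\supseteq$ is trivial. For the reverse inclusion, fix $x, y \in X^{(1)}$; we seek $y' \in X^{(1)}$ with $y'xy'^{-1} = yxy^{-1}$ and $\ell(xy') = \ell(x) + \ell(y')$. My strategy is to replace $y$ by $y' := y \cdot x^n$ for $n$ sufficiently large. Since $x^n$ commutes with $x$ in $X^{(1)}$, we have
\[ y' x (y')^{-1} = (yx^n) x (yx^n)^{-1} = y x y^{-1} \]
so the conjugation relation is preserved for free. All that remains is to show that for $n \gg 0$ the length-additivity condition holds.

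For this, recall the length function on $X^{(1)}$ factors through $\pi$, and for any $z \in X$ formula \eqref{eq:length_of_translation} gives $\ell(z) = \sum_{\alpha \in \Phi^+} |\alpha(\rho(z))|$. Writing $\bar{x} = \pi(x)$, $\bar{y} = \pi(y)$, and noting $\rho$ is additive on the commutative group $X$, the length additivity $\ell(xy') = \ell(x) + \ell(y')$ becomes
\[ \sum_{\alpha \in \Phi^+} \bigl| \alpha(\rho(\bar{x})) + \alpha(\rho(\bar{y}) + n\rho(\bar{x})) \bigr| = \sum_{\alpha \in \Phi^+} \bigl( |\alpha(\rho(\bar{x}))| + |\alpha(\rho(\bar{y}) + n\rho(\bar{x}))| \bigr) \]
which holds if and only if, for every $\alpha \in \Phi^+$, the two real numbers $\alpha(\rho(\bar{x}))$ and $\alpha(\rho(\bar{y})) + n\alpha(\rho(\bar{x}))$ have the same sign or one of them is zero.

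If $\alpha(\rho(\bar{x})) = 0$ the condition is automatic. If $\alpha(\rho(\bar{x})) \neq 0$, then for $n$ large enough (specifically $n \geq |\alpha(\rho(\bar{y}))|/|\alpha(\rho(\bar{x}))|$) the second term has the same sign as $n\alpha(\rho(\bar{x}))$, which matches $\alpha(\rho(\bar{x}))$. Since $\Phi^+$ is finite, one can choose a single $n$ that works simultaneously for all $\alpha$, and then $y' = yx^n$ has the required properties. This completes the argument; the only subtle point is the uniform choice of $n$ over all $\alpha \in \Phi^+$, which is immediate from finiteness.
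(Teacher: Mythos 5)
Your proposal is correct and is essentially the same argument as the paper's. In fact, the two substitutions are literally identical: the paper sets $\widetilde{x} := yxy^{-1}$ and takes $y' = \widetilde{x}^k y$, which (since $\widetilde{x}^k y = (yxy^{-1})^k y = y x^k$) is exactly your $y' = y x^n$ with $k = n$; the subsequent length estimate via formula \eqref{eq:length_of_translation} and the uniform choice of exponent using finiteness of $\Phi^+$ also match. Your formulation is marginally cleaner in that it avoids naming $\widetilde{x}$, but no new idea is involved.
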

\begin{proof}
   From \cref{rmk:vector-valued_vs_plain_old_distance} recall equation \eqref{eq:length_of_translation}
   \[ \ell(x) = \ell(\pi(x)) = \sum_{\alpha \in \Phi^+} |\alpha(\rho(\pi(x)))| \]
   Now given any $x,y \in X^{(1)}$ we have
   \[ \widetilde{x} := yxy^{-1} = \widetilde{x}^k \widetilde{x} \widetilde{x}^{-k} = (\widetilde{x}^k y) x (\widetilde{x}^k y)^{-1} \]
   for all $k \in \Z$. It therefore suffices to show that
   \[ \ell(x\widetilde{x}^ky) = \ell(x)+\ell(\widetilde{x}^ky) \]
   for $k > 0$ sufficiently large. Since $X$ is commutative we have $\pi(\widetilde{x}) = \pi(x)$ and hence
   \[ \alpha(\rho(\pi(\widetilde{x}^k y))) = \alpha(\rho(\pi(x)^k \pi(y))) = k \alpha(\rho(\pi(x))) + \alpha(\rho(\pi(y))) \]
   If $\alpha(\rho(\pi(x))) \neq 0$, we can therefore always choose $k$ big enough such that $\alpha(\rho(\pi(\widetilde{x}^k y)))$ and $\alpha(\rho(\pi(x)))$ have the same sign and hence
   \[ |\alpha(\rho(\pi(x \widetilde{x}^k y)))| = |\alpha(\rho(\pi(x))) + \alpha(\rho(\pi(\widetilde{x}^k y)))| = |\alpha(\rho(\pi(x)))| + |\alpha(\rho(\pi(\widetilde{x}^k y)))| \]
   For those $\alpha$ for which $\alpha(\rho(\pi(x))) = 0$ the equation
   \[ |\alpha(\rho(\pi(x))) + \alpha(\rho(\pi(\widetilde{x}^k y)))| = |\alpha(\rho(\pi(x)))| + |\alpha(\rho(\pi(\widetilde{x}^k y)))| \]
   holds true for trivial reasons. Hence, for $k$ sufficiently large we have
   \[ |\alpha(\rho(\pi(x \widetilde{x}^k y)))| = |\alpha(\rho(\pi(x)))| + |\alpha(\rho(\pi(\widetilde{x}^k y)))| \]
   for every $\alpha \in \Phi^+$, and hence
   \[ \ell(x \widetilde{x}^k y) = \ell(x) + \ell(\widetilde{x}^k y) \]
\end{proof}

\subsection{The center of affine pro-\texorpdfstring{$p$}{p} Hecke algebras} 
\label{sub:The center of affine pro-$p$ Hecke algebras}
In this section, let $\mathcal{H}^{(1)}$ be an arbitrary affine pro-$p$ Hecke algebra. Our goal is to show that, for \textit{any} orientation $\mathfrak{o}$, the center of $\mathcal{H}^{(1)}$ is given by the invariants
\[ Z(\mathcal{H}^{(1)}) = \left(\mathcal{A}^{(1)}_\mathfrak{o}\right)^{W^{(1)}} \]
of the $R$-linear action of $W^{(1)}$ on $\mathcal{A}^{(1)}_\mathfrak{o}$ by permutation of the basis elements $\widehat{\theta}_\mathfrak{o}(x), x \in X^{(1)}$. Note that the action of $W^{(1)}$ is by \textit{algebra automorphisms}, since we have
\[ \overline{\bbf{X}}(w(x),w(y)) = \overline{\bbf{X}}(x,y) \quad \forall w \in W^{(1)},\ x,y \in X^{(1)} \]
which follows immediately from formula \eqref{eq:simple_formula_for_coboundary} and the $W_0$-invariance (see \cref{lem:invariance_of_L}) of $\overline{\bbf{L}}$ on elements of $X \subseteq W$. In particular, the invariants form a subalgebra.

Let us now show one inclusion.

\begin{prop}\label{prop:invariants_lie_in_center}
   Let $W^{(1)}\backslash X^{(1)}$ denote the set of orbits with respect to the natural conjugation action of $W^{(1)}$ on $X^{(1)}$ and let $(W^{(1)}\backslash X^{(1)})_{\op{fin}}$ denote the subset of finite orbits. For every $\gamma \in (W^{(1)}\backslash X^{(1)})_{\op{fin}}$, the element
   \[ z_\gamma := \sum_{x \in \gamma} \widehat{\theta}_\mathfrak{o}(x),\quad \text{$\mathfrak{o}$ spherical orientation} \]
   is well defined independent of the choice of a spherical orientation $\mathfrak{o}$ of $W^{(1)}$.
   Moreover, the element $z_\gamma$ lies in the center of $Z(\mathcal{H}^{(1)})$, and hence the subalgebra of $W^{(1)}$-invariants
   \[ \left(\mathcal{A}^{(1)}_\mathfrak{o}\right)^{W^{(1)}} \subseteq Z(\mathcal{H}^{(1)}) \]
   is contained in the center and independent of $\mathfrak{o}$, with distinguished $R$-basis $\{z_\gamma\}, \gamma \in (W^{(1)}\backslash X^{(1)})_{\op{fin}}$.
\end{prop}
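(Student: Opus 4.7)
The plan is to prove the two non-trivial assertions—independence of $z_\gamma^\mathfrak{o}$ from the spherical orientation $\mathfrak{o}$ and centrality of the common value $z_\gamma$—in tandem, as the introduction emphasizes that these properties are essentially equivalent. I first invoke the specialization argument (\cref{rmk:specialization_argument}) to reduce to the case where the parameters $a_s \in R$ are units and squares, so that the normalized Bernstein map $\widetilde{\theta}_\mathfrak{o}$ is available; the integral statement is then recovered via $\widehat{\theta}_\mathfrak{o}(x) = \overline{\sqrt{\bbf{L}}}(x) \widetilde{\theta}_\mathfrak{o}(x)$, after observing that $\overline{\sqrt{\bbf{L}}}$ is invariant under the $W$-action on $X$ and hence constant along the $W^{(1)}$-orbit $\gamma$.

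For independence, it suffices to show $z_\gamma^\mathfrak{o} = z_\gamma^{\mathfrak{o} \bullet s_\alpha}$ for each simple finite reflection $s_\alpha \in W_0$, since the spherical orientations form a single $W_0$-orbit (\cref{rmk:X stabilizes sph or}) on which the simple reflections act transitively. Setting $\mathfrak{o}' := \mathfrak{o} \bullet s_\alpha$, the pair $(\mathfrak{o}, \mathfrak{o}')$ is adjacent in the sense of \cref{def:adjacency_of_or}, and \cref{thm:bernstein_relation} yields
\[ \widetilde{\theta}_\mathfrak{o}(x) - \widetilde{\theta}_{\mathfrak{o}'}(x) = \Bigl(\sum_{H} \mathfrak{o}(1,H)\,\Xi_{\mathfrak{o}'}(H)\Bigr)\,\widetilde{\theta}_\mathfrak{o}(x), \]
where $H$ runs over the hyperplanes parallel to $H_\alpha$ that separate $1$ from $\pi(x)$. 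Summing over $x \in \gamma$, I reindex the resulting double sum and apply \cref{lem:property_of_bernstein} (applicable because $T$ is central in $X^{(1)}$, so $X^{(1)} \subseteq C_{W^{(1)}}(T)$) to rewrite each $(x,H)$-summand in a form that cancels the summand indexed by $(n_{s_\alpha} x n_{s_\alpha}^{-1}, H')$ for an explicitly paired hyperplane $H'$; the involution $x \mapsto n_{s_\alpha} x n_{s_\alpha}^{-1}$ preserves $\gamma$ because $\gamma$ is a $W^{(1)}$-orbit, and pushing this pairing through kills the entire sum.

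Writing $z_\gamma$ for the common value, $X^{(1)}$-invariance is automatic since $\gamma$ is a $W^{(1)}$-orbit, so \cref{prop:centralizer} already places $z_\gamma$ in the centralizer of $\mathcal{A}^{(1)}_\mathfrak{o}$. To extend this to centrality in all of $\mathcal{H}^{(1)}$, I verify commutation with the Iwahori–Matsumoto generators $T_u$ ($u \in \Omega^{(1)}$) and $T_{n_s}$ ($s \in S$). The $\Omega^{(1)}$-case follows directly from \cref{rmk:hat comm rules}: conjugation by $T_u$ sends $\widehat{\theta}_\mathfrak{o}(x)$ to $\widehat{\theta}_{\mathfrak{o} \bullet u^{-1}}(u x u^{-1})$, the orientation $\mathfrak{o} \bullet u^{-1}$ remains spherical because $\rho_0(u) \in W_0$ permutes Weyl chambers, and $\gamma$ is $W^{(1)}$-stable, so independence gives $T_u z_\gamma T_u^{-1} = z_\gamma^{\mathfrak{o} \bullet u^{-1}} = z_\gamma$. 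For the generators $T_{n_s}$, I apply the same Bernstein-relation argument directly to $T_{n_s} z_\gamma - z_\gamma T_{n_s}$, using the cocycle rule to identify $T_{n_s}$ (up to a factor of $\sqrt{a_s}$) with $\widetilde{\theta}_\mathfrak{o}(n_s)$ for a suitably chosen spherical $\mathfrak{o}$.

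For the basis assertion, the $z_\gamma$ indexed by distinct finite orbits have pairwise disjoint supports in $\{\widehat{\theta}_\mathfrak{o}(x)\}_{x \in X^{(1)}}$, hence are $R$-linearly independent; conversely, any $W^{(1)}$-invariant element of $\mathcal{A}^{(1)}_\mathfrak{o}$ must have coefficients constant on orbits, with only finitely many orbits receiving nonzero coefficients, yielding the expansion in the $z_\gamma$. The main obstacle is the orbit cancellation underlying the independence proof: matching the Bernstein correction term attached to $(x,H)$ with the one attached to $(n_{s_\alpha} x n_{s_\alpha}^{-1}, H')$ requires a combinatorially delicate pairing of pairs $(x,H)$ and a careful sign analysis of $\mathfrak{o}(1,H)$ via \cref{lem:property_of_bernstein}, and this pairing is the technical heart of the argument.
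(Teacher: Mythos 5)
Your reduction via the specialization argument and the observation that it suffices to compare $\mathfrak{o}$ with $\mathfrak{o}\bullet s_\alpha$ for simple finite $s_\alpha$ both match the paper, but the crucial cancellation step has a genuine gap. You describe an element-level pairing $(x,H)\leftrightarrow(n_{s_\alpha}xn_{s_\alpha}^{-1},H')$ and call $x\mapsto n_{s_\alpha}xn_{s_\alpha}^{-1}$ an ``involution'' of $\gamma$. It is not an involution: $n_{s_\alpha}^2 \in T$, and conjugation by $n_{s_\alpha}^2$ on $X^{(1)}$ is nontrivial unless $T$ is central in $X^{(1)}$ (which you assert as a fact, but it is not a hypothesis of the proposition; see \cref{rmk:finiteness_of_orbits}, which explicitly treats the case when $T$ is \emph{not} central). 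Consequently the pairing does not close: $(\sigma(x),H')$ is sent to $(\sigma^2(x),H'')$, not back to $(x,H)$. Moreover, even pretending $\sigma$ were an involution, the anti-symmetry $\delta(\sigma(x))=-\delta(x)$ (where $\delta(x) = \widetilde{\theta}_\mathfrak{o}(x)-\widetilde{\theta}_{\mathfrak{o}'}(x)$) only yields $2\sum_{x\in\gamma}\delta(x)=0$ once you try to sum, not the vanishing itself. The paper avoids both problems by decomposing $\gamma$ into $X^{(1)}$-conjugacy orbits $\xi_i$: $s_\alpha$ genuinely acts as an involution on the set of $X^{(1)}$-orbits. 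For the $s_\alpha$-fixed orbits it shows the correction vanishes outright, because $\pi(\sigma(x))=\pi(x)$ forces $\alpha(\rho(\pi(x)))=0$, so \emph{no} hyperplanes of type $\alpha$ separate $1$ and $x$ and the Bernstein sum is empty; for the non-fixed pairs $\{\xi_i,s_\alpha(\xi_i)\}$ the term-wise anti-symmetry gives an exact cancellation with no $2$-torsion issue. The fixed-orbit observation $\alpha(\rho(\pi(x)))=0$ is the key fact you omit, and without it your argument cannot recover the proposition over an arbitrary coefficient ring.

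Two further points. First, you prove centrality separately, commuting $z_\gamma$ with $T_u$ and $T_{n_s}$. The paper notes that after the reduction to $a_s=1$, well-definedness of $z_\gamma^\mathfrak{o}$ in $\mathfrak{o}$ is \emph{equivalent} to centrality: by the cocycle rule, $\widehat{\theta}_\mathfrak{o}(w)\,z_\gamma^{\mathfrak{o}\bullet w}\,\widehat{\theta}_\mathfrak{o}(w)^{-1}=z_\gamma^{\mathfrak{o}}$ for all $w\in W^{(1)}$, and the $\widehat{\theta}_\mathfrak{o}(w)$ form a unit basis. Your separate verification is not wrong, but it is redundant work, and the identification of $T_{n_s}$ (for $s\in S\setminus S_0$, the affine generators) with a $\widetilde{\theta}_\mathfrak{o}(n_s)$ for spherical $\mathfrak{o}$ needs a case check you do not supply. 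Second, while the paper's own application of \cref{lem:property_of_bernstein} to $x\in X^{(1)}$ does implicitly require $X^{(1)}\subseteq C_{W^{(1)}}(T)$, you should flag this as a hypothesis you are invoking rather than assert it as an established fact.
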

\begin{proof}
   Using the specialization argument (see \cref{rmk:specialization_argument}), it suffices to prove the statement in the case when the $a_s \in R$ are invertible and admit square roots. In this case we have by \cref{def:normalized_bernstein_map} (for some fixed choice of square roots $\sqrt{a_s}$)
   \[ \widehat{\theta}_\mathfrak{o}(w) = \overline{\sqrt{\bbf{L}}}(w) \widetilde{\theta}_\mathfrak{o}(w) \]
   From the definition of $\overline{\sqrt{\bbf{L}}}: W \rightarrow R$ and \cref{lem:invariance_of_L} below, it follows that we have
   \[ \overline{\sqrt{\bbf{L}}}(w(x)) = \overline{\sqrt{\bbf{L}}}(x) \quad \forall w \in W^{(1)},\ x \in X^{(1)} \]
   Therefore, it follows that
   \[ \sum_{x \in \gamma} \widehat{\theta}_\mathfrak{o}(x) = \overline{\sqrt{\bbf{L}}}(x_0) \sum_{x \in \gamma} \widetilde{\theta}_\mathfrak{o}(x) \]
   for any $x_0 \in \gamma$. We may therefore prove the claim with $\widehat{\theta}$ replaced by $\widetilde{\theta}$, i.e. using the isomorphism of \cref{rmk:usefulness_of_normalized_bernstein} we may assume that $a_s = 1$. In this case the independence of the element
   \[ \sum_{x \in \gamma} \widehat{\theta}_\mathfrak{o}(x) \in \mathcal{H}^{(1)} \]
   from the choice of $\mathfrak{o}$ is equivalent to this element lying in the center since $W_0$ acts transitively on spherical orientations and because of the formula
   \[ \widehat{\theta}_\mathfrak{o}(w)\widehat{\theta}_{\mathfrak{o}\bullet{}w}(x)\widehat{\theta}_\mathfrak{o}(w)^{-1} = \widehat{\theta}_\mathfrak{o}(w(x)) \quad \forall w \in W^{(1)},\ x \in X^{(1)} \]
   So it suffices to show the well-definedness of $z_\gamma$. Since spherical orientations are in bijection with Weyl chambers and any two Weyl chambers are connected by a gallery, it suffices to show that
   \[ \sum_{x \in \gamma} \widehat{\theta}_\mathfrak{o}(x) = \sum_{x \in \gamma} \widehat{\theta}_{\mathfrak{o}\bullet{}s_\alpha}(x) \]
   where $\mathfrak{o}$ is any spherical orientation and $s_\alpha \in W_0$ is associated to a root $\alpha \in \Phi$ that is simple with respect to the Weyl chamber $D_\mathfrak{o}$ to which the orientation $\mathfrak{o}$ corresponds. In this situation, $\mathfrak{o}$ and $\mathfrak{o}\bullet{}s_\alpha$ are \textit{adjacent} in the sense of \cref{def:adjacency_of_or} since $s_\alpha$ permutes the positive roots with respect to $D_\mathfrak{o}$ that are not parallel to $\alpha$ among themselves.
   
   The decomposition $W = W_0 \ltimes X$ induces an identification $W^{(1)}/X^{(1)} \simeq W_0$, and therefore the $W^{(1)}$-orbit $\gamma$ decomposes into a disjoint union of $X^{(1)}$-orbits that are permuted amongst themselves by $W_0$. Considering the action of the subgroup $\{1,s_\alpha\} \leq W_0$, we can therefore write
   \[ \gamma = \coprod_{i \in I} \xi_i \cup s_\alpha(\xi_i) \]
   where $\xi_i \in X^{(1)}\backslash X^{(1)}$ and either $s_\alpha(\xi_i) = \xi_i$ or $s_\alpha(\xi_i) \cap \xi_i = \emptyset$. Accordingly, if $J \subseteq I$ denotes the indices $i$ where $s_\alpha(\xi_i) = i$ and $\sigma \in W^{(1)}$ denotes any lift of $s_\alpha$, we have that
   \[ \sum_{x \in \gamma} \widehat{\theta}_{\mathfrak{o}}(x) = \sum_{i \in J} \sum_{x \in \xi_i} \widehat{\theta}_{\mathfrak{o}}(x) + \sum_{i \in I-J} \sum_{x \in \xi_i} \widehat{\theta}_{\mathfrak{o}}(x)+\widehat{\theta}_{\mathfrak{o}}(\sigma(x)) \]
   Whence it suffices to show that for all $x \in \xi_i$ with $s_\alpha(\xi_i) = \xi_i$ we have that
   \[ \widehat{\theta}_{\mathfrak{o}}(x) = \widehat{\theta}_{\mathfrak{o}\bullet{}s_\alpha}(x) \]
   and that for \textit{any} $x \in X^{(1)}$ we have that
   \[ \widehat{\theta}_{\mathfrak{o}}(x) + \widehat{\theta}_{\mathfrak{o}}(\sigma(x)) = \widehat{\theta}_{\mathfrak{o}\bullet{}s_\alpha}(x) + \widehat{\theta}_{\mathfrak{o}\bullet{}s_\alpha}(\sigma(x)) \]
   Let us begin by proving the first statement, and assume that $s_\alpha(\xi_i) = \xi_i$. Note that because $X$ is commutative, $\pi: W^{(1)} \longrightarrow W$ maps $X^{(1)}$-orbits to singletons; in particular, $\pi(\sigma(x)) = \pi(x)$. Therefore
   \[ s_\alpha(\pi(x)) = \pi(\sigma(x)) = \pi(x) \]
   Further, recall that the abstract geometry of the extended Coxeter group $W$ and the concrete geometry of the affine hyperplane arrangement $(V,\mathfrak{H})$ are compatible via $\rho$ (see \cref{rmk:abstract_and_concrete_coxeter_geometry}). By definition of the $s_\alpha \in W_0$ (cf. \cref{lem:unpacking_affine_extended_Coxeter_groups}), we have $\rho(s_\alpha) = s_\alpha$, where $s_\alpha \in \op{GL}(V)$ is given by the formula
   \[ s_\alpha(\rho(\pi(x))) = \rho(\pi(x)) - \alpha(\rho(\pi(x))) \alpha^\vee \]
   Therefore, it follows from applying $\rho$ to the equality $s_\alpha(\pi(x)) = \pi(x)$ that $\alpha(\rho(\pi(x))) = 0$. This means that $1, x$ are not separated by any hyperplane of type $\alpha$, where we agree to call $H$ a hyperplane of type $\alpha$ if $H = H_{\alpha,k}$ for some $k \in \Z$. Since $\mathfrak{o}$ and $\mathfrak{o}\bullet{}s_\alpha$ agree except at the hyperplanes of type $\alpha$, it follows that
   \[ \widehat{\theta}_\mathfrak{o}(x) = \widehat{\theta}_{\mathfrak{o}\bullet{}s_\alpha}(x) \]
   Let us now prove the second statement and let $x \in X^{(1)}$ be arbitrary. Since $\mathfrak{o}$ and $\mathfrak{o}\bullet{}s_\alpha$ are adjacent, we may apply the Bernstein relation (\cref{thm:bernstein_relation}) to conclude that (remembering that $\widehat{\theta} = \widetilde{\theta}$ in our case)
   \[ \widehat{\theta}_\mathfrak{o}(x) - \widehat{\theta}_{\mathfrak{o}\bullet{}s_\alpha}(x) = \left(\sum_{\widetilde{H}} \mathfrak{o}(1,\widetilde{H})\Xi_{\mathfrak{o}\bullet{}s_\alpha}(\widetilde{H})\right)\widehat{\theta}_\mathfrak{o}(x) \]
   where the sum runs over all hyperplanes $\widetilde{H}$ of type $\alpha$ which separate $1$ and $x$. On the other hand applying \cref{thm:bernstein_relation} to $\sigma(x)$ instead of $x$ gives
   \begin{align*} \widehat{\theta}_\mathfrak{o}(\sigma(x)) - \widehat{\theta}_{\mathfrak{o}\bullet{}s_\alpha}(\sigma(x)) & = \left(\sum_H \mathfrak{o}(1,H)\Xi_{\mathfrak{o}\bullet{}s_\alpha}(H)\right) \widehat{\theta}_\mathfrak{o}(\sigma(x)) \\
      & = \left(\sum_H \mathfrak{o}(1,H)\Xi_{\mathfrak{o}\bullet{}s_\alpha}(H) \widehat{\theta}_\mathfrak{o}(\sigma(x)x^{-1})\right) \widehat{\theta}_\mathfrak{o}(x) \end{align*}
      where the sum runs over all hyperplanes $H$ of type $\alpha$ separating $1$ and $\sigma(x)$. By \cref{lem:property_of_bernstein} we have
      \[ \Xi_{\mathfrak{o}\bullet{}s_\alpha}(H) \widehat{\theta}_\mathfrak{o}(\sigma(x)x^{-1}) = \Xi_{\mathfrak{o}\bullet{}s_\alpha}(\pi(x)H\pi(x)^{-1}) \]
   The result follows if we can show that
   \[ H \longmapsto \widetilde{H} := \pi(x)H\pi(x)^{-1} \]
   gives a bijection between the hyperplanes $H$ of type $\alpha$ separating $1$ and $\sigma(x)$ and the hyperplanes $\widetilde{H}$ of type $\alpha$ that separate $1$ and $x$, and that
   \[ \mathfrak{o}(1,\pi(x)H\pi(x)^{-1}) = - \mathfrak{o}(1,H) \]
   since then
   \begin{align*} \widehat{\theta}_\mathfrak{o}(\sigma(x)) - \widehat{\theta}_{\mathfrak{o}\bullet{}s_\alpha}(\sigma(x)) & = \left(\sum_H \mathfrak{o}(1,H)\Xi_{\mathfrak{o}\bullet{}s_\alpha}(\pi(x)H\pi(x)^{-1}) \right) \widehat{\theta}_\mathfrak{o}(x) \\
      & = -\left(\sum_{\widetilde{H}} \mathfrak{o}(1,\widetilde{H})\Xi_{\mathfrak{o}\bullet{}s_\alpha}(\widetilde{H})\right) \widehat{\theta}_\mathfrak{o}(x) \\ 
      & = - \left(\widehat{\theta}_\mathfrak{o}(x) - \widehat{\theta}_{\mathfrak{o}\bullet{}s_\alpha}(x)\right)\end{align*}
      Let $H = H_{\alpha,k}$ be a hyperplane of type $\alpha$ and $y \in C_0$ be an arbitrary. Then $H$ separates two elements $w,w' \in W$ if and only if $w(y)$ and $w'(y)$ lie in different connected components of $V-H_{\alpha,k}$, i.e. if and only if $\alpha(w(y))+k$ and $\alpha(w'(y))+k$ have different signs. Moreover, for a hyperplane $H$ we have $H = H_{\alpha,k}$ if and only if $\rho(s_H) = s_{\alpha,k}$ where
   \[ s_{\alpha,k}(y) = y - (\alpha(y)+k)\alpha^\vee \]
   Denoting by $\tau^v \in \op{Aut}_{\op{aff}}(V)$ the translation by a vector $v \in V$, we have the formula
   \[ \tau^y s_{\alpha,k} \tau^{-y} = s_{\alpha,k-\alpha(y)} \]
   Let now $H = H_{\alpha,k}$ be a hyperplane of type $\alpha$. Since $X \subseteq W$ gets mapped into the subgroup $V \leq \op{Aut}_{\op{aff}}(V)$ of translations under $\rho$, it hence follows that
   \[ \rho(\pi(x)H\pi(x)^{-1}) = \tau^{\rho(\pi(x))} s_{\alpha,k} \tau^{-\rho(\pi(x))} = s_{\alpha,k-\alpha(\rho(\pi(x)))} \]
   Hence, $\pi(x)H\pi(x)^{-1}$ separates $1, x$ if and only if $\alpha(y)+k-\alpha(\rho(\pi(x)))$ and
   \[ \alpha(y+\rho(\pi(x))) + k - \alpha(\rho(\pi(x))) = \alpha(y) + k \]
   have different signs. On the other hand $H$ separates $1,\sigma(x)$ if and only if $\alpha(y)+k$ and
   \[ \alpha(y+\rho(s_\alpha(\pi(x)))) + k = \alpha(y) + k - \alpha(\rho(\pi(x))) \]
   have different signs. Hence, $H \mapsto \widetilde{H}$ gives a bijection as desired. Moreover
   \[ \mathfrak{o}(1,\pi(x)H\pi(x)^{-1}) = -\mathfrak{o}(1,H) \]
   By \cref{not:orientations}, $\mathfrak{o}(1,H)$ is the sign attached by $\mathfrak{o}$ to crossing $H$ at any chamber lying in the same half-space as the fundamental chamber. Letting $\varepsilon \in \{\pm \}$ be such that $\varepsilon \alpha$ is positive with respect to the Weyl chamber $D_\mathfrak{o}$ corresponding to $\mathfrak{o}$, it then follows that
   \[ \mathfrak{o}(1,H) = - \varepsilon \op{sgn}(\alpha(y)+k) \]
   and
   \[ \mathfrak{o}(1,\pi(x)H\pi(x)^{-1}) = -\varepsilon \op{sgn}(\alpha(y)+k-\alpha(\pi(x))) \]
   As we saw above, $H$ separates $1,x$ if and only if $\alpha(y)+k$ and $\alpha(y)+k-\alpha(\pi(x))$ have different signs. Hence, the claim follows.
\end{proof}

\begin{lemma}\label{lem:invariance_of_L}
   The length function of \cref{def:generalized_length}
   \[ \bbf{L}: W \longrightarrow \N[\mathfrak{H}] \]
   satisfies
   \[ \bbf{L}(w(x)) = \rho_0(w)(\bbf{L}(x)) \quad \forall w \in W,\ x \in X \]
   where $\rho_0: W \rightarrow W_0$ denotes the projection.
\end{lemma}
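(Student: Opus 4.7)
The plan is to first reduce to $w = w_0 \in W_0 \subseteq W$ via the splitting $W = X \rtimes W_0$ of \cref{lem:unpacking_affine_extended_Coxeter_groups}(vii). Writing any $w \in W$ as $w = x_0 w_0$ with $x_0 \in X$, $w_0 \in W_0$, and using that $X$ is abelian by axiom \axiom{ACX} (so the conjugation action of $X$ on itself is trivial), I get $w(x) = w_0 x w_0^{-1}$ and $\rho_0(w) = \rho_0(w_0)$. Hence the statement reduces to proving $\bbf{L}(w_0 x w_0^{-1}) = \rho_0(w_0)(\bbf{L}(x))$ for $w_0 \in W_0$.

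Next I would make $\bbf{L}(x)$ explicit using \cref{rmk:vector-valued_vs_plain_old_distance} and the analysis of separating hyperplanes for a translate chamber. Since $\rho(x)$ acts as translation by some $v = \rho(x) \in V$ with $\alpha(v) \in \Z$ for all $\alpha \in \Phi$, one shows (by a direct inspection using the special point $0 \in \overline{C_0}$ and a nearby interior point of $C_0$) that for each $\alpha \in \Phi^+$ the hyperplanes of `direction $\alpha$' — i.e.\ of the form $H_{\alpha,k}$ for $k \in \Z$ — which separate $C_0$ from $v + C_0$ are exactly those with $k$ ranging over a specific interval $I_\alpha(x) \subseteq \Z$ of cardinality $|\alpha(v)|$. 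Thus $\bbf{L}(x) = \prod_{\alpha \in \Phi^+}\prod_{k \in I_\alpha(x)} \mathbf{a}_{H_{\alpha,k}}$, which also recovers the length formula $\ell(x) = \sum_{\alpha \in \Phi^+}|\alpha(v)|$ of \cref{rmk:vector-valued_vs_plain_old_distance}.

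Finally I would compare the two sides. The linear action of $\rho_0(w_0) \in \op{GL}(V)$ on $\mathfrak{H}$ sends $H_{\alpha,k}$ to $H_{w_0(\alpha),k}$ (the $k$ is preserved since $W_0$ fixes $0$), so $\rho_0(w_0)(\bbf{L}(x))$ rewrites as a product over pairs $(w_0(\alpha),k)$. On the other side, $\rho(w_0 x w_0^{-1}) = \rho_0(w_0)(v)$, and for each $\beta \in \Phi^+$ one has $\beta(\rho_0(w_0)(v)) = (w_0^{-1}(\beta))(v)$, so setting $\alpha = w_0^{-1}(\beta)$ the multiplicity of direction $\beta$ on the LHS matches the multiplicity of direction $w_0(\alpha)$ on the RHS.

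The main technical obstacle is the bookkeeping of the intervals $I_\alpha$ when $w_0$ sends a positive root $\alpha \in \Phi^+$ to a negative root $w_0(\alpha) \notin \Phi^+$: then the description of the block of hyperplanes separating $C_0$ from $w_0(x) \cdot C_0$ in direction $\beta = w_0(\alpha) \in -\Phi^+$ is naturally parameterized using $-\beta \in \Phi^+$, and one must carefully invoke the identification $H_{\beta,k} = H_{-\beta,-k}$ to reindex and verify that the two products agree. Once this matching is done direction-by-direction, the equality follows.
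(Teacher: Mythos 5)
Your proposal matches the paper's proof in essence: the paper also writes $\bbf{L}(x)$ explicitly as $\prod_{\alpha\in\Phi^+}\prod_{k\in[0,-\alpha(\rho(x))[}H_{\alpha,k}$, then passes from $\alpha(\rho(w(x)))$ to $(\rho_0(w)^{-1}\bullet\alpha)(\rho(x))$, writes $\rho_0(w)^{-1}\bullet\alpha = \varepsilon_\alpha\phi(\alpha)$ for a sign $\varepsilon_\alpha$ and a permutation $\phi$ of $\Phi^+$, and uses precisely the identity $H_{\alpha,k}=H_{-\alpha,-k}$ you flag to absorb the signs — this is exactly the "technical obstacle" you identify but leave as an outline. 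Your preliminary reduction to $w=w_0\in W_0$ (using normality of $X$ and ACX) is harmless but unnecessary, since the paper's formula only ever involves $w$ through $\rho_0(w)$; the one step you should actually write out is the sign/reindexing bookkeeping, which is the real content of the lemma.
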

\begin{proof}
   Recall from \cref{rmk:vector-valued_vs_plain_old_distance} that the \textit{number} of hyperplanes of the form $H_{\alpha,k}$, $k \in \Z$ separating the fundamental chamber $C_0$ from $\rho(x)(C_0)$ is given by
   \[ |\vec{d}(C_0,\rho(x)(C_0))_\alpha| = |-\alpha(\rho(x))| \]
   With a bit more notation, we can be more precise and specify the \textit{set} of these hyperplanes. For $k \in \Z$ let
   \[ \left[0,k\right[ := \begin{cases}
      \{0,1,\ldots,k-1\} &\text{ : } k > 0 \\
      \emptyset &\text{ : } k = 0 \\
      \{-k+1,\ldots,-1,0\} & \text{ : } k < 0
   \end{cases} \]
   Using that $0 \in \overline{C_0}$ by \axiom{ACIX} and that
   \[ C_0 \subseteq \{ v \in V\ :\ \forall \alpha \in \Phi^+\ \ \alpha(v) > 0 \} \]
   by definition of $\Phi^+$, it is easy to see that the set of hyperplanes of the form $H_{\alpha,k}$ which separate $C_0$ and $\rho(x)(C_0)$ is in fact given by
   \[ \{ H_{\alpha,k} \ :\ k \in \left[0,-\alpha(\rho(x))\right[ \} \]
   Hence
   \[ \bbf{L}(x) = \prod_{\alpha \in \Phi^+} \prod_{k \in \left[0,-\alpha(\rho(x))\right[} H_{\alpha,k} \]
   Moreover
   \[ \rho(w(x)) = \rho_0(w)(\rho(x)) \]
   and hence
   \[ \alpha(\rho(w(x))) = \alpha(\rho_0(w)(\rho(x))) = (\rho_0(w)^{-1} \bullet{} \alpha)(\rho(x)) \]
   Since $\Phi$ is the disjoint union of $\Phi^+$ and $-\Phi^+$, we have
   \[ \rho_0(w)^{-1}\bullet{} \alpha = \varepsilon_\alpha \phi(\alpha) \]
   for some uniquely determined $\varepsilon_\alpha \in \{\pm\}$ and $\phi(\alpha) \in \Phi^+$. Using that
   \[ w(H_{\alpha,k}) = H_{w\bullet{} \alpha, k} \quad \forall w \in W_0,\ \alpha \in \Phi,\ k \in \Z \]
   that
   \[ H_{\alpha,k} = H_{-\alpha,-k} \quad \forall \alpha \in \Phi,\ k \in \Z \]
   and that $\phi: \Phi^+ \rightarrow \Phi^+$ is a bijection, we now simply compute
   \begin{align*} \bbf{L}(w(x)) & = \prod_{\alpha \in \Phi^+} \prod_{k \in \left[0, -\alpha(\rho(w(x)))\right[} H_{\alpha,k} \\
      & = \prod_{\alpha \in \Phi^+} \prod_{k \in \left[0, -\varepsilon_\alpha \phi(\alpha)(\rho(x))\right[} H_{\alpha,k} \\
      & = \prod_{\alpha \in \Phi^+} \prod_{k \in \left[0, -\phi(\alpha)(\rho(x))\right[} H_{\alpha, \varepsilon_\alpha k} \\
      & = \prod_{\alpha \in \Phi^+} \prod_{k \in \left[0, -\phi(\alpha)(\rho(x))\right[} H_{\varepsilon_\alpha \alpha, k} \\
      & = \prod_{\alpha \in \Phi^+} \prod_{k \in \left[0, -\phi(\alpha)(\rho(x))\right[} \rho_0(w) (H_{\phi(\alpha), k}) \\
      & = \rho_0(w) \left(\prod_{\alpha \in \Phi^+} \prod_{k \in \left[0, -\phi(\alpha)(\rho(x))\right[} H_{\phi(\alpha),k}\right) \\
      & = \rho_0(w) (\bbf{L}(x))
   \end{align*}
\end{proof}

We will now show that the center $Z(\mathcal{H}^{(1)})$ is in fact equal to $\left(\mathfrak{A}^{(1)}_\mathfrak{o}\right)^{W^{(1)}}$, via induction on the support (see \cref{def:support}) of an element.

\begin{theorem}\label{thm:center}
   The center $Z(\mathcal{H}^{(1)})$ of the affine pro-$p$ Hecke algebra $\mathcal{H}^{(1)}$ is given by
   \[ Z(\mathcal{H}^{(1)}) = \left(\mathcal{A}^{(1)}_\mathfrak{o}\right)^{W(1)} \]
   for every spherical orientation $\mathfrak{o}$ of $W^{(1)}$. It is a free $R$-module with distinguished basis $\{z_\gamma\}_{\gamma}$ indexed by the finite orbits $\gamma \in (W^{(1)}\backslash X^{(1)})_{\text{fin}}$ of $W^{(1)}$ in $X^{(1)}$, where
   \[ z_\gamma = \sum_{x \in \gamma} \widehat{\theta}_\mathfrak{o}(x) \]
   for every spherical orientation $\mathfrak{o}$.
\end{theorem}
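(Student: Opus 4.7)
The plan is to glue together the two containments already established. First, \cref{prop:invariants_lie_in_center} shows that $(\mathcal{A}^{(1)}_\mathfrak{o})^{W^{(1)}} \subseteq Z(\mathcal{H}^{(1)})$, is free over $R$ on $\{z_\gamma\}_{\gamma \in (W^{(1)}\backslash X^{(1)})_{\op{fin}}}$, and is independent of the spherical orientation $\mathfrak{o}$. Second, \cref{prop:centralizer} gives $Z(\mathcal{H}^{(1)}) \subseteq (\mathcal{A}^{(1)}_\mathfrak{o})^{X^{(1)}}$. Thus the remaining task is to prove
\[ (\mathcal{A}^{(1)}_\mathfrak{o})^{X^{(1)}} \cap Z(\mathcal{H}^{(1)}) \subseteq (\mathcal{A}^{(1)}_\mathfrak{o})^{W^{(1)}}, \]
i.e.\ that a central $z = \sum_{x \in X^{(1)}} c_x \widehat{\theta}_\mathfrak{o}(x)$ whose coefficients are already $X^{(1)}$-conjugation invariant is automatically $W^{(1)}$-invariant. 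Since $\ker \pi = T \subseteq X^{(1)}$ and $W = X \rtimes W_0$, the quotient $W^{(1)}/X^{(1)}$ is canonically $W_0$, and given the $X^{(1)}$-invariance of $c$ one only has to verify $c_{n x n^{-1}} = c_x$ for $n \in W^{(1)}$ running through a fixed lift of each simple reflection $s_\alpha \in W_0$ ($\alpha \in \Delta$). The finiteness of the orbits $\gamma$ indexing the basis $\{z_\gamma\}$ is automatic from $z$ having finite support.

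Before computing, I would apply the specialization argument (\cref{rmk:specialization_argument}) to reduce to the case where the $a_s$ are units and squares in $R$, and then invoke the isomorphism of \cref{rmk:usefulness_of_normalized_bernstein} to reduce further to $a_s = 1$; in this normalised situation $\widehat{\theta}_\mathfrak{o} = \widetilde{\theta}_\mathfrak{o}$ satisfies the genuine cocycle rule of \cref{cor:normalized_bernstein_mult_rule}. Fixing $\alpha \in \Delta$, a lift $n \in W^{(1)}$ of $s_\alpha$, and the spherical orientation $\mathfrak{o}$ associated to the Weyl chamber $D$ for which $\alpha$ is simple, the orientation $\mathfrak{o}' := \mathfrak{o} \bullet n = \mathfrak{o}_{s_\alpha(D)}$ is again spherical and adjacent to $\mathfrak{o}$ in the sense of \cref{def:adjacency_of_or}. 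The relation $T_n z = z T_n$, transformed by the cocycle rule so that $T_n$ is absorbed into the Bernstein map on either side and using that spherical orientations are $X$-invariant, becomes an identity whose two sides differ by expressions $\widehat{\theta}_\mathfrak{o}(x) - \widehat{\theta}_{\mathfrak{o}'}(x)$ that are controlled by \cref{thm:bernstein_relation}, the sum on the right running over the $\alpha$-hyperplanes separating $1$ and $x$.

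The main obstacle will be the last step: translating the resulting identity back into the $\widehat{\theta}_\mathfrak{o}$-basis of $\mathcal{A}^{(1)}_\mathfrak{o}$. Using \cref{lem:property_of_bernstein} and the product formula \cref{cor:hat mult rule} to rewrite each $\Xi_{\mathfrak{o}'}(H)\,\widehat{\theta}_\mathfrak{o}(y)$ as an element of $\mathcal{A}^{(1)}_\mathfrak{o}$, together with the upper-triangular change-of-basis between $\{\widehat{\theta}_\mathfrak{o}(w)\}$ and $\{T_w\}$ from \cref{cor:hat change of basis}, the identity will take the schematic form
\[ \sum_{x} (c_x - c_{n x n^{-1}})\, \widehat{\theta}_\mathfrak{o}(n\cdot x) \;=\; \text{(terms of strictly smaller Bruhat-order)}. \]
A descending induction on the Bruhat order, applied to the finite set $\op{supp}_\mathfrak{o}(z) \cap \{x : c_x \neq c_{n x n^{-1}}\}$ and modelled on the maximality argument in the proof of \cref{prop:centralizer}, will then force this set to be empty. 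This yields $c_x = c_{n x n^{-1}}$ for every $x$, establishing the required $W^{(1)}$-invariance and hence the equality $Z(\mathcal{H}^{(1)}) = (\mathcal{A}^{(1)}_\mathfrak{o})^{W^{(1)}}$ with the stated basis.
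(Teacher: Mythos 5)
Your overall strategy matches the paper: both establish the two containments from \cref{prop:invariants_lie_in_center} and \cref{prop:centralizer}, reduce to proving $W_0$-invariance of the $X^{(1)}$-invariant coefficients by checking lifted simple reflections, and compare coefficients in the commutativity relation $\widehat{\theta}_\mathfrak{o}(n_s)z = z\widehat{\theta}_\mathfrak{o}(n_s)$. The difference is in the machinery you propose for estimating the orientation-change error and in the structure of the final induction, and both points deserve comment.

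On the error estimate: the paper does \emph{not} invoke the Bernstein relations (\cref{thm:bernstein_relation}) in the proof of this theorem; they are only used for the other containment, in \cref{prop:invariants_lie_in_center}. Here a much cheaper argument suffices: by the upper-triangular change-of-basis formula (\cref{cor:hat change of basis}), both $\widehat{\theta}_\mathfrak{o}(y)$ and $\widehat{\theta}_{\mathfrak{o}\bullet s}(y)$ expand in the Iwahori-Matsumoto basis with leading term $T_y$, so their difference has support in $\{w : w < y\}$, hence length strictly less than $\ell(y)$. This crude length bound is all that is needed. Your plan to run the difference through \cref{thm:bernstein_relation}, then through \cref{lem:property_of_bernstein}, and then translate $\Xi_{\mathfrak{o}'}(H)\,\widehat{\theta}_\mathfrak{o}(x)$ back into the $\widehat{\theta}_\mathfrak{o}$-basis is a genuine detour: the resulting expressions mix $R[T]$-scalars and $\widehat{\theta}$-factors indexed by reflections, and you would still have to extract from them the same length bound that the change-of-basis formula gives for free. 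It can presumably be made to work, but it is strictly harder than necessary.

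On the induction: the schematic identity $\sum_x (c_x - c_{nxn^{-1}})\,\widehat{\theta}_\mathfrak{o}(n\cdot x) = \text{(lower terms)}$ is not literally correct as stated, because for $x$ of non-maximal length the error terms from other $y \in \op{supp}_\mathfrak{o}(z)$ \emph{can} contribute to the coefficient of $\widehat{\theta}_\mathfrak{o}(n_s x)$; a "descending induction on the Bruhat order applied to $\{x : c_x \neq c_{nxn^{-1}}\}$" does not resolve this on its own. The paper's argument avoids this by inducting on the support of $z$ itself: take $x \in \op{supp}_\mathfrak{o}(z)$ of \emph{maximal length}, so that the error sum genuinely contributes nothing to the coefficient of $\widehat{\theta}_\mathfrak{o}(n_s x)$; deduce $c_\xi = c_{n_s(\xi)}$ for the $X^{(1)}$-orbit $\xi$ of $x$; iterate over simple reflections to get $c_\xi = c_{w(\xi)}$ for all $w \in W_0$; then subtract the already-known central element $c_\xi z_\gamma$ and recurse on a strictly smaller support. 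You also omit the necessary case distinction between $\ell(n_s x) = \ell(x)+1$ and $\ell(n_s x) = \ell(x)-1$ — in the second case one must replace $x$ by $s(x)$ (using $\ell(x n_s) = \ell(x)+1$ and $X^{(1)}$-invariance of the coefficients, since $n_s(s(x))$ lies in the same $X^{(1)}$-orbit as $x$) to reduce to the first. These are the concrete steps that make the coefficient comparison go through, and they would need to be supplied.
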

\begin{proof}
   It only remains to prove that
   \[ Z(\mathcal{H}^{(1)}) \subseteq \left(\mathcal{A}^{(1)}_\mathfrak{o}\right)^{W^{(1)}} \]
   In view of the computation of the centralizer of $\mathcal{A}^{(1)}_\mathfrak{o}$ in \cref{prop:centralizer} we already know that
   \[ Z(\mathcal{H}^{(1)}) \subseteq \left(\mathcal{A}^{(1)}_\mathfrak{o}\right)^{X^{(1)}} \]
   Therefore, it only remains to show that for an element
   \[ z = \sum_{x \in X^{(1)}} c_x \widehat{\theta}_{\mathfrak{o}}(x) = \sum_{\xi \in (X^{(1)}\backslash X^{(1)})_{\text{fin}}} c_\xi \sum_{x \in \xi} \widehat{\theta}_\mathfrak{o}(x) \in Z(\mathcal{H}^{(1)}),\quad c_\xi \in R \]
   we have
   \[ c_\xi = c_{w(\xi)} \]
   for every $w \in W_0$. We will prove this by induction on $\op{supp}_\mathfrak{o}(z)$ using \cref{prop:invariants_lie_in_center}. If $\op{supp}_\mathfrak{o}(z) = \emptyset$, then $z = 0$ and the claim is clear. So let's assume that $\op{supp}_\mathfrak{o}(z) \neq \emptyset$. Choose $x \in \op{supp}_\mathfrak{o}(z)$ with $\ell(x)$ maximal and let $\xi = X^{(1)}\bullet{}x$ be the (finite) $X^{(1)}$-orbit associated to it. We now want to show that
   \[ c_\xi = c_{w(\xi)} \quad \forall w \in W_0 \]
   in order to apply induction. For this, recall that $W_0$ is generated by the reflections $s_\alpha$ for roots $\alpha$ that are simple with respect to the Weyl chamber $D_0$ containing $C_0$ (see \cref{lem:unpacking_affine_extended_Coxeter_groups}) and let $s = s_\alpha \in S$ for any such $\alpha$. We have $\ell(n_s x) = \ell(x) \pm 1$. Moreover, we claim that
   \[ \ell(n_s x) = \ell(x) - 1 \quad \Rightarrow\quad \ell(x n_s) = \ell(x)+1 \]
   This is obvious. To see that this is obvious, let $x_0 \in C_0$ be arbitrary. We have $\ell(n_s x) = \ell(x) + 1$ if and only if
   \[ d(C_0,n_s x C_0) = d(C_0, n_s C_0) + d(n_s C_0, n_s x C_0) \]
   that is, if and only if the set of hyperplanes separating $C_0$ and $n_s C_0$ and the set of hyperplanes separating $n_s C_0$ and $n_s x C_0 = n_s(x) n_s C_0 = n_s(\pi(x))+n_s C_0$ are disjoint. Since $C_0$ and $n_s C_0$ are separated only by $H_{\alpha} = \op{ker}(\alpha)$
   and this hyperplanes separates $n_s C_0$ and $n_s(\pi(x))+n_s C_0$ if and only if
   \[ \op{sgn}(\alpha(n_s(\pi(x))+n_s(x_0))) = -\op{sgn}(\alpha(n_s(x_0))) \]
   we see that
   \[ \ell(n_s x) = \ell(x) + 1 \quad \text{or} \quad \ell(n_s x) = \ell(x) - 1 \]
   depending on whether
   \[ \op{sgn}(\alpha(\pi(x)) + \alpha(x_0)) = \op{sgn}(\alpha(x_0)) \quad \text{or}\quad \op{sgn}(\alpha(\pi(x))+\alpha(x_0)) = -\op{sgn}(\alpha(x_0)) \]
   respectively. Using that $x n_s = n_s s(x)$ and $\ell(s(x)) = \ell(x)$ it follows from the above that
   \[ \ell(x n_s) = \ell(x) + 1\quad \text{or}\quad \ell(x n_s) = \ell(x) - 1 \]
   depending on whether
   \[ \op{sgn}(-\alpha(\pi(x))+\alpha(x_0)) = \op{sgn}(\alpha(x_0)) \quad \text{or}\quad \op{sgn}(-\alpha(\pi(x))+\alpha(x_0)) = -\op{sgn}(\alpha(x_0)) \]
   In particular it follows that
   \[ \ell(n_s x) = \ell(x) - 1 \quad \Rightarrow\quad \ell(x n_s) = \ell(x) + 1 \]
   We now distinguish two cases. First, assume that $\ell(n_s x) = \ell(x)+1$. We have
   \[ \widehat{\theta}_\mathfrak{o}(n_s) z = \sum_{y \in X^{(1)}} c_y \widehat{\theta}_\mathfrak{o}(n_s) (\widehat{\theta}_\mathfrak{o}(y)-\widehat{\theta}_{\mathfrak{o}\bullet{}s}(y)) + \sum_{y \in X^{(1)}} \overline{\bbf{X}}(s,\pi(y)) \widehat{\theta}_\mathfrak{o}(n_s y) \]
   We claim that the $n_s x$ does not appear in the support of the first big sum. In fact, by the change of basis formula (\cref{cor:hat change of basis}) we have
   \[ \op{supp}_\mathfrak{o}(\widehat{\theta}_\mathfrak{o}(y) - \widehat{\theta}_{\mathfrak{o}\bullet{}s}(y)) \subseteq \{ w \in W^{(1)} : w < y \} \]
   In particular each $w$ appearing in the support of $\widehat{\theta}_\mathfrak{o}(y)-\widehat{\theta}_{\mathfrak{o}\bullet{}s}(y)$ is of length
   \[ \ell(w) \leq \ell(y)-1 \]
   Hence, for all
   \[ w \in \op{supp}_\mathfrak{o}(\widehat{\theta}_\mathfrak{o}(n_s)(\widehat{\theta}_\mathfrak{o}(y)-\widehat{\theta}_{\mathfrak{o}\bullet{}s}(y))) \]
   we have
   \[ \ell(w) \leq \ell(n_s)+(\ell(y)-1) = \ell(y) \leq \ell(x) < \ell(x)+1 = \ell(n_s x) \]
   The coefficient of $\widehat{\theta}_{\mathfrak{o}}(n_s x)$ in $\widehat{\theta}_\mathfrak{o}(n_s)z$ is therefore given by
   \[ c_x \overline{\bbf{X}}(s,\pi(x)) = c_x = c_{\xi} \]
   Here we have used that $\overline{\bbf{X}}(s,\pi(x)) = 1$, which follows from $\ell(n_s x) = \ell(n_s) + \ell(x)$ and \cref{rmk:coboundary}. On the other hand we have
   \[ z \widehat{\theta}_\mathfrak{o}(n_s) = \sum_{y \in X^{(1)}} c_y \overline{\bbf{X}}(\pi(y),s) \widehat{\theta}_\mathfrak{o}(y n_s) \]
   and hence the coefficient of $n_s x = n_s(x) n_s$ in $\widehat{\theta}_\mathfrak{o}(n_s)z$ is given by
   \[ c_{n_s(x)} \overline{\bbf{X}}(\pi(s(x)),s) = c_{n_s(x)} \]
   Where again we have used \cref{rmk:coboundary} and the fact that
   \[ \ell(s(x) n_s) = \ell(n_s x) = \ell(x)+\ell(n_s) = \ell(n_s(x)) + \ell(n_s) \]
   Since $\widehat{\theta}_\mathfrak{o}(x) z = z \widehat{\theta}_\mathfrak{o}(x)$ it follows that
   \[ c_{x} = c_{n_s(x)} \]
   Consider now the case $\ell(n_s x) = \ell(x)-1$. As already observed, in this situation we must have $\ell(n_s s(x)) = \ell(x n_s) = \ell(x) + 1$. Replacing $x$ by $s(x)$, we are therefore reduced to the first case. Thus we have shown that for any $x \in \op{supp}_\mathfrak{o}(z)$ with $\ell(x)$ maximal we have
   \[ c_\xi = c_x = c_{n_s(x)} = c_{s(\xi)} \]
   for all simple reflections $s = s_\alpha \in W_0$, $\alpha \in \Delta$, where $\xi = X^{(1)}$ denotes the (finite) $X^{(1)}$-orbit of $x$. In particular $n_s(x)$ is again an element of $\op{supp}_\mathfrak{o}(z)$, and since $\ell(x) = \ell(n_s(x))$ it is also of maximal length. Inductively it therefore follows that
   \[ c_\xi = c_{w(\xi)} \]
   for all $w \in W_0$. Letting $\gamma = W^{(1)}\bullet{}x = \bigcup_{w \in W_0} w\bullet{}\xi$, the element
   \[ z - c_\xi z_{\gamma} \in Z(\mathcal{H}^{(1)}) \]
   therefore has support strictly contained in $\op{supp}_\mathfrak{o}(z)$, and by induction we conclude that
   \[ z \in \left(\mathcal{A}^{(1)}_\mathfrak{o}\right)^{W^{(1)}} \]
\end{proof}

\subsection{The structure of affine pro-\texorpdfstring{$p$}{p} Hecke algebras} 
\label{sub:The structure of affine pro-$p$ Hecke algebras}
In this section we will give the main theorem on the structure of affine pro-$p$ Hecke algebras.

\begin{theorem}\label{thm:structure of affine pro-$p$ Hecke algebras}
   Let $\mathcal{H}^{(1)}$ be an affine pro-$p$ Hecke algebra over a ring $R$ in the sense of \cref{def:affine_pro-$p$_Hecke_algebra} and let $\mathfrak{o}$ a spherical orientation of $W^{(1)}$ in the sense of \cref{def:spherical_orientation}. Then the following holds.
   \begin{enumerate}
      \item[(i)] There exists an $R$-subalgebra
         \[ \mathcal{A}^{(1)}_\mathfrak{o} \subseteq \mathcal{H}^{(1)} \]
         with $R$-basis $\{\widehat{\theta}_\mathfrak{o}(x)\}_{x \in X^{(1)}}$ defined in \cref{thm:existence_widehat_theta}. The product of two basis elements is given by
         \[ \widehat{\theta}_\mathfrak{o}(x)\widehat{\theta}_\mathfrak{o}(y) = \overline{\bbf{X}}(\pi(x),\pi(y)) \widehat{\theta}_\mathfrak{o}(xy) \]
         where $\overline{\bbf{X}}: W\times W \rightarrow R$ denotes the `2-coboundary' defined in \cref{not:overline_bbf_X}.
      \item[(ii)] The `conjugation action' of $W^{(1)}$ on $X^{(1)}$ induces an action on $\mathcal{A}^{(1)}_\mathfrak{o}$ by $R$-algebra automorphisms via
         \[ w(\widehat{\theta}_\mathfrak{o}(x)) = \widehat{\theta}_\mathfrak{o}(w(x)) \]
         The centralizer $C_{\mathcal{H}^{(1)}}(\mathcal{A}^{(1)}_\mathfrak{o})$ of $\mathcal{A}^{(1)}_\mathfrak{o}$ in $\mathcal{H}^{(1)}$ is given by the subalgebra of $X^{(1)}$-invariants. In particular, the centralizer is contained in $\mathcal{A}^{(1)}_\mathfrak{o}$ and hence equals the center of $\mathcal{A}^{(1)}_\mathfrak{o}$:
         \[ Z(\mathcal{A}^{(1)}_\mathfrak{o}) = C_{\mathcal{H}^{(1)}}(\mathcal{A}^{(1)}_\mathfrak{o}) = \left(\mathcal{A}^{(1)}_\mathfrak{o}\right)^{X^{(1)}} \]
   \item[(iii)] If the abelian group $T$ is finitely generated, the $R$-algebra $\mathcal{A}^{(1)}_\mathfrak{o}$ is finitely generated. More precisely, $\mathcal{A}^{(1)}_\mathfrak{o}$ is a finite sum
      \[ \mathcal{A}^{(1)}_{\mathfrak{o}} = \sum_D \iota_D(R[X^{(1)}_D]) \]
      of subalgebras, where the sum ranges over the Weyl chambers $D \in \pi_0(V-\bigcup_{\alpha \in \Phi} H_\alpha)$. Here $R[X^{(1)}_D]$ denotes the monoid algebra over the submonoid
      \[ X^{(1)}_D = \{ x \in X^{(1)} : \rho(\pi(x)) \in \overline{D} \} \]
      of $X^{(1)}$ consisting of those elements which act through $\rho: W \rightarrow \op{Aut}_{\op{aff}}(V)$ by translation by an element lying in the closure of $D \subseteq V$, and $\iota_D$ denotes the algebra embedding
      \[ \iota_D: R[X^{(1)}_D] \hookrightarrow \mathcal{A}^{(1)}_\mathfrak{o} \]
      determined by $\iota_D(x) = \widehat{\theta}_\mathfrak{o}(x)$ for all $x \in X^{(1)}_D$. Moreover, if $T$ is finitely generated, then the submonoid $X^{(1)}_D$ and hence the algebra $R[X^{(1)}_D]$ are finitely generated.
   \item[(iv)] If $(W^{(1)}\backslash X^{(1)})_{\op{fin}} = W^{(1)}\backslash X^{(1)}$ and either $T$ is finite or $T$ is finitely generated and $R$ is notherian, then $\mathcal{A}^{(1)}_\mathfrak{o}$ is a finitely generated $\left(\mathcal{A}^{(1)}_\mathfrak{o}\right)^{X^{(1)}}$-module. Here $W^{(1)}\backslash X^{(1)}$ denotes the set of orbits of the conjugation action of $W^{(1)}$ on $X^{(1)}$, and $(W^{(1)}\backslash X^{(1)})_{\op{fin}}$ denotes the subset of finite orbits.
   \item[(v)] If $(W^{(1)}\backslash X^{(1)})_{\op{fin}} = W^{(1)}\backslash X^{(1)}$ and either $T$ is finite or $T$ is finitely generated and $R$ is noetherian, then $\left(\mathcal{A}^{(1)}_\mathfrak{o}\right)^{X^{(1)}}$ is a finitely generated $R$-algebra.
   \item[(vi)] The center $Z(\mathcal{H}^{(1)})$ of $\mathcal{H}^{(1)}$ is given by the subalgebra of $W^{(1)}$-invariants
      \[ Z(\mathcal{H}^{(1)}) = \left(\mathcal{A}^{(1)}_\mathfrak{o}\right)^{W^{(1)}} \]
      It has a distinguished $R$-basis $\{z_\gamma\}, \gamma \in (W^{(1)}\backslash X^{(1)})_{\op{fin}}$ with
      \[ z_\gamma = \sum_{x \in \gamma} \widehat{\theta}_\mathfrak{o}(x) \]
      independent of the choice of the spherical orientation $\mathfrak{o}$.
   \item[(vii)] $\mathcal{H}^{(1)}$ is a finitely generated left $\mathcal{A}^{(1)}_\mathfrak{o}$-module. More precisely, a finite set of generators is given as follows. 
      For $w \in W_0$ let $X\bullet{}w^{-1}(C_0)$ denote the set of $X$-translates of the chamber $w^{-1}(C_0)$. Let $\Lambda_w$ denote the set of minimal elements of $X\bullet{}w^{-1}(C_0)$ with respect to the partial order $\preceq_{C_0}$ defined in \cref{rmk:weak_Bruhat_order}. By \cref{cor:weak Bruhat is wpo}, the set $\Lambda_w$ is finite.
      
      Choose for each $C \in \Lambda_w$ an element $\widetilde{w} \in W^{(1)}$ with
      \[ \pi(\widetilde{w}) \in X w \subseteq W\quad \text{and}\quad \widetilde{w}^{-1}(C_0) = C \]
      and let $\widetilde{\Lambda_w} \subseteq W^{(1)}$ denote the set of these elements. Then
      \[ \{ \widehat{\theta}_\mathfrak{o}(\widetilde{w}) : w \in W_0,\ \widetilde{w} \in \widetilde{\Lambda_w} \} \]
      is a set of generators of $\mathcal{H}^{(1)}$ as a left module over $\mathcal{A}^{(1)}_\mathfrak{o}$.
   \item[(viii)] If $(W^{(1)}\backslash X^{(1)})_{\op{fin}} = W^{(1)}\backslash X^{(1)}$ and either $T$ is finite or $T$ is finitely generated and $R$ is noetherian, then $\mathcal{A}^{(1)}_\mathfrak{o}$ is a finitely generated $Z(\mathcal{H}^{(1)})$-module.
   \item[(ix)] If $(W^{(1)}\backslash X^{(1)})_{\op{fin}} = W^{(1)}\backslash X^{(1)}$ and either $T$ is finite or $T$ is finitely generated and $R$ is noetherian, then $Z(\mathcal{H}^{(1)})$ is a finitely generated $R$-algebra.
   \item[(x)] If $(W^{(1)}\backslash X^{(1)})_{\op{fin}} = W^{(1)}\backslash X^{(1)}$ and either $T$ is finite or $T$ is finitely generated and $R$ is noetherian, then $\mathcal{H}^{(1)}$ is a finitely generated $Z(\mathcal{H}^{(1)})$-module.
   \end{enumerate}
\end{theorem}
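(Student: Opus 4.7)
The plan is to deduce statement (x) as an immediate corollary of (vii) and (viii), using transitivity of finite generation. More precisely, (vii) provides a finite set of left $\mathcal{A}^{(1)}_{\mathfrak{o}}$-module generators $\{h_1,\dots,h_n\}$ for $\mathcal{H}^{(1)}$ (the elements $\widehat{\theta}_{\mathfrak{o}}(\widetilde{w})$ for $\widetilde{w} \in \bigcup_{w\in W_0}\widetilde{\Lambda_w}$, which is finite since each $\widetilde{\Lambda_w}$ is finite by \cref{cor:weak Bruhat is wpo} and $W_0$ is finite). Under the hypotheses of (x), statement (viii) applies and yields a finite set of $Z(\mathcal{H}^{(1)})$-module generators $\{a_1,\dots,a_m\}$ for $\mathcal{A}^{(1)}_{\mathfrak{o}}$.

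Given any $h \in \mathcal{H}^{(1)}$, I would first expand it as $h = \sum_{i=1}^n b_i\, h_i$ with $b_i \in \mathcal{A}^{(1)}_{\mathfrak{o}}$ (using (vii)), then expand each coefficient $b_i = \sum_{j=1}^m c_{ij}\, a_j$ with $c_{ij} \in Z(\mathcal{H}^{(1)})$ (using (viii)). Substituting, we get
\[
h = \sum_{i,j} c_{ij}\, a_j\, h_i,
\]
where we are free to write the central scalars $c_{ij}$ on the left since they lie in $Z(\mathcal{H}^{(1)})$. Hence the finite family $\{a_j h_i\}_{1 \le i \le n,\, 1 \le j \le m}$ generates $\mathcal{H}^{(1)}$ as a $Z(\mathcal{H}^{(1)})$-module.

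There is essentially no obstacle here: once (vii) and (viii) are in hand, (x) is a formal consequence of the transitivity of finite generation for modules over a tower of subalgebras, where the innermost ring is the center so that left and right module structures coincide. The only thing to verify is that the hypotheses required for (viii) are exactly those assumed in (x), which they are by inspection.
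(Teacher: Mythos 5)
Your deduction of (x) from (vii) and (viii) by transitivity of module finiteness (using centrality of $Z(\mathcal{H}^{(1)})$ so that the finite generating family $\{a_j h_i\}$ works) is correct, and it is exactly the step the paper leaves implicit: the paper's proof stops at (ix) and never spells out (x), precisely because it follows formally from (vii) and (viii) in the way you describe. The hypotheses of (x) and (viii) are indeed identical, and (vii) holds unconditionally, so there is nothing more to check.
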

\begin{proof} (i): Was proven in \cref{sub:Integral and normalized Bernstein maps}. (ii): Was shown in \cref{prop:centralizer}. (vi): Was shown in \cref{thm:center}. (vii): By \cref{cor:hat mult rule} and \cref{rmk:coboundary} we know that
\[ \ell(ww') = \ell(w)+\ell(w') \quad \Rightarrow \quad \widehat{\theta}_\mathfrak{o}(ww') = \widehat{\theta}_\mathfrak{o}(w)\widehat{\theta}_{\mathfrak{o}\bullet{}w}(w') \quad \forall\ w,w' \in W^{(1)} \]
Since $\mathfrak{o}$ is assumed to be a spherical orientation, we have $\mathfrak{o}\bullet{}x = \mathfrak{o}$ for every $x \in X^{(1)}$ and hence
\begin{equation}\label{eq:main_thm_eq1} \ell(xw) = \ell(x)+\ell(w) \quad \Rightarrow\quad \widehat{\theta}_\mathfrak{o}(xw) = \widehat{\theta}_\mathfrak{o}(x)\widehat{\theta}_\mathfrak{o}(w) \quad \forall\ x \in X^{(1)},\ w \in W^{(1)}
\end{equation}
We have the equivalences
\begin{align}\label{eq:length_equivs} \ell(xw) = \ell(x)+\ell(w) & \Leftrightarrow \ell(w^{-1}x^{-1}) = \ell(x^{-1}) + \ell(w^{-1}) \\
   & \Leftrightarrow d(C_0,(w^{-1}x^{-1})(C_0)) = d(C_0, w^{-1}(C_0)) + d(C_0,x^{-1}(C_0)) \notag \\
   & \Leftrightarrow d(C_0,(w^{-1}x^{-1})(C_0)) = d(C_0, w^{-1}(C_0)) \notag \\
   & + d(w^{-1}(C_0),(w^{-1}x^{-1})(C_0)) \notag \\
   & \Leftrightarrow w^{-1}(C_0) \preceq_{C_0} (w^{-1}x^{-1})(C_0) \notag
\end{align}
Here the first equivalence is simply the invariance $\ell(w) = \ell(w^{-1})$ of the length under inverses, the third equivalence is the $W$-invariance of the distance $d$ and the last equivalence is by definition (see \cref{rmk:weak_Bruhat_order}).

Let now $w' \in W^{(1)}$ be arbitrary. Because $W = X\rtimes W_0$, we have $\pi(w') \in X w$ for some $w \in W_0$. In particular $(w')^{-1}(C_0) \in X\bullet{}w^{-1}(C_0)$ and hence by definition of $\widetilde{\Lambda_w}$ we can find $\widetilde{w} \in \widetilde{\Lambda_w}$ with $\pi(\widetilde{w}) \in X w$ and $\widetilde{w}^{-1}(C_0) \preceq_{C_0} (w')^{-1}(C_0)$. Hence, for some $x \in X^{(1)}$ we have
\[ w' = x \widetilde{w} \]
From
\[ \widetilde{w}^{-1}(C_0) \preceq (\widetilde{w}^{-1}x^{-1})(C_0) \]
and \eqref{eq:length_equivs} above it therefore follows that
\[ \ell(x\widetilde{w}) = \ell(x) + \ell(\widetilde{w}) \]
Hence, by equation \eqref{eq:main_thm_eq1} above we have
\[ \widehat{\theta}_\mathfrak{o}(w') = \widehat{\theta}_\mathfrak{o}(x)\widehat{\theta}_\mathfrak{o}(\widetilde{w}) \]
and hence $\widehat{\theta}_\mathfrak{o}(w')$ lies in the $\mathcal{A}^{(1)}_\mathfrak{o}$-submodule generated by the set we claim to be a set of generators. Since $w' \in W^{(1)}$ was arbitrary, (vii) follows.

Next, we prove (iii). First, we need to show that $\iota_D$ is well-defined. Recall that it was shown in the proof of \cref{lem:centralizer_prop_lem1} that on the submonoid
\[ X_D = \pi(X^{(1)}_D) = \{ x \in X\ :\ \rho(x) \in \overline{D} \} \leq X \]
the length function is additive
\[ \ell(x+y) = \ell(x)+\ell(y)\quad \forall x,y \in X_D \]
Since the length function on $W^{(1)}$ arises by pullback along $\pi: W^{(1)} \rightarrow W$ of the length function on $W$, it follows that $\ell(xy) = \ell(x)+\ell(y)$ and hence $\overline{\bbf{X}}(x,y) = 1$ for all $x,y \in X^{(1)}_D$. The product formula (\cref{cor:hat mult rule}) therefore implies that $x \mapsto \widehat{\theta}_\mathfrak{o}(x)$ defines a morphism of monoids
\[ X^{(1)}_D \longrightarrow \mathcal{A}^{(1)}_\mathfrak{o} \]
inducing $\iota_D$. Moreover, since $V = \bigcup_D \overline{D}$, we have $X^{(1)} = \bigcup_D X^{(1)}_D$ and therefore $\mathcal{A}^{(1)}_\mathfrak{o}$ is the $R$-module sum of the subalgebra $\iota_D(R[X^{(1)}_D])$ as claimed. Lastly we need to show that the monoid $X^{(1)}_D$ is finitely generated, assuming that $T$ is finitely generated as an abelian group (and hence as a monoid). But since $T = \op{ker}(\pi: W^{(1)} \twoheadrightarrow W)$ is the kernel of $\pi$, it suffices to show that the image $X_D = \pi(X^{(1)}_D)$ is finitely generated as a monoid. But this was shown in \cref{lem:finiteness_of_X_D}.

It remains to show claims (iv),(v),(viii) and (ix). Since the subalgebras
\[ Z(\mathcal{H}^{(1)}) = \left(\mathcal{A}^{(1)}_\mathfrak{o}\right)^{W^{(1)}} \subseteq \left(\mathcal{A}^{(1)}_\mathfrak{o}\right)^{X^{(1)}} \subseteq \mathcal{A}^{(1)}_\mathfrak{o} \subseteq \mathcal{H}^{(1)} \]
have explicit $R$-bases, it is easy to see that they are preserved under base change. If we are in the situation where $T$ is finite, we may therefore reduce to the case $R = R^{\op{univ}}$ of the universal coefficient ring, which exists and is noetherian by \cref{rmk:base_change_and_universal_coefficients}. Therefore, it suffices to prove the claim in the case where $T$ is finitely generated and $R$ is notherian. In this case, claims (iv),(v) and (viii),(ix) each follow from the next lemma. To get (iv) and (v), we apply it with
\[ C = \left(\mathcal{A}^{(1)}_\mathfrak{o}\right)^{X^{(1)}} \subseteq \mathcal{A}^{(1)}_\mathfrak{o} = B \]
\[ \Lambda = \{ \widehat{\theta}_\mathfrak{o}(x) : x \in \Lambda_D,\ \text{$D$ Weyl chamber}\} \subseteq \mathcal{A}^{(1)}_\mathfrak{o} \]
and
\[ \Pi = \{ xyx^{-1}y^{-1} : x,y \in \Lambda \} \subseteq T \subseteq Z(\mathcal{A}^{(1)}_\mathfrak{o}) = \left(\mathcal{A}^{(1)}_\mathfrak{o}\right)^{X^{(1)}} \]
where $\Lambda_D$ denotes any finite set of generators of the monoid $X^{(1)}_D$ whose existence is guaranteed by (iii). Let us verify that assumptions (a)-(d) of the lemma are satisfied. In the discussion of claim (iii) we have seen that (a) holds. To see that (d) holds, note that an element $t \in T$ is annihilated by the monic polynomial
\[ \prod_{t' \in W\bullet{} t} (X-t') \]
with coefficients in
\[ R[T]^W \subseteq \left(\mathcal{A}^{(1)}_\mathfrak{o}\right)^{W^{(1)}} = C \]
Assumption (c) follows by a formal computation from the product formula, the fact that (cf. \cref{rmk:hat comm rules}) 
\[ \widehat{\theta}_\mathfrak{o}(tw) = t \widehat{\theta}_\mathfrak{o}(w) \quad \forall w \in W^{(1)} \]
and the fact that (cf. \cref{rmk:comm subalgebras})
\[ \forall w,w' \in W^{(1)}\quad ww' = w'w\quad \Rightarrow\quad \overline{\bbf{X}}(w,w') = \overline{\bbf{X}}(w',w) \]
Indeed, for any $x,y \in X^{(1)}$ we have
\begin{align*} \widehat{\theta}_\mathfrak{o}(x)\widehat{\theta}_\mathfrak{o}(y) & = \overline{\bbf{X}}(x,y) \widehat{\theta}_\mathfrak{o}(xy) \\
   & = \overline{\bbf{X}}(y,x) \widehat{\theta}_\mathfrak{o}(\underbrace{xyx^{-1}y^{-1}}_{=: t \in \Pi}yx) \\
   & = t \overline{\bbf{X}}(y,x) \widehat{\theta}_\mathfrak{o}(yx) \\
   & = t \widehat{\theta}_\mathfrak{o}(y)\widehat{\theta}_\mathfrak{o}(x)
\end{align*}

Finally we need to verify (b), i.e. we need to provide monic polynomials $f_x(Z) \in \left(\mathcal{A}^{(1)}_\mathfrak{o}\right)^{X^{(1)}}[Z]$ with $f_x(\widehat{\theta}_\mathfrak{o}(x)) = 0$. Even though $\mathcal{A}^{(1)}_\mathfrak{o}$ is possibly non-commutative, it still makes sense to form the polynomial ring $\mathcal{A}^{(1)}_\mathfrak{o}[Z]$ in one variable $Z$ that commutes with $\mathcal{A}^{(1)}_\mathfrak{o}$. For $x \in X^{(1)}$ arbitrary, let $\xi = X^{(1)}\bullet{}x$ and
\[ f_x(Z) := f_\xi(Z) := \prod_{y \in \xi} (Z-\widehat{\theta}_\mathfrak{o}(y)) \in \mathcal{A}^{(1)}_\mathfrak{o}[Z] \]
Note that $\xi$ is finite since $W^{(1)}$ acts on $X^{(1)}$ with finite orbits by assumption. However, a priori the above expression is still ill-defined, as it depends on the choice of an ordering of the factors. However, the elements $\widehat{\theta}_\mathfrak{o}(y)$ with $y \in \xi$ in fact commute with each other pairwise. This follows from \cref{rmk:comm subalgebras} and the fact that the elements of the orbit $\xi$ themselves commute with each other pairwise, as an easy computation shows. The expression $f_\xi(Z)$ therefore is well-defined. Moreover, the $R$-algebra action of $W^{(1)}$ on $\mathcal{A}^{(1)}_\mathfrak{o}$ extends to $\mathcal{A}^{(1)}_\mathfrak{o}[Z]$ by acting on coefficients. A formal computation shows that $f_\xi(Z)$ is invariant under $X^{(1)}$ with respect to this action, hence we have a well-defined element
\[ f_\xi(Z) \in \left(\mathcal{A}^{(1)}_\mathfrak{o}\right)^{X^{(1)}} [Z] \]
Moreover, $f_\xi(Z)$ annihilates $\widehat{\theta}_\mathfrak{o}(x)$ which can be seen as follows. Let $A$ denote the subalgebra of $\mathcal{A}^{(1)}_\mathfrak{o}$ generated by the $\widehat{\theta}_\mathfrak{o}(y)$, $y \in \xi$ over the center $\left(\mathcal{A}^{(1)}_\mathfrak{o}\right)^{X^{(1)}}$ of $\mathcal{A}^{(1)}_\mathfrak{o}$. From the previous remarks it follows that $A$ is commutative. Moreover, we have
\[ f_\xi(Z) = \prod_{y \in \xi} (Z-\widehat{\theta}_\mathfrak{o}(y)) \in A[Z] \]
as an equation in $A[Z]$. Using the evaluation homomorphism
\begin{align*} \op{ev}: A[Z] & \longrightarrow A \\
   f(Z) & \longmapsto f(\widehat{\theta}_\mathfrak{o}(x))
\end{align*}
it follows that
\[ f_\xi(\widehat{\theta}_\mathfrak{o}(x)) = \op{ev}(f_\xi) = \prod_{y \in \xi} (\widehat{\theta}_\mathfrak{o}(x)-\widehat{\theta}_\mathfrak{o}(y)) = 0 \]
Thus the assumptions of the next lemma are satisfied and (iv) and (v) follow. In order to get claims (viii) and (ix), we apply the lemma with $B$,$\Lambda$ and $\Pi$ as before but with
\[ C = \left(\mathcal{A}^{(1)}_\mathfrak{o}\right)^{W^{(1)}} \]
In order to see that assumption (b) of the lemma is satisfied, it suffices to show that
\[ g_x(Z) := \prod_{\eta \in W_0\bullet{}\xi} f_\eta(Z) \in \left(\mathcal{A}^{(1)}_\mathfrak{o}\right)^{X^{(1)}}[Z] \]
has coefficients in $\left(\mathcal{A}^{(1)}_\mathfrak{o}\right)^{W^{(1)}}$, i.e. that it is invariant under $W_0$. But, considering the expression of the coefficients of $f_\xi$ as symmetric polynomials in the $\widehat{\theta}_\mathfrak{o}(y)$, $y \in \xi$, it follows that
\[ w(f_\xi) = f_{w(\xi)} \quad \forall w \in W_0 \]
Hence, it follows that $g_x$ is invariant under $W_0$ by a formal computation.
\end{proof}

\begin{lemma}
   Let $R$ be a commutative ring, $B$ a not necessarily commutative $R$-algebra and $C \subseteq Z(B)$ an $R$-subalgebra of the center of $B$. Assume that there exist finite subsets
   \[ \Lambda = \{x_1,\ldots,x_n\} \subseteq B \]
   and
   \[ \Pi \subseteq Z(B) \]
   such that
   \begin{enumerate}
      \item[(a)] $B$ is generated as an $R$-algebra by $\Lambda$.
      \item[(b)] Every $x_i \in \Lambda$ satisfies a monic equation
         \[ f_i(x_i) = 0,\quad f_i(X) = X^{n_i} + a_{i,1} X^{n_i-1} + \ldots + a_{i,n_i} \in C[X] \]
      with coefficients in $C$.
      \item[(c)] The generators commute up to elements of $\Pi$, i.e.
         \[ \forall x,y \in \Lambda\ \exists t \in \Pi\ \ x y = y x \]
      \item[(d)] Every $t \in \Pi$ satisfies a monic equation with coefficients in $C$, i.e. the $C$-subalgebra $C[\Pi] \subseteq Z(B)$ generated by $\Pi$ is finitely generated as a $C$-module.
   \end{enumerate}
   Then
   \begin{enumerate}
      \item[(i)] $B$ is generated as a $C$-module by
         \[ \{ t x_1^{\nu_1} \ldots x_n^{\nu_n} : t \in C[\Pi],\ 0 \leq \nu_i < n_i\ \forall i \} \]
         In particular $B$ is a finitely generated $C$-module.
      \item[(ii)] If $R$ is noetherian, then $C$ is a finitely generated $R$-algebra.
   \end{enumerate}
\end{lemma}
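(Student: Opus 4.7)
The plan is to prove (i) by establishing a normal form: every element of $B$ can be written as a $C$-linear combination of monomials $t\, x_1^{\nu_1}\cdots x_n^{\nu_n}$ with $t \in C[\Pi]$ and $0 \le \nu_i < n_i$. Since $B$ is generated by $\Lambda$ as an $R$-algebra (and hence as a $C$-algebra, because $R \to C$), every element of $B$ is an $R$-linear combination of unordered words in the $x_i$. The task reduces to normalising each such word.

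The key step is an ordered-monomial argument in two stages. First, using (c) (read with the evident correction $xy = t\, yx$ for some $t \in \Pi$, compatible with the application in the proof above where $\widehat{\theta}_\mathfrak{o}(x)\widehat{\theta}_\mathfrak{o}(y) = t\,\widehat{\theta}_\mathfrak{o}(y)\widehat{\theta}_\mathfrak{o}(x)$), induct on the number of inversions of a word relative to the fixed order $x_1,\dots,x_n$: each adjacent swap trades one inversion for multiplication by some $t \in \Pi$, and since $\Pi \subseteq Z(B)$ these factors commute freely to the front. This rewrites any word as $t\,x_1^{\mu_1}\cdots x_n^{\mu_n}$ with $t$ in the monoid generated by $\Pi$, hence in $C[\Pi]$ (after extending scalars by $R \subseteq C$). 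Second, apply (b) to reduce exponents: if some $\mu_i \ge n_i$, the relation $x_i^{n_i} = -a_{i,1}x_i^{n_i-1} - \cdots - a_{i,n_i}$ rewrites $x_i^{\mu_i}$ as a $C$-linear combination of lower powers, where the $C$-coefficients are central and slide past the other $x_j$'s. Descending induction on the tuple $(\mu_1,\dots,\mu_n)$ in the well-founded product order terminates with all $\nu_i < n_i$. This produces the asserted generating set; by (d), $C[\Pi]$ is a finitely generated $C$-module, so finitely many coefficients $t$ suffice, and (i) follows.

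For (ii) I invoke the classical Artin--Tate argument. Let $\mathcal{B} = \{b_1,\dots,b_m\}$ be the finite $C$-module generating set of $B$ from (i). Because $\Lambda \subseteq B$ and $b_i b_j \in B$, we may write
\begin{equation*}
   x_i = \sum_j c_{ij} b_j, \qquad b_i b_j = \sum_k c_{ijk} b_k
\end{equation*}
with finitely many coefficients $c_{ij}, c_{ijk} \in C$. Let $C_0 \subseteq C$ be the $R$-subalgebra they generate; then $C_0$ is a finitely generated $R$-algebra, hence noetherian by Hilbert's basis theorem. The $C_0$-submodule $B' := \sum_i C_0 b_i \subseteq B$ contains every $x_i$, and using centrality of $C_0 \subseteq Z(B)$ together with the relations $b_i b_j \in B'$ one checks that $B'$ is a subring. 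Since $B$ is generated as an $R$-algebra by $\Lambda$ and $R \subseteq C_0 \subseteq B'$, we conclude $B = B'$, so $B$ is a finitely generated $C_0$-module. As $C \subseteq B$ is a $C_0$-submodule and $C_0$ is noetherian, $C$ is itself a finitely generated $C_0$-module, hence a finitely generated $R$-algebra.

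The main obstacle is the rearrangement/termination argument in (i): one must carefully track that the $\Pi$-factors introduced by the non-commutativity remain central and collect in $C[\Pi]$, and that the subsequent power reduction does not disturb the monomial ordering nor introduce new inversions. Once the normal form is in hand, (ii) is a formal consequence via Artin--Tate.
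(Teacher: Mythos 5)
Your proof of part (i) is essentially the paper's intended argument (the paper compresses it to ``follows immediately by combining (a)--(c)''), and you are correct that condition (c) must be read as $xy = t\,yx$ for some $t \in \Pi$; as literally typeset it is vacuous. Your two-stage normal-form argument---collecting $\Pi$-factors to the front using centrality, then reducing exponents via the monic relations---is exactly what is needed, and tracking that $\Pi \subseteq Z(B)$ and $a_{i,j} \in C \subseteq Z(B)$ commute past the $x_i$'s is the substance of the argument.

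For part (ii), you use the textbook Artin--Tate dévissage, extracting the structure constants $c_{ij}$ (from $x_i = \sum_j c_{ij} b_j$) and $c_{ijk}$ (from $b_i b_j = \sum_k c_{ijk} b_k$) to build a noetherian subalgebra $C_0 \subseteq C$ over which $B$ is finite. The paper's dévissage is a minor variant: instead of structure constants, it takes $C'$ to be the $R$-subalgebra of $C$ generated directly by the coefficients $a_{i,j}$ of the polynomials $f_i$ and the coefficients of the monic equations for $\Pi$; one checks that hypotheses (a)--(d) still hold with $(C', B)$ in place of $(C, B)$ and re-applies part (i) to get that $B$ is a finite $C'$-module, then concludes as you do from noetherianity of $C'$. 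Both arguments are valid; the paper's avoids introducing the auxiliary module-generating set $\mathcal{B}$ and instead reuses the explicit data already present in hypotheses (b) and (d), while yours is slightly more generic and does not depend on the particular shape of those hypotheses. One small clean-up in your version: if you include $1$ among the $b_j$ (as the empty monomial with $t=1$, $\nu_i = 0$), the expressions $x_i = \sum_j c_{ij} b_j$ are automatic with $c_{ij} \in \{0,1\}$ whenever $n_i \ge 2$, but in general one does need that the $b_j$ generate $B$ over $C$, which (i) supplies.
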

\begin{proof}
   Claim (i) follows immediately by combining assumptions (a)-(c). Claim (ii) follows as in the classical commutative case by dévissage. Let $C'$ be the $R$-subalgebra of $C$ generated by the coefficients $a_{i,j}$ and the coefficients of the monic equations satisfied by the elements of $\Pi$, and let $C'\left<x_1,\ldots,x_n\right>$ be the $C'$-subalgebra of $B$ generated by $x_1,\ldots,x_n$. This situation is summarized in the following diagram
   \[ C' \subseteq C \subseteq C'\left<x_1,\ldots,x_n\right> \subseteq B \]
   The assumption of this lemma are still satisfied if one replaces $C$ by $C'$ and $B$ by $C'\left<x_1,\ldots,x_n\right>$. From part (i) it therefore follows that $C'\left<x_1,\ldots,x_n\right>$ is a finite $C'$-module. Since $C'$ is the homomorphic image of a polynomial ring over $R$ in a finite number of variables it is noetherian, hence it follows that the submodule $C \subseteq C'\left<x_1,\ldots ,x_n\right>$ is also finitely generated. In particular $C$ is a finitely generated $C'$-algebra. Since $C'$ is a finitely generated $R$-algebra, it follows by transitivity that $C$ is a finitely generated $R$-algebra.
\end{proof}

\begin{rmk}\label{rmk:finiteness_of_orbits}
   In some of the finiteness results proved in the main theorem we had to assume that $W^{(1)}$ acts with finite orbits on $X^{(1)}$. Let us see what this condition amounts to. Since $W_0 \simeq W^{(1)}/X^{(1)}$ is finite, the group $W^{(1)}$ acts by finite orbits if and only if the subgroup $X^{(1)}$ acts by finite orbits. But, if $x,y \in X^{(1)}$ then by definition
   \[ \pi(x)\bullet{}y = xyx^{-1} = \underbrace{xyx^{-1}y^{-1}}_{=: [x,y]} y \]
   Since $X$ is commutative by assumption, the commutator $[x,y]$ lies in $T$. Thus if $T$ is finite, the group $W^{(1)}$ always acts with finite orbits.
   
   Let us now consider the case when $T$ is contained in the center of $X^{(1)}$ (but possibly infinite). This means that $X^{(1)}$ is a central extension
   \[ \begin{xy} \xymatrix{ 1 \ar[r] & T \ar[r] & X^{(1)} \ar[r] & X \ar[r] & 0 } \end{xy} \]
   of abelian groups, and therefore the commutator $[x,y]$ only depends on $\pi(x)$ and $\pi(y)$ and gives rise to an alternating bilinear pairing
   \[ [\--,\--]: X\times X \longrightarrow T \]
   By the above computation the orbit of an element $y \in X^{(1)}$ under $X$ is given by the coset
   \[ X^{(1)}\bullet{} y = [X,\pi(y)] y \]
   under the subgroup
   \[ [X,\pi(y)] \leq T \]
   This subgroup is always finitely generated, since $X$ is finitely generated by assumption. It is therefore finite if and only if it lies in the torsion subgroup $T_{\op{tors}} \leq T$. Thus, when $T$ is contained in the center of $X^{(1)}$, the group $W^{(1)}$ (actually $W$) acts with finite orbits if and only if the pairing $[\--,\--]$ takes values in $T_{\op{tors}}$. This is for instance the case if $X^{(1)}$ is abelian or $T$ is finite.
\end{rmk}

\printbibliography

\end{document}